 \theoremstyle{plain}
\newtheorem{thm}{Theorem}[section]
  \theoremstyle{definition}
  \newtheorem{defn}[thm]{Definition}
 \theoremstyle{definition}
  \newtheorem{example}[thm]{Example}
  \theoremstyle{remark}
  \newtheorem{rem}[thm]{Remark}
  \theoremstyle{plain}
  \newtheorem{prop}[thm]{Proposition}
  \theoremstyle{remark}
  \newtheorem*{rem*}{Remark}
  \theoremstyle{plain}
  \newtheorem*{prop*}{Proposition}
  \theoremstyle{plain}
  \newtheorem{lem}[thm]{Lemma}
  \theoremstyle{plain}
  \newtheorem{cor}[thm]{Corollary}
  \theoremstyle{plain}
  \newtheorem*{thm*}{Theorem}
  \theoremstyle{plain}
  \newtheorem*{cor*}{Corollary}
  \theoremstyle{definition}
  \newtheorem*{example*}{Example}
\newcommand{\Stab}{\mathrm{Stab}}
\newcommand{\Cyl}{\mathrm{Cyl}}
\newcommand{\Isom}{\mathrm{Isom}}
\numberwithin{equation}{section}
\begin{document}
%
%
%
%
\ifx\figforTeXisloaded\relax \else\global\let\figforTeXisloaded=\relax\fi
\message{version 1.8.4}
\catcode`\@=11
\ifx\ctr@ln@m\undefined\else%
    \immediate\write16{*** Fig4TeX WARNING : \string\ctr@ln@m\space already defined.}\fi
\def\ctr@ln@m#1{\ifx#1\undefined\else%
    \immediate\write16{*** Fig4TeX WARNING : \string#1 already defined.}\fi}
\ctr@ln@m\ctr@ld@f
\def\ctr@ld@f#1#2{\ctr@ln@m#2#1#2}
\ctr@ld@f\def\ctr@ln@w#1#2{\ctr@ln@m#2\csname#1\endcsname#2}
{\catcode`\/=0 \catcode`/\=12 /ctr@ld@f/gdef/BS@{\}}
\ctr@ld@f\def\ctr@lcsn@m#1{\expandafter\ifx\csname#1\endcsname\relax\else%
    \immediate\write16{*** Fig4TeX WARNING : \BS@\expandafter\string#1\space already defined.}\fi}
\ctr@ld@f\edef\colonc@tcode{\the\catcode`\:}
\ctr@ld@f\edef\semicolonc@tcode{\the\catcode`\;}
\ctr@ld@f\def\t@stc@tcodech@nge{{\let\c@tcodech@nged=\z@%
    \ifnum\colonc@tcode=\the\catcode`\:\else\let\c@tcodech@nged=\@ne\fi%
    \ifnum\semicolonc@tcode=\the\catcode`\;\else\let\c@tcodech@nged=\@ne\fi%
    \ifx\c@tcodech@nged\@ne%
    \immediate\write16{}
    \immediate\write16{!!!=============================================================!!!}
    \immediate\write16{ Fig4TeX WARNING :}
    \immediate\write16{ The category code of some characters has been changed, which will}
    \immediate\write16{ result in an error (message "Runaway argument?").}
    \immediate\write16{ This probably comes from another package that changed the category}
    \immediate\write16{ code after Fig4TeX was loaded. If that proves to be exact, the}
    \immediate\write16{ solution is to exchange the loading commands on top of your file}
    \immediate\write16{ so that Fig4TeX is loaded last. For example, in LaTeX, we should}
    \immediate\write16{ say :}
    \immediate\write16{\BS@ usepackage[french]{babel}}
    \immediate\write16{\BS@ usepackage{fig4tex}}
    \immediate\write16{!!!=============================================================!!!}
    \immediate\write16{}
    \fi}}
\ctr@ld@f\def\FigforTeX{F\kern-.05em i\kern-.05em g\kern-.1em\raise-.14em\hbox{4}\kern-.19em\TeX}
\ctr@ln@w{newdimen}\epsil@n\epsil@n=0.00005pt
\ctr@ln@w{newdimen}\Cepsil@n\Cepsil@n=0.005pt
\ctr@ln@w{newdimen}\dcq@\dcq@=254pt
\ctr@ln@w{newdimen}\PI@\PI@=3.141592pt
\ctr@ln@w{newdimen}\DemiPI@deg\DemiPI@deg=90pt
\ctr@ln@w{newdimen}\PI@deg\PI@deg=180pt
\ctr@ln@w{newdimen}\DePI@deg\DePI@deg=360pt
\ctr@ld@f\chardef\t@n=10
\ctr@ld@f\chardef\c@nt=100
\ctr@ld@f\chardef\@lxxiv=74
\ctr@ld@f\chardef\@xci=91
\ctr@ld@f\mathchardef\@nMnCQn=9949
\ctr@ld@f\chardef\@vi=6
\ctr@ld@f\chardef\@xxx=30
\ctr@ld@f\chardef\@lvi=56
\ctr@ld@f\chardef\@@lxxi=71
\ctr@ld@f\chardef\@lxxxv=85
\ctr@ld@f\mathchardef\@@mmmmlxviii=4068
\ctr@ld@f\mathchardef\@ccclx=360
\ctr@ld@f\mathchardef\@dccxx=720
\ctr@ln@w{newcount}\p@rtent \ctr@ln@w{newcount}\f@ctech \ctr@ln@w{newcount}\result@tent
\ctr@ln@w{newdimen}\v@lmin \ctr@ln@w{newdimen}\v@lmax \ctr@ln@w{newdimen}\v@leur
\ctr@ln@w{newdimen}\result@t\ctr@ln@w{newdimen}\result@@t
\ctr@ln@w{newdimen}\mili@u \ctr@ln@w{newdimen}\c@rre \ctr@ln@w{newdimen}\delt@
\ctr@ld@f\def\degT@rd{0.017453 }  
\ctr@ld@f\def\rdT@deg{57.295779 } 
\ctr@ln@m\v@leurseule
{\catcode`p=12 \catcode`t=12 \gdef\v@leurseule#1pt{#1}}
\ctr@ld@f\def\repdecn@mb#1{\expandafter\v@leurseule\the#1\space}
\ctr@ld@f\def\arct@n#1(#2,#3){{\v@lmin=#2\v@lmax=#3%
    \maxim@m{\mili@u}{-\v@lmin}{\v@lmin}\maxim@m{\c@rre}{-\v@lmax}{\v@lmax}%
    \delt@=\mili@u\m@ech\mili@u%
    \ifdim\c@rre>\@nMnCQn\mili@u\divide\v@lmax\tw@\c@lATAN\v@leur(\z@,\v@lmax)
    \else%
    \maxim@m{\mili@u}{-\v@lmin}{\v@lmin}\maxim@m{\c@rre}{-\v@lmax}{\v@lmax}%
    \m@ech\c@rre%
    \ifdim\mili@u>\@nMnCQn\c@rre\divide\v@lmin\tw@
    \maxim@m{\mili@u}{-\v@lmin}{\v@lmin}\c@lATAN\v@leur(\mili@u,\z@)%
    \else\c@lATAN\v@leur(\delt@,\v@lmax)\fi\fi%
    \ifdim\v@lmin<\z@\v@leur=-\v@leur\ifdim\v@lmax<\z@\advance\v@leur-\PI@%
    \else\advance\v@leur\PI@\fi\fi%
    \global\result@t=\v@leur}#1=\result@t}
\ctr@ld@f\def\m@ech#1{\ifdim#1>1.646pt\divide\mili@u\t@n\divide\c@rre\t@n\m@ech#1\fi}
\ctr@ld@f\def\c@lATAN#1(#2,#3){{\v@lmin=#2\v@lmax=#3\v@leur=\z@\delt@=\tw@ pt%
    \un@iter{0.785398}{\v@lmax<}%
    \un@iter{0.463648}{\v@lmax<}%
    \un@iter{0.244979}{\v@lmax<}%
    \un@iter{0.124355}{\v@lmax<}%
    \un@iter{0.062419}{\v@lmax<}%
    \un@iter{0.031240}{\v@lmax<}%
    \un@iter{0.015624}{\v@lmax<}%
    \un@iter{0.007812}{\v@lmax<}%
    \un@iter{0.003906}{\v@lmax<}%
    \un@iter{0.001953}{\v@lmax<}%
    \un@iter{0.000976}{\v@lmax<}%
    \un@iter{0.000488}{\v@lmax<}%
    \un@iter{0.000244}{\v@lmax<}%
    \un@iter{0.000122}{\v@lmax<}%
    \un@iter{0.000061}{\v@lmax<}%
    \un@iter{0.000030}{\v@lmax<}%
    \un@iter{0.000015}{\v@lmax<}%
    \global\result@t=\v@leur}#1=\result@t}
\ctr@ld@f\def\un@iter#1#2{%
    \divide\delt@\tw@\edef\dpmn@{\repdecn@mb{\delt@}}%
    \mili@u=\v@lmin%
    \ifdim#2\z@%
      \advance\v@lmin-\dpmn@\v@lmax\advance\v@lmax\dpmn@\mili@u%
      \advance\v@leur-#1pt%
    \else%
      \advance\v@lmin\dpmn@\v@lmax\advance\v@lmax-\dpmn@\mili@u%
      \advance\v@leur#1pt%
    \fi}
\ctr@ld@f\def\c@ssin#1#2#3{\expandafter\ifx\csname COS@\number#3\endcsname\relax\c@lCS{#3pt}%
    \expandafter\xdef\csname COS@\number#3\endcsname{\repdecn@mb\result@t}%
    \expandafter\xdef\csname SIN@\number#3\endcsname{\repdecn@mb\result@@t}\fi%
    \edef#1{\csname COS@\number#3\endcsname}\edef#2{\csname SIN@\number#3\endcsname}}
\ctr@ld@f\def\c@lCS#1{{\mili@u=#1\p@rtent=\@ne%
    \relax\ifdim\mili@u<\z@\red@ng<-\else\red@ng>+\fi\f@ctech=\p@rtent%
    \relax\ifdim\mili@u<\z@\mili@u=-\mili@u\f@ctech=-\f@ctech\fi\c@@lCS}}
\ctr@ld@f\def\c@@lCS{\v@lmin=\mili@u\c@rre=-\mili@u\advance\c@rre\DemiPI@deg\v@lmax=\c@rre%
    \mili@u\@@lxxi\mili@u\divide\mili@u\@@mmmmlxviii%
    \edef\v@larg{\repdecn@mb{\mili@u}}\mili@u=-\v@larg\mili@u%
    \edef\v@lmxde{\repdecn@mb{\mili@u}}%
    \c@rre\@@lxxi\c@rre\divide\c@rre\@@mmmmlxviii%
    \edef\v@largC{\repdecn@mb{\c@rre}}\c@rre=-\v@largC\c@rre%
    \edef\v@lmxdeC{\repdecn@mb{\c@rre}}%
    \fctc@s\mili@u\v@lmin\global\result@t\p@rtent\v@leur%
    \let\t@mp=\v@larg\let\v@larg=\v@largC\let\v@largC=\t@mp%
    \let\t@mp=\v@lmxde\let\v@lmxde=\v@lmxdeC\let\v@lmxdeC=\t@mp%
    \fctc@s\c@rre\v@lmax\global\result@@t\f@ctech\v@leur}
\ctr@ld@f\def\fctc@s#1#2{\v@leur=#1\relax\ifdim#2<\@lxxxv\p@\cosser@h\else\sinser@t\fi}
\ctr@ld@f\def\cosser@h{\advance\v@leur\@lvi\p@\divide\v@leur\@lvi%
    \v@leur=\v@lmxde\v@leur\advance\v@leur\@xxx\p@%
    \v@leur=\v@lmxde\v@leur\advance\v@leur\@ccclx\p@%
    \v@leur=\v@lmxde\v@leur\advance\v@leur\@dccxx\p@\divide\v@leur\@dccxx}
\ctr@ld@f\def\sinser@t{\v@leur=\v@lmxdeC\p@\advance\v@leur\@vi\p@%
    \v@leur=\v@largC\v@leur\divide\v@leur\@vi}
\ctr@ld@f\def\red@ng#1#2{\relax\ifdim\mili@u#1#2\DemiPI@deg\advance\mili@u#2-\PI@deg%
    \p@rtent=-\p@rtent\red@ng#1#2\fi}
\ctr@ld@f\def\pr@c@lCS#1#2#3{\ctr@lcsn@m{COS@\number#3 }%
    \expandafter\xdef\csname COS@\number#3\endcsname{#1}%
    \expandafter\xdef\csname SIN@\number#3\endcsname{#2}}
\pr@c@lCS{1}{0}{0}
\pr@c@lCS{0.7071}{0.7071}{45}\pr@c@lCS{0.7071}{-0.7071}{-45}
\pr@c@lCS{0}{1}{90}          \pr@c@lCS{0}{-1}{-90}
\pr@c@lCS{-1}{0}{180}        \pr@c@lCS{-1}{0}{-180}
\pr@c@lCS{0}{-1}{270}        \pr@c@lCS{0}{1}{-270}
\ctr@ld@f\def\invers@#1#2{{\v@leur=#2\maxim@m{\v@lmax}{-\v@leur}{\v@leur}%
    \f@ctech=\@ne\m@inv@rs%
    \multiply\v@leur\f@ctech\edef\v@lv@leur{\repdecn@mb{\v@leur}}%
    \p@rtentiere{\p@rtent}{\v@leur}\v@lmin=\p@\divide\v@lmin\p@rtent%
    \inv@rs@\multiply\v@lmax\f@ctech\global\result@t=\v@lmax}#1=\result@t}
\ctr@ld@f\def\m@inv@rs{\ifdim\v@lmax<\p@\multiply\v@lmax\t@n\multiply\f@ctech\t@n\m@inv@rs\fi}
\ctr@ld@f\def\inv@rs@{\v@lmax=-\v@lmin\v@lmax=\v@lv@leur\v@lmax%
    \advance\v@lmax\tw@ pt\v@lmax=\repdecn@mb{\v@lmin}\v@lmax%
    \delt@=\v@lmax\advance\delt@-\v@lmin\ifdim\delt@<\z@\delt@=-\delt@\fi%
    \ifdim\delt@>\epsil@n\v@lmin=\v@lmax\inv@rs@\fi}
\ctr@ld@f\def\minim@m#1#2#3{\relax\ifdim#2<#3#1=#2\else#1=#3\fi}
\ctr@ld@f\def\maxim@m#1#2#3{\relax\ifdim#2>#3#1=#2\else#1=#3\fi}
\ctr@ld@f\def\p@rtentiere#1#2{#1=#2\divide#1by65536 }
\ctr@ld@f\def\r@undint#1#2{{\v@leur=#2\divide\v@leur\t@n\p@rtentiere{\p@rtent}{\v@leur}%
    \v@leur=\p@rtent pt\global\result@t=\t@n\v@leur}#1=\result@t}
\ctr@ld@f\def\sqrt@#1#2{{\v@leur=#2%
    \minim@m{\v@lmin}{\p@}{\v@leur}\maxim@m{\v@lmax}{\p@}{\v@leur}%
    \f@ctech=\@ne\m@sqrt@\sqrt@@%
    \mili@u=\v@lmin\advance\mili@u\v@lmax\divide\mili@u\tw@\multiply\mili@u\f@ctech%
    \global\result@t=\mili@u}#1=\result@t}
\ctr@ld@f\def\m@sqrt@{\ifdim\v@leur>\dcq@\divide\v@leur\c@nt\v@lmax=\v@leur%
    \multiply\f@ctech\t@n\m@sqrt@\fi}
\ctr@ld@f\def\sqrt@@{\mili@u=\v@lmin\advance\mili@u\v@lmax\divide\mili@u\tw@%
    \c@rre=\repdecn@mb{\mili@u}\mili@u%
    \ifdim\c@rre<\v@leur\v@lmin=\mili@u\else\v@lmax=\mili@u\fi%
    \delt@=\v@lmax\advance\delt@-\v@lmin\ifdim\delt@>\epsil@n\sqrt@@\fi}
\ctr@ld@f\def\extrairelepremi@r#1\de#2{\expandafter\lepremi@r#2@#1#2}
\ctr@ld@f\def\lepremi@r#1,#2@#3#4{\def#3{#1}\def#4{#2}\ignorespaces}
\ctr@ld@f\def\@cfor#1:=#2\do#3{%
  \edef\@fortemp{#2}%
  \ifx\@fortemp\empty\else\@cforloop#2,\@nil,\@nil\@@#1{#3}\fi}
\ctr@ln@m\@nextwhile
\ctr@ld@f\def\@cforloop#1,#2\@@#3#4{%
  \def#3{#1}%
  \ifx#3\Fig@nnil\let\@nextwhile=\Fig@fornoop\else#4\relax\let\@nextwhile=\@cforloop\fi%
  \@nextwhile#2\@@#3{#4}}

\ctr@ld@f\def\@ecfor#1:=#2\do#3{%
  \def\@@cfor{\@cfor#1:=}%
  \edef\@@@cfor{#2}%
  \expandafter\@@cfor\@@@cfor\do{#3}}
\ctr@ld@f\def\Fig@nnil{\@nil}
\ctr@ld@f\def\Fig@fornoop#1\@@#2#3{}
\ctr@ln@m\list@@rg
\ctr@ld@f\def\trtlis@rg#1#2{\def\list@@rg{#1}%
    \@ecfor\p@rv@l:=\list@@rg\do{\expandafter#2\p@rv@l|}}
\ctr@ln@w{newbox}\b@xvisu
\ctr@ln@w{newtoks}\let@xte
\ctr@ln@w{newif}\ifitis@K
\ctr@ln@w{newcount}\s@mme
\ctr@ln@w{newcount}\l@mbd@un \ctr@ln@w{newcount}\l@mbd@de
\ctr@ln@w{newcount}\superc@ntr@l\superc@ntr@l=\@ne        
\ctr@ln@w{newcount}\typec@ntr@l\typec@ntr@l=\superc@ntr@l 
\ctr@ln@w{newdimen}\v@lX  \ctr@ln@w{newdimen}\v@lY  \ctr@ln@w{newdimen}\v@lZ
\ctr@ln@w{newdimen}\v@lXa \ctr@ln@w{newdimen}\v@lYa \ctr@ln@w{newdimen}\v@lZa
\ctr@ln@w{newdimen}\unit@\unit@=\p@ 
\ctr@ld@f\def\unit@util{pt}
\ctr@ld@f\def\ptT@ptps{0.996264 }
\ctr@ld@f\def\ptpsT@pt{1.00375 }
\ctr@ld@f\def\ptT@unit@{1} 
\ctr@ld@f\def\setunit@#1{\def\unit@util{#1}\setunit@@#1:\invers@{\result@t}{\unit@}%
    \edef\ptT@unit@{\repdecn@mb\result@t}}
\ctr@ld@f\def\setunit@@#1#2:{\ifcat#1a\unit@=\@ne#1#2\else\unit@=#1#2\fi}
\ctr@ld@f\def\d@fm@cdim#1#2{{\v@leur=#2\v@leur=\ptT@unit@\v@leur\xdef#1{\repdecn@mb\v@leur}}}
\ctr@ln@w{newif}\ifBdingB@x\BdingB@xtrue
\ctr@ln@w{newdimen}\c@@rdXmin \ctr@ln@w{newdimen}\c@@rdYmin  
\ctr@ln@w{newdimen}\c@@rdXmax \ctr@ln@w{newdimen}\c@@rdYmax
\ctr@ld@f\def\b@undb@x#1#2{\ifBdingB@x%
    \relax\ifdim#1<\c@@rdXmin\global\c@@rdXmin=#1\fi%
    \relax\ifdim#2<\c@@rdYmin\global\c@@rdYmin=#2\fi%
    \relax\ifdim#1>\c@@rdXmax\global\c@@rdXmax=#1\fi%
    \relax\ifdim#2>\c@@rdYmax\global\c@@rdYmax=#2\fi\fi}
\ctr@ld@f\def\b@undb@xP#1{{\Figg@tXY{#1}\b@undb@x{\v@lX}{\v@lY}}}
\ctr@ld@f\def\ellBB@x#1;#2,#3(#4,#5,#6){{\s@uvc@ntr@l\et@tellBB@x%
    \setc@ntr@l{2}\figptell-2::#1;#2,#3(#4,#6)\b@undb@xP{-2}%
    \figptell-2::#1;#2,#3(#5,#6)\b@undb@xP{-2}%
    \c@ssin{\C@}{\S@}{#6}\v@lmin=\C@ pt\v@lmax=\S@ pt%
    \mili@u=#3\v@lmin\delt@=#2\v@lmax\arct@n\v@leur(\delt@,\mili@u)%
    \mili@u=-#3\v@lmax\delt@=#2\v@lmin\arct@n\c@rre(\delt@,\mili@u)%
    \v@leur=\rdT@deg\v@leur\advance\v@leur-\DePI@deg%
    \c@rre=\rdT@deg\c@rre\advance\c@rre-\DePI@deg%
    \v@lmin=#4pt\v@lmax=#5pt%
    \loop\ifdim\v@leur<\v@lmax\ifdim\v@leur>\v@lmin%
    \edef\@ngle{\repdecn@mb\v@leur}\figptell-2::#1;#2,#3(\@ngle,#6)%
    \b@undb@xP{-2}\fi\advance\v@leur\PI@deg\repeat%
    \loop\ifdim\c@rre<\v@lmax\ifdim\c@rre>\v@lmin%
    \edef\@ngle{\repdecn@mb\c@rre}\figptell-2::#1;#2,#3(\@ngle,#6)%
    \b@undb@xP{-2}\fi\advance\c@rre\PI@deg\repeat%
    \resetc@ntr@l\et@tellBB@x}\ignorespaces}
\ctr@ld@f\def\initb@undb@x{\c@@rdXmin=\maxdimen\c@@rdYmin=\maxdimen%
    \c@@rdXmax=-\maxdimen\c@@rdYmax=-\maxdimen}
\ctr@ld@f\def\c@ntr@lnum#1{%
    \relax\ifnum\typec@ntr@l=\@ne%
    \ifnum#1<\z@%
    \immediate\write16{*** Forbidden point number (#1). Abort.}\end\fi\fi%
    \set@bjc@de{#1}}
\ctr@ln@m\objc@de
\ctr@ld@f\def\set@bjc@de#1{\edef\objc@de{@BJ\ifnum#1<\z@ M\romannumeral-#1\else\romannumeral#1\fi}}
\s@mme=\m@ne\loop\ifnum\s@mme>-19
  \set@bjc@de{\s@mme}\ctr@lcsn@m\objc@de\ctr@lcsn@m{\objc@de T}
\advance\s@mme\m@ne\repeat
\s@mme=\@ne\loop\ifnum\s@mme<6
  \set@bjc@de{\s@mme}\ctr@lcsn@m\objc@de\ctr@lcsn@m{\objc@de T}
\advance\s@mme\@ne\repeat
\ctr@ld@f\def\setc@ntr@l#1{\ifnum\superc@ntr@l>#1\typec@ntr@l=\superc@ntr@l%
    \else\typec@ntr@l=#1\fi}
\ctr@ld@f\def\resetc@ntr@l#1{\global\superc@ntr@l=#1\setc@ntr@l{#1}}
\ctr@ld@f\def\s@uvc@ntr@l#1{\edef#1{\the\superc@ntr@l}}
\ctr@ln@m\c@lproscal
\ctr@ld@f\def\c@lproscalDD#1[#2,#3]{{\Figg@tXY{#2}%
    \edef\Xu@{\repdecn@mb{\v@lX}}\edef\Yu@{\repdecn@mb{\v@lY}}\Figg@tXY{#3}%
    \global\result@t=\Xu@\v@lX\global\advance\result@t\Yu@\v@lY}#1=\result@t}
\ctr@ld@f\def\c@lproscalTD#1[#2,#3]{{\Figg@tXY{#2}\edef\Xu@{\repdecn@mb{\v@lX}}%
    \edef\Yu@{\repdecn@mb{\v@lY}}\edef\Zu@{\repdecn@mb{\v@lZ}}%
    \Figg@tXY{#3}\global\result@t=\Xu@\v@lX\global\advance\result@t\Yu@\v@lY%
    \global\advance\result@t\Zu@\v@lZ}#1=\result@t}
\ctr@ld@f\def\c@lprovec#1{%
    \det@rmC\v@lZa(\v@lX,\v@lY,\v@lmin,\v@lmax)%
    \det@rmC\v@lXa(\v@lY,\v@lZ,\v@lmax,\v@leur)%
    \det@rmC\v@lYa(\v@lZ,\v@lX,\v@leur,\v@lmin)%
    \Figv@ctCreg#1(\v@lXa,\v@lYa,\v@lZa)}
\ctr@ld@f\def\det@rm#1[#2,#3]{{\Figg@tXY{#2}\Figg@tXYa{#3}%
    \delt@=\repdecn@mb{\v@lX}\v@lYa\advance\delt@-\repdecn@mb{\v@lY}\v@lXa%
    \global\result@t=\delt@}#1=\result@t}
\ctr@ld@f\def\det@rmC#1(#2,#3,#4,#5){{\global\result@t=\repdecn@mb{#2}#5%
    \global\advance\result@t-\repdecn@mb{#3}#4}#1=\result@t}
\ctr@ld@f\def\getredf@ctDD#1(#2,#3){{\maxim@m{\v@lXa}{-#2}{#2}\maxim@m{\v@lYa}{-#3}{#3}%
    \maxim@m{\v@lXa}{\v@lXa}{\v@lYa}
    \ifdim\v@lXa>\@xci pt\divide\v@lXa\@xci%
    \p@rtentiere{\p@rtent}{\v@lXa}\advance\p@rtent\@ne\else\p@rtent=\@ne\fi%
    \global\result@tent=\p@rtent}#1=\result@tent\ignorespaces}
\ctr@ld@f\def\getredf@ctTD#1(#2,#3,#4){{\maxim@m{\v@lXa}{-#2}{#2}\maxim@m{\v@lYa}{-#3}{#3}%
    \maxim@m{\v@lZa}{-#4}{#4}\maxim@m{\v@lXa}{\v@lXa}{\v@lYa}%
    \maxim@m{\v@lXa}{\v@lXa}{\v@lZa}
    \ifdim\v@lXa>\@lxxiv pt\divide\v@lXa\@lxxiv%
    \p@rtentiere{\p@rtent}{\v@lXa}\advance\p@rtent\@ne\else\p@rtent=\@ne\fi%
    \global\result@tent=\p@rtent}#1=\result@tent\ignorespaces}
\ctr@ld@f\def\FigptintercircB@zDD#1:#2:#3,#4[#5,#6,#7,#8]{{\s@uvc@ntr@l\et@tfigptintercircB@zDD%
    \setc@ntr@l{2}\figvectPDD-1[#5,#8]\Figg@tXY{-1}\getredf@ctDD\f@ctech(\v@lX,\v@lY)%
    \mili@u=#4\unit@\divide\mili@u\f@ctech\c@rre=\repdecn@mb{\mili@u}\mili@u%
    \figptBezierDD-5::#3[#5,#6,#7,#8]%
    \v@lmin=#3\p@\v@lmax=\v@lmin\advance\v@lmax0.1\p@%
    \loop\edef\T@{\repdecn@mb{\v@lmax}}\figptBezierDD-2::\T@[#5,#6,#7,#8]%
    \figvectPDD-1[-5,-2]\n@rmeucCDD{\delt@}{-1}\ifdim\delt@<\c@rre\v@lmin=\v@lmax%
    \advance\v@lmax0.1\p@\repeat%
    \loop\mili@u=\v@lmin\advance\mili@u\v@lmax%
    \divide\mili@u\tw@\edef\T@{\repdecn@mb{\mili@u}}\figptBezierDD-2::\T@[#5,#6,#7,#8]%
    \figvectPDD-1[-5,-2]\n@rmeucCDD{\delt@}{-1}\ifdim\delt@>\c@rre\v@lmax=\mili@u%
    \else\v@lmin=\mili@u\fi\v@leur=\v@lmax\advance\v@leur-\v@lmin%
    \ifdim\v@leur>\epsil@n\repeat\figptcopyDD#1:#2/-2/%
    \resetc@ntr@l\et@tfigptintercircB@zDD}\ignorespaces}
\ctr@ln@m\figptinterlines
\ctr@ld@f\def\inters@cDD#1:#2[#3,#4;#5,#6]{{\s@uvc@ntr@l\et@tinters@cDD%
    \setc@ntr@l{2}\vecunit@{-1}{#4}\vecunit@{-2}{#6}%
    \Figg@tXY{-1}\setc@ntr@l{1}\Figg@tXYa{#3}%
    \edef\A@{\repdecn@mb{\v@lX}}\edef\B@{\repdecn@mb{\v@lY}}%
    \v@lmin=\B@\v@lXa\advance\v@lmin-\A@\v@lYa%
    \Figg@tXYa{#5}\setc@ntr@l{2}\Figg@tXY{-2}%
    \edef\C@{\repdecn@mb{\v@lX}}\edef\D@{\repdecn@mb{\v@lY}}%
    \v@lmax=\D@\v@lXa\advance\v@lmax-\C@\v@lYa%
    \delt@=\A@\v@lY\advance\delt@-\B@\v@lX%
    \invers@{\v@leur}{\delt@}\edef\v@ldelta{\repdecn@mb{\v@leur}}%
    \v@lXa=\A@\v@lmax\advance\v@lXa-\C@\v@lmin%
    \v@lYa=\B@\v@lmax\advance\v@lYa-\D@\v@lmin%
    \v@lXa=\v@ldelta\v@lXa\v@lYa=\v@ldelta\v@lYa%
    \setc@ntr@l{1}\Figp@intregDD#1:{#2}(\v@lXa,\v@lYa)%
    \resetc@ntr@l\et@tinters@cDD}\ignorespaces}
\ctr@ld@f\def\inters@cTD#1:#2[#3,#4;#5,#6]{{\s@uvc@ntr@l\et@tinters@cTD%
    \setc@ntr@l{2}\figvectNVTD-1[#4,#6]\figvectNVTD-2[#6,-1]\figvectPTD-1[#3,#5]%
    \r@pPSTD\v@leur[-2,-1,#4]\edef\v@lcoef{\repdecn@mb{\v@leur}}%
    \figpttraTD#1:{#2}=#3/\v@lcoef,#4/\resetc@ntr@l\et@tinters@cTD}\ignorespaces}
\ctr@ld@f\def\r@pPSTD#1[#2,#3,#4]{{\Figg@tXY{#2}\edef\Xu@{\repdecn@mb{\v@lX}}%
    \edef\Yu@{\repdecn@mb{\v@lY}}\edef\Zu@{\repdecn@mb{\v@lZ}}%
    \Figg@tXY{#3}\v@lmin=\Xu@\v@lX\advance\v@lmin\Yu@\v@lY\advance\v@lmin\Zu@\v@lZ%
    \Figg@tXY{#4}\v@lmax=\Xu@\v@lX\advance\v@lmax\Yu@\v@lY\advance\v@lmax\Zu@\v@lZ%
    \invers@{\v@leur}{\v@lmax}\global\result@t=\repdecn@mb{\v@leur}\v@lmin}%
    #1=\result@t}
\ctr@ln@m\n@rminf
\ctr@ld@f\def\n@rminfDD#1#2{{\Figg@tXY{#2}\maxim@m{\v@lX}{\v@lX}{-\v@lX}%
    \maxim@m{\v@lY}{\v@lY}{-\v@lY}\maxim@m{\global\result@t}{\v@lX}{\v@lY}}%
    #1=\result@t}
\ctr@ld@f\def\n@rminfTD#1#2{{\Figg@tXY{#2}\maxim@m{\v@lX}{\v@lX}{-\v@lX}%
    \maxim@m{\v@lY}{\v@lY}{-\v@lY}\maxim@m{\v@lZ}{\v@lZ}{-\v@lZ}%
    \maxim@m{\v@lX}{\v@lX}{\v@lY}\maxim@m{\global\result@t}{\v@lX}{\v@lZ}}%
    #1=\result@t}
\ctr@ld@f\def\n@rmeucCDD#1#2{\Figg@tXY{#2}\divide\v@lX\f@ctech\divide\v@lY\f@ctech%
    #1=\repdecn@mb{\v@lX}\v@lX\v@lX=\repdecn@mb{\v@lY}\v@lY\advance#1\v@lX}
\ctr@ld@f\def\n@rmeucCTD#1#2{\Figg@tXY{#2}%
    \divide\v@lX\f@ctech\divide\v@lY\f@ctech\divide\v@lZ\f@ctech%
    #1=\repdecn@mb{\v@lX}\v@lX\v@lX=\repdecn@mb{\v@lY}\v@lY\advance#1\v@lX%
    \v@lX=\repdecn@mb{\v@lZ}\v@lZ\advance#1\v@lX}
\ctr@ln@m\n@rmeucSV
\ctr@ld@f\def\n@rmeucSVDD#1#2{{\Figg@tXY{#2}%
    \v@lXa=\repdecn@mb{\v@lX}\v@lX\v@lYa=\repdecn@mb{\v@lY}\v@lY%
    \advance\v@lXa\v@lYa\sqrt@{\global\result@t}{\v@lXa}}#1=\result@t}
\ctr@ld@f\def\n@rmeucSVTD#1#2{{\Figg@tXY{#2}\v@lXa=\repdecn@mb{\v@lX}\v@lX%
    \v@lYa=\repdecn@mb{\v@lY}\v@lY\v@lZa=\repdecn@mb{\v@lZ}\v@lZ%
    \advance\v@lXa\v@lYa\advance\v@lXa\v@lZa\sqrt@{\global\result@t}{\v@lXa}}#1=\result@t}
\ctr@ln@m\n@rmeuc
\ctr@ld@f\def\n@rmeucDD#1#2{{\Figg@tXY{#2}\getredf@ctDD\f@ctech(\v@lX,\v@lY)%
    \divide\v@lX\f@ctech\divide\v@lY\f@ctech%
    \v@lXa=\repdecn@mb{\v@lX}\v@lX\v@lYa=\repdecn@mb{\v@lY}\v@lY%
    \advance\v@lXa\v@lYa\sqrt@{\global\result@t}{\v@lXa}%
    \global\multiply\result@t\f@ctech}#1=\result@t}
\ctr@ld@f\def\n@rmeucTD#1#2{{\Figg@tXY{#2}\getredf@ctTD\f@ctech(\v@lX,\v@lY,\v@lZ)%
    \divide\v@lX\f@ctech\divide\v@lY\f@ctech\divide\v@lZ\f@ctech%
    \v@lXa=\repdecn@mb{\v@lX}\v@lX%
    \v@lYa=\repdecn@mb{\v@lY}\v@lY\v@lZa=\repdecn@mb{\v@lZ}\v@lZ%
    \advance\v@lXa\v@lYa\advance\v@lXa\v@lZa\sqrt@{\global\result@t}{\v@lXa}%
    \global\multiply\result@t\f@ctech}#1=\result@t}
\ctr@ln@m\vecunit@
\ctr@ld@f\def\vecunit@DD#1#2{{\Figg@tXY{#2}\getredf@ctDD\f@ctech(\v@lX,\v@lY)%
    \divide\v@lX\f@ctech\divide\v@lY\f@ctech%
    \Figv@ctCreg#1(\v@lX,\v@lY)\n@rmeucSV{\v@lYa}{#1}%
    \invers@{\v@lXa}{\v@lYa}\edef\v@lv@lXa{\repdecn@mb{\v@lXa}}%
    \v@lX=\v@lv@lXa\v@lX\v@lY=\v@lv@lXa\v@lY%
    \Figv@ctCreg#1(\v@lX,\v@lY)\multiply\v@lYa\f@ctech\global\result@t=\v@lYa}}
\ctr@ld@f\def\vecunit@TD#1#2{{\Figg@tXY{#2}\getredf@ctTD\f@ctech(\v@lX,\v@lY,\v@lZ)%
    \divide\v@lX\f@ctech\divide\v@lY\f@ctech\divide\v@lZ\f@ctech%
    \Figv@ctCreg#1(\v@lX,\v@lY,\v@lZ)\n@rmeucSV{\v@lYa}{#1}%
    \invers@{\v@lXa}{\v@lYa}\edef\v@lv@lXa{\repdecn@mb{\v@lXa}}%
    \v@lX=\v@lv@lXa\v@lX\v@lY=\v@lv@lXa\v@lY\v@lZ=\v@lv@lXa\v@lZ%
    \Figv@ctCreg#1(\v@lX,\v@lY,\v@lZ)\multiply\v@lYa\f@ctech\global\result@t=\v@lYa}}
\ctr@ld@f\def\vecunitC@TD[#1,#2]{\Figg@tXYa{#1}\Figg@tXY{#2}%
    \advance\v@lX-\v@lXa\advance\v@lY-\v@lYa\advance\v@lZ-\v@lZa\c@lvecunitTD}
\ctr@ld@f\def\vecunitCV@TD#1{\Figg@tXY{#1}\c@lvecunitTD}
\ctr@ld@f\def\c@lvecunitTD{\getredf@ctTD\f@ctech(\v@lX,\v@lY,\v@lZ)%
    \divide\v@lX\f@ctech\divide\v@lY\f@ctech\divide\v@lZ\f@ctech%
    \v@lXa=\repdecn@mb{\v@lX}\v@lX%
    \v@lYa=\repdecn@mb{\v@lY}\v@lY\v@lZa=\repdecn@mb{\v@lZ}\v@lZ%
    \advance\v@lXa\v@lYa\advance\v@lXa\v@lZa\sqrt@{\v@lYa}{\v@lXa}%
    \invers@{\v@lXa}{\v@lYa}\edef\v@lv@lXa{\repdecn@mb{\v@lXa}}%
    \v@lX=\v@lv@lXa\v@lX\v@lY=\v@lv@lXa\v@lY\v@lZ=\v@lv@lXa\v@lZ}
\ctr@ln@m\figgetangle
\ctr@ld@f\def\figgetangleDD#1[#2,#3,#4]{\ifps@cri{\s@uvc@ntr@l\et@tfiggetangleDD\setc@ntr@l{2}%
    \figvectPDD-1[#2,#3]\figvectPDD-2[#2,#4]\vecunit@{-1}{-1}%
    \c@lproscalDD\delt@[-2,-1]\figvectNVDD-1[-1]\c@lproscalDD\v@leur[-2,-1]%
    \arct@n\v@lmax(\delt@,\v@leur)\v@lmax=\rdT@deg\v@lmax%
    \ifdim\v@lmax<\z@\advance\v@lmax\DePI@deg\fi\xdef#1{\repdecn@mb{\v@lmax}}%
    \resetc@ntr@l\et@tfiggetangleDD}\ignorespaces\fi}
\ctr@ld@f\def\figgetangleTD#1[#2,#3,#4,#5]{\ifps@cri{\s@uvc@ntr@l\et@tfiggetangleTD\setc@ntr@l{2}%
    \figvectPTD-1[#2,#3]\figvectPTD-2[#2,#5]\figvectNVTD-3[-1,-2]%
    \figvectPTD-2[#2,#4]\figvectNVTD-4[-3,-1]%
    \vecunit@{-1}{-1}\c@lproscalTD\delt@[-2,-1]\c@lproscalTD\v@leur[-2,-4]%
    \arct@n\v@lmax(\delt@,\v@leur)\v@lmax=\rdT@deg\v@lmax%
    \ifdim\v@lmax<\z@\advance\v@lmax\DePI@deg\fi\xdef#1{\repdecn@mb{\v@lmax}}%
    \resetc@ntr@l\et@tfiggetangleTD}\ignorespaces\fi}    
\ctr@ld@f\def\figgetdist#1[#2,#3]{\ifps@cri{\s@uvc@ntr@l\et@tfiggetdist\setc@ntr@l{2}%
    \figvectP-1[#2,#3]\n@rmeuc{\v@lX}{-1}\v@lX=\ptT@unit@\v@lX\xdef#1{\repdecn@mb{\v@lX}}%
    \resetc@ntr@l\et@tfiggetdist}\ignorespaces\fi}
\ctr@ld@f\def\Figg@tT#1{\c@ntr@lnum{#1}%
    {\expandafter\expandafter\expandafter\extr@ctT\csname\objc@de\endcsname:%
     \ifnum\B@@ltxt=\z@\ptn@me{#1}\else\csname\objc@de T\endcsname\fi}}
\ctr@ld@f\def\extr@ctT#1,#2,#3/#4:{\def\B@@ltxt{#3}}
\ctr@ld@f\def\Figg@tXY#1{\c@ntr@lnum{#1}%
    \expandafter\expandafter\expandafter\extr@ctC\csname\objc@de\endcsname:}
\ctr@ln@m\extr@ctC
\ctr@ld@f\def\extr@ctCDD#1/#2,#3,#4:{\v@lX=#2\v@lY=#3}
\ctr@ld@f\def\extr@ctCTD#1/#2,#3,#4:{\v@lX=#2\v@lY=#3\v@lZ=#4}
\ctr@ld@f\def\Figg@tXYa#1{\c@ntr@lnum{#1}%
    \expandafter\expandafter\expandafter\extr@ctCa\csname\objc@de\endcsname:}
\ctr@ln@m\extr@ctCa
\ctr@ld@f\def\extr@ctCaDD#1/#2,#3,#4:{\v@lXa=#2\v@lYa=#3}
\ctr@ld@f\def\extr@ctCaTD#1/#2,#3,#4:{\v@lXa=#2\v@lYa=#3\v@lZa=#4}
\ctr@ln@m\t@xt@
\ctr@ld@f\def\figinit#1{\t@stc@tcodech@nge\initpr@lim\Figinit@#1,:\initpss@ttings\ignorespaces}
\ctr@ld@f\def\Figinit@#1,#2:{\setunit@{#1}\def\t@xt@{#2}\ifx\t@xt@\empty\else\Figinit@@#2:\fi}
\ctr@ld@f\def\Figinit@@#1#2:{\if#12 \else\Figs@tproj{#1}\initTD@\fi}
\ctr@ln@w{newif}\ifTr@isDim
\ctr@ld@f\def\UnD@fined{UNDEFINED}
\ctr@ld@f\def\ifundefined#1{\expandafter\ifx\csname#1\endcsname\relax}
\ctr@ln@m\@utoFN
\ctr@ln@m\@utoFInDone
\ctr@ln@m\disob@unit
\ctr@ld@f\def\initpr@lim{\initb@undb@x\figsetmark{}\figsetptname{$A_{##1}$}\def\Sc@leFact{1}%
    \initDD@\figsetroundcoord{yes}\ps@critrue\expandafter\setupd@te\defaultupdate:%
    \edef\disob@unit{\UnD@fined}\edef\t@rgetpt{\UnD@fined}\gdef\@utoFInDone{1}\gdef\@utoFN{0}}
\ctr@ld@f\def\initDD@{\Tr@isDimfalse%
    \ifPDFm@ke%
     \let\Ps@rcerc=\Ps@rcercBz%
     \let\Ps@rell=\Ps@rellBz%
    \fi
    \let\c@lDCUn=\c@lDCUnDD%
    \let\c@lDCDeux=\c@lDCDeuxDD%
    \let\c@ldefproj=\relax%
    \let\c@lproscal=\c@lproscalDD%
    \let\c@lprojSP=\relax%
    \let\extr@ctC=\extr@ctCDD%
    \let\extr@ctCa=\extr@ctCaDD%
    \let\extr@ctCF=\extr@ctCFDD%
    \let\Figp@intreg=\Figp@intregDD%
    \let\Figpts@xes=\Figpts@xesDD%
    \let\n@rmeucSV=\n@rmeucSVDD\let\n@rmeuc=\n@rmeucDD\let\n@rminf=\n@rminfDD%
    \let\pr@dMatV=\pr@dMatVDD%
    \let\ps@xes=\ps@xesDD%
    \let\vecunit@=\vecunit@DD%
    \let\figcoord=\figcoordDD%
    \let\figgetangle=\figgetangleDD%
    \let\figpt=\figptDD%
    \let\figptBezier=\figptBezierDD%
    \let\figptbary=\figptbaryDD%
    \let\figptcirc=\figptcircDD%
    \let\figptcircumcenter=\figptcircumcenterDD%
    \let\figptcopy=\figptcopyDD%
    \let\figptcurvcenter=\figptcurvcenterDD%
    \let\figptell=\figptellDD%
    \let\figptendnormal=\figptendnormalDD%
    \let\figptinterlineplane=\figptinterlineplaneDD%
    \let\figptinterlines=\inters@cDD%
    \let\figptorthocenter=\figptorthocenterDD%
    \let\figptorthoprojline=\figptorthoprojlineDD%
    \let\figptorthoprojplane=\figptorthoprojplaneDD%
    \let\figptrot=\figptrotDD%
    \let\figptscontrol=\figptscontrolDD%
    \let\figptsintercirc=\figptsintercircDD%
    \let\figptsinterlinell=\figptsinterlinellDD%
    \let\figptsorthoprojline=\figptsorthoprojlineDD%
    \let\figptorthoprojplane=\figptorthoprojplaneDD%
    \let\figptsrot=\figptsrotDD%
    \let\figptssym=\figptssymDD%
    \let\figptstra=\figptstraDD%
    \let\figptsym=\figptsymDD%
    \let\figpttraC=\figpttraCDD%
    \let\figpttra=\figpttraDD%
    \let\figptvisilimSL=\figptvisilimSLDD%
    \let\figsetobdist=\figsetobdistDD%
    \let\figsettarget=\figsettargetDD%
    \let\figsetview=\figsetviewDD%
    \let\figvectDBezier=\figvectDBezierDD%
    \let\figvectN=\figvectNDD%
    \let\figvectNV=\figvectNVDD%
    \let\figvectP=\figvectPDD%
    \let\figvectU=\figvectUDD%
    \let\psarccircP=\psarccircPDD%
    \let\psarccirc=\psarccircDD%
    \let\psarcell=\psarcellDD%
    \let\psarcellPA=\psarcellPADD%
    \let\psarrowBezier=\psarrowBezierDD%
    \let\psarrowcircP=\psarrowcircPDD%
    \let\psarrowcirc=\psarrowcircDD%
    \let\psarrowhead=\psarrowheadDD%
    \let\psarrow=\psarrowDD%
    \let\psBezier=\psBezierDD%
    \let\pscirc=\pscircDD%
    \let\pscurve=\pscurveDD%
    \let\psnormal=\psnormalDD%
    }
\ctr@ld@f\def\initTD@{\Tr@isDimtrue\initb@undb@xTD\newt@rgetptfalse\newdis@bfalse%
    \let\c@lDCUn=\c@lDCUnTD%
    \let\c@lDCDeux=\c@lDCDeuxTD%
    \let\c@ldefproj=\c@ldefprojTD%
    \let\c@lproscal=\c@lproscalTD%
    \let\extr@ctC=\extr@ctCTD%
    \let\extr@ctCa=\extr@ctCaTD%
    \let\extr@ctCF=\extr@ctCFTD%
    \let\Figp@intreg=\Figp@intregTD%
    \let\Figpts@xes=\Figpts@xesTD%
    \let\n@rmeucSV=\n@rmeucSVTD\let\n@rmeuc=\n@rmeucTD\let\n@rminf=\n@rminfTD%
    \let\pr@dMatV=\pr@dMatVTD%
    \let\ps@xes=\ps@xesTD%
    \let\vecunit@=\vecunit@TD%
    \let\figcoord=\figcoordTD%
    \let\figgetangle=\figgetangleTD%
    \let\figpt=\figptTD%
    \let\figptBezier=\figptBezierTD%
    \let\figptbary=\figptbaryTD%
    \let\figptcirc=\figptcircTD%
    \let\figptcircumcenter=\figptcircumcenterTD%
    \let\figptcopy=\figptcopyTD%
    \let\figptcurvcenter=\figptcurvcenterTD%
    \let\figptinterlineplane=\figptinterlineplaneTD%
    \let\figptinterlines=\inters@cTD%
    \let\figptorthocenter=\figptorthocenterTD%
    \let\figptorthoprojline=\figptorthoprojlineTD%
    \let\figptorthoprojplane=\figptorthoprojplaneTD%
    \let\figptrot=\figptrotTD%
    \let\figptscontrol=\figptscontrolTD%
    \let\figptsintercirc=\figptsintercircTD%
    \let\figptsorthoprojline=\figptsorthoprojlineTD%
    \let\figptsorthoprojplane=\figptsorthoprojplaneTD%
    \let\figptsrot=\figptsrotTD%
    \let\figptssym=\figptssymTD%
    \let\figptstra=\figptstraTD%
    \let\figptsym=\figptsymTD%
    \let\figpttraC=\figpttraCTD%
    \let\figpttra=\figpttraTD%
    \let\figptvisilimSL=\figptvisilimSLTD%
    \let\figsetobdist=\figsetobdistTD%
    \let\figsettarget=\figsettargetTD%
    \let\figsetview=\figsetviewTD%
    \let\figvectDBezier=\figvectDBezierTD%
    \let\figvectN=\figvectNTD%
    \let\figvectNV=\figvectNVTD%
    \let\figvectP=\figvectPTD%
    \let\figvectU=\figvectUTD%
    \let\psarccircP=\psarccircPTD%
    \let\psarccirc=\psarccircTD%
    \let\psarcell=\psarcellTD%
    \let\psarcellPA=\psarcellPATD%
    \let\psarrowBezier=\psarrowBezierTD%
    \let\psarrowcircP=\psarrowcircPTD%
    \let\psarrowcirc=\psarrowcircTD%
    \let\psarrowhead=\psarrowheadTD%
    \let\psarrow=\psarrowTD%
    \let\psBezier=\psBezierTD%
    \let\pscirc=\pscircTD%
    \let\pscurve=\pscurveTD%
    }
\ctr@ld@f\def\un@v@ilable#1{\immediate\write16{*** The macro #1 is not available in the current context.}}
\ctr@ld@f\def\figinsert#1{{\def\t@xt@{#1}\relax%
    \ifx\t@xt@\empty\ifnum\@utoFInDone>\z@\Figinsert@\DefGIfilen@me,:\fi%
    \else\expandafter\FiginsertNu@#1 :\fi}\ignorespaces}
\ctr@ld@f\def\FiginsertNu@#1 #2:{\def\t@xt@{#1}\relax\ifx\t@xt@\empty\def\t@xt@{#2}%
    \ifx\t@xt@\empty\ifnum\@utoFInDone>\z@\Figinsert@\DefGIfilen@me,:\fi%
    \else\FiginsertNu@#2:\fi\else\expandafter\FiginsertNd@#1 #2:\fi}
\ctr@ld@f\def\FiginsertNd@#1#2:{\ifcat#1a\Figinsert@#1#2,:\else%
    \ifnum\@utoFInDone>\z@\Figinsert@\DefGIfilen@me,#1#2,:\fi\fi}
\ctr@ln@m\Sc@leFact
\ctr@ld@f\def\Figinsert@#1,#2:{\def\t@xt@{#2}\ifx\t@xt@\empty\xdef\Sc@leFact{1}\else%
    \X@rgdeux@#2\xdef\Sc@leFact{\@rgdeux}\fi%
    \Figdisc@rdLTS{#1}{\t@xt@}\@psfgetbb{\t@xt@}%
    \v@lX=\@psfllx\p@\v@lX=\ptpsT@pt\v@lX\v@lX=\Sc@leFact\v@lX%
    \v@lY=\@psflly\p@\v@lY=\ptpsT@pt\v@lY\v@lY=\Sc@leFact\v@lY%
    \b@undb@x{\v@lX}{\v@lY}%
    \v@lX=\@psfurx\p@\v@lX=\ptpsT@pt\v@lX\v@lX=\Sc@leFact\v@lX%
    \v@lY=\@psfury\p@\v@lY=\ptpsT@pt\v@lY\v@lY=\Sc@leFact\v@lY%
    \b@undb@x{\v@lX}{\v@lY}%
    \ifPDFm@ke\Figinclud@PDF{\t@xt@}{\Sc@leFact}\else%
    \v@lX=\c@nt pt\v@lX=\Sc@leFact\v@lX\edef\F@ct{\repdecn@mb{\v@lX}}%
    \ifx\TeXturesonMacOSltX\special{postscriptfile #1 vscale=\F@ct\space hscale=\F@ct}%
    \else\includegraphics{#1}\fi\fi%
    \message{[\t@xt@]}\ignorespaces}
\ctr@ld@f\def\Figdisc@rdLTS#1#2{\expandafter\Figdisc@rdLTS@#1 :#2}
\ctr@ld@f\def\Figdisc@rdLTS@#1 #2:#3{\def#3{#1}\relax\ifx#3\empty\expandafter\Figdisc@rdLTS@#2:#3\fi}
\ctr@ld@f\def\figinsertE#1{\FiginsertE@#1,:\ignorespaces}
\ctr@ld@f\def\FiginsertE@#1,#2:{{\def\t@xt@{#2}\ifx\t@xt@\empty\xdef\Sc@leFact{1}\else%
    \X@rgdeux@#2\xdef\Sc@leFact{\@rgdeux}\fi%
    \Figdisc@rdLTS{#1}{\t@xt@}\pdfximage{\t@xt@}%
    \setbox\Gb@x=\hbox{\pdfrefximage\pdflastximage}%
    \v@lX=\z@\v@lY=-\Sc@leFact\dp\Gb@x\b@undb@x{\v@lX}{\v@lY}%
    \advance\v@lX\Sc@leFact\wd\Gb@x\advance\v@lY\Sc@leFact\dp\Gb@x%
    \advance\v@lY\Sc@leFact\ht\Gb@x\b@undb@x{\v@lX}{\v@lY}%
    \v@lX=\Sc@leFact\wd\Gb@x\pdfximage width \v@lX {\t@xt@}%
    \rlap{\pdfrefximage\pdflastximage}\message{[\t@xt@]}}\ignorespaces}
\ctr@ld@f\def\X@rgdeux@#1,{\edef\@rgdeux{#1}}
\ctr@ln@m\figpt
\ctr@ld@f\def\figptDD#1:#2(#3,#4){\ifps@cri\c@ntr@lnum{#1}%
    {\v@lX=#3\unit@\v@lY=#4\unit@\Fig@dmpt{#2}{\z@}}\ignorespaces\fi}
\ctr@ld@f\def\Fig@dmpt#1#2{\def\t@xt@{#1}\ifx\t@xt@\empty\def\B@@ltxt{\z@}%
    \else\expandafter\gdef\csname\objc@de T\endcsname{#1}\def\B@@ltxt{\@ne}\fi%
    \expandafter\xdef\csname\objc@de\endcsname{\ifitis@vect@r\C@dCl@svect%
    \else\C@dCl@spt\fi,\z@,\B@@ltxt/\the\v@lX,\the\v@lY,#2}}
\ctr@ld@f\def\C@dCl@spt{P}
\ctr@ld@f\def\C@dCl@svect{V}
\ctr@ln@m\c@@rdYZ
\ctr@ln@m\c@@rdY
\ctr@ld@f\def\figptTD#1:#2(#3,#4){\ifps@cri\c@ntr@lnum{#1}%
    \def\c@@rdYZ{#4,0,0}\extrairelepremi@r\c@@rdY\de\c@@rdYZ%
    \extrairelepremi@r\c@@rdZ\de\c@@rdYZ%
    {\v@lX=#3\unit@\v@lY=\c@@rdY\unit@\v@lZ=\c@@rdZ\unit@\Fig@dmpt{#2}{\the\v@lZ}%
    \b@undb@xTD{\v@lX}{\v@lY}{\v@lZ}}\ignorespaces\fi}
\ctr@ln@m\Figp@intreg
\ctr@ld@f\def\Figp@intregDD#1:#2(#3,#4){\c@ntr@lnum{#1}%
    {\result@t=#4\v@lX=#3\v@lY=\result@t\Fig@dmpt{#2}{\z@}}\ignorespaces}
\ctr@ld@f\def\Figp@intregTD#1:#2(#3,#4){\c@ntr@lnum{#1}%
    \def\c@@rdYZ{#4,\z@,\z@}\extrairelepremi@r\c@@rdY\de\c@@rdYZ%
    \extrairelepremi@r\c@@rdZ\de\c@@rdYZ%
    {\v@lX=#3\v@lY=\c@@rdY\v@lZ=\c@@rdZ\Fig@dmpt{#2}{\the\v@lZ}%
    \b@undb@xTD{\v@lX}{\v@lY}{\v@lZ}}\ignorespaces}
\ctr@ln@m\figptBezier
\ctr@ld@f\def\figptBezierDD#1:#2:#3[#4,#5,#6,#7]{\ifps@cri{\s@uvc@ntr@l\et@tfigptBezierDD%
    \FigptBezier@#3[#4,#5,#6,#7]\Figp@intregDD#1:{#2}(\v@lX,\v@lY)%
    \resetc@ntr@l\et@tfigptBezierDD}\ignorespaces\fi}
\ctr@ld@f\def\figptBezierTD#1:#2:#3[#4,#5,#6,#7]{\ifps@cri{\s@uvc@ntr@l\et@tfigptBezierTD%
    \FigptBezier@#3[#4,#5,#6,#7]\Figp@intregTD#1:{#2}(\v@lX,\v@lY,\v@lZ)%
    \resetc@ntr@l\et@tfigptBezierTD}\ignorespaces\fi}
\ctr@ld@f\def\FigptBezier@#1[#2,#3,#4,#5]{\setc@ntr@l{2}%
    \edef\T@{#1}\v@leur=\p@\advance\v@leur-#1pt\edef\UNmT@{\repdecn@mb{\v@leur}}%
    \figptcopy-4:/#2/\figptcopy-3:/#3/\figptcopy-2:/#4/\figptcopy-1:/#5/%
    \l@mbd@un=-4 \l@mbd@de=-\thr@@\p@rtent=\m@ne\c@lDecast%
    \l@mbd@un=-4 \l@mbd@de=-\thr@@\p@rtent=-\tw@\c@lDecast%
    \l@mbd@un=-4 \l@mbd@de=-\thr@@\p@rtent=-\thr@@\c@lDecast\Figg@tXY{-4}}
\ctr@ln@m\c@lDCUn
\ctr@ld@f\def\c@lDCUnDD#1#2{\Figg@tXY{#1}\v@lX=\UNmT@\v@lX\v@lY=\UNmT@\v@lY%
    \Figg@tXYa{#2}\advance\v@lX\T@\v@lXa\advance\v@lY\T@\v@lYa%
    \Figp@intregDD#1:(\v@lX,\v@lY)}
\ctr@ld@f\def\c@lDCUnTD#1#2{\Figg@tXY{#1}\v@lX=\UNmT@\v@lX\v@lY=\UNmT@\v@lY\v@lZ=\UNmT@\v@lZ%
    \Figg@tXYa{#2}\advance\v@lX\T@\v@lXa\advance\v@lY\T@\v@lYa\advance\v@lZ\T@\v@lZa%
    \Figp@intregTD#1:(\v@lX,\v@lY,\v@lZ)}
\ctr@ld@f\def\c@lDecast{\relax\ifnum\l@mbd@un<\p@rtent\c@lDCUn{\l@mbd@un}{\l@mbd@de}%
    \advance\l@mbd@un\@ne\advance\l@mbd@de\@ne\c@lDecast\fi}
\ctr@ld@f\def\figptmap#1:#2=#3/#4/#5/{\ifps@cri{\s@uvc@ntr@l\et@tfigptmap%
    \setc@ntr@l{2}\figvectP-1[#4,#3]\Figg@tXY{-1}%
    \pr@dMatV/#5/\figpttra#1:{#2}=#4/1,-1/%
    \resetc@ntr@l\et@tfigptmap}\ignorespaces\fi}
\ctr@ln@m\pr@dMatV
\ctr@ld@f\def\pr@dMatVDD/#1,#2;#3,#4/{\v@lXa=#1\v@lX\advance\v@lXa#2\v@lY%
    \v@lYa=#3\v@lX\advance\v@lYa#4\v@lY\Figv@ctCreg-1(\v@lXa,\v@lYa)}
\ctr@ld@f\def\pr@dMatVTD/#1,#2,#3;#4,#5,#6;#7,#8,#9/{%
    \v@lXa=#1\v@lX\advance\v@lXa#2\v@lY\advance\v@lXa#3\v@lZ%
    \v@lYa=#4\v@lX\advance\v@lYa#5\v@lY\advance\v@lYa#6\v@lZ%
    \v@lZa=#7\v@lX\advance\v@lZa#8\v@lY\advance\v@lZa#9\v@lZ%
    \Figv@ctCreg-1(\v@lXa,\v@lYa,\v@lZa)}
\ctr@ln@m\figptbary
\ctr@ld@f\def\figptbaryDD#1:#2[#3;#4]{\ifps@cri{\edef\list@num{#3}\extrairelepremi@r\p@int\de\list@num%
    \s@mme=\z@\@ecfor\c@ef:=#4\do{\advance\s@mme\c@ef}%
    \edef\listec@ef{#4,0}\extrairelepremi@r\c@ef\de\listec@ef%
    \Figg@tXY{\p@int}\divide\v@lX\s@mme\divide\v@lY\s@mme%
    \multiply\v@lX\c@ef\multiply\v@lY\c@ef%
    \@ecfor\p@int:=\list@num\do{\extrairelepremi@r\c@ef\de\listec@ef%
           \Figg@tXYa{\p@int}\divide\v@lXa\s@mme\divide\v@lYa\s@mme%
           \multiply\v@lXa\c@ef\multiply\v@lYa\c@ef%
           \advance\v@lX\v@lXa\advance\v@lY\v@lYa}%
    \Figp@intregDD#1:{#2}(\v@lX,\v@lY)}\ignorespaces\fi}
\ctr@ld@f\def\figptbaryTD#1:#2[#3;#4]{\ifps@cri{\edef\list@num{#3}\extrairelepremi@r\p@int\de\list@num%
    \s@mme=\z@\@ecfor\c@ef:=#4\do{\advance\s@mme\c@ef}%
    \edef\listec@ef{#4,0}\extrairelepremi@r\c@ef\de\listec@ef%
    \Figg@tXY{\p@int}\divide\v@lX\s@mme\divide\v@lY\s@mme\divide\v@lZ\s@mme%
    \multiply\v@lX\c@ef\multiply\v@lY\c@ef\multiply\v@lZ\c@ef%
    \@ecfor\p@int:=\list@num\do{\extrairelepremi@r\c@ef\de\listec@ef%
           \Figg@tXYa{\p@int}\divide\v@lXa\s@mme\divide\v@lYa\s@mme\divide\v@lZa\s@mme%
           \multiply\v@lXa\c@ef\multiply\v@lYa\c@ef\multiply\v@lZa\c@ef%
           \advance\v@lX\v@lXa\advance\v@lY\v@lYa\advance\v@lZ\v@lZa}%
    \Figp@intregTD#1:{#2}(\v@lX,\v@lY,\v@lZ)}\ignorespaces\fi}
\ctr@ld@f\def\figptbaryR#1:#2[#3;#4]{\ifps@cri{%
    \v@leur=\z@\@ecfor\c@ef:=#4\do{\maxim@m{\v@lmax}{\c@ef pt}{-\c@ef pt}%
    \ifdim\v@lmax>\v@leur\v@leur=\v@lmax\fi}%
    \ifdim\v@leur<\p@\f@ctech=\@M\else\ifdim\v@leur<\t@n\p@\f@ctech=\@m\else%
    \ifdim\v@leur<\c@nt\p@\f@ctech=\c@nt\else\ifdim\v@leur<\@m\p@\f@ctech=\t@n\else%
    \f@ctech=\@ne\fi\fi\fi\fi%
    \def\listec@ef{0}%
    \@ecfor\c@ef:=#4\do{\sc@lec@nvRI{\c@ef pt}\edef\listec@ef{\listec@ef,\the\s@mme}}%
    \extrairelepremi@r\c@ef\de\listec@ef\figptbary#1:#2[#3;\listec@ef]}\ignorespaces\fi}
\ctr@ld@f\def\sc@lec@nvRI#1{\v@leur=#1\p@rtentiere{\s@mme}{\v@leur}\advance\v@leur-\s@mme\p@%
    \multiply\v@leur\f@ctech\p@rtentiere{\p@rtent}{\v@leur}%
    \multiply\s@mme\f@ctech\advance\s@mme\p@rtent}
\ctr@ln@m\figptcirc
\ctr@ld@f\def\figptcircDD#1:#2:#3;#4(#5){\ifps@cri{\s@uvc@ntr@l\et@tfigptcircDD%
    \c@lptellDD#1:{#2}:#3;#4,#4(#5)\resetc@ntr@l\et@tfigptcircDD}\ignorespaces\fi}
\ctr@ld@f\def\figptcircTD#1:#2:#3,#4,#5;#6(#7){\ifps@cri{\s@uvc@ntr@l\et@tfigptcircTD%
    \setc@ntr@l{2}\c@lExtAxes#3,#4,#5(#6)\figptellP#1:{#2}:#3,-4,-5(#7)%
    \resetc@ntr@l\et@tfigptcircTD}\ignorespaces\fi}
\ctr@ln@m\figptcircumcenter
\ctr@ld@f\def\figptcircumcenterDD#1:#2[#3,#4,#5]{\ifps@cri{\s@uvc@ntr@l\et@tfigptcircumcenterDD%
    \setc@ntr@l{2}\figvectNDD-5[#3,#4]\figptbaryDD-3:[#3,#4;1,1]%
                  \figvectNDD-6[#4,#5]\figptbaryDD-4:[#4,#5;1,1]%
    \resetc@ntr@l{2}\inters@cDD#1:{#2}[-3,-5;-4,-6]%
    \resetc@ntr@l\et@tfigptcircumcenterDD}\ignorespaces\fi}
\ctr@ld@f\def\figptcircumcenterTD#1:#2[#3,#4,#5]{\ifps@cri{\s@uvc@ntr@l\et@tfigptcircumcenterTD%
    \setc@ntr@l{2}\figvectNTD-1[#3,#4,#5]%
    \figvectPTD-3[#3,#4]\figvectNVTD-5[-1,-3]\figptbaryTD-3:[#3,#4;1,1]%
    \figvectPTD-4[#4,#5]\figvectNVTD-6[-1,-4]\figptbaryTD-4:[#4,#5;1,1]%
    \resetc@ntr@l{2}\inters@cTD#1:{#2}[-3,-5;-4,-6]%
    \resetc@ntr@l\et@tfigptcircumcenterTD}\ignorespaces\fi}
\ctr@ln@m\figptcopy
\ctr@ld@f\def\figptcopyDD#1:#2/#3/{\ifps@cri{\Figg@tXY{#3}%
    \Figp@intregDD#1:{#2}(\v@lX,\v@lY)}\ignorespaces\fi}
\ctr@ld@f\def\figptcopyTD#1:#2/#3/{\ifps@cri{\Figg@tXY{#3}%
    \Figp@intregTD#1:{#2}(\v@lX,\v@lY,\v@lZ)}\ignorespaces\fi}
\ctr@ln@m\figptcurvcenter
\ctr@ld@f\def\figptcurvcenterDD#1:#2:#3[#4,#5,#6,#7]{\ifps@cri{\s@uvc@ntr@l\et@tfigptcurvcenterDD%
    \setc@ntr@l{2}\c@lcurvradDD#3[#4,#5,#6,#7]\edef\Sprim@{\repdecn@mb{\result@t}}%
    \figptBezierDD-1::#3[#4,#5,#6,#7]\figpttraDD#1:{#2}=-1/\Sprim@,-5/%
    \resetc@ntr@l\et@tfigptcurvcenterDD}\ignorespaces\fi}
\ctr@ld@f\def\figptcurvcenterTD#1:#2:#3[#4,#5,#6,#7]{\ifps@cri{\s@uvc@ntr@l\et@tfigptcurvcenterTD%
    \setc@ntr@l{2}\figvectDBezierTD -5:1,#3[#4,#5,#6,#7]%
    \figvectDBezierTD -6:2,#3[#4,#5,#6,#7]\vecunit@TD{-5}{-5}%
    \edef\Sprim@{\repdecn@mb{\result@t}}\figvectNVTD-1[-6,-5]%
    \figvectNVTD-5[-5,-1]\c@lproscalTD\v@leur[-6,-5]%
    \invers@{\v@leur}{\v@leur}\v@leur=\Sprim@\v@leur\v@leur=\Sprim@\v@leur%
    \figptBezierTD-1::#3[#4,#5,#6,#7]\edef\Sprim@{\repdecn@mb{\v@leur}}%
    \figpttraTD#1:{#2}=-1/\Sprim@,-5/\resetc@ntr@l\et@tfigptcurvcenterTD}\ignorespaces\fi}
\ctr@ld@f\def\c@lcurvradDD#1[#2,#3,#4,#5]{{\figvectDBezierDD -5:1,#1[#2,#3,#4,#5]%
    \figvectDBezierDD -6:2,#1[#2,#3,#4,#5]\vecunit@DD{-5}{-5}%
    \edef\Sprim@{\repdecn@mb{\result@t}}\figvectNVDD-5[-5]\c@lproscalDD\v@leur[-6,-5]%
    \invers@{\v@leur}{\v@leur}\v@leur=\Sprim@\v@leur\v@leur=\Sprim@\v@leur%
    \global\result@t=\v@leur}}
\ctr@ln@m\figptell
\ctr@ld@f\def\figptellDD#1:#2:#3;#4,#5(#6,#7){\ifps@cri{\s@uvc@ntr@l\et@tfigptell%
    \c@lptellDD#1::#3;#4,#5(#6)\figptrotDD#1:{#2}=#1/#3,#7/%
    \resetc@ntr@l\et@tfigptell}\ignorespaces\fi}
\ctr@ld@f\def\c@lptellDD#1:#2:#3;#4,#5(#6){\c@ssin{\C@}{\S@}{#6}\v@lmin=\C@ pt\v@lmax=\S@ pt%
    \v@lmin=#4\v@lmin\v@lmax=#5\v@lmax%
    \edef\Xc@mp{\repdecn@mb{\v@lmin}}\edef\Yc@mp{\repdecn@mb{\v@lmax}}%
    \setc@ntr@l{2}\figvectC-1(\Xc@mp,\Yc@mp)\figpttraDD#1:{#2}=#3/1,-1/}
\ctr@ld@f\def\figptellP#1:#2:#3,#4,#5(#6){\ifps@cri{\s@uvc@ntr@l\et@tfigptellP%
    \setc@ntr@l{2}\figvectP-1[#3,#4]\figvectP-2[#3,#5]%
    \v@leur=#6pt\c@lptellP{#3}{-1}{-2}\figptcopy#1:{#2}/-3/%
    \resetc@ntr@l\et@tfigptellP}\ignorespaces\fi}
\ctr@ln@m\@ngle
\ctr@ld@f\def\c@lptellP#1#2#3{\edef\@ngle{\repdecn@mb\v@leur}\c@ssin{\C@}{\S@}{\@ngle}%
    \figpttra-3:=#1/\C@,#2/\figpttra-3:=-3/\S@,#3/}
\ctr@ln@m\figptendnormal
\ctr@ld@f\def\figptendnormalDD#1:#2:#3,#4[#5,#6]{\ifps@cri{\s@uvc@ntr@l\et@tfigptendnormal%
    \Figg@tXYa{#5}\Figg@tXY{#6}%
    \advance\v@lX-\v@lXa\advance\v@lY-\v@lYa%
    \setc@ntr@l{2}\Figv@ctCreg-1(\v@lX,\v@lY)\vecunit@{-1}{-1}\Figg@tXY{-1}%
    \delt@=#3\unit@\maxim@m{\delt@}{\delt@}{-\delt@}\edef\l@ngueur{\repdecn@mb{\delt@}}%
    \v@lX=\l@ngueur\v@lX\v@lY=\l@ngueur\v@lY%
    \delt@=\p@\advance\delt@-#4pt\edef\l@ngueur{\repdecn@mb{\delt@}}%
    \figptbaryR-1:[#5,#6;#4,\l@ngueur]\Figg@tXYa{-1}%
    \advance\v@lXa\v@lY\advance\v@lYa-\v@lX%
    \setc@ntr@l{1}\Figp@intregDD#1:{#2}(\v@lXa,\v@lYa)\resetc@ntr@l\et@tfigptendnormal}%
    \ignorespaces\fi}
\ctr@ld@f\def\figptexcenter#1:#2[#3,#4,#5]{\ifps@cri{\let@xte={-}%
    \Figptexinsc@nter#1:#2[#3,#4,#5]}\ignorespaces\fi}
\ctr@ld@f\def\figptincenter#1:#2[#3,#4,#5]{\ifps@cri{\let@xte={}%
    \Figptexinsc@nter#1:#2[#3,#4,#5]}\ignorespaces\fi}
\ctr@ld@f\let\figptinscribedcenter=\figptincenter
\ctr@ld@f\def\Figptexinsc@nter#1:#2[#3,#4,#5]{%
    \figgetdist\LA@[#4,#5]\figgetdist\LB@[#3,#5]\figgetdist\LC@[#3,#4]%
    \figptbaryR#1:{#2}[#3,#4,#5;\the\let@xte\LA@,\LB@,\LC@]}
\ctr@ln@m\figptinterlineplane
\ctr@ld@f\def\figptinterlineplaneDD{\un@v@ilable{figptinterlineplane}}
\ctr@ld@f\def\figptinterlineplaneTD#1:#2[#3,#4;#5,#6]{\ifps@cri{\s@uvc@ntr@l\et@tfigptinterlineplane%
    \setc@ntr@l{2}\figvectPTD-1[#3,#5]\vecunit@TD{-2}{#6}%
    \r@pPSTD\v@leur[-2,-1,#4]\edef\v@lcoef{\repdecn@mb{\v@leur}}%
    \figpttraTD#1:{#2}=#3/\v@lcoef,#4/\resetc@ntr@l\et@tfigptinterlineplane}\ignorespaces\fi}
\ctr@ln@m\figptorthocenter
\ctr@ld@f\def\figptorthocenterDD#1:#2[#3,#4,#5]{\ifps@cri{\s@uvc@ntr@l\et@tfigptorthocenterDD%
    \setc@ntr@l{2}\figvectNDD-3[#3,#4]\figvectNDD-4[#4,#5]%
    \resetc@ntr@l{2}\inters@cDD#1:{#2}[#5,-3;#3,-4]%
    \resetc@ntr@l\et@tfigptorthocenterDD}\ignorespaces\fi}
\ctr@ld@f\def\figptorthocenterTD#1:#2[#3,#4,#5]{\ifps@cri{\s@uvc@ntr@l\et@tfigptorthocenterTD%
    \setc@ntr@l{2}\figvectNTD-1[#3,#4,#5]%
    \figvectPTD-2[#3,#4]\figvectNVTD-3[-1,-2]%
    \figvectPTD-2[#4,#5]\figvectNVTD-4[-1,-2]%
    \resetc@ntr@l{2}\inters@cTD#1:{#2}[#5,-3;#3,-4]%
    \resetc@ntr@l\et@tfigptorthocenterTD}\ignorespaces\fi}
\ctr@ln@m\figptorthoprojline
\ctr@ld@f\def\figptorthoprojlineDD#1:#2=#3/#4,#5/{\ifps@cri{\s@uvc@ntr@l\et@tfigptorthoprojlineDD%
    \setc@ntr@l{2}\figvectPDD-3[#4,#5]\figvectNVDD-4[-3]\resetc@ntr@l{2}%
    \inters@cDD#1:{#2}[#3,-4;#4,-3]\resetc@ntr@l\et@tfigptorthoprojlineDD}\ignorespaces\fi}
\ctr@ld@f\def\figptorthoprojlineTD#1:#2=#3/#4,#5/{\ifps@cri{\s@uvc@ntr@l\et@tfigptorthoprojlineTD%
    \setc@ntr@l{2}\figvectPTD-1[#4,#3]\figvectPTD-2[#4,#5]\vecunit@TD{-2}{-2}%
    \c@lproscalTD\v@leur[-1,-2]\edef\v@lcoef{\repdecn@mb{\v@leur}}%
    \figpttraTD#1:{#2}=#4/\v@lcoef,-2/\resetc@ntr@l\et@tfigptorthoprojlineTD}\ignorespaces\fi}
\ctr@ln@m\figptorthoprojplane
\ctr@ld@f\def\figptorthoprojplaneDD{\un@v@ilable{figptorthoprojplane}}
\ctr@ld@f\def\figptorthoprojplaneTD#1:#2=#3/#4,#5/{\ifps@cri{\s@uvc@ntr@l\et@tfigptorthoprojplane%
    \setc@ntr@l{2}\figvectPTD-1[#3,#4]\vecunit@TD{-2}{#5}%
    \c@lproscalTD\v@leur[-1,-2]\edef\v@lcoef{\repdecn@mb{\v@leur}}%
    \figpttraTD#1:{#2}=#3/\v@lcoef,-2/\resetc@ntr@l\et@tfigptorthoprojplane}\ignorespaces\fi}
\ctr@ld@f\def\figpthom#1:#2=#3/#4,#5/{\ifps@cri{\s@uvc@ntr@l\et@tfigpthom%
    \setc@ntr@l{2}\figvectP-1[#4,#3]\figpttra#1:{#2}=#4/#5,-1/%
    \resetc@ntr@l\et@tfigpthom}\ignorespaces\fi}
\ctr@ln@m\figptrot
\ctr@ld@f\def\figptrotDD#1:#2=#3/#4,#5/{\ifps@cri{\s@uvc@ntr@l\et@tfigptrotDD%
    \c@ssin{\C@}{\S@}{#5}\setc@ntr@l{2}\figvectPDD-1[#4,#3]\Figg@tXY{-1}%
    \v@lXa=\C@\v@lX\advance\v@lXa-\S@\v@lY%
    \v@lYa=\S@\v@lX\advance\v@lYa\C@\v@lY%
    \Figv@ctCreg-1(\v@lXa,\v@lYa)\figpttraDD#1:{#2}=#4/1,-1/%
    \resetc@ntr@l\et@tfigptrotDD}\ignorespaces\fi}
\ctr@ld@f\def\figptrotTD#1:#2=#3/#4,#5,#6/{\ifps@cri{\s@uvc@ntr@l\et@tfigptrotTD%
    \c@ssin{\C@}{\S@}{#5}%
    \setc@ntr@l{2}\figptorthoprojplaneTD-3:=#4/#3,#6/\figvectPTD-2[-3,#3]%
    \n@rmeucTD\v@leur{-2}\ifdim\v@leur<\Cepsil@n\Figg@tXYa{#3}\else%
    \edef\v@lcoef{\repdecn@mb{\v@leur}}\figvectNVTD-1[#6,-2]%
    \Figg@tXYa{-1}\v@lXa=\v@lcoef\v@lXa\v@lYa=\v@lcoef\v@lYa\v@lZa=\v@lcoef\v@lZa%
    \v@lXa=\S@\v@lXa\v@lYa=\S@\v@lYa\v@lZa=\S@\v@lZa\Figg@tXY{-2}%
    \advance\v@lXa\C@\v@lX\advance\v@lYa\C@\v@lY\advance\v@lZa\C@\v@lZ%
    \Figg@tXY{-3}\advance\v@lXa\v@lX\advance\v@lYa\v@lY\advance\v@lZa\v@lZ\fi%
    \Figp@intregTD#1:{#2}(\v@lXa,\v@lYa,\v@lZa)\resetc@ntr@l\et@tfigptrotTD}\ignorespaces\fi}
\ctr@ln@m\figptsym
\ctr@ld@f\def\figptsymDD#1:#2=#3/#4,#5/{\ifps@cri{\s@uvc@ntr@l\et@tfigptsymDD%
    \resetc@ntr@l{2}\figptorthoprojlineDD-5:=#3/#4,#5/\figvectPDD-2[#3,-5]%
    \figpttraDD#1:{#2}=#3/2,-2/\resetc@ntr@l\et@tfigptsymDD}\ignorespaces\fi}
\ctr@ld@f\def\figptsymTD#1:#2=#3/#4,#5/{\ifps@cri{\s@uvc@ntr@l\et@tfigptsymTD%
    \resetc@ntr@l{2}\figptorthoprojplaneTD-3:=#3/#4,#5/\figvectPTD-2[#3,-3]%
    \figpttraTD#1:{#2}=#3/2,-2/\resetc@ntr@l\et@tfigptsymTD}\ignorespaces\fi}
\ctr@ln@m\figpttra
\ctr@ld@f\def\figpttraDD#1:#2=#3/#4,#5/{\ifps@cri{\Figg@tXYa{#5}\v@lXa=#4\v@lXa\v@lYa=#4\v@lYa%
    \Figg@tXY{#3}\advance\v@lX\v@lXa\advance\v@lY\v@lYa%
    \Figp@intregDD#1:{#2}(\v@lX,\v@lY)}\ignorespaces\fi}
\ctr@ld@f\def\figpttraTD#1:#2=#3/#4,#5/{\ifps@cri{\Figg@tXYa{#5}\v@lXa=#4\v@lXa\v@lYa=#4\v@lYa%
    \v@lZa=#4\v@lZa\Figg@tXY{#3}\advance\v@lX\v@lXa\advance\v@lY\v@lYa%
    \advance\v@lZ\v@lZa\Figp@intregTD#1:{#2}(\v@lX,\v@lY,\v@lZ)}\ignorespaces\fi}
\ctr@ln@m\figpttraC
\ctr@ld@f\def\figpttraCDD#1:#2=#3/#4,#5/{\ifps@cri{\v@lXa=#4\unit@\v@lYa=#5\unit@%
    \Figg@tXY{#3}\advance\v@lX\v@lXa\advance\v@lY\v@lYa%
    \Figp@intregDD#1:{#2}(\v@lX,\v@lY)}\ignorespaces\fi}
\ctr@ld@f\def\figpttraCTD#1:#2=#3/#4,#5,#6/{\ifps@cri{\v@lXa=#4\unit@\v@lYa=#5\unit@\v@lZa=#6\unit@%
    \Figg@tXY{#3}\advance\v@lX\v@lXa\advance\v@lY\v@lYa\advance\v@lZ\v@lZa%
    \Figp@intregTD#1:{#2}(\v@lX,\v@lY,\v@lZ)}\ignorespaces\fi}
\ctr@ld@f\def\figptsaxes#1:#2(#3){\ifps@cri{\an@lys@xes#3,:\ifx\t@xt@\empty%
    \ifTr@isDim\Figpts@xes#1:#2(0,#3,0,#3,0,#3)\else\Figpts@xes#1:#2(0,#3,0,#3)\fi%
    \else\Figpts@xes#1:#2(#3)\fi}\ignorespaces\fi}
\ctr@ln@m\Figpts@xes
\ctr@ld@f\def\Figpts@xesDD#1:#2(#3,#4,#5,#6){%
    \s@mme=#1\figpttraC\the\s@mme:$x$=#2/#4,0/%
    \advance\s@mme\@ne\figpttraC\the\s@mme:$y$=#2/0,#6/}
\ctr@ld@f\def\Figpts@xesTD#1:#2(#3,#4,#5,#6,#7,#8){%
    \s@mme=#1\figpttraC\the\s@mme:$x$=#2/#4,0,0/%
    \advance\s@mme\@ne\figpttraC\the\s@mme:$y$=#2/0,#6,0/%
    \advance\s@mme\@ne\figpttraC\the\s@mme:$z$=#2/0,0,#8/}
\ctr@ld@f\def\figptsmap#1=#2/#3/#4/{\ifps@cri{\s@uvc@ntr@l\et@tfigptsmap%
    \setc@ntr@l{2}\def\list@num{#2}\s@mme=#1%
    \@ecfor\p@int:=\list@num\do{\figvectP-1[#3,\p@int]\Figg@tXY{-1}%
    \pr@dMatV/#4/\figpttra\the\s@mme:=#3/1,-1/\advance\s@mme\@ne}%
    \resetc@ntr@l\et@tfigptsmap}\ignorespaces\fi}
\ctr@ln@m\figptscontrol
\ctr@ld@f\def\figptscontrolDD#1[#2,#3,#4,#5]{\ifps@cri{\s@uvc@ntr@l\et@tfigptscontrolDD\setc@ntr@l{2}%
    \v@lX=\z@\v@lY=\z@\Figtr@nptDD{-5}{#2}\Figtr@nptDD{2}{#5}%
    \divide\v@lX\@vi\divide\v@lY\@vi%
    \Figtr@nptDD{3}{#3}\Figtr@nptDD{-1.5}{#4}\Figp@intregDD-1:(\v@lX,\v@lY)%
    \v@lX=\z@\v@lY=\z@\Figtr@nptDD{2}{#2}\Figtr@nptDD{-5}{#5}%
    \divide\v@lX\@vi\divide\v@lY\@vi\Figtr@nptDD{-1.5}{#3}\Figtr@nptDD{3}{#4}%
    \s@mme=#1\advance\s@mme\@ne\Figp@intregDD\the\s@mme:(\v@lX,\v@lY)%
    \figptcopyDD#1:/-1/\resetc@ntr@l\et@tfigptscontrolDD}\ignorespaces\fi}
\ctr@ld@f\def\figptscontrolTD#1[#2,#3,#4,#5]{\ifps@cri{\s@uvc@ntr@l\et@tfigptscontrolTD\setc@ntr@l{2}%
    \v@lX=\z@\v@lY=\z@\v@lZ=\z@\Figtr@nptTD{-5}{#2}\Figtr@nptTD{2}{#5}%
    \divide\v@lX\@vi\divide\v@lY\@vi\divide\v@lZ\@vi%
    \Figtr@nptTD{3}{#3}\Figtr@nptTD{-1.5}{#4}\Figp@intregTD-1:(\v@lX,\v@lY,\v@lZ)%
    \v@lX=\z@\v@lY=\z@\v@lZ=\z@\Figtr@nptTD{2}{#2}\Figtr@nptTD{-5}{#5}%
    \divide\v@lX\@vi\divide\v@lY\@vi\divide\v@lZ\@vi\Figtr@nptTD{-1.5}{#3}\Figtr@nptTD{3}{#4}%
    \s@mme=#1\advance\s@mme\@ne\Figp@intregTD\the\s@mme:(\v@lX,\v@lY,\v@lZ)%
    \figptcopyTD#1:/-1/\resetc@ntr@l\et@tfigptscontrolTD}\ignorespaces\fi}
\ctr@ld@f\def\Figtr@nptDD#1#2{\Figg@tXYa{#2}\v@lXa=#1\v@lXa\v@lYa=#1\v@lYa%
    \advance\v@lX\v@lXa\advance\v@lY\v@lYa}
\ctr@ld@f\def\Figtr@nptTD#1#2{\Figg@tXYa{#2}\v@lXa=#1\v@lXa\v@lYa=#1\v@lYa\v@lZa=#1\v@lZa%
    \advance\v@lX\v@lXa\advance\v@lY\v@lYa\advance\v@lZ\v@lZa}
\ctr@ld@f\def\figptscontrolcurve#1,#2[#3]{\ifps@cri{\s@uvc@ntr@l\et@tfigptscontrolcurve%
    \def\list@num{#3}\extrairelepremi@r\Ak@\de\list@num%
    \extrairelepremi@r\Ai@\de\list@num\extrairelepremi@r\Aj@\de\list@num%
    \s@mme=#1\figptcopy\the\s@mme:/\Ai@/%
    \setc@ntr@l{2}\figvectP -1[\Ak@,\Aj@]%
    \@ecfor\Ak@:=\list@num\do{\advance\s@mme\@ne\figpttra\the\s@mme:=\Ai@/\curv@roundness,-1/%
       \figvectP -1[\Ai@,\Ak@]\advance\s@mme\@ne\figpttra\the\s@mme:=\Aj@/-\curv@roundness,-1/%
       \advance\s@mme\@ne\figptcopy\the\s@mme:/\Aj@/%
       \edef\Ai@{\Aj@}\edef\Aj@{\Ak@}}\advance\s@mme-#1\divide\s@mme\thr@@%
       \xdef#2{\the\s@mme}%
    \resetc@ntr@l\et@tfigptscontrolcurve}\ignorespaces\fi}
\ctr@ln@m\figptsintercirc
\ctr@ld@f\def\figptsintercircDD#1[#2,#3;#4,#5]{\ifps@cri{\s@uvc@ntr@l\et@tfigptsintercircDD%
    \setc@ntr@l{2}\let\c@lNVintc=\c@lNVintcDD\Figptsintercirc@#1[#2,#3;#4,#5]%
    \resetc@ntr@l\et@tfigptsintercircDD}\ignorespaces\fi}
\ctr@ld@f\def\figptsintercircTD#1[#2,#3;#4,#5;#6]{\ifps@cri{\s@uvc@ntr@l\et@tfigptsintercircTD%
    \setc@ntr@l{2}\let\c@lNVintc=\c@lNVintcTD\vecunitC@TD[#2,#6]%
    \Figv@ctCreg-3(\v@lX,\v@lY,\v@lZ)\Figptsintercirc@#1[#2,#3;#4,#5]%
    \resetc@ntr@l\et@tfigptsintercircTD}\ignorespaces\fi}
\ctr@ld@f\def\Figptsintercirc@#1[#2,#3;#4,#5]{\figvectP-1[#2,#4]%
    \vecunit@{-1}{-1}\delt@=\result@t\f@ctech=\result@tent%
    \s@mme=#1\advance\s@mme\@ne\figptcopy#1:/#2/\figptcopy\the\s@mme:/#4/%
    \ifdim\delt@=\z@\else%
    \v@lmin=#3\unit@\v@lmax=#5\unit@\v@leur=\v@lmin\advance\v@leur\v@lmax%
    \ifdim\v@leur>\delt@%
    \v@leur=\v@lmin\advance\v@leur-\v@lmax\maxim@m{\v@leur}{\v@leur}{-\v@leur}%
    \ifdim\v@leur<\delt@%
    \divide\v@lmin\f@ctech\divide\v@lmax\f@ctech\divide\delt@\f@ctech%
    \v@lmin=\repdecn@mb{\v@lmin}\v@lmin\v@lmax=\repdecn@mb{\v@lmax}\v@lmax%
    \invers@{\v@leur}{\delt@}\advance\v@lmax-\v@lmin%
    \v@lmax=-\repdecn@mb{\v@leur}\v@lmax\advance\delt@\v@lmax\delt@=.5\delt@%
    \v@lmax=\delt@\multiply\v@lmax\f@ctech%
    \edef\t@ille{\repdecn@mb{\v@lmax}}\figpttra-2:=#2/\t@ille,-1/%
    \delt@=\repdecn@mb{\delt@}\delt@\advance\v@lmin-\delt@%
    \sqrt@{\v@leur}{\v@lmin}\multiply\v@leur\f@ctech\edef\t@ille{\repdecn@mb{\v@leur}}%
    \c@lNVintc\figpttra#1:=-2/-\t@ille,-1/\figpttra\the\s@mme:=-2/\t@ille,-1/\fi\fi\fi}
\ctr@ld@f\def\c@lNVintcDD{\Figg@tXY{-1}\Figv@ctCreg-1(-\v@lY,\v@lX)} 
\ctr@ld@f\def\c@lNVintcTD{{\Figg@tXY{-3}\v@lmin=\v@lX\v@lmax=\v@lY\v@leur=\v@lZ%
    \Figg@tXY{-1}\c@lprovec{-3}\vecunit@{-3}{-3}
    \Figg@tXY{-1}\v@lmin=\v@lX\v@lmax=\v@lY%
    \v@leur=\v@lZ\Figg@tXY{-3}\c@lprovec{-1}}} 
\ctr@ln@m\figptsinterlinell
\ctr@ld@f\def\figptsinterlinellDD#1[#2,#3,#4,#5;#6,#7]{\ifps@cri{\s@uvc@ntr@l\et@tfigptsinterlinellDD%
    \figptcopy#1:/#6/\s@mme=#1\advance\s@mme\@ne\figptcopy\the\s@mme:/#7/%
    \v@lmin=#3\unit@\v@lmax=#4\unit@
    \setc@ntr@l{2}\figptbaryDD-4:[#6,#7;1,1]\figptsrotDD-3=-4,#7/#2,-#5/
    \Figg@tXY{-3}\Figg@tXYa{#2}\advance\v@lX-\v@lXa\advance\v@lY-\v@lYa
    \figvectP-1[-3,-2]\Figg@tXYa{-1}\figvectP-3[-4,#7]\Figptsint@rLE{#1}
    \resetc@ntr@l\et@tfigptsinterlinellDD}\ignorespaces\fi}
\ctr@ld@f\def\figptsinterlinellP#1[#2,#3,#4;#5,#6]{\ifps@cri{\s@uvc@ntr@l\et@tfigptsinterlinellP%
    \figptcopy#1:/#5/\s@mme=#1\advance\s@mme\@ne\figptcopy\the\s@mme:/#6/\setc@ntr@l{2}%
    \figvectP-1[#2,#3]\vecunit@{-1}{-1}\v@lmin=\result@t
    \figvectP-2[#2,#4]\vecunit@{-2}{-2}\v@lmax=\result@t
    \figptbary-4:[#5,#6;1,1]
    \figvectP-3[#2,-4]\c@lproscal\v@lX[-3,-1]\c@lproscal\v@lY[-3,-2]
    \figvectP-3[-4,#6]\c@lproscal\v@lXa[-3,-1]\c@lproscal\v@lYa[-3,-2]
    \Figptsint@rLE{#1}\resetc@ntr@l\et@tfigptsinterlinellP}\ignorespaces\fi}
\ctr@ld@f\def\Figptsint@rLE#1{%
    \getredf@ctDD\f@ctech(\v@lmin,\v@lmax)%
    \getredf@ctDD\p@rtent(\v@lX,\v@lY)\ifnum\p@rtent>\f@ctech\f@ctech=\p@rtent\fi%
    \getredf@ctDD\p@rtent(\v@lXa,\v@lYa)\ifnum\p@rtent>\f@ctech\f@ctech=\p@rtent\fi%
    \divide\v@lmin\f@ctech\divide\v@lmax\f@ctech\divide\v@lX\f@ctech\divide\v@lY\f@ctech%
    \divide\v@lXa\f@ctech\divide\v@lYa\f@ctech%
    \c@rre=\repdecn@mb\v@lXa\v@lmax\mili@u=\repdecn@mb\v@lYa\v@lmin%
    \getredf@ctDD\f@ctech(\c@rre,\mili@u)%
    \c@rre=\repdecn@mb\v@lX\v@lmax\mili@u=\repdecn@mb\v@lY\v@lmin%
    \getredf@ctDD\p@rtent(\c@rre,\mili@u)\ifnum\p@rtent>\f@ctech\f@ctech=\p@rtent\fi%
    \divide\v@lmin\f@ctech\divide\v@lmax\f@ctech\divide\v@lX\f@ctech\divide\v@lY\f@ctech%
    \divide\v@lXa\f@ctech\divide\v@lYa\f@ctech%
    \v@lmin=\repdecn@mb{\v@lmin}\v@lmin\v@lmax=\repdecn@mb{\v@lmax}\v@lmax%
    \edef\G@xde{\repdecn@mb\v@lmin}\edef\P@xde{\repdecn@mb\v@lmax}%
    \c@rre=-\v@lmax\v@leur=\repdecn@mb\v@lY\v@lY\advance\c@rre\v@leur\c@rre=\G@xde\c@rre%
    \v@leur=\repdecn@mb\v@lX\v@lX\v@leur=\P@xde\v@leur\advance\c@rre\v@leur
    \v@lmin=\repdecn@mb\v@lYa\v@lmin\v@lmax=\repdecn@mb\v@lXa\v@lmax%
    \mili@u=\repdecn@mb\v@lX\v@lmax\advance\mili@u\repdecn@mb\v@lY\v@lmin
    \v@lmax=\repdecn@mb\v@lXa\v@lmax\advance\v@lmax\repdecn@mb\v@lYa\v@lmin
    \ifdim\v@lmax>\epsil@n%
    \maxim@m{\v@leur}{\c@rre}{-\c@rre}\maxim@m{\v@lmin}{\mili@u}{-\mili@u}%
    \maxim@m{\v@leur}{\v@leur}{\v@lmin}\maxim@m{\v@lmin}{\v@lmax}{-\v@lmax}%
    \maxim@m{\v@leur}{\v@leur}{\v@lmin}\p@rtentiere{\p@rtent}{\v@leur}\advance\p@rtent\@ne%
    \divide\c@rre\p@rtent\divide\mili@u\p@rtent\divide\v@lmax\p@rtent%
    \delt@=\repdecn@mb{\mili@u}\mili@u\v@leur=\repdecn@mb{\v@lmax}\c@rre%
    \advance\delt@-\v@leur\ifdim\delt@<\z@\else\sqrt@\delt@\delt@%
    \invers@\v@lmax\v@lmax\edef\Uns@rAp{\repdecn@mb\v@lmax}%
    \v@leur=-\mili@u\advance\v@leur-\delt@\v@leur=\Uns@rAp\v@leur%
    \edef\t@ille{\repdecn@mb\v@leur}\figpttra#1:=-4/\t@ille,-3/\s@mme=#1\advance\s@mme\@ne%
    \v@leur=-\mili@u\advance\v@leur\delt@\v@leur=\Uns@rAp\v@leur%
    \edef\t@ille{\repdecn@mb\v@leur}\figpttra\the\s@mme:=-4/\t@ille,-3/\fi\fi}
\ctr@ln@m\figptsorthoprojline
\ctr@ld@f\def\figptsorthoprojlineDD#1=#2/#3,#4/{\ifps@cri{\s@uvc@ntr@l\et@tfigptsorthoprojlineDD%
    \setc@ntr@l{2}\figvectPDD-3[#3,#4]\figvectNVDD-4[-3]\resetc@ntr@l{2}%
    \def\list@num{#2}\s@mme=#1\@ecfor\p@int:=\list@num\do{%
    \inters@cDD\the\s@mme:[\p@int,-4;#3,-3]\advance\s@mme\@ne}%
    \resetc@ntr@l\et@tfigptsorthoprojlineDD}\ignorespaces\fi}
\ctr@ld@f\def\figptsorthoprojlineTD#1=#2/#3,#4/{\ifps@cri{\s@uvc@ntr@l\et@tfigptsorthoprojlineTD%
    \setc@ntr@l{2}\figvectPTD-2[#3,#4]\vecunit@TD{-2}{-2}%
    \def\list@num{#2}\s@mme=#1\@ecfor\p@int:=\list@num\do{%
    \figvectPTD-1[#3,\p@int]\c@lproscalTD\v@leur[-1,-2]%
    \edef\v@lcoef{\repdecn@mb{\v@leur}}\figpttraTD\the\s@mme:=#3/\v@lcoef,-2/%
    \advance\s@mme\@ne}\resetc@ntr@l\et@tfigptsorthoprojlineTD}\ignorespaces\fi}
\ctr@ln@m\figptsorthoprojplane
\ctr@ld@f\def\figptsorthoprojplaneDD{\un@v@ilable{figptsorthoprojplane}}
\ctr@ld@f\def\figptsorthoprojplaneTD#1=#2/#3,#4/{\ifps@cri{\s@uvc@ntr@l\et@tfigptsorthoprojplane%
    \setc@ntr@l{2}\vecunit@TD{-2}{#4}%
    \def\list@num{#2}\s@mme=#1\@ecfor\p@int:=\list@num\do{\figvectPTD-1[\p@int,#3]%
    \c@lproscalTD\v@leur[-1,-2]\edef\v@lcoef{\repdecn@mb{\v@leur}}%
    \figpttraTD\the\s@mme:=\p@int/\v@lcoef,-2/\advance\s@mme\@ne}%
    \resetc@ntr@l\et@tfigptsorthoprojplane}\ignorespaces\fi}
\ctr@ld@f\def\figptshom#1=#2/#3,#4/{\ifps@cri{\s@uvc@ntr@l\et@tfigptshom%
    \setc@ntr@l{2}\def\list@num{#2}\s@mme=#1%
    \@ecfor\p@int:=\list@num\do{\figvectP-1[#3,\p@int]%
    \figpttra\the\s@mme:=#3/#4,-1/\advance\s@mme\@ne}%
    \resetc@ntr@l\et@tfigptshom}\ignorespaces\fi}
\ctr@ln@m\figptsrot
\ctr@ld@f\def\figptsrotDD#1=#2/#3,#4/{\ifps@cri{\s@uvc@ntr@l\et@tfigptsrotDD%
    \c@ssin{\C@}{\S@}{#4}\setc@ntr@l{2}\def\list@num{#2}\s@mme=#1%
    \@ecfor\p@int:=\list@num\do{\figvectPDD-1[#3,\p@int]\Figg@tXY{-1}%
    \v@lXa=\C@\v@lX\advance\v@lXa-\S@\v@lY%
    \v@lYa=\S@\v@lX\advance\v@lYa\C@\v@lY%
    \Figv@ctCreg-1(\v@lXa,\v@lYa)\figpttraDD\the\s@mme:=#3/1,-1/\advance\s@mme\@ne}%
    \resetc@ntr@l\et@tfigptsrotDD}\ignorespaces\fi}
\ctr@ld@f\def\figptsrotTD#1=#2/#3,#4,#5/{\ifps@cri{\s@uvc@ntr@l\et@tfigptsrotTD%
    \c@ssin{\C@}{\S@}{#4}%
    \setc@ntr@l{2}\def\list@num{#2}\s@mme=#1%
    \@ecfor\p@int:=\list@num\do{\figptorthoprojplaneTD-3:=#3/\p@int,#5/%
    \figvectPTD-2[-3,\p@int]%
    \figvectNVTD-1[#5,-2]\n@rmeucTD\v@leur{-2}\edef\v@lcoef{\repdecn@mb{\v@leur}}%
    \Figg@tXYa{-1}\v@lXa=\v@lcoef\v@lXa\v@lYa=\v@lcoef\v@lYa\v@lZa=\v@lcoef\v@lZa%
    \v@lXa=\S@\v@lXa\v@lYa=\S@\v@lYa\v@lZa=\S@\v@lZa\Figg@tXY{-2}%
    \advance\v@lXa\C@\v@lX\advance\v@lYa\C@\v@lY\advance\v@lZa\C@\v@lZ%
    \Figg@tXY{-3}\advance\v@lXa\v@lX\advance\v@lYa\v@lY\advance\v@lZa\v@lZ%
    \Figp@intregTD\the\s@mme:(\v@lXa,\v@lYa,\v@lZa)\advance\s@mme\@ne}%
    \resetc@ntr@l\et@tfigptsrotTD}\ignorespaces\fi}
\ctr@ln@m\figptssym
\ctr@ld@f\def\figptssymDD#1=#2/#3,#4/{\ifps@cri{\s@uvc@ntr@l\et@tfigptssymDD%
    \setc@ntr@l{2}\figvectPDD-3[#3,#4]\Figg@tXY{-3}\Figv@ctCreg-4(-\v@lY,\v@lX)%
    \resetc@ntr@l{2}\def\list@num{#2}\s@mme=#1%
    \@ecfor\p@int:=\list@num\do{\inters@cDD-5:[#3,-3;\p@int,-4]\figvectPDD-2[\p@int,-5]%
    \figpttraDD\the\s@mme:=\p@int/2,-2/\advance\s@mme\@ne}%
    \resetc@ntr@l\et@tfigptssymDD}\ignorespaces\fi}
\ctr@ld@f\def\figptssymTD#1=#2/#3,#4/{\ifps@cri{\s@uvc@ntr@l\et@tfigptssymTD%
    \setc@ntr@l{2}\vecunit@TD{-2}{#4}\def\list@num{#2}\s@mme=#1%
    \@ecfor\p@int:=\list@num\do{\figvectPTD-1[\p@int,#3]%
    \c@lproscalTD\v@leur[-1,-2]\v@leur=2\v@leur\edef\v@lcoef{\repdecn@mb{\v@leur}}%
    \figpttraTD\the\s@mme:=\p@int/\v@lcoef,-2/\advance\s@mme\@ne}%
    \resetc@ntr@l\et@tfigptssymTD}\ignorespaces\fi}
\ctr@ln@m\figptstra
\ctr@ld@f\def\figptstraDD#1=#2/#3,#4/{\ifps@cri{\Figg@tXYa{#4}\v@lXa=#3\v@lXa\v@lYa=#3\v@lYa%
    \def\list@num{#2}\s@mme=#1\@ecfor\p@int:=\list@num\do{\Figg@tXY{\p@int}%
    \advance\v@lX\v@lXa\advance\v@lY\v@lYa%
    \Figp@intregDD\the\s@mme:(\v@lX,\v@lY)\advance\s@mme\@ne}}\ignorespaces\fi}
\ctr@ld@f\def\figptstraTD#1=#2/#3,#4/{\ifps@cri{\Figg@tXYa{#4}\v@lXa=#3\v@lXa\v@lYa=#3\v@lYa%
    \v@lZa=#3\v@lZa\def\list@num{#2}\s@mme=#1\@ecfor\p@int:=\list@num\do{\Figg@tXY{\p@int}%
    \advance\v@lX\v@lXa\advance\v@lY\v@lYa\advance\v@lZ\v@lZa%
    \Figp@intregTD\the\s@mme:(\v@lX,\v@lY,\v@lZ)\advance\s@mme\@ne}}\ignorespaces\fi}
\ctr@ln@m\figptvisilimSL
\ctr@ld@f\def\figptvisilimSLDD{\un@v@ilable{figptvisilimSL}}
\ctr@ld@f\def\figptvisilimSLTD#1:#2[#3,#4;#5,#6]{\ifps@cri{\s@uvc@ntr@l\et@tfigptvisilimSLTD%
    \setc@ntr@l{2}\figvectP-1[#3,#4]\n@rminf{\delt@}{-1}%
    \ifcase\curr@ntproj\v@lX=\cxa@\p@\v@lY=-\p@\v@lZ=\cxb@\p@
    \Figv@ctCreg-2(\v@lX,\v@lY,\v@lZ)\figvectP-3[#5,#6]\figvectNV-1[-2,-3]%
    \or\figvectP-1[#5,#6]\vecunitCV@TD{-1}\v@lmin=\v@lX\v@lmax=\v@lY
    \v@leur=\v@lZ\v@lX=\cza@\p@\v@lY=\czb@\p@\v@lZ=\czc@\p@\c@lprovec{-1}%
    \or\c@ley@pt{-2}\figvectN-1[#5,#6,-2]\fi
    \edef\Ai@{#3}\edef\Aj@{#4}\figvectP-2[#5,\Ai@]\c@lproscal\v@leur[-1,-2]%
    \ifdim\v@leur>\z@\p@rtent=\@ne\else\p@rtent=\m@ne\fi%
    \figvectP-2[#5,\Aj@]\c@lproscal\v@leur[-1,-2]%
    \ifdim\p@rtent\v@leur>\z@\figptcopy#1:#2/#3/%
    \message{*** \BS@ figptvisilimSL: points are on the same side.}\else%
    \figptcopy-3:/#3/\figptcopy-4:/#4/%
    \loop\figptbary-5:[-3,-4;1,1]\figvectP-2[#5,-5]\c@lproscal\v@leur[-1,-2]%
    \ifdim\p@rtent\v@leur>\z@\figptcopy-3:/-5/\else\figptcopy-4:/-5/\fi%
    \divide\delt@\tw@\ifdim\delt@>\epsil@n\repeat%
    \figptbary#1:#2[-3,-4;1,1]\fi\resetc@ntr@l\et@tfigptvisilimSLTD}\ignorespaces\fi}
\ctr@ld@f\def\c@ley@pt#1{\t@stp@r\ifitis@K\v@lX=\cza@\p@\v@lY=\czb@\p@\v@lZ=\czc@\p@%
    \Figv@ctCreg-1(\v@lX,\v@lY,\v@lZ)\Figp@intreg-2:(\wd\Bt@rget,\ht\Bt@rget,\dp\Bt@rget)%
    \figpttra#1:=-2/-\disob@intern,-1/\else\end\fi}
\ctr@ld@f\def\t@stp@r{\itis@Ktrue\ifnewt@rgetpt\else\itis@Kfalse%
    \message{*** \BS@ figptvisilimXX: target point undefined.}\fi\ifnewdis@b\else%
    \itis@Kfalse\message{*** \BS@ figptvisilimXX: observation distance undefined.}\fi%
    \ifitis@K\else\message{*** This macro must be called after \BS@ psbeginfig or after
    having set the missing parameter(s) with \BS@ figset proj()}\fi}
\ctr@ld@f\def\figscan#1(#2,#3){{\s@uvc@ntr@l\et@tfigscan\@psfgetbb{#1}\if@psfbbfound\else%
    \def\@psfllx{0}\def\@psflly{20}\def\@psfurx{540}\def\@psfury{640}\fi\figscan@{#2}{#3}%
    \resetc@ntr@l\et@tfigscan}\ignorespaces}
\ctr@ld@f\def\figscan@#1#2{%
    \unit@=\@ne bp\setc@ntr@l{2}\figsetmark{}%
    \def\minst@p{20pt}%
    \v@lX=\@psfllx\p@\v@lX=\Sc@leFact\v@lX\r@undint\v@lX\v@lX%
    \v@lY=\@psflly\p@\v@lY=\Sc@leFact\v@lY\ifdim\v@lY>\z@\r@undint\v@lY\v@lY\fi%
    \delt@=\@psfury\p@\delt@=\Sc@leFact\delt@%
    \advance\delt@-\v@lY\v@lXa=\@psfurx\p@\v@lXa=\Sc@leFact\v@lXa\v@leur=\minst@p%
    \edef\valv@lY{\repdecn@mb{\v@lY}}\edef\LgTr@it{\the\delt@}%
    \loop\ifdim\v@lX<\v@lXa\edef\valv@lX{\repdecn@mb{\v@lX}}%
    \figptDD -1:(\valv@lX,\valv@lY)\figwriten -1:\hbox{\vrule height\LgTr@it}(0)%
    \ifdim\v@leur<\minst@p\else\figsetmark{\raise-8bp\hbox{$\scriptscriptstyle\triangle$}}%
    \figwrites -1:\@ffichnb{0}{\valv@lX}(6)\v@leur=\z@\figsetmark{}\fi%
    \advance\v@leur#1pt\advance\v@lX#1pt\repeat%
    \def\minst@p{10pt}%
    \v@lX=\@psfllx\p@\v@lX=\Sc@leFact\v@lX\ifdim\v@lX>\z@\r@undint\v@lX\v@lX\fi%
    \v@lY=\@psflly\p@\v@lY=\Sc@leFact\v@lY\r@undint\v@lY\v@lY%
    \delt@=\@psfurx\p@\delt@=\Sc@leFact\delt@%
    \advance\delt@-\v@lX\v@lYa=\@psfury\p@\v@lYa=\Sc@leFact\v@lYa\v@leur=\minst@p%
    \edef\valv@lX{\repdecn@mb{\v@lX}}\edef\LgTr@it{\the\delt@}%
    \loop\ifdim\v@lY<\v@lYa\edef\valv@lY{\repdecn@mb{\v@lY}}%
    \figptDD -1:(\valv@lX,\valv@lY)\figwritee -1:\vbox{\hrule width\LgTr@it}(0)%
    \ifdim\v@leur<\minst@p\else\figsetmark{$\triangleright$\kern4bp}%
    \figwritew -1:\@ffichnb{0}{\valv@lY}(6)\v@leur=\z@\figsetmark{}\fi%
    \advance\v@leur#2pt\advance\v@lY#2pt\repeat}
\ctr@ld@f\let\figscanI=\figscan
\ctr@ld@f\def\figscan@E#1(#2,#3){{\s@uvc@ntr@l\et@tfigscan@E%
    \Figdisc@rdLTS{#1}{\t@xt@}\pdfximage{\t@xt@}%
    \setbox\Gb@x=\hbox{\pdfrefximage\pdflastximage}%
    \edef\@psfllx{0}\v@lY=-\dp\Gb@x\edef\@psflly{\repdecn@mb{\v@lY}}%
    \edef\@psfurx{\repdecn@mb{\wd\Gb@x}}%
    \v@lY=\dp\Gb@x\advance\v@lY\ht\Gb@x\edef\@psfury{\repdecn@mb{\v@lY}}%
    \figscan@{#2}{#3}\resetc@ntr@l\et@tfigscan@E}\ignorespaces}
\ctr@ld@f\def\figshowpts[#1,#2]{{\figsetmark{$\bullet$}\figsetptname{\bf ##1}%
    \p@rtent=#2\relax\ifnum\p@rtent<\z@\p@rtent=\z@\fi%
    \s@mme=#1\relax\ifnum\s@mme<\z@\s@mme=\z@\fi%
    \loop\ifnum\s@mme<\p@rtent\pt@rvect{\s@mme}%
    \ifitis@K\figwriten{\the\s@mme}:(4pt)\fi\advance\s@mme\@ne\repeat%
    \pt@rvect{\s@mme}\ifitis@K\figwriten{\the\s@mme}:(4pt)\fi}\ignorespaces}
\ctr@ld@f\def\pt@rvect#1{\set@bjc@de{#1}%
    \expandafter\expandafter\expandafter\inqpt@rvec\csname\objc@de\endcsname:}
\ctr@ld@f\def\inqpt@rvec#1#2:{\if#1\C@dCl@spt\itis@Ktrue\else\itis@Kfalse\fi}
\ctr@ld@f\def\figshowsettings{{%
    \immediate\write16{====================================================================}%
    \immediate\write16{ Current settings about:}%
    \immediate\write16{ --- GENERAL ---}%
    \immediate\write16{Scale factor and Unit = \unit@util\space (\the\unit@)
     \space -> \BS@ figinit{ScaleFactorUnit}}%
    \immediate\write16{Update mode = \ifpsupdatem@de yes\else no\fi
     \space-> \BS@ psset(update=yes/no) or \BS@ pssetdefault(update=yes/no)}%
    \immediate\write16{ --- PRINTING ---}%
    \immediate\write16{Implicit point name = \ptn@me{i} \space-> \BS@ figsetptname{Name}}%
    \immediate\write16{Point marker = \the\c@nsymb \space -> \BS@ figsetmark{Mark}}%
    \immediate\write16{Print rounded coordinates = \ifr@undcoord yes\else no\fi
     \space-> \BS@ figsetroundcoord{yes/no}}%
    \immediate\write16{ --- GRAPHICAL (general) ---}%
    \immediate\write16{First-level (or primary) settings:}%
    \immediate\write16{ Color = \curr@ntcolor \space-> \BS@ psset(color=ColorDefinition)}%
    \immediate\write16{ Filling mode = \iffillm@de yes\else no\fi
     \space-> \BS@ psset(fillmode=yes/no)}%
    \immediate\write16{ Line join = \curr@ntjoin \space-> \BS@ psset(join=miter/round/bevel)}%
    \immediate\write16{ Line style = \curr@ntdash \space-> \BS@ psset(dash=Index/Pattern)}%
    \immediate\write16{ Line width = \curr@ntwidth
     \space-> \BS@ psset(width=real in PostScript units)}%
    \immediate\write16{Second-level (or secondary) settings:}%
    \immediate\write16{ Color = \sec@ndcolor \space-> \BS@ psset second(color=ColorDefinition)}%
    \immediate\write16{ Line style = \curr@ntseconddash
     \space-> \BS@ psset second(dash=Index/Pattern)}%
    \immediate\write16{ Line width = \curr@ntsecondwidth
     \space-> \BS@ psset second(width=real in PostScript units)}%
    \immediate\write16{Third-level (or ternary) settings:}%
    \immediate\write16{ Color = \th@rdcolor \space-> \BS@ psset third(color=ColorDefinition)}%
    \immediate\write16{ --- GRAPHICAL (specific) ---}%
    \immediate\write16{Arrow-head:}%
    \immediate\write16{ (half-)Angle = \@rrowheadangle
     \space-> \BS@ psset arrowhead(angle=real in degrees)}%
    \immediate\write16{ Filling mode = \if@rrowhfill yes\else no\fi
     \space-> \BS@ psset arrowhead(fillmode=yes/no)}%
    \immediate\write16{ "Outside" = \if@rrowhout yes\else no\fi
     \space-> \BS@ psset arrowhead(out=yes/no)}%
    \immediate\write16{ Length = \@rrowheadlength
     \if@rrowratio\space(not active)\else\space(active)\fi
     \space-> \BS@ psset arrowhead(length=real in user coord.)}%
    \immediate\write16{ Ratio = \@rrowheadratio
     \if@rrowratio\space(active)\else\space(not active)\fi
     \space-> \BS@ psset arrowhead(ratio=real in [0,1])}%
    \immediate\write16{Curve: Roundness = \curv@roundness
     \space-> \BS@ psset curve(roundness=real in [0,0.5])}%
    \immediate\write16{Mesh: Diagonal = \c@ntrolmesh
     \space-> \BS@ psset mesh(diag=integer in {-1,0,1})}%
    \immediate\write16{Flow chart:}%
    \immediate\write16{ Arrow position = \@rrowp@s
     \space-> \BS@ psset flowchart(arrowposition=real in [0,1])}%
    \immediate\write16{ Arrow reference point = \ifcase\@rrowr@fpt start\else end\fi
     \space-> \BS@ psset flowchart(arrowrefpt = start/end)}%
    \immediate\write16{ Line type = \ifcase\fclin@typ@ curve\else polygon\fi
     \space-> \BS@ psset flowchart(line=polygon/curve)}%
    \immediate\write16{ Padding = (\Xp@dd, \Yp@dd)
     \space-> \BS@ psset flowchart(padding = real in user coord.)}%
    \immediate\write16{\space\space\space\space(or
     \BS@ psset flowchart(xpadding=real, ypadding=real) )}%
    \immediate\write16{ Radius = \fclin@r@d
     \space-> \BS@ psset flowchart(radius=positive real in user coord.)}%
    \immediate\write16{ Shape = \fcsh@pe
     \space-> \BS@ psset flowchart(shape = rectangle, ellipse or lozenge)}%
    \immediate\write16{ Thickness = \thickn@ss
     \space-> \BS@ psset flowchart(thickness = real in user coord.)}%
    \ifTr@isDim%
    \immediate\write16{ --- 3D to 2D PROJECTION ---}%
    \immediate\write16{Projection : \typ@proj \space-> \BS@ figinit{ScaleFactorUnit, ProjType}}%
    \immediate\write16{Longitude (psi) = \v@lPsi \space-> \BS@ figset proj(psi=real in degrees)}%
    \ifcase\curr@ntproj\immediate\write16{Depth coeff. (Lambda)
     \space = \v@lTheta \space-> \BS@ figset proj(lambda=real in [0,1])}%
    \else\immediate\write16{Latitude (theta)
     \space = \v@lTheta \space-> \BS@ figset proj(theta=real in degrees)}%
    \fi%
    \ifnum\curr@ntproj=\tw@%
    \immediate\write16{Observation distance = \disob@unit
     \space-> \BS@ figset proj(dist=real in user coord.)}%
    \immediate\write16{Target point = \t@rgetpt \space-> \BS@ figset proj(targetpt=pt number)}%
     \v@lX=\ptT@unit@\wd\Bt@rget\v@lY=\ptT@unit@\ht\Bt@rget\v@lZ=\ptT@unit@\dp\Bt@rget%
    \immediate\write16{ Its coordinates are
     (\repdecn@mb{\v@lX}, \repdecn@mb{\v@lY}, \repdecn@mb{\v@lZ})}%
    \fi%
    \fi%
    \immediate\write16{====================================================================}%
    \ignorespaces}}
\ctr@ln@w{newif}\ifitis@vect@r
\ctr@ld@f\def\figvectC#1(#2,#3){{\itis@vect@rtrue\figpt#1:(#2,#3)}\ignorespaces}
\ctr@ld@f\def\Figv@ctCreg#1(#2,#3){{\itis@vect@rtrue\Figp@intreg#1:(#2,#3)}\ignorespaces}
\ctr@ln@m\figvectDBezier
\ctr@ld@f\def\figvectDBezierDD#1:#2,#3[#4,#5,#6,#7]{\ifps@cri{\s@uvc@ntr@l\et@tfigvectDBezierDD%
    \FigvectDBezier@#2,#3[#4,#5,#6,#7]\v@lX=\c@ef\v@lX\v@lY=\c@ef\v@lY%
    \Figv@ctCreg#1(\v@lX,\v@lY)\resetc@ntr@l\et@tfigvectDBezierDD}\ignorespaces\fi}
\ctr@ld@f\def\figvectDBezierTD#1:#2,#3[#4,#5,#6,#7]{\ifps@cri{\s@uvc@ntr@l\et@tfigvectDBezierTD%
    \FigvectDBezier@#2,#3[#4,#5,#6,#7]\v@lX=\c@ef\v@lX\v@lY=\c@ef\v@lY\v@lZ=\c@ef\v@lZ%
    \Figv@ctCreg#1(\v@lX,\v@lY,\v@lZ)\resetc@ntr@l\et@tfigvectDBezierTD}\ignorespaces\fi}
\ctr@ld@f\def\FigvectDBezier@#1,#2[#3,#4,#5,#6]{\setc@ntr@l{2}%
    \edef\T@{#2}\v@leur=\p@\advance\v@leur-#2pt\edef\UNmT@{\repdecn@mb{\v@leur}}%
    \ifnum#1=\tw@\def\c@ef{6}\else\def\c@ef{3}\fi%
    \figptcopy-4:/#3/\figptcopy-3:/#4/\figptcopy-2:/#5/\figptcopy-1:/#6/%
    \l@mbd@un=-4 \l@mbd@de=-\thr@@\p@rtent=\m@ne\c@lDecast%
    \ifnum#1=\tw@\c@lDCDeux{-4}{-3}\c@lDCDeux{-3}{-2}\c@lDCDeux{-4}{-3}\else%
    \l@mbd@un=-4 \l@mbd@de=-\thr@@\p@rtent=-\tw@\c@lDecast%
    \c@lDCDeux{-4}{-3}\fi\Figg@tXY{-4}}
\ctr@ln@m\c@lDCDeux
\ctr@ld@f\def\c@lDCDeuxDD#1#2{\Figg@tXY{#2}\Figg@tXYa{#1}%
    \advance\v@lX-\v@lXa\advance\v@lY-\v@lYa\Figp@intregDD#1:(\v@lX,\v@lY)}
\ctr@ld@f\def\c@lDCDeuxTD#1#2{\Figg@tXY{#2}\Figg@tXYa{#1}\advance\v@lX-\v@lXa%
    \advance\v@lY-\v@lYa\advance\v@lZ-\v@lZa\Figp@intregTD#1:(\v@lX,\v@lY,\v@lZ)}
\ctr@ln@m\figvectN
\ctr@ld@f\def\figvectNDD#1[#2,#3]{\ifps@cri{\Figg@tXYa{#2}\Figg@tXY{#3}%
    \advance\v@lX-\v@lXa\advance\v@lY-\v@lYa%
    \Figv@ctCreg#1(-\v@lY,\v@lX)}\ignorespaces\fi}
\ctr@ld@f\def\figvectNTD#1[#2,#3,#4]{\ifps@cri{\vecunitC@TD[#2,#4]\v@lmin=\v@lX\v@lmax=\v@lY%
    \v@leur=\v@lZ\vecunitC@TD[#2,#3]\c@lprovec{#1}}\ignorespaces\fi}
\ctr@ln@m\figvectNV
\ctr@ld@f\def\figvectNVDD#1[#2]{\ifps@cri{\Figg@tXY{#2}\Figv@ctCreg#1(-\v@lY,\v@lX)}\ignorespaces\fi}
\ctr@ld@f\def\figvectNVTD#1[#2,#3]{\ifps@cri{\vecunitCV@TD{#3}\v@lmin=\v@lX\v@lmax=\v@lY%
    \v@leur=\v@lZ\vecunitCV@TD{#2}\c@lprovec{#1}}\ignorespaces\fi}
\ctr@ln@m\figvectP
\ctr@ld@f\def\figvectPDD#1[#2,#3]{\ifps@cri{\Figg@tXYa{#2}\Figg@tXY{#3}%
    \advance\v@lX-\v@lXa\advance\v@lY-\v@lYa%
    \Figv@ctCreg#1(\v@lX,\v@lY)}\ignorespaces\fi}
\ctr@ld@f\def\figvectPTD#1[#2,#3]{\ifps@cri{\Figg@tXYa{#2}\Figg@tXY{#3}%
    \advance\v@lX-\v@lXa\advance\v@lY-\v@lYa\advance\v@lZ-\v@lZa%
    \Figv@ctCreg#1(\v@lX,\v@lY,\v@lZ)}\ignorespaces\fi}
\ctr@ln@m\figvectU
\ctr@ld@f\def\figvectUDD#1[#2]{\ifps@cri{\n@rmeuc\v@leur{#2}\invers@\v@leur\v@leur%
    \delt@=\repdecn@mb{\v@leur}\unit@\edef\v@ldelt@{\repdecn@mb{\delt@}}%
    \Figg@tXY{#2}\v@lX=\v@ldelt@\v@lX\v@lY=\v@ldelt@\v@lY%
    \Figv@ctCreg#1(\v@lX,\v@lY)}\ignorespaces\fi}
\ctr@ld@f\def\figvectUTD#1[#2]{\ifps@cri{\n@rmeuc\v@leur{#2}\invers@\v@leur\v@leur%
    \delt@=\repdecn@mb{\v@leur}\unit@\edef\v@ldelt@{\repdecn@mb{\delt@}}%
    \Figg@tXY{#2}\v@lX=\v@ldelt@\v@lX\v@lY=\v@ldelt@\v@lY\v@lZ=\v@ldelt@\v@lZ%
    \Figv@ctCreg#1(\v@lX,\v@lY,\v@lZ)}\ignorespaces\fi}
\ctr@ld@f\def\figvisu#1#2#3{\c@ldefproj\initb@undb@x\xdef\figforTeXFigno{\figforTeXnextFigno}%
    \s@mme=\figforTeXnextFigno\advance\s@mme\@ne\xdef\figforTeXnextFigno{\number\s@mme}%
    \setbox\b@xvisu=\hbox{\ifnum\@utoFN>\z@\figinsert{}\gdef\@utoFInDone{0}\fi\ignorespaces#3}%
    \gdef\@utoFInDone{1}\gdef\@utoFN{0}%
    \v@lXa=-\c@@rdYmin\v@lYa=\c@@rdYmax\advance\v@lYa-\c@@rdYmin%
    \v@lX=\c@@rdXmax\advance\v@lX-\c@@rdXmin%
    \setbox#1=\hbox{#2}\v@lY=-\v@lX\maxim@m{\v@lX}{\v@lX}{\wd#1}%
    \advance\v@lY\v@lX\divide\v@lY\tw@\advance\v@lY-\c@@rdXmin%
    \setbox#1=\vbox{\parindent0mm\hsize=\v@lX\vskip\v@lYa%
    \rlap{\hskip\v@lY\smash{\raise\v@lXa\box\b@xvisu}}%
    \def\t@xt@{#2}\ifx\t@xt@\empty\else\medskip\centerline{#2}\fi}\wd#1=\v@lX}
\ctr@ld@f\def\figDecrementFigno{{\xdef\figforTeXnextFigno{\figforTeXFigno}%
    \s@mme=\figforTeXFigno\advance\s@mme\m@ne\xdef\figforTeXFigno{\number\s@mme}}}
\ctr@ln@w{newbox}\Bt@rget\setbox\Bt@rget=\null
\ctr@ln@w{newbox}\BminTD@\setbox\BminTD@=\null
\ctr@ln@w{newbox}\BmaxTD@\setbox\BmaxTD@=\null
\ctr@ln@w{newif}\ifnewt@rgetpt\ctr@ln@w{newif}\ifnewdis@b
\ctr@ld@f\def\b@undb@xTD#1#2#3{%
    \relax\ifdim#1<\wd\BminTD@\global\wd\BminTD@=#1\fi%
    \relax\ifdim#2<\ht\BminTD@\global\ht\BminTD@=#2\fi%
    \relax\ifdim#3<\dp\BminTD@\global\dp\BminTD@=#3\fi%
    \relax\ifdim#1>\wd\BmaxTD@\global\wd\BmaxTD@=#1\fi%
    \relax\ifdim#2>\ht\BmaxTD@\global\ht\BmaxTD@=#2\fi%
    \relax\ifdim#3>\dp\BmaxTD@\global\dp\BmaxTD@=#3\fi}
\ctr@ld@f\def\c@ldefdisob{{\ifdim\wd\BminTD@<\maxdimen\v@leur=\wd\BmaxTD@\advance\v@leur-\wd\BminTD@%
    \delt@=\ht\BmaxTD@\advance\delt@-\ht\BminTD@\maxim@m{\v@leur}{\v@leur}{\delt@}%
    \delt@=\dp\BmaxTD@\advance\delt@-\dp\BminTD@\maxim@m{\v@leur}{\v@leur}{\delt@}%
    \v@leur=5\v@leur\else\v@leur=800pt\fi\c@ldefdisob@{\v@leur}}}
\ctr@ln@m\disob@intern
\ctr@ln@m\disob@
\ctr@ln@m\divf@ctproj
\ctr@ld@f\def\c@ldefdisob@#1{{\v@leur=#1\ifdim\v@leur<\p@\v@leur=800pt\fi%
    \xdef\disob@intern{\repdecn@mb{\v@leur}}%
    \delt@=\ptT@unit@\v@leur\xdef\disob@unit{\repdecn@mb{\delt@}}%
    \f@ctech=\@ne\loop\ifdim\v@leur>\t@n pt\divide\v@leur\t@n\multiply\f@ctech\t@n\repeat%
    \xdef\disob@{\repdecn@mb{\v@leur}}\xdef\divf@ctproj{\the\f@ctech}}%
    \global\newdis@btrue}
\ctr@ln@m\t@rgetpt
\ctr@ld@f\def\c@ldeft@rgetpt{\newt@rgetpttrue\def\t@rgetpt{CenterBoundBox}{%
    \delt@=\wd\BmaxTD@\advance\delt@-\wd\BminTD@\divide\delt@\tw@%
    \v@leur=\wd\BminTD@\advance\v@leur\delt@\global\wd\Bt@rget=\v@leur%
    \delt@=\ht\BmaxTD@\advance\delt@-\ht\BminTD@\divide\delt@\tw@%
    \v@leur=\ht\BminTD@\advance\v@leur\delt@\global\ht\Bt@rget=\v@leur%
    \delt@=\dp\BmaxTD@\advance\delt@-\dp\BminTD@\divide\delt@\tw@%
    \v@leur=\dp\BminTD@\advance\v@leur\delt@\global\dp\Bt@rget=\v@leur}}
\ctr@ln@m\c@ldefproj
\ctr@ld@f\def\c@ldefprojTD{\ifnewt@rgetpt\else\c@ldeft@rgetpt\fi\ifnewdis@b\else\c@ldefdisob\fi}
\ctr@ld@f\def\c@lprojcav{
    \v@lZa=\cxa@\v@lY\advance\v@lX\v@lZa%
    \v@lZa=\cxb@\v@lY\v@lY=\v@lZ\advance\v@lY\v@lZa\ignorespaces}
\ctr@ln@m\v@lcoef
\ctr@ld@f\def\c@lprojrea{
    \advance\v@lX-\wd\Bt@rget\advance\v@lY-\ht\Bt@rget\advance\v@lZ-\dp\Bt@rget%
    \v@lZa=\cza@\v@lX\advance\v@lZa\czb@\v@lY\advance\v@lZa\czc@\v@lZ%
    \divide\v@lZa\divf@ctproj\advance\v@lZa\disob@ pt\invers@{\v@lZa}{\v@lZa}%
    \v@lZa=\disob@\v@lZa\edef\v@lcoef{\repdecn@mb{\v@lZa}}%
    \v@lXa=\cxa@\v@lX\advance\v@lXa\cxb@\v@lY\v@lXa=\v@lcoef\v@lXa%
    \v@lY=\cyb@\v@lY\advance\v@lY\cya@\v@lX\advance\v@lY\cyc@\v@lZ%
    \v@lY=\v@lcoef\v@lY\v@lX=\v@lXa\ignorespaces}
\ctr@ld@f\def\c@lprojort{
    \v@lXa=\cxa@\v@lX\advance\v@lXa\cxb@\v@lY%
    \v@lY=\cyb@\v@lY\advance\v@lY\cya@\v@lX\advance\v@lY\cyc@\v@lZ%
    \v@lX=\v@lXa\ignorespaces}
\ctr@ld@f\def\Figptpr@j#1:#2/#3/{{\Figg@tXY{#3}\superc@lprojSP%
    \Figp@intregDD#1:{#2}(\v@lX,\v@lY)}\ignorespaces}
\ctr@ln@m\figsetobdist
\ctr@ld@f\def\figsetobdistDD{\un@v@ilable{figsetobdist}}
\ctr@ld@f\def\figsetobdistTD(#1){{\ifcurr@ntPS%
    \immediate\write16{*** \BS@ figsetobdist is ignored inside a
     \BS@ psbeginfig-\BS@ psendfig block.}%
    \else\v@leur=#1\unit@\c@ldefdisob@{\v@leur}\fi}\ignorespaces}
\ctr@ln@m\c@lprojSP
\ctr@ln@m\curr@ntproj
\ctr@ln@m\typ@proj
\ctr@ln@m\superc@lprojSP
\ctr@ld@f\def\Figs@tproj#1{%
    \if#13 \d@faultproj\else\if#1c\d@faultproj%
    \else\if#1o\xdef\curr@ntproj{1}\xdef\typ@proj{orthogonal}%
         \figsetviewTD(\def@ultpsi,\def@ulttheta)%
         \global\let\c@lprojSP=\c@lprojort\global\let\superc@lprojSP=\c@lprojort%
    \else\if#1r\xdef\curr@ntproj{2}\xdef\typ@proj{realistic}%
         \figsetviewTD(\def@ultpsi,\def@ulttheta)%
         \global\let\c@lprojSP=\c@lprojrea\global\let\superc@lprojSP=\c@lprojrea%
    \else\d@faultproj\message{*** Unknown projection. Cavalier projection assumed.}%
    \fi\fi\fi\fi}
\ctr@ld@f\def\d@faultproj{\xdef\curr@ntproj{0}\xdef\typ@proj{cavalier}\figsetviewTD(\def@ultpsi,0.5)%
         \global\let\c@lprojSP=\c@lprojcav\global\let\superc@lprojSP=\c@lprojcav}
\ctr@ln@m\figsettarget
\ctr@ld@f\def\figsettargetDD{\un@v@ilable{figsettarget}}
\ctr@ld@f\def\figsettargetTD[#1]{{\ifcurr@ntPS%
    \immediate\write16{*** \BS@ figsettarget is ignored inside a
     \BS@ psbeginfig-\BS@ psendfig block.}%
    \else\global\newt@rgetpttrue\xdef\t@rgetpt{#1}\Figg@tXY{#1}\global\wd\Bt@rget=\v@lX%
    \global\ht\Bt@rget=\v@lY\global\dp\Bt@rget=\v@lZ\fi}\ignorespaces}
\ctr@ln@m\figsetview
\ctr@ld@f\def\figsetviewDD{\un@v@ilable{figsetview}}
\ctr@ld@f\def\figsetviewTD(#1){\ifcurr@ntPS%
     \immediate\write16{*** \BS@ figsetview is ignored inside a
     \BS@ psbeginfig-\BS@ psendfig block.}\else\Figsetview@#1,:\fi\ignorespaces}
\ctr@ld@f\def\Figsetview@#1,#2:{{\xdef\v@lPsi{#1}\def\t@xt@{#2}%
    \ifx\t@xt@\empty\def\@rgdeux{\v@lTheta}\else\X@rgdeux@#2\fi%
    \c@ssin{\costhet@}{\sinthet@}{#1}\v@lmin=\costhet@ pt\v@lmax=\sinthet@ pt%
    \ifcase\curr@ntproj%
    \v@leur=\@rgdeux\v@lmin\xdef\cxa@{\repdecn@mb{\v@leur}}%
    \v@leur=\@rgdeux\v@lmax\xdef\cxb@{\repdecn@mb{\v@leur}}\v@leur=\@rgdeux pt%
    \relax\ifdim\v@leur>\p@\message{*** Lambda too large ! See \BS@ figset proj() !}\fi%
    \else%
    \v@lmax=-\v@lmax\xdef\cxa@{\repdecn@mb{\v@lmax}}\xdef\cxb@{\costhet@}%
    \ifx\t@xt@\empty\edef\@rgdeux{\def@ulttheta}\fi\c@ssin{\C@}{\S@}{\@rgdeux}%
    \v@lmax=-\S@ pt%
    \v@leur=\v@lmax\v@leur=\costhet@\v@leur\xdef\cya@{\repdecn@mb{\v@leur}}%
    \v@leur=\v@lmax\v@leur=\sinthet@\v@leur\xdef\cyb@{\repdecn@mb{\v@leur}}%
    \xdef\cyc@{\C@}\v@lmin=-\C@ pt%
    \v@leur=\v@lmin\v@leur=\costhet@\v@leur\xdef\cza@{\repdecn@mb{\v@leur}}%
    \v@leur=\v@lmin\v@leur=\sinthet@\v@leur\xdef\czb@{\repdecn@mb{\v@leur}}%
    \xdef\czc@{\repdecn@mb{\v@lmax}}\fi%
    \xdef\v@lTheta{\@rgdeux}}}
\ctr@ld@f\def\def@ultpsi{40}
\ctr@ld@f\def\def@ulttheta{25}
\ctr@ln@m\l@debut
\ctr@ln@m\n@mref
\ctr@ld@f\def\figset#1(#2){\def\t@xt@{#1}\ifx\t@xt@\empty\trtlis@rg{#2}{\Figsetwr@te}
    \else\keln@mde#1|%
    \def\n@mref{pr}\ifx\l@debut\n@mref\ifcurr@ntPS
     \immediate\write16{*** \BS@ figset proj(...) is ignored inside a
     \BS@ psbeginfig-\BS@ psendfig block.}\else\trtlis@rg{#2}{\Figsetpr@j}\fi\else%
    \def\n@mref{wr}\ifx\l@debut\n@mref\trtlis@rg{#2}{\Figsetwr@te}\else
    \immediate\write16{*** Unknown keyword: \BS@ figset #1(...)}%
    \fi\fi\fi\ignorespaces}
\ctr@ld@f\def\Figsetpr@j#1=#2|{\keln@mtr#1|%
    \def\n@mref{dep}\ifx\l@debut\n@mref\Figsetd@p{#2}\else
    \def\n@mref{dis}\ifx\l@debut\n@mref%
     \ifnum\curr@ntproj=\tw@\figsetobdist(#2)\else\Figset@rr\fi\else
    \def\n@mref{lam}\ifx\l@debut\n@mref\Figsetd@p{#2}\else
    \def\n@mref{lat}\ifx\l@debut\n@mref\Figsetth@{#2}\else
    \def\n@mref{lon}\ifx\l@debut\n@mref\figsetview(#2)\else
    \def\n@mref{psi}\ifx\l@debut\n@mref\figsetview(#2)\else
    \def\n@mref{tar}\ifx\l@debut\n@mref%
     \ifnum\curr@ntproj=\tw@\figsettarget[#2]\else\Figset@rr\fi\else
    \def\n@mref{the}\ifx\l@debut\n@mref\Figsetth@{#2}\else
    \immediate\write16{*** Unknown attribute: \BS@ figset proj(..., #1=...).}%
    \fi\fi\fi\fi\fi\fi\fi\fi}
\ctr@ld@f\def\Figsetd@p#1{\ifnum\curr@ntproj=\z@\figsetview(\v@lPsi,#1)\else\Figset@rr\fi}
\ctr@ld@f\def\Figsetth@#1{\ifnum\curr@ntproj=\z@\Figset@rr\else\figsetview(\v@lPsi,#1)\fi}
\ctr@ld@f\def\Figset@rr{\message{*** \BS@ figset proj(): Attribute "\n@mref" ignored, incompatible
    with current projection}}
\ctr@ld@f\def\initb@undb@xTD{\wd\BminTD@=\maxdimen\ht\BminTD@=\maxdimen\dp\BminTD@=\maxdimen%
    \wd\BmaxTD@=-\maxdimen\ht\BmaxTD@=-\maxdimen\dp\BmaxTD@=-\maxdimen}
\ctr@ln@w{newbox}\Gb@x      
\ctr@ln@w{newbox}\Gb@xSC    
\ctr@ln@w{newtoks}\c@nsymb  
\ctr@ln@w{newif}\ifr@undcoord\ctr@ln@w{newif}\ifunitpr@sent
\ctr@ld@f\def\unssqrttw@{0.707106 }
\ctr@ld@f\def\figAst{\raise-1.15ex\hbox{$\ast$}}
\ctr@ld@f\def\figBullet{\raise-1.15ex\hbox{$\bullet$}}
\ctr@ld@f\def\figCirc{\raise-1.15ex\hbox{$\circ$}}
\ctr@ld@f\def\figDiamond{\raise-1.15ex\hbox{$\diamond$}}%
\ctr@ld@f\def\boxit#1#2{\leavevmode\hbox{\vrule\vbox{\hrule\vglue#1%
    \vtop{\hbox{\kern#1{#2}\kern#1}\vglue#1\hrule}}\vrule}}
\ctr@ld@f\def\centertext#1#2{\vbox{\hsize#1\parindent0cm%
    \leftskip=0pt plus 1fil\rightskip=0pt plus 1fil\parfillskip=0pt{#2}}}
\ctr@ld@f\def\lefttext#1#2{\vbox{\hsize#1\parindent0cm\rightskip=0pt plus 1fil#2}}
\ctr@ld@f\def\c@nterpt{\ignorespaces%
    \kern-.5\wd\Gb@xSC%
    \raise-.5\ht\Gb@xSC\rlap{\hbox{\raise.5\dp\Gb@xSC\hbox{\copy\Gb@xSC}}}%
    \kern .5\wd\Gb@xSC\ignorespaces}
\ctr@ld@f\def\b@undb@xSC#1#2{{\v@lXa=#1\v@lYa=#2%
    \v@leur=\ht\Gb@xSC\advance\v@leur\dp\Gb@xSC%
    \advance\v@lXa-.5\wd\Gb@xSC\advance\v@lYa-.5\v@leur\b@undb@x{\v@lXa}{\v@lYa}%
    \advance\v@lXa\wd\Gb@xSC\advance\v@lYa\v@leur\b@undb@x{\v@lXa}{\v@lYa}}}
\ctr@ln@m\Dist@n
\ctr@ln@m\l@suite
\ctr@ld@f\def\@keldist#1#2{\edef\Dist@n{#2}\y@tiunit{\Dist@n}%
    \ifunitpr@sent#1=\Dist@n\else#1=\Dist@n\unit@\fi}
\ctr@ld@f\def\y@tiunit#1{\unitpr@sentfalse\expandafter\y@tiunit@#1:}
\ctr@ld@f\def\y@tiunit@#1#2:{\ifcat#1a\unitpr@senttrue\else\def\l@suite{#2}%
    \ifx\l@suite\empty\else\y@tiunit@#2:\fi\fi}
\ctr@ln@m\figcoord
\ctr@ld@f\def\figcoordDD#1{{\v@lX=\ptT@unit@\v@lX\v@lY=\ptT@unit@\v@lY%
    \ifr@undcoord\ifcase#1\v@leur=0.5pt\or\v@leur=0.05pt\or\v@leur=0.005pt%
    \or\v@leur=0.0005pt\else\v@leur=\z@\fi%
    \ifdim\v@lX<\z@\advance\v@lX-\v@leur\else\advance\v@lX\v@leur\fi%
    \ifdim\v@lY<\z@\advance\v@lY-\v@leur\else\advance\v@lY\v@leur\fi\fi%
    (\@ffichnb{#1}{\repdecn@mb{\v@lX}},\ifmmode\else\thinspace\fi%
    \@ffichnb{#1}{\repdecn@mb{\v@lY}})}}
\ctr@ld@f\def\@ffichnb#1#2{{\def\@@ffich{\@ffich#1(}\edef\n@mbre{#2}%
    \expandafter\@@ffich\n@mbre)}}
\ctr@ld@f\def\@ffich#1(#2.#3){{#2\ifnum#1>\z@.\fi\def\dig@ts{#3}\s@mme=\z@%
    \loop\ifnum\s@mme<#1\expandafter\@ffichdec\dig@ts:\advance\s@mme\@ne\repeat}}
\ctr@ld@f\def\@ffichdec#1#2:{\relax#1\def\dig@ts{#20}}
\ctr@ld@f\def\figcoordTD#1{{\v@lX=\ptT@unit@\v@lX\v@lY=\ptT@unit@\v@lY\v@lZ=\ptT@unit@\v@lZ%
    \ifr@undcoord\ifcase#1\v@leur=0.5pt\or\v@leur=0.05pt\or\v@leur=0.005pt%
    \or\v@leur=0.0005pt\else\v@leur=\z@\fi%
    \ifdim\v@lX<\z@\advance\v@lX-\v@leur\else\advance\v@lX\v@leur\fi%
    \ifdim\v@lY<\z@\advance\v@lY-\v@leur\else\advance\v@lY\v@leur\fi%
    \ifdim\v@lZ<\z@\advance\v@lZ-\v@leur\else\advance\v@lZ\v@leur\fi\fi%
    (\@ffichnb{#1}{\repdecn@mb{\v@lX}},\ifmmode\else\thinspace\fi%
     \@ffichnb{#1}{\repdecn@mb{\v@lY}},\ifmmode\else\thinspace\fi%
     \@ffichnb{#1}{\repdecn@mb{\v@lZ}})}}
\ctr@ld@f\def\figsetroundcoord#1{\expandafter\Figsetr@undcoord#1:\ignorespaces}
\ctr@ld@f\def\Figsetr@undcoord#1#2:{\if#1n\r@undcoordfalse\else\r@undcoordtrue\fi}
\ctr@ld@f\def\Figsetwr@te#1=#2|{\keln@mun#1|%
    \def\n@mref{m}\ifx\l@debut\n@mref\figsetmark{#2}\else
    \immediate\write16{*** Unknown attribute: \BS@ figset (..., #1=...)}%
    \fi}
\ctr@ld@f\def\figsetmark#1{\c@nsymb={#1}\setbox\Gb@xSC=\hbox{\the\c@nsymb}\ignorespaces}
\ctr@ln@m\ptn@me
\ctr@ld@f\def\figsetptname#1{\def\ptn@me##1{#1}\ignorespaces}
\ctr@ld@f\def\FigWrit@L#1:#2(#3,#4){\ignorespaces\@keldist\v@leur{#3}\@keldist\delt@{#4}%
    \C@rp@r@m\def\list@num{#1}\@ecfor\p@int:=\list@num\do{\FigWrit@pt{\p@int}{#2}}}
\ctr@ld@f\def\FigWrit@pt#1#2{\FigWp@r@m{#1}{#2}\Vc@rrect\figWp@si%
    \ifdim\wd\Gb@xSC>\z@\b@undb@xSC{\v@lX}{\v@lY}\fi\figWBB@x}
\ctr@ld@f\def\FigWp@r@m#1#2{\Figg@tXY{#1}%
    \setbox\Gb@x=\hbox{\def\t@xt@{#2}\ifx\t@xt@\empty\Figg@tT{#1}\else#2\fi}\c@lprojSP}
\ctr@ld@f\let\Vc@rrect=\relax
\ctr@ld@f\let\C@rp@r@m=\relax
\ctr@ld@f\def\figwrite[#1]#2{{\ignorespaces\def\list@num{#1}\@ecfor\p@int:=\list@num\do{%
    \setbox\Gb@x=\hbox{\def\t@xt@{#2}\ifx\t@xt@\empty\Figg@tT{\p@int}\else#2\fi}%
    \Figwrit@{\p@int}}}\ignorespaces}
\ctr@ld@f\def\Figwrit@#1{\Figg@tXY{#1}\c@lprojSP%
    \rlap{\kern\v@lX\raise\v@lY\hbox{\unhcopy\Gb@x}}\v@leur=\v@lY%
    \advance\v@lY\ht\Gb@x\b@undb@x{\v@lX}{\v@lY}\advance\v@lX\wd\Gb@x%
    \v@lY=\v@leur\advance\v@lY-\dp\Gb@x\b@undb@x{\v@lX}{\v@lY}}
\ctr@ld@f\def\figwritec[#1]#2{{\ignorespaces\def\list@num{#1}%
    \@ecfor\p@int:=\list@num\do{\Figwrit@c{\p@int}{#2}}}\ignorespaces}
\ctr@ld@f\def\Figwrit@c#1#2{\FigWp@r@m{#1}{#2}%
    \rlap{\kern\v@lX\raise\v@lY\hbox{\rlap{\kern-.5\wd\Gb@x%
    \raise-.5\ht\Gb@x\hbox{\raise.5\dp\Gb@x\hbox{\unhcopy\Gb@x}}}}}%
    \v@leur=\ht\Gb@x\advance\v@leur\dp\Gb@x%
    \advance\v@lX-.5\wd\Gb@x\advance\v@lY-.5\v@leur\b@undb@x{\v@lX}{\v@lY}%
    \advance\v@lX\wd\Gb@x\advance\v@lY\v@leur\b@undb@x{\v@lX}{\v@lY}}
\ctr@ld@f\def\figwritep[#1]{{\ignorespaces\def\list@num{#1}\setbox\Gb@x=\hbox{\c@nterpt}%
    \@ecfor\p@int:=\list@num\do{\Figwrit@{\p@int}}}\ignorespaces}
\ctr@ld@f\def\figwritew#1:#2(#3){\figwritegcw#1:{#2}(#3,0pt)}
\ctr@ld@f\def\figwritee#1:#2(#3){\figwritegce#1:{#2}(#3,0pt)}
\ctr@ld@f\def\figwriten#1:#2(#3){{\def\Vc@rrect{\v@lZ=\v@leur\advance\v@lZ\dp\Gb@x}%
    \Figwrit@NS#1:{#2}(#3)}\ignorespaces}
\ctr@ld@f\def\figwrites#1:#2(#3){{\def\Vc@rrect{\v@lZ=-\v@leur\advance\v@lZ-\ht\Gb@x}%
    \Figwrit@NS#1:{#2}(#3)}\ignorespaces}
\ctr@ld@f\def\Figwrit@NS#1:#2(#3){\let\figWp@si=\FigWp@siNS\let\figWBB@x=\FigWBB@xNS%
    \FigWrit@L#1:{#2}(#3,0pt)}
\ctr@ld@f\def\FigWp@siNS{\rlap{\kern\v@lX\raise\v@lY\hbox{\rlap{\kern-.5\wd\Gb@x%
    \raise\v@lZ\hbox{\unhcopy\Gb@x}}\c@nterpt}}}
\ctr@ld@f\def\FigWBB@xNS{\advance\v@lY\v@lZ%
    \advance\v@lY-\dp\Gb@x\advance\v@lX-.5\wd\Gb@x\b@undb@x{\v@lX}{\v@lY}%
    \advance\v@lY\ht\Gb@x\advance\v@lY\dp\Gb@x%
    \advance\v@lX\wd\Gb@x\b@undb@x{\v@lX}{\v@lY}}
\ctr@ld@f\def\figwritenw#1:#2(#3){{\let\figWp@si=\FigWp@sigW\let\figWBB@x=\FigWBB@xgWE%
    \def\C@rp@r@m{\v@leur=\unssqrttw@\v@leur\delt@=\v@leur%
    \ifdim\delt@=\z@\delt@=\epsil@n\fi}\let@xte={-}\FigWrit@L#1:{#2}(#3,0pt)}\ignorespaces}
\ctr@ld@f\def\figwritesw#1:#2(#3){{\let\figWp@si=\FigWp@sigW\let\figWBB@x=\FigWBB@xgWE%
    \def\C@rp@r@m{\v@leur=\unssqrttw@\v@leur\delt@=-\v@leur%
    \ifdim\delt@=\z@\delt@=-\epsil@n\fi}\let@xte={-}\FigWrit@L#1:{#2}(#3,0pt)}\ignorespaces}
\ctr@ld@f\def\figwritene#1:#2(#3){{\let\figWp@si=\FigWp@sigE\let\figWBB@x=\FigWBB@xgWE%
    \def\C@rp@r@m{\v@leur=\unssqrttw@\v@leur\delt@=\v@leur%
    \ifdim\delt@=\z@\delt@=\epsil@n\fi}\let@xte={}\FigWrit@L#1:{#2}(#3,0pt)}\ignorespaces}
\ctr@ld@f\def\figwritese#1:#2(#3){{\let\figWp@si=\FigWp@sigE\let\figWBB@x=\FigWBB@xgWE%
    \def\C@rp@r@m{\v@leur=\unssqrttw@\v@leur\delt@=-\v@leur%
    \ifdim\delt@=\z@\delt@=-\epsil@n\fi}\let@xte={}\FigWrit@L#1:{#2}(#3,0pt)}\ignorespaces}
\ctr@ld@f\def\figwritegw#1:#2(#3,#4){{\let\figWp@si=\FigWp@sigW\let\figWBB@x=\FigWBB@xgWE%
    \let@xte={-}\FigWrit@L#1:{#2}(#3,#4)}\ignorespaces}
\ctr@ld@f\def\figwritege#1:#2(#3,#4){{\let\figWp@si=\FigWp@sigE\let\figWBB@x=\FigWBB@xgWE%
    \let@xte={}\FigWrit@L#1:{#2}(#3,#4)}\ignorespaces}
\ctr@ld@f\def\FigWp@sigW{\v@lXa=\z@\v@lYa=\ht\Gb@x\advance\v@lYa\dp\Gb@x%
    \ifdim\delt@>\z@\relax%
    \rlap{\kern\v@lX\raise\v@lY\hbox{\rlap{\kern-\wd\Gb@x\kern-\v@leur%
          \raise\delt@\hbox{\raise\dp\Gb@x\hbox{\unhcopy\Gb@x}}}\c@nterpt}}%
    \else\ifdim\delt@<\z@\relax\v@lYa=-\v@lYa%
    \rlap{\kern\v@lX\raise\v@lY\hbox{\rlap{\kern-\wd\Gb@x\kern-\v@leur%
          \raise\delt@\hbox{\raise-\ht\Gb@x\hbox{\unhcopy\Gb@x}}}\c@nterpt}}%
    \else\v@lXa=-.5\v@lYa%
    \rlap{\kern\v@lX\raise\v@lY\hbox{\rlap{\kern-\wd\Gb@x\kern-\v@leur%
          \raise-.5\ht\Gb@x\hbox{\raise.5\dp\Gb@x\hbox{\unhcopy\Gb@x}}}\c@nterpt}}%
    \fi\fi}
\ctr@ld@f\def\FigWp@sigE{\v@lXa=\z@\v@lYa=\ht\Gb@x\advance\v@lYa\dp\Gb@x%
    \ifdim\delt@>\z@\relax%
    \rlap{\kern\v@lX\raise\v@lY\hbox{\c@nterpt\kern\v@leur%
          \raise\delt@\hbox{\raise\dp\Gb@x\hbox{\unhcopy\Gb@x}}}}%
    \else\ifdim\delt@<\z@\relax\v@lYa=-\v@lYa%
    \rlap{\kern\v@lX\raise\v@lY\hbox{\c@nterpt\kern\v@leur%
          \raise\delt@\hbox{\raise-\ht\Gb@x\hbox{\unhcopy\Gb@x}}}}%
    \else\v@lXa=-.5\v@lYa%
    \rlap{\kern\v@lX\raise\v@lY\hbox{\c@nterpt\kern\v@leur%
          \raise-.5\ht\Gb@x\hbox{\raise.5\dp\Gb@x\hbox{\unhcopy\Gb@x}}}}%
    \fi\fi}
\ctr@ld@f\def\FigWBB@xgWE{\advance\v@lY\delt@%
    \advance\v@lX\the\let@xte\v@leur\advance\v@lY\v@lXa\b@undb@x{\v@lX}{\v@lY}%
    \advance\v@lX\the\let@xte\wd\Gb@x\advance\v@lY\v@lYa\b@undb@x{\v@lX}{\v@lY}}
\ctr@ld@f\def\figwritegcw#1:#2(#3,#4){{\let\figWp@si=\FigWp@sigcW\let\figWBB@x=\FigWBB@xgcWE%
    \let@xte={-}\FigWrit@L#1:{#2}(#3,#4)}\ignorespaces}
\ctr@ld@f\def\figwritegce#1:#2(#3,#4){{\let\figWp@si=\FigWp@sigcE\let\figWBB@x=\FigWBB@xgcWE%
    \let@xte={}\FigWrit@L#1:{#2}(#3,#4)}\ignorespaces}
\ctr@ld@f\def\FigWp@sigcW{\rlap{\kern\v@lX\raise\v@lY\hbox{\rlap{\kern-\wd\Gb@x\kern-\v@leur%
     \raise-.5\ht\Gb@x\hbox{\raise\delt@\hbox{\raise.5\dp\Gb@x\hbox{\unhcopy\Gb@x}}}}%
     \c@nterpt}}}
\ctr@ld@f\def\FigWp@sigcE{\rlap{\kern\v@lX\raise\v@lY\hbox{\c@nterpt\kern\v@leur%
    \raise-.5\ht\Gb@x\hbox{\raise\delt@\hbox{\raise.5\dp\Gb@x\hbox{\unhcopy\Gb@x}}}}}}
\ctr@ld@f\def\FigWBB@xgcWE{\v@lZ=\ht\Gb@x\advance\v@lZ\dp\Gb@x%
    \advance\v@lX\the\let@xte\v@leur\advance\v@lY\delt@\advance\v@lY.5\v@lZ%
    \b@undb@x{\v@lX}{\v@lY}%
    \advance\v@lX\the\let@xte\wd\Gb@x\advance\v@lY-\v@lZ\b@undb@x{\v@lX}{\v@lY}}
\ctr@ld@f\def\figwritebn#1:#2(#3){{\def\Vc@rrect{\v@lZ=\v@leur}\Figwrit@NS#1:{#2}(#3)}\ignorespaces}
\ctr@ld@f\def\figwritebs#1:#2(#3){{\def\Vc@rrect{\v@lZ=-\v@leur}\Figwrit@NS#1:{#2}(#3)}\ignorespaces}
\ctr@ld@f\def\figwritebw#1:#2(#3){{\let\figWp@si=\FigWp@sibW\let\figWBB@x=\FigWBB@xbWE%
    \let@xte={-}\FigWrit@L#1:{#2}(#3,0pt)}\ignorespaces}
\ctr@ld@f\def\figwritebe#1:#2(#3){{\let\figWp@si=\FigWp@sibE\let\figWBB@x=\FigWBB@xbWE%
    \let@xte={}\FigWrit@L#1:{#2}(#3,0pt)}\ignorespaces}
\ctr@ld@f\def\FigWp@sibW{\rlap{\kern\v@lX\raise\v@lY\hbox{\rlap{\kern-\wd\Gb@x\kern-\v@leur%
          \hbox{\unhcopy\Gb@x}}\c@nterpt}}}
\ctr@ld@f\def\FigWp@sibE{\rlap{\kern\v@lX\raise\v@lY\hbox{\c@nterpt\kern\v@leur%
          \hbox{\unhcopy\Gb@x}}}}
\ctr@ld@f\def\FigWBB@xbWE{\v@lZ=\ht\Gb@x\advance\v@lZ\dp\Gb@x%
    \advance\v@lX\the\let@xte\v@leur\advance\v@lY\ht\Gb@x\b@undb@x{\v@lX}{\v@lY}%
    \advance\v@lX\the\let@xte\wd\Gb@x\advance\v@lY-\v@lZ\b@undb@x{\v@lX}{\v@lY}}
\ctr@ln@w{newread}\frf@g  \ctr@ln@w{newwrite}\fwf@g
\ctr@ln@w{newif}\ifcurr@ntPS
\ctr@ln@w{newif}\ifps@cri
\ctr@ln@w{newif}\ifUse@llipse
\ctr@ln@w{newif}\ifpsdebugmode \psdebugmodefalse 
\ctr@ln@w{newif}\ifPDFm@ke
\ifx\pdfliteral\undefined\else\ifnum\pdfoutput>\z@\PDFm@ketrue\fi\fi
\ctr@ld@f\def\initPDF@rDVI{%
\ifPDFm@ke
 \let\figscan=\figscan@E
 \let\newGr@FN=\newGr@FNPDF
 \ctr@ld@f\def\c@mcurveto{c}
 \ctr@ld@f\def\c@mfill{f}
 \ctr@ld@f\def\c@mgsave{q}
 \ctr@ld@f\def\c@mgrestore{Q}
 \ctr@ld@f\def\c@mlineto{l}
 \ctr@ld@f\def\c@mmoveto{m}
 \ctr@ld@f\def\c@msetgray{g}     \ctr@ld@f\def\c@msetgrayStroke{G}
 \ctr@ld@f\def\c@msetcmykcolor{k}\ctr@ld@f\def\c@msetcmykcolorStroke{K}
 \ctr@ld@f\def\c@msetrgbcolor{rg}\ctr@ld@f\def\c@msetrgbcolorStroke{RG}
 \ctr@ld@f\def\d@fprimarC@lor{\curr@ntcolor\space\curr@ntcolorc@md%
               \space\curr@ntcolor\space\curr@ntcolorc@mdStroke}
 \ctr@ld@f\def\d@fsecondC@lor{\sec@ndcolor\space\sec@ndcolorc@md%
               \space\sec@ndcolor\space\sec@ndcolorc@mdStroke}
 \ctr@ld@f\def\d@fthirdC@lor{\th@rdcolor\space\th@rdcolorc@md%
              \space\th@rdcolor\space\th@rdcolorc@mdStroke}
 \ctr@ld@f\def\c@msetdash{d}
 \ctr@ld@f\def\c@msetlinejoin{j}
 \ctr@ld@f\def\c@msetlinewidth{w}
 \ctr@ld@f\def\f@gclosestroke{\immediate\write\fwf@g{s}}
 \ctr@ld@f\def\f@gfill{\immediate\write\fwf@g{\fillc@md}}
 \ctr@ld@f\def\f@gnewpath{}
 \ctr@ld@f\def\f@gstroke{\immediate\write\fwf@g{S}}
\else
 \let\figinsertE=\figinsert
 \let\newGr@FN=\newGr@FNDVI
 \ctr@ld@f\def\c@mcurveto{curveto}
 \ctr@ld@f\def\c@mfill{fill}
 \ctr@ld@f\def\c@mgsave{gsave}
 \ctr@ld@f\def\c@mgrestore{grestore}
 \ctr@ld@f\def\c@mlineto{lineto}
 \ctr@ld@f\def\c@mmoveto{moveto}
 \ctr@ld@f\def\c@msetgray{setgray}          \ctr@ld@f\def\c@msetgrayStroke{}
 \ctr@ld@f\def\c@msetcmykcolor{setcmykcolor}\ctr@ld@f\def\c@msetcmykcolorStroke{}
 \ctr@ld@f\def\c@msetrgbcolor{setrgbcolor}  \ctr@ld@f\def\c@msetrgbcolorStroke{}
 \ctr@ld@f\def\d@fprimarC@lor{\curr@ntcolor\space\curr@ntcolorc@md}
 \ctr@ld@f\def\d@fsecondC@lor{\sec@ndcolor\space\sec@ndcolorc@md}
 \ctr@ld@f\def\d@fthirdC@lor{\th@rdcolor\space\th@rdcolorc@md}
 \ctr@ld@f\def\c@msetdash{setdash}
 \ctr@ld@f\def\c@msetlinejoin{setlinejoin}
 \ctr@ld@f\def\c@msetlinewidth{setlinewidth}
 \ctr@ld@f\def\f@gclosestroke{\immediate\write\fwf@g{closepath\space stroke}}
 \ctr@ld@f\def\f@gfill{\immediate\write\fwf@g{\fillc@md}}
 \ctr@ld@f\def\f@gnewpath{\immediate\write\fwf@g{newpath}}
 \ctr@ld@f\def\f@gstroke{\immediate\write\fwf@g{stroke}}
\fi}
\ctr@ld@f\def\c@pypsfile#1#2{\c@pyfil@{\immediate\write#1}{#2}}
\ctr@ld@f\def\Figinclud@PDF#1#2{\openin\frf@g=#1\pdfliteral{q #2 0 0 #2 0 0 cm}%
    \c@pyfil@{\pdfliteral}{\frf@g}\pdfliteral{Q}\closein\frf@g}
\ctr@ln@w{newif}\ifmored@ta
\ctr@ln@m\bl@nkline
\ctr@ld@f\def\c@pyfil@#1#2{\def\bl@nkline{\par}{\catcode`\%=12
    \loop\ifeof#2\mored@tafalse\else\mored@tatrue\immediate\read#2 to\tr@c
    \ifx\tr@c\bl@nkline\else#1{\tr@c}\fi\fi\ifmored@ta\repeat}}
\ctr@ld@f\def\keln@mun#1#2|{\def\l@debut{#1}\def\l@suite{#2}}
\ctr@ld@f\def\keln@mde#1#2#3|{\def\l@debut{#1#2}\def\l@suite{#3}}
\ctr@ld@f\def\keln@mtr#1#2#3#4|{\def\l@debut{#1#2#3}\def\l@suite{#4}}
\ctr@ld@f\def\keln@mqu#1#2#3#4#5|{\def\l@debut{#1#2#3#4}\def\l@suite{#5}}
\ctr@ld@f\let\@psffilein=\frf@g 
\ctr@ln@w{newif}\if@psffileok    
\ctr@ln@w{newif}\if@psfbbfound   
\ctr@ln@w{newif}\if@psfverbose   
\@psfverbosetrue
\ctr@ln@m\@psfllx \ctr@ln@m\@psflly
\ctr@ln@m\@psfurx \ctr@ln@m\@psfury
\ctr@ln@m\resetcolonc@tcode
\ctr@ld@f\def\@psfgetbb#1{\global\@psfbbfoundfalse%
\global\def\@psfllx{0}\global\def\@psflly{0}%
\global\def\@psfurx{30}\global\def\@psfury{30}%
\openin\@psffilein=#1\relax
\ifeof\@psffilein\errmessage{I couldn't open #1, will ignore it}\else
   \edef\resetcolonc@tcode{\catcode`\noexpand\:\the\catcode`\:\relax}%
   {\@psffileoktrue \chardef\other=12
    \def\do##1{\catcode`##1=\other}\dospecials \catcode`\ =10 \resetcolonc@tcode
    \loop
       \read\@psffilein to \@psffileline
       \ifeof\@psffilein\@psffileokfalse\else
          \expandafter\@psfaux\@psffileline:. \\%
       \fi
   \if@psffileok\repeat
   \if@psfbbfound\else
    \if@psfverbose\message{No bounding box comment in #1; using defaults}\fi\fi
   }\closein\@psffilein\fi}%
\ctr@ln@m\@psfbblit
\ctr@ln@m\@psfpercent
{\catcode`\%=12 \global\let\@psfpercent=
\ctr@ln@m\@psfaux
\long\def\@psfaux#1#2:#3\\{\ifx#1\@psfpercent
   \def\testit{#2}\ifx\testit\@psfbblit
      \@psfgrab #3 . . . \\%
      \@psffileokfalse
      \global\@psfbbfoundtrue
   \fi\else\ifx#1\par\else\@psffileokfalse\fi\fi}%
\ctr@ld@f\def\@psfempty{}%
\ctr@ld@f\def\@psfgrab #1 #2 #3 #4 #5\\{%
\global\def\@psfllx{#1}\ifx\@psfllx\@psfempty
      \@psfgrab #2 #3 #4 #5 .\\\else
   \global\def\@psflly{#2}%
   \global\def\@psfurx{#3}\global\def\@psfury{#4}\fi}%
\ctr@ld@f\def\PSwrit@cmd#1#2#3{{\Figg@tXY{#1}\c@lprojSP\b@undb@x{\v@lX}{\v@lY}%
    \v@lX=\ptT@ptps\v@lX\v@lY=\ptT@ptps\v@lY%
    \immediate\write#3{\repdecn@mb{\v@lX}\space\repdecn@mb{\v@lY}\space#2}}}
\ctr@ld@f\def\PSwrit@cmdS#1#2#3#4#5{{\Figg@tXY{#1}\c@lprojSP\b@undb@x{\v@lX}{\v@lY}%
    \global\result@t=\v@lX\global\result@@t=\v@lY%
    \v@lX=\ptT@ptps\v@lX\v@lY=\ptT@ptps\v@lY%
    \immediate\write#3{\repdecn@mb{\v@lX}\space\repdecn@mb{\v@lY}\space#2}}%
    \edef#4{\the\result@t}\edef#5{\the\result@@t}}
\ctr@ld@f\def\psaltitude#1[#2,#3,#4]{{\ifcurr@ntPS\ifps@cri%
    \PSc@mment{psaltitude Square Dim=#1, Triangle=[#2 / #3,#4]}%
    \s@uvc@ntr@l\et@tpsaltitude\resetc@ntr@l{2}\figptorthoprojline-5:=#2/#3,#4/%
    \figvectP -1[#3,#4]\n@rminf{\v@leur}{-1}\vecunit@{-3}{-1}%
    \figvectP -1[-5,#3]\n@rminf{\v@lmin}{-1}\figvectP -2[-5,#4]\n@rminf{\v@lmax}{-2}%
    \ifdim\v@lmin<\v@lmax\s@mme=#3\else\v@lmax=\v@lmin\s@mme=#4\fi%
    \figvectP -4[-5,#2]\vecunit@{-4}{-4}\delt@=#1\unit@%
    \edef\t@ille{\repdecn@mb{\delt@}}\figpttra-1:=-5/\t@ille,-3/%
    \figptstra-3=-5,-1/\t@ille,-4/\psline[#2,-5]\s@uvdash{\typ@dash}%
    \pssetdash{\defaultdash}\psline[-1,-2,-3]\pssetdash{\typ@dash}%
    \ifdim\v@leur<\v@lmax\Pss@tsecondSt\psline[-5,\the\s@mme]\Psrest@reSt\fi%
    \PSc@mment{End psaltitude}\resetc@ntr@l\et@tpsaltitude\fi\fi}}
\ctr@ld@f\def\Ps@rcerc#1;#2(#3,#4){\ellBB@x#1;#2,#2(#3,#4,0)%
    \f@gnewpath{\delt@=#2\unit@\delt@=\ptT@ptps\delt@%
    \BdingB@xfalse%
    \PSwrit@cmd{#1}{\repdecn@mb{\delt@}\space #3\space #4\space arc}{\fwf@g}}}
\ctr@ln@m\psarccirc
\ctr@ld@f\def\psarccircDD#1;#2(#3,#4){\ifcurr@ntPS\ifps@cri%
    \PSc@mment{psarccircDD Center=#1 ; Radius=#2 (Ang1=#3, Ang2=#4)}%
    \iffillm@de\Ps@rcerc#1;#2(#3,#4)%
    \f@gfill%
    \else\Ps@rcerc#1;#2(#3,#4)\f@gstroke\fi%
    \PSc@mment{End psarccircDD}\fi\fi}
\ctr@ld@f\def\psarccircTD#1,#2,#3;#4(#5,#6){{\ifcurr@ntPS\ifps@cri\s@uvc@ntr@l\et@tpsarccircTD%
    \PSc@mment{psarccircTD Center=#1,P1=#2,P2=#3 ; Radius=#4 (Ang1=#5, Ang2=#6)}%
    \setc@ntr@l{2}\c@lExtAxes#1,#2,#3(#4)\psarcellPATD#1,-4,-5(#5,#6)%
    \PSc@mment{End psarccircTD}\resetc@ntr@l\et@tpsarccircTD\fi\fi}}
\ctr@ld@f\def\c@lExtAxes#1,#2,#3(#4){%
    \figvectPTD-5[#1,#2]\vecunit@{-5}{-5}\figvectNTD-4[#1,#2,#3]\vecunit@{-4}{-4}%
    \figvectNVTD-3[-4,-5]\delt@=#4\unit@\edef\r@yon{\repdecn@mb{\delt@}}%
    \figpttra-4:=#1/\r@yon,-5/\figpttra-5:=#1/\r@yon,-3/}
\ctr@ln@m\psarccircP
\ctr@ld@f\def\psarccircPDD#1;#2[#3,#4]{{\ifcurr@ntPS\ifps@cri\s@uvc@ntr@l\et@tpsarccircPDD%
    \PSc@mment{psarccircPDD Center=#1; Radius=#2, [P1=#3, P2=#4]}%
    \Ps@ngleparam#1;#2[#3,#4]\ifdim\v@lmin>\v@lmax\advance\v@lmax\DePI@deg\fi%
    \edef\@ngdeb{\repdecn@mb{\v@lmin}}\edef\@ngfin{\repdecn@mb{\v@lmax}}%
    \psarccirc#1;\r@dius(\@ngdeb,\@ngfin)%
    \PSc@mment{End psarccircPDD}\resetc@ntr@l\et@tpsarccircPDD\fi\fi}}
\ctr@ld@f\def\psarccircPTD#1;#2[#3,#4,#5]{{\ifcurr@ntPS\ifps@cri\s@uvc@ntr@l\et@tpsarccircPTD%
    \PSc@mment{psarccircPTD Center=#1; Radius=#2, [P1=#3, P2=#4, P3=#5]}%
    \setc@ntr@l{2}\c@lExtAxes#1,#3,#5(#2)\psarcellPP#1,-4,-5[#3,#4]%
    \PSc@mment{End psarccircPTD}\resetc@ntr@l\et@tpsarccircPTD\fi\fi}}
\ctr@ld@f\def\Ps@ngleparam#1;#2[#3,#4]{\setc@ntr@l{2}%
    \figvectPDD-1[#1,#3]\vecunit@{-1}{-1}\Figg@tXY{-1}\arct@n\v@lmin(\v@lX,\v@lY)%
    \figvectPDD-2[#1,#4]\vecunit@{-2}{-2}\Figg@tXY{-2}\arct@n\v@lmax(\v@lX,\v@lY)%
    \v@lmin=\rdT@deg\v@lmin\v@lmax=\rdT@deg\v@lmax%
    \v@leur=#2pt\maxim@m{\mili@u}{-\v@leur}{\v@leur}%
    \edef\r@dius{\repdecn@mb{\mili@u}}}
\ctr@ld@f\def\Ps@rcercBz#1;#2(#3,#4){\Ps@rellBz#1;#2,#2(#3,#4,0)}
\ctr@ld@f\def\Ps@rellBz#1;#2,#3(#4,#5,#6){%
    \ellBB@x#1;#2,#3(#4,#5,#6)\BdingB@xfalse%
    \c@lNbarcs{#4}{#5}\v@leur=#4pt\setc@ntr@l{2}\figptell-13::#1;#2,#3(#4,#6)%
    \f@gnewpath\PSwrit@cmd{-13}{\c@mmoveto}{\fwf@g}%
    \s@mme=\z@\bcl@rellBz#1;#2,#3(#6)\BdingB@xtrue}
\ctr@ld@f\def\bcl@rellBz#1;#2,#3(#4){\relax%
    \ifnum\s@mme<\p@rtent\advance\s@mme\@ne%
    \advance\v@leur\delt@\edef\@ngle{\repdecn@mb\v@leur}\figptell-14::#1;#2,#3(\@ngle,#4)%
    \advance\v@leur\delt@\edef\@ngle{\repdecn@mb\v@leur}\figptell-15::#1;#2,#3(\@ngle,#4)%
    \advance\v@leur\delt@\edef\@ngle{\repdecn@mb\v@leur}\figptell-16::#1;#2,#3(\@ngle,#4)%
    \figptscontrolDD-18[-13,-14,-15,-16]%
    \PSwrit@cmd{-18}{}{\fwf@g}\PSwrit@cmd{-17}{}{\fwf@g}%
    \PSwrit@cmd{-16}{\c@mcurveto}{\fwf@g}%
    \figptcopyDD-13:/-16/\bcl@rellBz#1;#2,#3(#4)\fi}
\ctr@ld@f\def\Ps@rell#1;#2,#3(#4,#5,#6){\ellBB@x#1;#2,#3(#4,#5,#6)%
    \f@gnewpath{\v@lmin=#2\unit@\v@lmin=\ptT@ptps\v@lmin%
    \v@lmax=#3\unit@\v@lmax=\ptT@ptps\v@lmax\BdingB@xfalse%
    \PSwrit@cmd{#1}%
    {#6\space\repdecn@mb{\v@lmin}\space\repdecn@mb{\v@lmax}\space #4\space #5\space ellipse}{\fwf@g}}%
    \global\Use@llipsetrue}
\ctr@ln@m\psarcell
\ctr@ld@f\def\psarcellDD#1;#2,#3(#4,#5,#6){{\ifcurr@ntPS\ifps@cri%
    \PSc@mment{psarcellDD Center=#1 ; XRad=#2, YRad=#3 (Ang1=#4, Ang2=#5, Inclination=#6)}%
    \iffillm@de\Ps@rell#1;#2,#3(#4,#5,#6)%
    \f@gfill%
    \else\Ps@rell#1;#2,#3(#4,#5,#6)\f@gstroke\fi%
    \PSc@mment{End psarcellDD}\fi\fi}}
\ctr@ld@f\def\psarcellTD#1;#2,#3(#4,#5,#6){{\ifcurr@ntPS\ifps@cri\s@uvc@ntr@l\et@tpsarcellTD%
    \PSc@mment{psarcellTD Center=#1 ; XRad=#2, YRad=#3 (Ang1=#4, Ang2=#5, Inclination=#6)}%
    \setc@ntr@l{2}\figpttraC -8:=#1/#2,0,0/\figpttraC -7:=#1/0,#3,0/%
    \figvectC -4(0,0,1)\figptsrot -8=-8,-7/#1,#6,-4/\psarcellPATD#1,-8,-7(#4,#5)%
    \PSc@mment{End psarcellTD}\resetc@ntr@l\et@tpsarcellTD\fi\fi}}
\ctr@ln@m\psarcellPA
\ctr@ld@f\def\psarcellPADD#1,#2,#3(#4,#5){{\ifcurr@ntPS\ifps@cri\s@uvc@ntr@l\et@tpsarcellPADD%
    \PSc@mment{psarcellPADD Center=#1,PtAxis1=#2,PtAxis2=#3 (Ang1=#4, Ang2=#5)}%
    \setc@ntr@l{2}\figvectPDD-1[#1,#2]\vecunit@DD{-1}{-1}\v@lX=\ptT@unit@\result@t%
    \edef\XR@d{\repdecn@mb{\v@lX}}\Figg@tXY{-1}\arct@n\v@lmin(\v@lX,\v@lY)%
    \v@lmin=\rdT@deg\v@lmin\edef\Inclin@{\repdecn@mb{\v@lmin}}%
    \figgetdist\YR@d[#1,#3]\psarcellDD#1;\XR@d,\YR@d(#4,#5,\Inclin@)%
    \PSc@mment{End psarcellPADD}\resetc@ntr@l\et@tpsarcellPADD\fi\fi}}
\ctr@ld@f\def\psarcellPATD#1,#2,#3(#4,#5){{\ifcurr@ntPS\ifps@cri\s@uvc@ntr@l\et@tpsarcellPATD%
    \PSc@mment{psarcellPATD Center=#1,PtAxis1=#2,PtAxis2=#3 (Ang1=#4, Ang2=#5)}%
    \iffillm@de\Ps@rellPATD#1,#2,#3(#4,#5)%
    \f@gfill%
    \else\Ps@rellPATD#1,#2,#3(#4,#5)\f@gstroke\fi%
    \PSc@mment{End psarcellPATD}\resetc@ntr@l\et@tpsarcellPATD\fi\fi}}
\ctr@ld@f\def\Ps@rellPATD#1,#2,#3(#4,#5){\let\c@lprojSP=\relax%
    \setc@ntr@l{2}\figvectPTD-1[#1,#2]\figvectPTD-2[#1,#3]\c@lNbarcs{#4}{#5}%
    \v@leur=#4pt\c@lptellP{#1}{-1}{-2}\Figptpr@j-5:/-3/%
    \f@gnewpath\PSwrit@cmdS{-5}{\c@mmoveto}{\fwf@g}{\X@un}{\Y@un}%
    \edef\C@nt@r{#1}\s@mme=\z@\bcl@rellPATD}
\ctr@ld@f\def\bcl@rellPATD{\relax%
    \ifnum\s@mme<\p@rtent\advance\s@mme\@ne%
    \advance\v@leur\delt@\c@lptellP{\C@nt@r}{-1}{-2}\Figptpr@j-4:/-3/%
    \advance\v@leur\delt@\c@lptellP{\C@nt@r}{-1}{-2}\Figptpr@j-6:/-3/%
    \advance\v@leur\delt@\c@lptellP{\C@nt@r}{-1}{-2}\Figptpr@j-3:/-3/%
    \v@lX=\z@\v@lY=\z@\Figtr@nptDD{-5}{-5}\Figtr@nptDD{2}{-3}%
    \divide\v@lX\@vi\divide\v@lY\@vi%
    \Figtr@nptDD{3}{-4}\Figtr@nptDD{-1.5}{-6}\v@lmin=\v@lX\v@lmax=\v@lY%
    \v@lX=\z@\v@lY=\z@\Figtr@nptDD{2}{-5}\Figtr@nptDD{-5}{-3}%
    \divide\v@lX\@vi\divide\v@lY\@vi\Figtr@nptDD{-1.5}{-4}\Figtr@nptDD{3}{-6}%
    \BdingB@xfalse%
    \Figp@intregDD-4:(\v@lmin,\v@lmax)\PSwrit@cmdS{-4}{}{\fwf@g}{\X@de}{\Y@de}%
    \Figp@intregDD-4:(\v@lX,\v@lY)\PSwrit@cmdS{-4}{}{\fwf@g}{\X@tr}{\Y@tr}%
    \BdingB@xtrue\PSwrit@cmdS{-3}{\c@mcurveto}{\fwf@g}{\X@qu}{\Y@qu}%
    \B@zierBB@x{1}{\Y@un}(\X@un,\X@de,\X@tr,\X@qu)%
    \B@zierBB@x{2}{\X@un}(\Y@un,\Y@de,\Y@tr,\Y@qu)%
    \edef\X@un{\X@qu}\edef\Y@un{\Y@qu}\figptcopyDD-5:/-3/\bcl@rellPATD\fi}
\ctr@ld@f\def\c@lNbarcs#1#2{%
    \delt@=#2pt\advance\delt@-#1pt\maxim@m{\v@lmax}{\delt@}{-\delt@}%
    \v@leur=\v@lmax\divide\v@leur45 \p@rtentiere{\p@rtent}{\v@leur}\advance\p@rtent\@ne%
    \s@mme=\p@rtent\multiply\s@mme\thr@@\divide\delt@\s@mme}
\ctr@ld@f\def\psarcellPP#1,#2,#3[#4,#5]{{\ifcurr@ntPS\ifps@cri\s@uvc@ntr@l\et@tpsarcellPP%
    \PSc@mment{psarcellPP Center=#1,PtAxis1=#2,PtAxis2=#3 [Point1=#4, Point2=#5]}%
    \setc@ntr@l{2}\figvectP-2[#1,#3]\vecunit@{-2}{-2}\v@lmin=\result@t%
    \invers@{\v@lmax}{\v@lmin}%
    \figvectP-1[#1,#2]\vecunit@{-1}{-1}\v@leur=\result@t%
    \v@leur=\repdecn@mb{\v@lmax}\v@leur\edef\AsB@{\repdecn@mb{\v@leur}}
    \c@lAngle{#1}{#4}{\v@lmin}\edef\@ngdeb{\repdecn@mb{\v@lmin}}%
    \c@lAngle{#1}{#5}{\v@lmax}\ifdim\v@lmin>\v@lmax\advance\v@lmax\DePI@deg\fi%
    \edef\@ngfin{\repdecn@mb{\v@lmax}}\psarcellPA#1,#2,#3(\@ngdeb,\@ngfin)%
    \PSc@mment{End psarcellPP}\resetc@ntr@l\et@tpsarcellPP\fi\fi}}
\ctr@ld@f\def\c@lAngle#1#2#3{\figvectP-3[#1,#2]%
    \c@lproscal\delt@[-3,-1]\c@lproscal\v@leur[-3,-2]%
    \v@leur=\AsB@\v@leur\arct@n#3(\delt@,\v@leur)#3=\rdT@deg#3}
\ctr@ln@w{newif}\if@rrowratio\@rrowratiotrue
\ctr@ln@w{newif}\if@rrowhfill
\ctr@ln@w{newif}\if@rrowhout
\ctr@ld@f\def\Psset@rrowhe@d#1=#2|{\keln@mun#1|%
    \def\n@mref{a}\ifx\l@debut\n@mref\pssetarrowheadangle{#2}\else
    \def\n@mref{f}\ifx\l@debut\n@mref\pssetarrowheadfill{#2}\else
    \def\n@mref{l}\ifx\l@debut\n@mref\pssetarrowheadlength{#2}\else
    \def\n@mref{o}\ifx\l@debut\n@mref\pssetarrowheadout{#2}\else
    \def\n@mref{r}\ifx\l@debut\n@mref\pssetarrowheadratio{#2}\else
    \immediate\write16{*** Unknown attribute: \BS@ psset arrowhead(..., #1=...)}%
    \fi\fi\fi\fi\fi}
\ctr@ln@m\@rrowheadangle
\ctr@ln@m\C@AHANG \ctr@ln@m\S@AHANG \ctr@ln@m\UNSS@N
\ctr@ld@f\def\pssetarrowheadangle#1{\edef\@rrowheadangle{#1}{\c@ssin{\C@}{\S@}{#1}%
    \xdef\C@AHANG{\C@}\xdef\S@AHANG{\S@}\v@lmax=\S@ pt%
    \invers@{\v@leur}{\v@lmax}\maxim@m{\v@leur}{\v@leur}{-\v@leur}%
    \xdef\UNSS@N{\the\v@leur}}}
\ctr@ld@f\def\pssetarrowheadfill#1{\expandafter\set@rrowhfill#1:}
\ctr@ld@f\def\set@rrowhfill#1#2:{\if#1n\@rrowhfillfalse\else\@rrowhfilltrue\fi}
\ctr@ld@f\def\pssetarrowheadout#1{\expandafter\set@rrowhout#1:}
\ctr@ld@f\def\set@rrowhout#1#2:{\if#1n\@rrowhoutfalse\else\@rrowhouttrue\fi}
\ctr@ln@m\@rrowheadlength
\ctr@ld@f\def\pssetarrowheadlength#1{\edef\@rrowheadlength{#1}\@rrowratiofalse}
\ctr@ln@m\@rrowheadratio
\ctr@ld@f\def\pssetarrowheadratio#1{\edef\@rrowheadratio{#1}\@rrowratiotrue}
\ctr@ln@m\defaultarrowheadlength
\ctr@ld@f\def\psresetarrowhead{%
    \pssetarrowheadangle{\defaultarrowheadangle}%
    \pssetarrowheadfill{\defaultarrowheadfill}%
    \pssetarrowheadout{\defaultarrowheadout}%
    \pssetarrowheadratio{\defaultarrowheadratio}%
    \d@fm@cdim\defaultarrowheadlength{\defaulth@rdahlength}
    \pssetarrowheadlength{\defaultarrowheadlength}}
\ctr@ld@f\def\defaultarrowheadratio{0.1}
\ctr@ld@f\def\defaultarrowheadangle{20}
\ctr@ld@f\def\defaultarrowheadfill{no}
\ctr@ld@f\def\defaultarrowheadout{no}
\ctr@ld@f\def\defaulth@rdahlength{8pt}
\ctr@ln@m\psarrow
\ctr@ld@f\def\psarrowDD[#1,#2]{{\ifcurr@ntPS\ifps@cri\s@uvc@ntr@l\et@tpsarrow%
    \PSc@mment{psarrowDD [Pt1,Pt2]=[#1,#2]}\pssetfillmode{no}%
    \psarrowheadDD[#1,#2]\setc@ntr@l{2}\psline[#1,-3]%
    \PSc@mment{End psarrowDD}\resetc@ntr@l\et@tpsarrow\fi\fi}}
\ctr@ld@f\def\psarrowTD[#1,#2]{{\ifcurr@ntPS\ifps@cri\s@uvc@ntr@l\et@tpsarrowTD%
    \PSc@mment{psarrowTD [Pt1,Pt2]=[#1,#2]}\resetc@ntr@l{2}%
    \Figptpr@j-5:/#1/\Figptpr@j-6:/#2/\let\c@lprojSP=\relax\psarrowDD[-5,-6]%
    \PSc@mment{End psarrowTD}\resetc@ntr@l\et@tpsarrowTD\fi\fi}}
\ctr@ln@m\psarrowhead
\ctr@ld@f\def\psarrowheadDD[#1,#2]{{\ifcurr@ntPS\ifps@cri\s@uvc@ntr@l\et@tpsarrowheadDD%
    \if@rrowhfill\def\@hangle{-\@rrowheadangle}\else\def\@hangle{\@rrowheadangle}\fi%
    \if@rrowratio%
    \if@rrowhout\def\@hratio{-\@rrowheadratio}\else\def\@hratio{\@rrowheadratio}\fi%
    \PSc@mment{psarrowheadDD Ratio=\@hratio, Angle=\@hangle, [Pt1,Pt2]=[#1,#2]}%
    \Ps@rrowhead\@hratio,\@hangle[#1,#2]%
    \else%
    \if@rrowhout\def\@hlength{-\@rrowheadlength}\else\def\@hlength{\@rrowheadlength}\fi%
    \PSc@mment{psarrowheadDD Length=\@hlength, Angle=\@hangle, [Pt1,Pt2]=[#1,#2]}%
    \Ps@rrowheadfd\@hlength,\@hangle[#1,#2]%
    \fi%
    \PSc@mment{End psarrowheadDD}\resetc@ntr@l\et@tpsarrowheadDD\fi\fi}}
\ctr@ld@f\def\psarrowheadTD[#1,#2]{{\ifcurr@ntPS\ifps@cri\s@uvc@ntr@l\et@tpsarrowheadTD%
    \PSc@mment{psarrowheadTD [Pt1,Pt2]=[#1,#2]}\resetc@ntr@l{2}%
    \Figptpr@j-5:/#1/\Figptpr@j-6:/#2/\let\c@lprojSP=\relax\psarrowheadDD[-5,-6]%
    \PSc@mment{End psarrowheadTD}\resetc@ntr@l\et@tpsarrowheadTD\fi\fi}}
\ctr@ld@f\def\Ps@rrowhead#1,#2[#3,#4]{\v@leur=#1\p@\maxim@m{\v@leur}{\v@leur}{-\v@leur}%
    \ifdim\v@leur>\Cepsil@n{
    \PSc@mment{ps@rrowhead Ratio=#1, Angle=#2, [Pt1,Pt2]=[#3,#4]}\v@leur=\UNSS@N%
    \v@leur=\curr@ntwidth\v@leur\v@leur=\ptpsT@pt\v@leur\delt@=.5\v@leur
    \setc@ntr@l{2}\figvectPDD-3[#4,#3]%
    \Figg@tXY{-3}\v@lX=#1\v@lX\v@lY=#1\v@lY\Figv@ctCreg-3(\v@lX,\v@lY)%
    \vecunit@{-4}{-3}\mili@u=\result@t%
    \ifdim#2pt>\z@\v@lXa=-\C@AHANG\delt@%
     \edef\c@ef{\repdecn@mb{\v@lXa}}\figpttraDD-3:=-3/\c@ef,-4/\fi%
    \edef\c@ef{\repdecn@mb{\delt@}}%
    \v@lXa=\mili@u\v@lXa=\C@AHANG\v@lXa%
    \v@lYa=\ptpsT@pt\p@\v@lYa=\curr@ntwidth\v@lYa\v@lYa=\sDcc@ngle\v@lYa%
    \advance\v@lXa-\v@lYa\gdef\sDcc@ngle{0}%
    \ifdim\v@lXa>\v@leur\edef\c@efendpt{\repdecn@mb{\v@leur}}%
    \else\edef\c@efendpt{\repdecn@mb{\v@lXa}}\fi%
    \Figg@tXY{-3}\v@lmin=\v@lX\v@lmax=\v@lY%
    \v@lXa=\C@AHANG\v@lmin\v@lYa=\S@AHANG\v@lmax\advance\v@lXa\v@lYa%
    \v@lYa=-\S@AHANG\v@lmin\v@lX=\C@AHANG\v@lmax\advance\v@lYa\v@lX%
    \setc@ntr@l{1}\Figg@tXY{#4}\advance\v@lX\v@lXa\advance\v@lY\v@lYa%
    \setc@ntr@l{2}\Figp@intregDD-2:(\v@lX,\v@lY)%
    \v@lXa=\C@AHANG\v@lmin\v@lYa=-\S@AHANG\v@lmax\advance\v@lXa\v@lYa%
    \v@lYa=\S@AHANG\v@lmin\v@lX=\C@AHANG\v@lmax\advance\v@lYa\v@lX%
    \setc@ntr@l{1}\Figg@tXY{#4}\advance\v@lX\v@lXa\advance\v@lY\v@lYa%
    \setc@ntr@l{2}\Figp@intregDD-1:(\v@lX,\v@lY)%
    \ifdim#2pt<\z@\fillm@detrue\psline[-2,#4,-1]
    \else\figptstraDD-3=#4,-2,-1/\c@ef,-4/\psline[-2,-3,-1]\fi
    \ifdim#1pt>\z@\figpttraDD-3:=#4/\c@efendpt,-4/\else\figptcopyDD-3:/#4/\fi%
    \PSc@mment{End ps@rrowhead}}\fi}
\ctr@ld@f\def\sDcc@ngle{0}
\ctr@ld@f\def\Ps@rrowheadfd#1,#2[#3,#4]{{%
    \PSc@mment{ps@rrowheadfd Length=#1, Angle=#2, [Pt1,Pt2]=[#3,#4]}%
    \setc@ntr@l{2}\figvectPDD-1[#3,#4]\n@rmeucDD{\v@leur}{-1}\v@leur=\ptT@unit@\v@leur%
    \invers@{\v@leur}{\v@leur}\v@leur=#1\v@leur\edef\R@tio{\repdecn@mb{\v@leur}}%
    \Ps@rrowhead\R@tio,#2[#3,#4]\PSc@mment{End ps@rrowheadfd}}}
\ctr@ln@m\psarrowBezier
\ctr@ld@f\def\psarrowBezierDD[#1,#2,#3,#4]{{\ifcurr@ntPS\ifps@cri\s@uvc@ntr@l\et@tpsarrowBezierDD%
    \PSc@mment{psarrowBezierDD Control points=#1,#2,#3,#4}\setc@ntr@l{2}%
    \if@rrowratio\c@larclengthDD\v@leur,10[#1,#2,#3,#4]\else\v@leur=\z@\fi%
    \Ps@rrowB@zDD\v@leur[#1,#2,#3,#4]%
    \PSc@mment{End psarrowBezierDD}\resetc@ntr@l\et@tpsarrowBezierDD\fi\fi}}
\ctr@ld@f\def\psarrowBezierTD[#1,#2,#3,#4]{{\ifcurr@ntPS\ifps@cri\s@uvc@ntr@l\et@tpsarrowBezierTD%
    \PSc@mment{psarrowBezierTD Control points=#1,#2,#3,#4}\resetc@ntr@l{2}%
    \Figptpr@j-7:/#1/\Figptpr@j-8:/#2/\Figptpr@j-9:/#3/\Figptpr@j-10:/#4/%
    \let\c@lprojSP=\relax\ifnum\curr@ntproj<\tw@\psarrowBezierDD[-7,-8,-9,-10]%
    \else\f@gnewpath\PSwrit@cmd{-7}{\c@mmoveto}{\fwf@g}%
    \if@rrowratio\c@larclengthDD\mili@u,10[-7,-8,-9,-10]\else\mili@u=\z@\fi%
    \p@rtent=\NBz@rcs\advance\p@rtent\m@ne\subB@zierTD\p@rtent[#1,#2,#3,#4]%
    \f@gstroke%
    \advance\v@lmin\p@rtent\delt@
    \v@leur=\v@lmin\advance\v@leur0.33333 \delt@\edef\unti@rs{\repdecn@mb{\v@leur}}%
    \v@leur=\v@lmin\advance\v@leur0.66666 \delt@\edef\deti@rs{\repdecn@mb{\v@leur}}%
    \figptcopyDD-8:/-10/\c@lsubBzarc\unti@rs,\deti@rs[#1,#2,#3,#4]%
    \figptcopyDD-8:/-4/\figptcopyDD-9:/-3/\Ps@rrowB@zDD\mili@u[-7,-8,-9,-10]\fi%
    \PSc@mment{End psarrowBezierTD}\resetc@ntr@l\et@tpsarrowBezierTD\fi\fi}}
\ctr@ld@f\def\c@larclengthDD#1,#2[#3,#4,#5,#6]{{\p@rtent=#2\figptcopyDD-5:/#3/%
    \delt@=\p@\divide\delt@\p@rtent\c@rre=\z@\v@leur=\z@\s@mme=\z@%
    \loop\ifnum\s@mme<\p@rtent\advance\s@mme\@ne\advance\v@leur\delt@%
    \edef\T@{\repdecn@mb{\v@leur}}\figptBezierDD-6::\T@[#3,#4,#5,#6]%
    \figvectPDD-1[-5,-6]\n@rmeucDD{\mili@u}{-1}\advance\c@rre\mili@u%
    \figptcopyDD-5:/-6/\repeat\global\result@t=\ptT@unit@\c@rre}#1=\result@t}
\ctr@ld@f\def\Ps@rrowB@zDD#1[#2,#3,#4,#5]{{\pssetfillmode{no}%
    \if@rrowratio\delt@=\@rrowheadratio#1\else\delt@=\@rrowheadlength pt\fi%
    \v@leur=\C@AHANG\delt@\edef\R@dius{\repdecn@mb{\v@leur}}%
    \FigptintercircB@zDD-5::0,\R@dius[#5,#4,#3,#2]%
    \pssetarrowheadlength{\repdecn@mb{\delt@}}\psarrowheadDD[-5,#5]%
    \let\n@rmeuc=\n@rmeucDD\figgetdist\R@dius[#5,-3]%
    \FigptintercircB@zDD-6::0,\R@dius[#5,#4,#3,#2]%
    \figptBezierDD-5::0.33333[#5,#4,#3,#2]\figptBezierDD-3::0.66666[#5,#4,#3,#2]%
    \figptscontrolDD-5[-6,-5,-3,#2]\psBezierDD1[-6,-5,-4,#2]}}
\ctr@ln@m\psarrowcirc
\ctr@ld@f\def\psarrowcircDD#1;#2(#3,#4){{\ifcurr@ntPS\ifps@cri\s@uvc@ntr@l\et@tpsarrowcircDD%
    \PSc@mment{psarrowcircDD Center=#1 ; Radius=#2 (Ang1=#3,Ang2=#4)}%
    \pssetfillmode{no}\Pscirc@rrowhead#1;#2(#3,#4)%
    \setc@ntr@l{2}\figvectPDD -4[#1,-3]\vecunit@{-4}{-4}%
    \Figg@tXY{-4}\arct@n\v@lmin(\v@lX,\v@lY)%
    \v@lmin=\rdT@deg\v@lmin\v@leur=#4pt\advance\v@leur-\v@lmin%
    \maxim@m{\v@leur}{\v@leur}{-\v@leur}%
    \ifdim\v@leur>\DemiPI@deg\relax\ifdim\v@lmin<#4pt\advance\v@lmin\DePI@deg%
    \else\advance\v@lmin-\DePI@deg\fi\fi\edef\ar@ngle{\repdecn@mb{\v@lmin}}%
    \ifdim#3pt<#4pt\psarccirc#1;#2(#3,\ar@ngle)\else\psarccirc#1;#2(\ar@ngle,#3)\fi%
    \PSc@mment{End psarrowcircDD}\resetc@ntr@l\et@tpsarrowcircDD\fi\fi}}
\ctr@ld@f\def\psarrowcircTD#1,#2,#3;#4(#5,#6){{\ifcurr@ntPS\ifps@cri\s@uvc@ntr@l\et@tpsarrowcircTD%
    \PSc@mment{psarrowcircTD Center=#1,P1=#2,P2=#3 ; Radius=#4 (Ang1=#5, Ang2=#6)}%
    \resetc@ntr@l{2}\c@lExtAxes#1,#2,#3(#4)\let\c@lprojSP=\relax%
    \figvectPTD-11[#1,-4]\figvectPTD-12[#1,-5]\c@lNbarcs{#5}{#6}%
    \if@rrowratio\v@lmax=\degT@rd\v@lmax\edef\D@lpha{\repdecn@mb{\v@lmax}}\fi%
    \advance\p@rtent\m@ne\mili@u=\z@%
    \v@leur=#5pt\c@lptellP{#1}{-11}{-12}\Figptpr@j-9:/-3/%
    \f@gnewpath\PSwrit@cmdS{-9}{\c@mmoveto}{\fwf@g}{\X@un}{\Y@un}%
    \edef\C@nt@r{#1}\s@mme=\z@\bcl@rcircTD\f@gstroke%
    \advance\v@leur\delt@\c@lptellP{#1}{-11}{-12}\Figptpr@j-5:/-3/%
    \advance\v@leur\delt@\c@lptellP{#1}{-11}{-12}\Figptpr@j-6:/-3/%
    \advance\v@leur\delt@\c@lptellP{#1}{-11}{-12}\Figptpr@j-10:/-3/%
    \figptscontrolDD-8[-9,-5,-6,-10]%
    \if@rrowratio\c@lcurvradDD0.5[-9,-8,-7,-10]\advance\mili@u\result@t%
    \maxim@m{\mili@u}{\mili@u}{-\mili@u}\mili@u=\ptT@unit@\mili@u%
    \mili@u=\D@lpha\mili@u\advance\p@rtent\@ne\divide\mili@u\p@rtent\fi%
    \Ps@rrowB@zDD\mili@u[-9,-8,-7,-10]%
    \PSc@mment{End psarrowcircTD}\resetc@ntr@l\et@tpsarrowcircTD\fi\fi}}
\ctr@ld@f\def\bcl@rcircTD{\relax%
    \ifnum\s@mme<\p@rtent\advance\s@mme\@ne%
    \advance\v@leur\delt@\c@lptellP{\C@nt@r}{-11}{-12}\Figptpr@j-5:/-3/%
    \advance\v@leur\delt@\c@lptellP{\C@nt@r}{-11}{-12}\Figptpr@j-6:/-3/%
    \advance\v@leur\delt@\c@lptellP{\C@nt@r}{-11}{-12}\Figptpr@j-10:/-3/%
    \figptscontrolDD-8[-9,-5,-6,-10]\BdingB@xfalse%
    \PSwrit@cmdS{-8}{}{\fwf@g}{\X@de}{\Y@de}\PSwrit@cmdS{-7}{}{\fwf@g}{\X@tr}{\Y@tr}%
    \BdingB@xtrue\PSwrit@cmdS{-10}{\c@mcurveto}{\fwf@g}{\X@qu}{\Y@qu}%
    \if@rrowratio\c@lcurvradDD0.5[-9,-8,-7,-10]\advance\mili@u\result@t\fi%
    \B@zierBB@x{1}{\Y@un}(\X@un,\X@de,\X@tr,\X@qu)%
    \B@zierBB@x{2}{\X@un}(\Y@un,\Y@de,\Y@tr,\Y@qu)%
    \edef\X@un{\X@qu}\edef\Y@un{\Y@qu}\figptcopyDD-9:/-10/\bcl@rcircTD\fi}
\ctr@ld@f\def\Pscirc@rrowhead#1;#2(#3,#4){{%
    \PSc@mment{pscirc@rrowhead Center=#1 ; Radius=#2 (Ang1=#3,Ang2=#4)}%
    \v@leur=#2\unit@\edef\s@glen{\repdecn@mb{\v@leur}}\v@lY=\z@\v@lX=\v@leur%
    \resetc@ntr@l{2}\Figv@ctCreg-3(\v@lX,\v@lY)\figpttraDD-5:=#1/1,-3/%
    \figptrotDD-5:=-5/#1,#4/%
    \figvectPDD-3[#1,-5]\Figg@tXY{-3}\v@leur=\v@lX%
    \ifdim#3pt<#4pt\v@lX=\v@lY\v@lY=-\v@leur\else\v@lX=-\v@lY\v@lY=\v@leur\fi%
    \Figv@ctCreg-3(\v@lX,\v@lY)\vecunit@{-3}{-3}%
    \if@rrowratio\v@leur=#4pt\advance\v@leur-#3pt\maxim@m{\mili@u}{-\v@leur}{\v@leur}%
    \mili@u=\degT@rd\mili@u\v@leur=\s@glen\mili@u\edef\s@glen{\repdecn@mb{\v@leur}}%
    \mili@u=#2\mili@u\mili@u=\@rrowheadratio\mili@u\else\mili@u=\@rrowheadlength pt\fi%
    \figpttraDD-6:=-5/\s@glen,-3/\v@leur=#2pt\v@leur=2\v@leur%
    \invers@{\v@leur}{\v@leur}\c@rre=\repdecn@mb{\v@leur}\mili@u
    \mili@u=\c@rre\mili@u=\repdecn@mb{\c@rre}\mili@u%
    \v@leur=\p@\advance\v@leur-\mili@u
    \invers@{\mili@u}{2\v@leur}\delt@=\c@rre\delt@=\repdecn@mb{\mili@u}\delt@%
    \xdef\sDcc@ngle{\repdecn@mb{\delt@}}
    \sqrt@{\mili@u}{\v@leur}\arct@n\v@leur(\mili@u,\c@rre)%
    \v@leur=\rdT@deg\v@leur
    \ifdim#3pt<#4pt\v@leur=-\v@leur\fi%
    \if@rrowhout\v@leur=-\v@leur\fi\edef\cor@ngle{\repdecn@mb{\v@leur}}%
    \figptrotDD-6:=-6/-5,\cor@ngle/\psarrowheadDD[-6,-5]%
    \PSc@mment{End pscirc@rrowhead}}}
\ctr@ln@m\psarrowcircP
\ctr@ld@f\def\psarrowcircPDD#1;#2[#3,#4]{{\ifcurr@ntPS\ifps@cri%
    \PSc@mment{psarrowcircPDD Center=#1; Radius=#2, [P1=#3,P2=#4]}%
    \s@uvc@ntr@l\et@tpsarrowcircPDD\Ps@ngleparam#1;#2[#3,#4]%
    \ifdim\v@leur>\z@\ifdim\v@lmin>\v@lmax\advance\v@lmax\DePI@deg\fi%
    \else\ifdim\v@lmin<\v@lmax\advance\v@lmin\DePI@deg\fi\fi%
    \edef\@ngdeb{\repdecn@mb{\v@lmin}}\edef\@ngfin{\repdecn@mb{\v@lmax}}%
    \psarrowcirc#1;\r@dius(\@ngdeb,\@ngfin)%
    \PSc@mment{End psarrowcircPDD}\resetc@ntr@l\et@tpsarrowcircPDD\fi\fi}}
\ctr@ld@f\def\psarrowcircPTD#1;#2[#3,#4,#5]{{\ifcurr@ntPS\ifps@cri\s@uvc@ntr@l\et@tpsarrowcircPTD%
    \PSc@mment{psarrowcircPTD Center=#1; Radius=#2, [P1=#3,P2=#4,P3=#5]}%
    \figgetangleTD\@ngfin[#1,#3,#4,#5]\v@leur=#2pt%
    \maxim@m{\mili@u}{-\v@leur}{\v@leur}\edef\r@dius{\repdecn@mb{\mili@u}}%
    \ifdim\v@leur<\z@\v@lmax=\@ngfin pt\advance\v@lmax-\DePI@deg%
    \edef\@ngfin{\repdecn@mb{\v@lmax}}\fi\psarrowcircTD#1,#3,#5;\r@dius(0,\@ngfin)%
    \PSc@mment{End psarrowcircPTD}\resetc@ntr@l\et@tpsarrowcircPTD\fi\fi}}
\ctr@ld@f\def\psaxes#1(#2){{\ifcurr@ntPS\ifps@cri\s@uvc@ntr@l\et@tpsaxes%
    \PSc@mment{psaxes Origin=#1 Range=(#2)}\an@lys@xes#2,:\resetc@ntr@l{2}%
    \ifx\t@xt@\empty\ifTr@isDim\ps@xes#1(0,#2,0,#2,0,#2)\else\ps@xes#1(0,#2,0,#2)\fi%
    \else\ps@xes#1(#2)\fi\PSc@mment{End psaxes}\resetc@ntr@l\et@tpsaxes\fi\fi}}
\ctr@ld@f\def\an@lys@xes#1,#2:{\def\t@xt@{#2}}
\ctr@ln@m\ps@xes
\ctr@ld@f\def\ps@xesDD#1(#2,#3,#4,#5){%
    \figpttraC-5:=#1/#2,0/\figpttraC-6:=#1/#3,0/\psarrowDD[-5,-6]%
    \figpttraC-5:=#1/0,#4/\figpttraC-6:=#1/0,#5/\psarrowDD[-5,-6]}
\ctr@ld@f\def\ps@xesTD#1(#2,#3,#4,#5,#6,#7){%
    \figpttraC-7:=#1/#2,0,0/\figpttraC-8:=#1/#3,0,0/\psarrowTD[-7,-8]%
    \figpttraC-7:=#1/0,#4,0/\figpttraC-8:=#1/0,#5,0/\psarrowTD[-7,-8]%
    \figpttraC-7:=#1/0,0,#6/\figpttraC-8:=#1/0,0,#7/\psarrowTD[-7,-8]}
\ctr@ln@m\newGr@FN
\ctr@ld@f\def\newGr@FNPDF#1{\s@mme=\Gr@FNb\advance\s@mme\@ne\xdef\Gr@FNb{\number\s@mme}}
\ctr@ld@f\def\newGr@FNDVI#1{\newGr@FNPDF{}\xdef#1{\jobname GI\Gr@FNb.anx}}
\ctr@ld@f\def\psbeginfig#1{\newGr@FN\DefGIfilen@me\gdef\@utoFN{0}%
    \def\t@xt@{#1}\relax\ifx\t@xt@\empty\psupdatem@detrue%
    \gdef\@utoFN{1}\Psb@ginfig\DefGIfilen@me\else\expandafter\Psb@ginfigNu@#1 :\fi}
\ctr@ld@f\def\Psb@ginfigNu@#1 #2:{\def\t@xt@{#1}\relax\ifx\t@xt@\empty\def\t@xt@{#2}%
    \ifx\t@xt@\empty\psupdatem@detrue\gdef\@utoFN{1}\Psb@ginfig\DefGIfilen@me%
    \else\Psb@ginfigNu@#2:\fi\else\Psb@ginfig{#1}\fi}
\ctr@ln@m\PSfilen@me \ctr@ln@m\auxfilen@me
\ctr@ld@f\def\Psb@ginfig#1{\ifcurr@ntPS\else%
    \edef\PSfilen@me{#1}\edef\auxfilen@me{\jobname.anx}%
    \ifpsupdatem@de\ps@critrue\else\openin\frf@g=\PSfilen@me\relax%
    \ifeof\frf@g\ps@critrue\else\ps@crifalse\fi\closein\frf@g\fi%
    \curr@ntPStrue\c@ldefproj\expandafter\setupd@te\defaultupdate:%
    \ifps@cri\initb@undb@x%
    \immediate\openout\fwf@g=\auxfilen@me\initpss@ttings\fi%
    \fi}
\ctr@ld@f\def\Gr@FNb{0}
\ctr@ld@f\def\figforTeXFileno{\Gr@FNb}
\ctr@ld@f\def\figforTeXFigno{0 }
\ctr@ld@f\def\figforTeXnextFigno{1 }
\ctr@ld@f\edef\DefGIfilen@me{\jobname GI.anx}
\ctr@ld@f\def\initpss@ttings{\psreset{arrowhead,curve,first,flowchart,mesh,second,third}%
    \Use@llipsefalse}
\ctr@ld@f\def\B@zierBB@x#1#2(#3,#4,#5,#6){{\c@rre=\t@n\epsil@n
    \v@lmax=#4\advance\v@lmax-#5\v@lmax=\thr@@\v@lmax\advance\v@lmax#6\advance\v@lmax-#3%
    \mili@u=#4\mili@u=-\tw@\mili@u\advance\mili@u#3\advance\mili@u#5%
    \v@lmin=#4\advance\v@lmin-#3\maxim@m{\v@leur}{-\v@lmax}{\v@lmax}%
    \maxim@m{\delt@}{-\mili@u}{\mili@u}\maxim@m{\v@leur}{\v@leur}{\delt@}%
    \maxim@m{\delt@}{-\v@lmin}{\v@lmin}\maxim@m{\v@leur}{\v@leur}{\delt@}%
    \ifdim\v@leur>\c@rre\invers@{\v@leur}{\v@leur}\edef\Uns@rM@x{\repdecn@mb{\v@leur}}%
    \v@lmax=\Uns@rM@x\v@lmax\mili@u=\Uns@rM@x\mili@u\v@lmin=\Uns@rM@x\v@lmin%
    \maxim@m{\v@leur}{-\v@lmax}{\v@lmax}\ifdim\v@leur<\c@rre%
    \maxim@m{\v@leur}{-\mili@u}{\mili@u}\ifdim\v@leur<\c@rre\else%
    \invers@{\mili@u}{\mili@u}\v@leur=-0.5\v@lmin%
    \v@leur=\repdecn@mb{\mili@u}\v@leur\m@jBBB@x{\v@leur}{#1}{#2}(#3,#4,#5,#6)\fi%
    \else\delt@=\repdecn@mb{\mili@u}\mili@u\v@leur=\repdecn@mb{\v@lmax}\v@lmin%
    \advance\delt@-\v@leur\ifdim\delt@<\z@\else\invers@{\v@lmax}{\v@lmax}%
    \edef\Uns@rAp{\repdecn@mb{\v@lmax}}\sqrt@{\delt@}{\delt@}%
    \v@leur=-\mili@u\advance\v@leur\delt@\v@leur=\Uns@rAp\v@leur%
    \m@jBBB@x{\v@leur}{#1}{#2}(#3,#4,#5,#6)%
    \v@leur=-\mili@u\advance\v@leur-\delt@\v@leur=\Uns@rAp\v@leur%
    \m@jBBB@x{\v@leur}{#1}{#2}(#3,#4,#5,#6)\fi\fi\fi}}
\ctr@ld@f\def\m@jBBB@x#1#2#3(#4,#5,#6,#7){{\relax\ifdim#1>\z@\ifdim#1<\p@%
    \edef\T@{\repdecn@mb{#1}}\v@lX=\p@\advance\v@lX-#1\edef\UNmT@{\repdecn@mb{\v@lX}}%
    \v@lX=#4\v@lY=#5\v@lZ=#6\v@lXa=#7\v@lX=\UNmT@\v@lX\advance\v@lX\T@\v@lY%
    \v@lY=\UNmT@\v@lY\advance\v@lY\T@\v@lZ\v@lZ=\UNmT@\v@lZ\advance\v@lZ\T@\v@lXa%
    \v@lX=\UNmT@\v@lX\advance\v@lX\T@\v@lY\v@lY=\UNmT@\v@lY\advance\v@lY\T@\v@lZ%
    \v@lX=\UNmT@\v@lX\advance\v@lX\T@\v@lY%
    \ifcase#2\or\v@lY=#3\or\v@lY=\v@lX\v@lX=#3\fi\b@undb@x{\v@lX}{\v@lY}\fi\fi}}
\ctr@ld@f\def\PsB@zier#1[#2]{{\f@gnewpath%
    \s@mme=\z@\def\list@num{#2,0}\extrairelepremi@r\p@int\de\list@num%
    \PSwrit@cmdS{\p@int}{\c@mmoveto}{\fwf@g}{\X@un}{\Y@un}\p@rtent=#1\bclB@zier}}
\ctr@ld@f\def\bclB@zier{\relax%
    \ifnum\s@mme<\p@rtent\advance\s@mme\@ne\BdingB@xfalse%
    \extrairelepremi@r\p@int\de\list@num\PSwrit@cmdS{\p@int}{}{\fwf@g}{\X@de}{\Y@de}%
    \extrairelepremi@r\p@int\de\list@num\PSwrit@cmdS{\p@int}{}{\fwf@g}{\X@tr}{\Y@tr}%
    \BdingB@xtrue%
    \extrairelepremi@r\p@int\de\list@num\PSwrit@cmdS{\p@int}{\c@mcurveto}{\fwf@g}{\X@qu}{\Y@qu}%
    \B@zierBB@x{1}{\Y@un}(\X@un,\X@de,\X@tr,\X@qu)%
    \B@zierBB@x{2}{\X@un}(\Y@un,\Y@de,\Y@tr,\Y@qu)%
    \edef\X@un{\X@qu}\edef\Y@un{\Y@qu}\bclB@zier\fi}
\ctr@ln@m\psBezier
\ctr@ld@f\def\psBezierDD#1[#2]{\ifcurr@ntPS\ifps@cri%
    \PSc@mment{psBezierDD N arcs=#1, Control points=#2}%
    \iffillm@de\PsB@zier#1[#2]%
    \f@gfill%
    \else\PsB@zier#1[#2]\f@gstroke\fi%
    \PSc@mment{End psBezierDD}\fi\fi}
\ctr@ln@m\et@tpsBezierTD
\ctr@ld@f\def\psBezierTD#1[#2]{\ifcurr@ntPS\ifps@cri\s@uvc@ntr@l\et@tpsBezierTD%
    \PSc@mment{psBezierTD N arcs=#1, Control points=#2}%
    \iffillm@de\PsB@zierTD#1[#2]%
    \f@gfill%
    \else\PsB@zierTD#1[#2]\f@gstroke\fi%
    \PSc@mment{End psBezierTD}\resetc@ntr@l\et@tpsBezierTD\fi\fi}
\ctr@ld@f\def\PsB@zierTD#1[#2]{\ifnum\curr@ntproj<\tw@\PsB@zier#1[#2]\else\PsB@zier@TD#1[#2]\fi}
\ctr@ld@f\def\PsB@zier@TD#1[#2]{{\f@gnewpath%
    \s@mme=\z@\def\list@num{#2,0}\extrairelepremi@r\p@int\de\list@num%
    \let\c@lprojSP=\relax\setc@ntr@l{2}\Figptpr@j-7:/\p@int/%
    \PSwrit@cmd{-7}{\c@mmoveto}{\fwf@g}%
    \loop\ifnum\s@mme<#1\advance\s@mme\@ne\extrairelepremi@r\p@intun\de\list@num%
    \extrairelepremi@r\p@intde\de\list@num\extrairelepremi@r\p@inttr\de\list@num%
    \subB@zierTD\NBz@rcs[\p@int,\p@intun,\p@intde,\p@inttr]\edef\p@int{\p@inttr}\repeat}}
\ctr@ld@f\def\subB@zierTD#1[#2,#3,#4,#5]{\delt@=\p@\divide\delt@\NBz@rcs\v@lmin=\z@%
    {\Figg@tXY{-7}\edef\X@un{\the\v@lX}\edef\Y@un{\the\v@lY}%
    \s@mme=\z@\loop\ifnum\s@mme<#1\advance\s@mme\@ne%
    \v@leur=\v@lmin\advance\v@leur0.33333 \delt@\edef\unti@rs{\repdecn@mb{\v@leur}}%
    \v@leur=\v@lmin\advance\v@leur0.66666 \delt@\edef\deti@rs{\repdecn@mb{\v@leur}}%
    \advance\v@lmin\delt@\edef\trti@rs{\repdecn@mb{\v@lmin}}%
    \figptBezierTD-8::\trti@rs[#2,#3,#4,#5]\Figptpr@j-8:/-8/%
    \c@lsubBzarc\unti@rs,\deti@rs[#2,#3,#4,#5]\BdingB@xfalse%
    \PSwrit@cmdS{-4}{}{\fwf@g}{\X@de}{\Y@de}\PSwrit@cmdS{-3}{}{\fwf@g}{\X@tr}{\Y@tr}%
    \BdingB@xtrue\PSwrit@cmdS{-8}{\c@mcurveto}{\fwf@g}{\X@qu}{\Y@qu}%
    \B@zierBB@x{1}{\Y@un}(\X@un,\X@de,\X@tr,\X@qu)%
    \B@zierBB@x{2}{\X@un}(\Y@un,\Y@de,\Y@tr,\Y@qu)%
    \edef\X@un{\X@qu}\edef\Y@un{\Y@qu}\figptcopyDD-7:/-8/\repeat}}
\ctr@ld@f\def\NBz@rcs{2}
\ctr@ld@f\def\c@lsubBzarc#1,#2[#3,#4,#5,#6]{\figptBezierTD-5::#1[#3,#4,#5,#6]%
    \figptBezierTD-6::#2[#3,#4,#5,#6]\Figptpr@j-4:/-5/\Figptpr@j-5:/-6/%
    \figptscontrolDD-4[-7,-4,-5,-8]}
\ctr@ln@m\pscirc
\ctr@ld@f\def\pscircDD#1(#2){\ifcurr@ntPS\ifps@cri\PSc@mment{pscircDD Center=#1 (Radius=#2)}%
    \psarccircDD#1;#2(0,360)\PSc@mment{End pscircDD}\fi\fi}
\ctr@ld@f\def\pscircTD#1,#2,#3(#4){\ifcurr@ntPS\ifps@cri%
    \PSc@mment{pscircTD Center=#1,P1=#2,P2=#3 (Radius=#4)}%
    \psarccircTD#1,#2,#3;#4(0,360)\PSc@mment{End pscircTD}\fi\fi}
\ctr@ln@m\p@urcent
{\catcode`\%=12\gdef\p@urcent{
\ctr@ld@f\def\PSc@mment#1{\ifpsdebugmode\immediate\write\fwf@g{\p@urcent\space#1}\fi}
\ctr@ln@m\acc@louv \ctr@ln@m\acc@lfer
{\catcode`\[=1\catcode`\{=12\gdef\acc@louv[{}}
{\catcode`\]=2\catcode`\}=12\gdef\acc@lfer{}]]
\ctr@ld@f\def\PSdict@{\ifUse@llipse%
    \immediate\write\fwf@g{/ellipsedict 9 dict def ellipsedict /mtrx matrix put}%
    \immediate\write\fwf@g{/ellipse \acc@louv ellipsedict begin}%
    \immediate\write\fwf@g{ /endangle exch def /startangle exch def}%
    \immediate\write\fwf@g{ /yrad exch def /xrad exch def}%
    \immediate\write\fwf@g{ /rotangle exch def /y exch def /x exch def}%
    \immediate\write\fwf@g{ /savematrix mtrx currentmatrix def}%
    \immediate\write\fwf@g{ x y translate rotangle rotate xrad yrad scale}%
    \immediate\write\fwf@g{ 0 0 1 startangle endangle arc}%
    \immediate\write\fwf@g{ savematrix setmatrix end\acc@lfer def}%
    \fi\PShe@der{EndProlog}}
\ctr@ld@f\def\Pssetc@rve#1=#2|{\keln@mun#1|%
    \def\n@mref{r}\ifx\l@debut\n@mref\pssetroundness{#2}\else
    \immediate\write16{*** Unknown attribute: \BS@ psset curve(..., #1=...)}%
    \fi}
\ctr@ln@m\curv@roundness
\ctr@ld@f\def\pssetroundness#1{\edef\curv@roundness{#1}}
\ctr@ld@f\def\defaultroundness{0.2} 
\ctr@ln@m\pscurve
\ctr@ld@f\def\pscurveDD[#1]{{\ifcurr@ntPS\ifps@cri\PSc@mment{pscurveDD Points=#1}%
    \s@uvc@ntr@l\et@tpscurveDD%
    \iffillm@de\Psc@rveDD\curv@roundness[#1]%
    \f@gfill%
    \else\Psc@rveDD\curv@roundness[#1]\f@gstroke\fi%
    \PSc@mment{End pscurveDD}\resetc@ntr@l\et@tpscurveDD\fi\fi}}
\ctr@ld@f\def\pscurveTD[#1]{{\ifcurr@ntPS\ifps@cri%
    \PSc@mment{pscurveTD Points=#1}\s@uvc@ntr@l\et@tpscurveTD\let\c@lprojSP=\relax%
    \iffillm@de\Psc@rveTD\curv@roundness[#1]%
    \f@gfill%
    \else\Psc@rveTD\curv@roundness[#1]\f@gstroke\fi%
    \PSc@mment{End pscurveTD}\resetc@ntr@l\et@tpscurveTD\fi\fi}}
\ctr@ld@f\def\Psc@rveDD#1[#2]{%
    \def\list@num{#2}\extrairelepremi@r\Ak@\de\list@num%
    \extrairelepremi@r\Ai@\de\list@num\extrairelepremi@r\Aj@\de\list@num%
    \f@gnewpath\PSwrit@cmdS{\Ai@}{\c@mmoveto}{\fwf@g}{\X@un}{\Y@un}%
    \setc@ntr@l{2}\figvectPDD -1[\Ak@,\Aj@]%
    \@ecfor\Ak@:=\list@num\do{\figpttraDD-2:=\Ai@/#1,-1/\BdingB@xfalse%
       \PSwrit@cmdS{-2}{}{\fwf@g}{\X@de}{\Y@de}%
       \figvectPDD -1[\Ai@,\Ak@]\figpttraDD-2:=\Aj@/-#1,-1/%
       \PSwrit@cmdS{-2}{}{\fwf@g}{\X@tr}{\Y@tr}\BdingB@xtrue%
       \PSwrit@cmdS{\Aj@}{\c@mcurveto}{\fwf@g}{\X@qu}{\Y@qu}%
       \B@zierBB@x{1}{\Y@un}(\X@un,\X@de,\X@tr,\X@qu)%
       \B@zierBB@x{2}{\X@un}(\Y@un,\Y@de,\Y@tr,\Y@qu)%
       \edef\X@un{\X@qu}\edef\Y@un{\Y@qu}\edef\Ai@{\Aj@}\edef\Aj@{\Ak@}}}
\ctr@ld@f\def\Psc@rveTD#1[#2]{\ifnum\curr@ntproj<\tw@\Psc@rvePPTD#1[#2]\else\Psc@rveCPTD#1[#2]\fi}
\ctr@ld@f\def\Psc@rvePPTD#1[#2]{\setc@ntr@l{2}%
    \def\list@num{#2}\extrairelepremi@r\Ak@\de\list@num\Figptpr@j-5:/\Ak@/%
    \extrairelepremi@r\Ai@\de\list@num\Figptpr@j-3:/\Ai@/%
    \extrairelepremi@r\Aj@\de\list@num\Figptpr@j-4:/\Aj@/%
    \f@gnewpath\PSwrit@cmdS{-3}{\c@mmoveto}{\fwf@g}{\X@un}{\Y@un}%
    \figvectPDD -1[-5,-4]%
    \@ecfor\Ak@:=\list@num\do{\Figptpr@j-5:/\Ak@/\figpttraDD-2:=-3/#1,-1/%
       \BdingB@xfalse\PSwrit@cmdS{-2}{}{\fwf@g}{\X@de}{\Y@de}%
       \figvectPDD -1[-3,-5]\figpttraDD-2:=-4/-#1,-1/%
       \PSwrit@cmdS{-2}{}{\fwf@g}{\X@tr}{\Y@tr}\BdingB@xtrue%
       \PSwrit@cmdS{-4}{\c@mcurveto}{\fwf@g}{\X@qu}{\Y@qu}%
       \B@zierBB@x{1}{\Y@un}(\X@un,\X@de,\X@tr,\X@qu)%
       \B@zierBB@x{2}{\X@un}(\Y@un,\Y@de,\Y@tr,\Y@qu)%
       \edef\X@un{\X@qu}\edef\Y@un{\Y@qu}\figptcopyDD-3:/-4/\figptcopyDD-4:/-5/}}
\ctr@ld@f\def\Psc@rveCPTD#1[#2]{\setc@ntr@l{2}%
    \def\list@num{#2}\extrairelepremi@r\Ak@\de\list@num%
    \extrairelepremi@r\Ai@\de\list@num\extrairelepremi@r\Aj@\de\list@num%
    \Figptpr@j-7:/\Ai@/%
    \f@gnewpath\PSwrit@cmd{-7}{\c@mmoveto}{\fwf@g}%
    \figvectPTD -9[\Ak@,\Aj@]%
    \@ecfor\Ak@:=\list@num\do{\figpttraTD-10:=\Ai@/#1,-9/%
       \figvectPTD -9[\Ai@,\Ak@]\figpttraTD-11:=\Aj@/-#1,-9/%
       \subB@zierTD\NBz@rcs[\Ai@,-10,-11,\Aj@]\edef\Ai@{\Aj@}\edef\Aj@{\Ak@}}}
\ctr@ld@f\def\psendfig{\ifcurr@ntPS\ifps@cri\immediate\closeout\fwf@g%
    \immediate\openout\fwf@g=\PSfilen@me\relax%
    \ifPDFm@ke\PSBdingB@x\else%
    \immediate\write\fwf@g{\p@urcent\string!PS-Adobe-2.0 EPSF-2.0}%
    \PShe@der{Creator\string: TeX (fig4tex.tex)}%
    \PShe@der{Title\string: \PSfilen@me}%
    \PShe@der{CreationDate\string: \the\day/\the\month/\the\year}%
    \PSBdingB@x%
    \PShe@der{EndComments}\PSdict@\fi%
    \immediate\write\fwf@g{\c@mgsave}%
    \openin\frf@g=\auxfilen@me\c@pypsfile\fwf@g\frf@g\closein\frf@g%
    \immediate\write\fwf@g{\c@mgrestore}%
    \PSc@mment{End of file.}\immediate\closeout\fwf@g%
    \immediate\openout\fwf@g=\auxfilen@me\immediate\closeout\fwf@g%
    \immediate\write16{File \PSfilen@me\space created.}\fi\fi\curr@ntPSfalse\ps@critrue}
\ctr@ld@f\def\PShe@der#1{\immediate\write\fwf@g{\p@urcent\p@urcent#1}}
\ctr@ld@f\def\PSBdingB@x{{\v@lX=\ptT@ptps\c@@rdXmin\v@lY=\ptT@ptps\c@@rdYmin%
     \v@lXa=\ptT@ptps\c@@rdXmax\v@lYa=\ptT@ptps\c@@rdYmax%
     \PShe@der{BoundingBox\string: \repdecn@mb{\v@lX}\space\repdecn@mb{\v@lY}%
     \space\repdecn@mb{\v@lXa}\space\repdecn@mb{\v@lYa}}}}
\ctr@ld@f\def\psfcconnect[#1]{{\ifcurr@ntPS\ifps@cri\PSc@mment{psfcconnect Points=#1}%
    \pssetfillmode{no}\s@uvc@ntr@l\et@tpsfcconnect\resetc@ntr@l{2}%
    \fcc@nnect@[#1]\resetc@ntr@l\et@tpsfcconnect\PSc@mment{End psfcconnect}\fi\fi}}
\ctr@ld@f\def\fcc@nnect@[#1]{\let\N@rm=\n@rmeucDD\def\list@num{#1}%
    \extrairelepremi@r\Ai@\de\list@num\edef\pr@m{\Ai@}\v@leur=\z@\p@rtent=\@ne\c@llgtot%
    \ifcase\fclin@typ@\edef\list@num{[\pr@m,#1,\Ai@}\expandafter\pscurve\list@num]%
    \else\ifdim\fclin@r@d\p@>\z@\Pslin@conge[#1]\else\psline[#1]\fi\fi%
    \v@leur=\@rrowp@s\v@leur\edef\list@num{#1,\Ai@,0}%
    \extrairelepremi@r\Ai@\de\list@num\mili@u=\epsil@n\c@llgpart%
    \advance\mili@u-\epsil@n\advance\mili@u-\delt@\advance\v@leur-\mili@u%
    \ifcase\fclin@typ@\invers@\mili@u\delt@%
    \ifnum\@rrowr@fpt>\z@\advance\delt@-\v@leur\v@leur=\delt@\fi%
    \v@leur=\repdecn@mb\v@leur\mili@u\edef\v@lt{\repdecn@mb\v@leur}%
    \extrairelepremi@r\Ak@\de\list@num%
    \figvectPDD-1[\pr@m,\Aj@]\figpttraDD-6:=\Ai@/\curv@roundness,-1/%
    \figvectPDD-1[\Ak@,\Ai@]\figpttraDD-7:=\Aj@/\curv@roundness,-1/%
    \delt@=\@rrowheadlength\p@\delt@=\C@AHANG\delt@\edef\R@dius{\repdecn@mb{\delt@}}%
    \ifcase\@rrowr@fpt%
    \FigptintercircB@zDD-8::\v@lt,\R@dius[\Ai@,-6,-7,\Aj@]\psarrowheadDD[-5,-8]\else%
    \FigptintercircB@zDD-8::\v@lt,\R@dius[\Aj@,-7,-6,\Ai@]\psarrowheadDD[-8,-5]\fi%
    \else\advance\delt@-\v@leur%
    \p@rtentiere{\p@rtent}{\delt@}\edef\C@efun{\the\p@rtent}%
    \p@rtentiere{\p@rtent}{\v@leur}\edef\C@efde{\the\p@rtent}%
    \figptbaryDD-5:[\Ai@,\Aj@;\C@efun,\C@efde]\ifcase\@rrowr@fpt%
    \delt@=\@rrowheadlength\unit@\delt@=\C@AHANG\delt@\edef\t@ille{\repdecn@mb{\delt@}}%
    \figvectPDD-2[\Ai@,\Aj@]\vecunit@{-2}{-2}\figpttraDD-5:=-5/\t@ille,-2/\fi%
    \psarrowheadDD[\Ai@,-5]\fi}
\ctr@ld@f\def\c@llgtot{\@ecfor\Aj@:=\list@num\do{\figvectP-1[\Ai@,\Aj@]\N@rm\delt@{-1}%
    \advance\v@leur\delt@\advance\p@rtent\@ne\edef\Ai@{\Aj@}}}
\ctr@ld@f\def\c@llgpart{\extrairelepremi@r\Aj@\de\list@num\figvectP-1[\Ai@,\Aj@]\N@rm\delt@{-1}%
    \advance\mili@u\delt@\ifdim\mili@u<\v@leur\edef\pr@m{\Ai@}\edef\Ai@{\Aj@}\c@llgpart\fi}
\ctr@ld@f\def\Pslin@conge[#1]{\ifnum\p@rtent>\tw@{\def\list@num{#1}%
    \extrairelepremi@r\Ai@\de\list@num\extrairelepremi@r\Aj@\de\list@num%
    \figptcopy-6:/\Ai@/\figvectP-3[\Ai@,\Aj@]\vecunit@{-3}{-3}\v@lmax=\result@t%
    \@ecfor\Ak@:=\list@num\do{\figvectP-4[\Aj@,\Ak@]\vecunit@{-4}{-4}%
    \minim@m\v@lmin\v@lmax\result@t\v@lmax=\result@t%
    \det@rm\delt@[-3,-4]\maxim@m\mili@u{\delt@}{-\delt@}\ifdim\mili@u>\Cepsil@n%
    \ifdim\delt@>\z@\figgetangleDD\Angl@[\Aj@,\Ak@,\Ai@]\else%
    \figgetangleDD\Angl@[\Aj@,\Ai@,\Ak@]\fi%
    \v@leur=\PI@deg\advance\v@leur-\Angl@\p@\divide\v@leur\tw@%
    \edef\Angl@{\repdecn@mb\v@leur}\c@ssin{\C@}{\S@}{\Angl@}\v@leur=\fclin@r@d\unit@%
    \v@leur=\S@\v@leur\mili@u=\C@\p@\invers@\mili@u\mili@u%
    \v@leur=\repdecn@mb{\mili@u}\v@leur%
    \minim@m\v@leur\v@leur\v@lmin\edef\t@ille{\repdecn@mb{\v@leur}}%
    \figpttra-5:=\Aj@/-\t@ille,-3/\psline[-6,-5]\figpttra-6:=\Aj@/\t@ille,-4/%
    \figvectNVDD-3[-3]\figvectNVDD-8[-4]\inters@cDD-7:[-5,-3;-6,-8]%
    \ifdim\delt@>\z@\psarccircP-7;\fclin@r@d[-5,-6]\else\psarccircP-7;\fclin@r@d[-6,-5]\fi%
    \else\psline[-6,\Aj@]\figptcopy-6:/\Aj@/\fi
    \edef\Ai@{\Aj@}\edef\Aj@{\Ak@}\figptcopy-3:/-4/}\psline[-6,\Aj@]}\else\psline[#1]\fi}
\ctr@ld@f\def\psfcnode[#1]#2{{\ifcurr@ntPS\ifps@cri\PSc@mment{psfcnode Points=#1}%
    \s@uvc@ntr@l\et@tpsfcnode\resetc@ntr@l{2}%
    \def\t@xt@{#2}\ifx\t@xt@\empty\def\g@tt@xt{\setbox\Gb@x=\hbox{\Figg@tT{\p@int}}}%
    \else\def\g@tt@xt{\setbox\Gb@x=\hbox{#2}}\fi%
    \v@lmin=\h@rdfcXp@dd\advance\v@lmin\Xp@dd\unit@\multiply\v@lmin\tw@%
    \v@lmax=\h@rdfcYp@dd\advance\v@lmax\Yp@dd\unit@\multiply\v@lmax\tw@%
    \Figv@ctCreg-8(\unit@,-\unit@)\def\list@num{#1}%
    \delt@=\curr@ntwidth bp\divide\delt@\tw@%
    \fcn@de\PSc@mment{End psfcnode}\resetc@ntr@l\et@tpsfcnode\fi\fi}}
\ctr@ld@f\def\d@butn@de{\g@tt@xt\v@lX=\wd\Gb@x%
    \v@lY=\ht\Gb@x\advance\v@lY\dp\Gb@x\advance\v@lX\v@lmin\advance\v@lY\v@lmax}
\ctr@ld@f\def\fcn@deE{%
    \@ecfor\p@int:=\list@num\do{\d@butn@de\v@lX=\unssqrttw@\v@lX\v@lY=\unssqrttw@\v@lY%
    \ifdim\thickn@ss\p@>\z@
    \v@lXa=\v@lX\advance\v@lXa\delt@\v@lXa=\ptT@unit@\v@lXa\edef\XR@d{\repdecn@mb\v@lXa}%
    \v@lYa=\v@lY\advance\v@lYa\delt@\v@lYa=\ptT@unit@\v@lYa\edef\YR@d{\repdecn@mb\v@lYa}%
    \arct@n\v@leur(\v@lXa,\v@lYa)\v@leur=\rdT@deg\v@leur\edef\@nglde{\repdecn@mb\v@leur}%
    {\c@lptellDD-2::\p@int;\XR@d,\YR@d(\@nglde)}
    \advance\v@leur-\PI@deg\edef\@nglun{\repdecn@mb\v@leur}%
    {\c@lptellDD-3::\p@int;\XR@d,\YR@d(\@nglun)}%
    \figptstra-6=-3,-2,\p@int/\thickn@ss,-8/\pssetfillmode{yes}\us@secondC@lor%
    \psline[-2,-3,-6,-5]\psarcell-4;\XR@d,\YR@d(\@nglun,\@nglde,0)\fi
    \v@lX=\ptT@unit@\v@lX\v@lY=\ptT@unit@\v@lY%
    \edef\XR@d{\repdecn@mb\v@lX}\edef\YR@d{\repdecn@mb\v@lY}%
    \pssetfillmode{yes}\us@thirdC@lor\psarcell\p@int;\XR@d,\YR@d(0,360,0)%
    \pssetfillmode{no}\us@primarC@lor\psarcell\p@int;\XR@d,\YR@d(0,360,0)}}
\ctr@ld@f\def\fcn@deL{\delt@=\ptT@unit@\delt@\edef\t@ille{\repdecn@mb\delt@}%
    \@ecfor\p@int:=\list@num\do{\Figg@tXYa{\p@int}\d@butn@de%
    \ifdim\v@lX>\v@lY\itis@Ktrue\else\itis@Kfalse\fi%
    \advance\v@lXa-\v@lX\Figp@intreg-1:(\v@lXa,\v@lYa)%
    \advance\v@lXa\v@lX\advance\v@lYa-\v@lY\Figp@intreg-2:(\v@lXa,\v@lYa)%
    \advance\v@lXa\v@lX\advance\v@lYa\v@lY\Figp@intreg-3:(\v@lXa,\v@lYa)%
    \advance\v@lXa-\v@lX\advance\v@lYa\v@lY\Figp@intreg-4:(\v@lXa,\v@lYa)%
    \ifdim\thickn@ss\p@>\z@\Figg@tXYa{\p@int}\pssetfillmode{yes}\us@secondC@lor
    \c@lpt@xt{-1}{-4}\c@lpt@xt@\v@lXa\v@lYa\v@lX\v@lY\c@rre\delt@%
    \Figp@intregDD-9:(\v@lZ,\v@lYa)\Figp@intregDD-11:(\v@lZa,\v@lYa)%
    \c@lpt@xt{-4}{-3}\c@lpt@xt@\v@lYa\v@lXa\v@lY\v@lX\delt@\c@rre%
    \Figp@intregDD-12:(\v@lXa,\v@lZ)\Figp@intregDD-10:(\v@lXa,\v@lZa)%
    \ifitis@K\figptstra-7=-9,-10,-11/\thickn@ss,-8/\psline[-9,-11,-5,-6,-7]\else%
    \figptstra-7=-10,-11,-12/\thickn@ss,-8/\psline[-10,-12,-5,-6,-7]\fi\fi
    \pssetfillmode{yes}\us@thirdC@lor\psline[-1,-2,-3,-4]%
    \pssetfillmode{no}\us@primarC@lor\psline[-1,-2,-3,-4,-1]}}
\ctr@ld@f\def\c@lpt@xt#1#2{\figvectN-7[#1,#2]\vecunit@{-7}{-7}\figpttra-5:=#1/\t@ille,-7/%
    \figvectP-7[#1,#2]\Figg@tXY{-7}\c@rre=\v@lX\delt@=\v@lY\Figg@tXY{-5}}
\ctr@ld@f\def\c@lpt@xt@#1#2#3#4#5#6{\v@lZ=#6\invers@{\v@lZ}{\v@lZ}\v@leur=\repdecn@mb{#5}\v@lZ%
    \v@lZ=#2\advance\v@lZ-#4\mili@u=\repdecn@mb{\v@leur}\v@lZ%
    \v@lZ=#3\advance\v@lZ\mili@u\v@lZa=-\v@lZ\advance\v@lZa\tw@#1}
\ctr@ld@f\def\fcn@deR{\@ecfor\p@int:=\list@num\do{\Figg@tXYa{\p@int}\d@butn@de%
    \advance\v@lXa-0.5\v@lX\advance\v@lYa-0.5\v@lY\Figp@intreg-1:(\v@lXa,\v@lYa)%
    \advance\v@lXa\v@lX\Figp@intreg-2:(\v@lXa,\v@lYa)%
    \advance\v@lYa\v@lY\Figp@intreg-3:(\v@lXa,\v@lYa)%
    \advance\v@lXa-\v@lX\Figp@intreg-4:(\v@lXa,\v@lYa)%
    \ifdim\thickn@ss\p@>\z@\pssetfillmode{yes}\us@secondC@lor
    \Figv@ctCreg-5(-\delt@,-\delt@)\figpttra-9:=-1/1,-5/%
    \Figv@ctCreg-5(\delt@,-\delt@)\figpttra-10:=-2/1,-5/%
    \Figv@ctCreg-5(\delt@,\delt@)\figpttra-11:=-3/1,-5/%
    \figptstra-7=-9,-10,-11/\thickn@ss,-8/\psline[-9,-11,-5,-6,-7]\fi
    \pssetfillmode{yes}\us@thirdC@lor\psline[-1,-2,-3,-4]%
    \pssetfillmode{no}\us@primarC@lor\psline[-1,-2,-3,-4,-1]}}
\ctr@ln@m\@rrowp@s
\ctr@ln@m\Xp@dd     \ctr@ln@m\Yp@dd
\ctr@ln@m\fclin@r@d \ctr@ln@m\thickn@ss
\ctr@ld@f\def\Pssetfl@wchart#1=#2|{\keln@mtr#1|%
    \def\n@mref{arr}\ifx\l@debut\n@mref\expandafter\keln@mtr\l@suite|%
     \def\n@mref{owp}\ifx\l@debut\n@mref\edef\@rrowp@s{#2}\else
     \def\n@mref{owr}\ifx\l@debut\n@mref\setfcr@fpt#2|\else
     \immediate\write16{*** Unknown attribute: \BS@ psset flowchart(..., #1=...)}%
     \fi\fi\else%
    \def\n@mref{lin}\ifx\l@debut\n@mref\setfccurv@#2|\else
    \def\n@mref{pad}\ifx\l@debut\n@mref\edef\Xp@dd{#2}\edef\Yp@dd{#2}\else
    \def\n@mref{rad}\ifx\l@debut\n@mref\edef\fclin@r@d{#2}\else
    \def\n@mref{sha}\ifx\l@debut\n@mref\setfcshap@#2|\else
    \def\n@mref{thi}\ifx\l@debut\n@mref\edef\thickn@ss{#2}\else
    \def\n@mref{xpa}\ifx\l@debut\n@mref\edef\Xp@dd{#2}\else
    \def\n@mref{ypa}\ifx\l@debut\n@mref\edef\Yp@dd{#2}\else
    \immediate\write16{*** Unknown attribute: \BS@ psset flowchart(..., #1=...)}%
    \fi\fi\fi\fi\fi\fi\fi\fi}
\ctr@ln@m\@rrowr@fpt \ctr@ln@m\fclin@typ@
\ctr@ld@f\def\setfcr@fpt#1#2|{\if#1e\def\@rrowr@fpt{1}\else\def\@rrowr@fpt{0}\fi}
\ctr@ld@f\def\setfccurv@#1#2|{\if#1c\def\fclin@typ@{0}\else\def\fclin@typ@{1}\fi}
\ctr@ln@m\h@rdfcXp@dd \ctr@ln@m\h@rdfcYp@dd
\ctr@ln@m\fcn@de \ctr@ln@m\fcsh@pe
\ctr@ld@f\def\setfcshap@#1#2|{%
    \if#1e\let\fcn@de=\fcn@deE\def\h@rdfcXp@dd{4pt}\def\h@rdfcYp@dd{4pt}%
     \edef\fcsh@pe{ellipse}\else%
    \if#1l\let\fcn@de=\fcn@deL\def\h@rdfcXp@dd{4pt}\def\h@rdfcYp@dd{4pt}%
     \edef\fcsh@pe{lozenge}\else%
          \let\fcn@de=\fcn@deR\def\h@rdfcXp@dd{6pt}\def\h@rdfcYp@dd{6pt}%
     \edef\fcsh@pe{rectangle}\fi\fi}
\ctr@ld@f\def\psline[#1]{{\ifcurr@ntPS\ifps@cri\PSc@mment{psline Points=#1}%
    \let\pslign@=\Pslign@P\Pslin@{#1}\PSc@mment{End psline}\fi\fi}}
\ctr@ld@f\def\pslineF#1{{\ifcurr@ntPS\ifps@cri\PSc@mment{pslineF Filename=#1}%
    \let\pslign@=\Pslign@F\Pslin@{#1}\PSc@mment{End pslineF}\fi\fi}}
\ctr@ld@f\def\pslineC(#1){{\ifcurr@ntPS\ifps@cri\PSc@mment{pslineC}%
    \let\pslign@=\Pslign@C\Pslin@{#1}\PSc@mment{End pslineC}\fi\fi}}
\ctr@ld@f\def\Pslin@#1{\iffillm@de\pslign@{#1}%
    \f@gfill%
    \else\pslign@{#1}\ifx\derp@int\premp@int%
    \f@gclosestroke%
    \else\f@gstroke\fi\fi}
\ctr@ld@f\def\Pslign@P#1{\def\list@num{#1}\extrairelepremi@r\p@int\de\list@num%
    \edef\premp@int{\p@int}\f@gnewpath%
    \PSwrit@cmd{\p@int}{\c@mmoveto}{\fwf@g}%
    \@ecfor\p@int:=\list@num\do{\PSwrit@cmd{\p@int}{\c@mlineto}{\fwf@g}%
    \edef\derp@int{\p@int}}}
\ctr@ld@f\def\Pslign@F#1{\s@uvc@ntr@l\et@tPslign@F\setc@ntr@l{2}\openin\frf@g=#1\relax%
    \ifeof\frf@g\message{*** File #1 not found !}\end\else%
    \read\frf@g to\tr@c\edef\premp@int{\tr@c}\expandafter\extr@ctCF\tr@c:%
    \f@gnewpath\PSwrit@cmd{-1}{\c@mmoveto}{\fwf@g}%
    \loop\read\frf@g to\tr@c\ifeof\frf@g\mored@tafalse\else\mored@tatrue\fi%
    \ifmored@ta\expandafter\extr@ctCF\tr@c:\PSwrit@cmd{-1}{\c@mlineto}{\fwf@g}%
    \edef\derp@int{\tr@c}\repeat\fi\closein\frf@g\resetc@ntr@l\et@tPslign@F}
\ctr@ln@m\extr@ctCF
\ctr@ld@f\def\extr@ctCFDD#1 #2:{\v@lX=#1\unit@\v@lY=#2\unit@\Figp@intregDD-1:(\v@lX,\v@lY)}
\ctr@ld@f\def\extr@ctCFTD#1 #2 #3:{\v@lX=#1\unit@\v@lY=#2\unit@\v@lZ=#3\unit@%
    \Figp@intregTD-1:(\v@lX,\v@lY,\v@lZ)}
\ctr@ld@f\def\Pslign@C#1{\s@uvc@ntr@l\et@tPslign@C\setc@ntr@l{2}%
    \def\list@num{#1}\extrairelepremi@r\p@int\de\list@num%
    \edef\premp@int{\p@int}\f@gnewpath%
    \expandafter\Pslign@C@\p@int:\PSwrit@cmd{-1}{\c@mmoveto}{\fwf@g}%
    \@ecfor\p@int:=\list@num\do{\expandafter\Pslign@C@\p@int:%
    \PSwrit@cmd{-1}{\c@mlineto}{\fwf@g}\edef\derp@int{\p@int}}%
    \resetc@ntr@l\et@tPslign@C}
\ctr@ld@f\def\Pslign@C@#1 #2:{{\def\t@xt@{#1}\ifx\t@xt@\empty\Pslign@C@#2:
    \else\extr@ctCF#1 #2:\fi}}
\ctr@ln@m\c@ntrolmesh
\ctr@ld@f\def\Pssetm@sh#1=#2|{\keln@mun#1|%
    \def\n@mref{d}\ifx\l@debut\n@mref\pssetmeshdiag{#2}\else
    \immediate\write16{*** Unknown attribute: \BS@ psset mesh(..., #1=...)}%
    \fi}
\ctr@ld@f\def\pssetmeshdiag#1{\edef\c@ntrolmesh{#1}}
\ctr@ld@f\def\defaultmeshdiag{0}    
\ctr@ld@f\def\psmesh#1,#2[#3,#4,#5,#6]{{\ifcurr@ntPS\ifps@cri%
    \PSc@mment{psmesh N1=#1, N2=#2, Quadrangle=[#3,#4,#5,#6]}%
    \s@uvc@ntr@l\et@tpsmesh\Pss@tsecondSt\setc@ntr@l{2}%
    \ifnum#1>\@ne\Psmeshp@rt#1[#3,#4,#5,#6]\fi%
    \ifnum#2>\@ne\Psmeshp@rt#2[#4,#5,#6,#3]\fi%
    \ifnum\c@ntrolmesh>\z@\Psmeshdi@g#1,#2[#3,#4,#5,#6]\fi%
    \ifnum\c@ntrolmesh<\z@\Psmeshdi@g#2,#1[#4,#5,#6,#3]\fi\Psrest@reSt%
    \psline[#3,#4,#5,#6,#3]\PSc@mment{End psmesh}\resetc@ntr@l\et@tpsmesh\fi\fi}}
\ctr@ld@f\def\Psmeshp@rt#1[#2,#3,#4,#5]{{\l@mbd@un=\@ne\l@mbd@de=#1\loop%
    \ifnum\l@mbd@un<#1\advance\l@mbd@de\m@ne\figptbary-1:[#2,#3;\l@mbd@de,\l@mbd@un]%
    \figptbary-2:[#5,#4;\l@mbd@de,\l@mbd@un]\psline[-1,-2]\advance\l@mbd@un\@ne\repeat}}
\ctr@ld@f\def\Psmeshdi@g#1,#2[#3,#4,#5,#6]{\figptcopy-2:/#3/\figptcopy-3:/#6/%
    \l@mbd@un=\z@\l@mbd@de=#1\loop\ifnum\l@mbd@un<#1%
    \advance\l@mbd@un\@ne\advance\l@mbd@de\m@ne\figptcopy-1:/-2/\figptcopy-4:/-3/%
    \figptbary-2:[#3,#4;\l@mbd@de,\l@mbd@un]%
    \figptbary-3:[#6,#5;\l@mbd@de,\l@mbd@un]\Psmeshdi@gp@rt#2[-1,-2,-3,-4]\repeat}
\ctr@ld@f\def\Psmeshdi@gp@rt#1[#2,#3,#4,#5]{{\l@mbd@un=\z@\l@mbd@de=#1\loop%
    \ifnum\l@mbd@un<#1\figptbary-5:[#2,#5;\l@mbd@de,\l@mbd@un]%
    \advance\l@mbd@de\m@ne\advance\l@mbd@un\@ne%
    \figptbary-6:[#3,#4;\l@mbd@de,\l@mbd@un]\psline[-5,-6]\repeat}}
\ctr@ln@m\psnormal
\ctr@ld@f\def\psnormalDD#1,#2[#3,#4]{{\ifcurr@ntPS\ifps@cri%
    \PSc@mment{psnormal Length=#1, Lambda=#2 [Pt1,Pt2]=[#3,#4]}%
    \s@uvc@ntr@l\et@tpsnormal\resetc@ntr@l{2}\figptendnormal-6::#1,#2[#3,#4]%
    \figptcopyDD-5:/-1/\psarrow[-5,-6]%
    \PSc@mment{End psnormal}\resetc@ntr@l\et@tpsnormal\fi\fi}}
\ctr@ld@f\def\psreset#1{\trtlis@rg{#1}{\Psreset@}}
\ctr@ld@f\def\Psreset@#1|{\keln@mde#1|%
    \def\n@mref{ar}\ifx\l@debut\n@mref\psresetarrowhead\else
    \def\n@mref{cu}\ifx\l@debut\n@mref\psset curve(roundness=\defaultroundness)\else
    \def\n@mref{fi}\ifx\l@debut\n@mref\psset (color=\defaultcolor,dash=\defaultdash,%
         fill=\defaultfill,join=\defaultjoin,width=\defaultwidth)\else
    \def\n@mref{fl}\ifx\l@debut\n@mref\psset flowchart(arrowp=\defaultfcarrowposition,%
	arrowr=\defaultfcarrowrefpt,line=\defaultfcline,xpadd=\defaultfcxpadding,%
	ypadd=\defaultfcypadding,radius=\defaultfcradius,shape=\defaultfcshape,%
	thick=\defaultfcthickness)\else
    \def\n@mref{me}\ifx\l@debut\n@mref\psset mesh(diag=\defaultmeshdiag)\else
    \def\n@mref{se}\ifx\l@debut\n@mref\psresetsecondsettings\else
    \def\n@mref{th}\ifx\l@debut\n@mref\psset third(color=\defaultthirdcolor)\else
    \immediate\write16{*** Unknown keyword #1 (\BS@ psreset).}%
    \fi\fi\fi\fi\fi\fi\fi}
\ctr@ld@f\def\psset#1(#2){\def\t@xt@{#1}\ifx\t@xt@\empty\trtlis@rg{#2}{\Pssetf@rst}
    \else\keln@mde#1|%
    \def\n@mref{ar}\ifx\l@debut\n@mref\trtlis@rg{#2}{\Psset@rrowhe@d}\else
    \def\n@mref{cu}\ifx\l@debut\n@mref\trtlis@rg{#2}{\Pssetc@rve}\else
    \def\n@mref{fi}\ifx\l@debut\n@mref\trtlis@rg{#2}{\Pssetf@rst}\else
    \def\n@mref{fl}\ifx\l@debut\n@mref\trtlis@rg{#2}{\Pssetfl@wchart}\else
    \def\n@mref{me}\ifx\l@debut\n@mref\trtlis@rg{#2}{\Pssetm@sh}\else
    \def\n@mref{se}\ifx\l@debut\n@mref\trtlis@rg{#2}{\Pssets@cond}\else
    \def\n@mref{th}\ifx\l@debut\n@mref\trtlis@rg{#2}{\Pssetth@rd}\else
    \immediate\write16{*** Unknown keyword: \BS@ psset #1(...)}%
    \fi\fi\fi\fi\fi\fi\fi\fi}
\ctr@ld@f\def\pssetdefault#1(#2){\ifcurr@ntPS\immediate\write16{*** \BS@ pssetdefault is ignored
    inside a \BS@ psbeginfig-\BS@ psendfig block.}%
    \immediate\write16{*** It must be called before \BS@ psbeginfig.}\else%
    \def\t@xt@{#1}\ifx\t@xt@\empty\trtlis@rg{#2}{\Pssd@f@rst}\else\keln@mde#1|%
    \def\n@mref{ar}\ifx\l@debut\n@mref\trtlis@rg{#2}{\Pssd@@rrowhe@d}\else
    \def\n@mref{cu}\ifx\l@debut\n@mref\trtlis@rg{#2}{\Pssd@c@rve}\else
    \def\n@mref{fi}\ifx\l@debut\n@mref\trtlis@rg{#2}{\Pssd@f@rst}\else
    \def\n@mref{fl}\ifx\l@debut\n@mref\trtlis@rg{#2}{\Pssd@fl@wchart}\else
    \def\n@mref{me}\ifx\l@debut\n@mref\trtlis@rg{#2}{\Pssd@m@sh}\else
    \def\n@mref{se}\ifx\l@debut\n@mref\trtlis@rg{#2}{\Pssd@s@cond}\else
    \def\n@mref{th}\ifx\l@debut\n@mref\trtlis@rg{#2}{\Pssd@th@rd}\else
    \immediate\write16{*** Unknown keyword: \BS@ pssetdefault #1(...)}%
    \fi\fi\fi\fi\fi\fi\fi\fi\initpss@ttings\fi}
\ctr@ld@f\def\Pssd@f@rst#1=#2|{\keln@mun#1|%
    \def\n@mref{c}\ifx\l@debut\n@mref\edef\defaultcolor{#2}\else
    \def\n@mref{d}\ifx\l@debut\n@mref\edef\defaultdash{#2}\else
    \def\n@mref{f}\ifx\l@debut\n@mref\edef\defaultfill{#2}\else
    \def\n@mref{j}\ifx\l@debut\n@mref\edef\defaultjoin{#2}\else
    \def\n@mref{u}\ifx\l@debut\n@mref\edef\defaultupdate{#2}\pssetupdate{#2}\else
    \def\n@mref{w}\ifx\l@debut\n@mref\edef\defaultwidth{#2}\else
    \immediate\write16{*** Unknown attribute: \BS@ pssetdefault (..., #1=...)}%
    \fi\fi\fi\fi\fi\fi}
\ctr@ld@f\def\Pssd@@rrowhe@d#1=#2|{\keln@mun#1|%
    \def\n@mref{a}\ifx\l@debut\n@mref\edef\defaultarrowheadangle{#2}\else
    \def\n@mref{f}\ifx\l@debut\n@mref\edef\defaultarrowheadangle{#2}\else
    \def\n@mref{l}\ifx\l@debut\n@mref\y@tiunit{#2}\ifunitpr@sent%
     \edef\defaulth@rdahlength{#2}\else\edef\defaulth@rdahlength{#2pt}%
     \message{*** \BS@ pssetdefault (..., #1=#2, ...) : unit is missing, pt is assumed.}%
     \fi\else
    \def\n@mref{o}\ifx\l@debut\n@mref\edef\defaultarrowheadout{#2}\else
    \def\n@mref{r}\ifx\l@debut\n@mref\edef\defaultarrowheadratio{#2}\else
    \immediate\write16{*** Unknown attribute: \BS@ pssetdefault arrowhead(..., #1=...)}%
    \fi\fi\fi\fi\fi}
\ctr@ld@f\def\Pssd@c@rve#1=#2|{\keln@mun#1|%
    \def\n@mref{r}\ifx\l@debut\n@mref\edef\defaultroundness{#2}\else%
    \immediate\write16{*** Unknown attribute: \BS@ pssetdefault curve(..., #1=...)}%
    \fi}
\ctr@ld@f\def\Pssd@fl@wchart#1=#2|{\keln@mtr#1|%
    \def\n@mref{arr}\ifx\l@debut\n@mref\expandafter\keln@mtr\l@suite|%
     \def\n@mref{owp}\ifx\l@debut\n@mref\edef\defaultfcarrowposition{#2}\else
     \def\n@mref{owr}\ifx\l@debut\n@mref\edef\defaultfcarrowrefpt{#2}\else
     \immediate\write16{*** Unknown attribute: \BS@ pssetdefault flowchart(..., #1=...)}%
     \fi\fi\else%
    \def\n@mref{lin}\ifx\l@debut\n@mref\edef\defaultfcline{#2}\else
    \def\n@mref{pad}\ifx\l@debut\n@mref\edef\defaultfcxpadding{#2}%
                    \edef\defaultfcypadding{#2}\else
    \def\n@mref{rad}\ifx\l@debut\n@mref\edef\defaultfcradius{#2}\else
    \def\n@mref{sha}\ifx\l@debut\n@mref\edef\defaultfcshape{#2}\else
    \def\n@mref{thi}\ifx\l@debut\n@mref\edef\defaultfcthickness{#2}\else
    \def\n@mref{xpa}\ifx\l@debut\n@mref\edef\defaultfcxpadding{#2}\else
    \def\n@mref{ypa}\ifx\l@debut\n@mref\edef\defaultfcypadding{#2}\else
    \immediate\write16{*** Unknown attribute: \BS@ pssetdefault flowchart(..., #1=...)}%
    \fi\fi\fi\fi\fi\fi\fi\fi}
\ctr@ld@f\def\defaultfcarrowposition{0.5}
\ctr@ld@f\def\defaultfcarrowrefpt{start}
\ctr@ld@f\def\defaultfcline{polygon}
\ctr@ld@f\def\defaultfcradius{0}
\ctr@ld@f\def\defaultfcshape{rectangle}
\ctr@ld@f\def\defaultfcthickness{0}
\ctr@ld@f\def\defaultfcxpadding{0}
\ctr@ld@f\def\defaultfcypadding{0}
\ctr@ld@f\def\Pssd@m@sh#1=#2|{\keln@mun#1|%
    \def\n@mref{d}\ifx\l@debut\n@mref\edef\defaultmeshdiag{#2}\else%
    \immediate\write16{*** Unknown attribute: \BS@ pssetdefault mesh(..., #1=...)}%
    \fi}
\ctr@ld@f\def\Pssd@s@cond#1=#2|{\keln@mun#1|%
    \def\n@mref{c}\ifx\l@debut\n@mref\edef\defaultsecondcolor{#2}\else%
    \def\n@mref{d}\ifx\l@debut\n@mref\edef\defaultseconddash{#2}\else%
    \def\n@mref{w}\ifx\l@debut\n@mref\edef\defaultsecondwidth{#2}\else%
    \immediate\write16{*** Unknown attribute: \BS@ pssetdefault second(..., #1=...)}%
    \fi\fi\fi}
\ctr@ld@f\def\Pssd@th@rd#1=#2|{\keln@mun#1|%
    \def\n@mref{c}\ifx\l@debut\n@mref\edef\defaultthirdcolor{#2}\else%
    \immediate\write16{*** Unknown attribute: \BS@ pssetdefault third(..., #1=...)}%
    \fi}
\ctr@ln@w{newif}\iffillm@de
\ctr@ld@f\def\pssetfillmode#1{\expandafter\setfillm@de#1:}
\ctr@ld@f\def\setfillm@de#1#2:{\if#1n\fillm@defalse\else\fillm@detrue\fi}
\ctr@ld@f\def\defaultfill{no}     
\ctr@ln@w{newif}\ifpsupdatem@de
\ctr@ld@f\def\pssetupdate#1{\ifcurr@ntPS\immediate\write16{*** \BS@ pssetupdate is ignored inside a
     \BS@ psbeginfig-\BS@ psendfig block.}%
    \immediate\write16{*** It must be called before \BS@ psbeginfig.}%
    \else\expandafter\setupd@te#1:\fi}
\ctr@ld@f\def\setupd@te#1#2:{\if#1n\psupdatem@defalse\else\psupdatem@detrue\fi}
\ctr@ld@f\def\defaultupdate{no}     
\ctr@ln@m\curr@ntcolor \ctr@ln@m\curr@ntcolorc@md
\ctr@ld@f\def\Pssetc@lor#1{\ifps@cri\result@tent=\@ne\expandafter\c@lnbV@l#1 :%
    \def\curr@ntcolor{}\def\curr@ntcolorc@md{}%
    \ifcase\result@tent\or\pssetgray{#1}\or\or\pssetrgb{#1}\or\pssetcmyk{#1}\fi\fi}
\ctr@ln@m\curr@ntcolorc@mdStroke
\ctr@ld@f\def\pssetcmyk#1{\ifps@cri\def\curr@ntcolor{#1}\def\curr@ntcolorc@md{\c@msetcmykcolor}%
    \def\curr@ntcolorc@mdStroke{\c@msetcmykcolorStroke}%
    \ifcurr@ntPS\PSc@mment{pssetcmyk Color=#1}\us@primarC@lor\fi\fi}
\ctr@ld@f\def\pssetrgb#1{\ifps@cri\def\curr@ntcolor{#1}\def\curr@ntcolorc@md{\c@msetrgbcolor}%
    \def\curr@ntcolorc@mdStroke{\c@msetrgbcolorStroke}%
    \ifcurr@ntPS\PSc@mment{pssetrgb Color=#1}\us@primarC@lor\fi\fi}
\ctr@ld@f\def\pssetgray#1{\ifps@cri\def\curr@ntcolor{#1}\def\curr@ntcolorc@md{\c@msetgray}%
    \def\curr@ntcolorc@mdStroke{\c@msetgrayStroke}%
    \ifcurr@ntPS\PSc@mment{pssetgray Gray level=#1}\us@primarC@lor\fi\fi}
\ctr@ln@m\fillc@md
\ctr@ld@f\def\us@primarC@lor{\immediate\write\fwf@g{\d@fprimarC@lor}%
    \let\fillc@md=\prfillc@md}
\ctr@ld@f\def\prfillc@md{\d@fprimarC@lor\space\c@mfill}
\ctr@ld@f\def\defaultcolor{0}       
\ctr@ld@f\def\c@lnbV@l#1 #2:{\def\t@xt@{#1}\relax\ifx\t@xt@\empty\c@lnbV@l#2:
    \else\c@lnbV@l@#1 #2:\fi}
\ctr@ld@f\def\c@lnbV@l@#1 #2:{\def\t@xt@{#2}\ifx\t@xt@\empty%
    \def\t@xt@{#1}\ifx\t@xt@\empty\advance\result@tent\m@ne\fi
    \else\advance\result@tent\@ne\c@lnbV@l@#2:\fi}
\ctr@ld@f\def\Blackcmyk{0 0 0 1}
\ctr@ld@f\def\Whitecmyk{0 0 0 0}
\ctr@ld@f\def\Cyancmyk{1 0 0 0}
\ctr@ld@f\def\Magentacmyk{0 1 0 0}
\ctr@ld@f\def\Yellowcmyk{0 0 1 0}
\ctr@ld@f\def\Redcmyk{0 1 1 0}
\ctr@ld@f\def\Greencmyk{1 0 1 0}
\ctr@ld@f\def\Bluecmyk{1 1 0 0}
\ctr@ld@f\def\Graycmyk{0 0 0 0.50}
\ctr@ld@f\def\BrickRedcmyk{0 0.89 0.94 0.28} 
\ctr@ld@f\def\Browncmyk{0 0.81 1 0.60} 
\ctr@ld@f\def\ForestGreencmyk{0.91 0 0.88 0.12} 
\ctr@ld@f\def\Goldenrodcmyk{ 0 0.10 0.84 0} 
\ctr@ld@f\def\Marooncmyk{0 0.87 0.68 0.32} 
\ctr@ld@f\def\Orangecmyk{0 0.61 0.87 0} 
\ctr@ld@f\def\Purplecmyk{0.45 0.86 0 0} 
\ctr@ld@f\def\RoyalBluecmyk{1. 0.50 0 0} 
\ctr@ld@f\def\Violetcmyk{0.79 0.88 0 0} 
\ctr@ld@f\def\Blackrgb{0 0 0}
\ctr@ld@f\def\Whitergb{1 1 1}
\ctr@ld@f\def\Redrgb{1 0 0}
\ctr@ld@f\def\Greenrgb{0 1 0}
\ctr@ld@f\def\Bluergb{0 0 1}
\ctr@ld@f\def\Cyanrgb{0 1 1}
\ctr@ld@f\def\Magentargb{1 0 1}
\ctr@ld@f\def\Yellowrgb{1 1 0}
\ctr@ld@f\def\Grayrgb{0.5 0.5 0.5}
\ctr@ld@f\def\Chocolatergb{0.824 0.412 0.118}
\ctr@ld@f\def\DarkGoldenrodrgb{0.722 0.525 0.043}
\ctr@ld@f\def\DarkOrangergb{1 0.549 0}
\ctr@ld@f\def\Firebrickrgb{0.698 0.133 0.133}
\ctr@ld@f\def\ForestGreenrgb{0.133 0.545 0.133}
\ctr@ld@f\def\Goldrgb{1 0.843 0}
\ctr@ld@f\def\HotPinkrgb{1 0.412 0.706}
\ctr@ld@f\def\Maroonrgb{0.690 0.188 0.376}
\ctr@ld@f\def\Pinkrgb{1 0.753 0.796}
\ctr@ld@f\def\RoyalBluergb{0.255 0.412 0.882}
\ctr@ld@f\def\Pssetf@rst#1=#2|{\keln@mun#1|%
    \def\n@mref{c}\ifx\l@debut\n@mref\Pssetc@lor{#2}\else
    \def\n@mref{d}\ifx\l@debut\n@mref\pssetdash{#2}\else
    \def\n@mref{f}\ifx\l@debut\n@mref\pssetfillmode{#2}\else
    \def\n@mref{j}\ifx\l@debut\n@mref\pssetjoin{#2}\else
    \def\n@mref{u}\ifx\l@debut\n@mref\pssetupdate{#2}\else
    \def\n@mref{w}\ifx\l@debut\n@mref\pssetwidth{#2}\else
    \immediate\write16{*** Unknown attribute: \BS@ psset (..., #1=...)}%
    \fi\fi\fi\fi\fi\fi}
\ctr@ln@m\curr@ntdash
\ctr@ld@f\def\s@uvdash#1{\edef#1{\curr@ntdash}}
\ctr@ld@f\def\defaultdash{1}        
\ctr@ld@f\def\pssetdash#1{\ifps@cri\edef\curr@ntdash{#1}\ifcurr@ntPS\expandafter\Pssetd@sh#1 :\fi\fi}
\ctr@ld@f\def\Pssetd@shI#1{\PSc@mment{pssetdash Index=#1}\ifcase#1%
    \or\immediate\write\fwf@g{[] 0 \c@msetdash}
    \or\immediate\write\fwf@g{[6 2] 0 \c@msetdash}
    \or\immediate\write\fwf@g{[4 2] 0 \c@msetdash}
    \or\immediate\write\fwf@g{[2 2] 0 \c@msetdash}
    \or\immediate\write\fwf@g{[1 2] 0 \c@msetdash}
    \or\immediate\write\fwf@g{[2 4] 0 \c@msetdash}
    \or\immediate\write\fwf@g{[3 5] 0 \c@msetdash}
    \or\immediate\write\fwf@g{[3 3] 0 \c@msetdash}
    \or\immediate\write\fwf@g{[3 5 1 5] 0 \c@msetdash}
    \or\immediate\write\fwf@g{[6 4 2 4] 0 \c@msetdash}
    \fi}
\ctr@ld@f\def\Pssetd@sh#1 #2:{{\def\t@xt@{#1}\ifx\t@xt@\empty\Pssetd@sh#2:
    \else\def\t@xt@{#2}\ifx\t@xt@\empty\Pssetd@shI{#1}\else\s@mme=\@ne\def\debutp@t{#1}%
    \an@lysd@sh#2:\ifodd\s@mme\edef\debutp@t{\debutp@t\space\finp@t}\def\finp@t{0}\fi%
    \PSc@mment{pssetdash Pattern=#1 #2}%
    \immediate\write\fwf@g{[\debutp@t] \finp@t\space\c@msetdash}\fi\fi}}
\ctr@ld@f\def\an@lysd@sh#1 #2:{\def\t@xt@{#2}\ifx\t@xt@\empty\def\finp@t{#1}\else%
    \edef\debutp@t{\debutp@t\space#1}\advance\s@mme\@ne\an@lysd@sh#2:\fi}
\ctr@ln@m\curr@ntwidth
\ctr@ld@f\def\s@uvwidth#1{\edef#1{\curr@ntwidth}}
\ctr@ld@f\def\defaultwidth{0.4}     
\ctr@ld@f\def\pssetwidth#1{\ifps@cri\edef\curr@ntwidth{#1}\ifcurr@ntPS%
    \PSc@mment{pssetwidth Width=#1}\immediate\write\fwf@g{#1 \c@msetlinewidth}\fi\fi}
\ctr@ln@m\curr@ntjoin
\ctr@ld@f\def\pssetjoin#1{\ifps@cri\edef\curr@ntjoin{#1}\ifcurr@ntPS\expandafter\Pssetj@in#1:\fi\fi}
\ctr@ld@f\def\Pssetj@in#1#2:{\PSc@mment{pssetjoin join=#1}%
    \if#1r\def\t@xt@{1}\else\if#1b\def\t@xt@{2}\else\def\t@xt@{0}\fi\fi%
    \immediate\write\fwf@g{\t@xt@\space\c@msetlinejoin}}
\ctr@ld@f\def\defaultjoin{miter}   
\ctr@ld@f\def\Pssets@cond#1=#2|{\keln@mun#1|%
    \def\n@mref{c}\ifx\l@debut\n@mref\Pssets@condcolor{#2}\else%
    \def\n@mref{d}\ifx\l@debut\n@mref\pssetseconddash{#2}\else%
    \def\n@mref{w}\ifx\l@debut\n@mref\pssetsecondwidth{#2}\else%
    \immediate\write16{*** Unknown attribute: \BS@ psset second(..., #1=...)}%
    \fi\fi\fi}
\ctr@ln@m\curr@ntseconddash
\ctr@ld@f\def\pssetseconddash#1{\edef\curr@ntseconddash{#1}}
\ctr@ld@f\def\defaultseconddash{4}  
\ctr@ln@m\curr@ntsecondwidth
\ctr@ld@f\def\pssetsecondwidth#1{\edef\curr@ntsecondwidth{#1}}
\ctr@ld@f\edef\defaultsecondwidth{\defaultwidth} 
\ctr@ld@f\def\psresetsecondsettings{%
    \pssetseconddash{\defaultseconddash}\pssetsecondwidth{\defaultsecondwidth}%
    \Pssets@condcolor{\defaultsecondcolor}}
\ctr@ln@m\sec@ndcolor \ctr@ln@m\sec@ndcolorc@md
\ctr@ld@f\def\Pssets@condcolor#1{\ifps@cri\result@tent=\@ne\expandafter\c@lnbV@l#1 :%
    \def\sec@ndcolor{}\def\sec@ndcolorc@md{}%
    \ifcase\result@tent\or\pssetsecondgray{#1}\or\or\pssetsecondrgb{#1}%
    \or\pssetsecondcmyk{#1}\fi\fi}
\ctr@ln@m\sec@ndcolorc@mdStroke
\ctr@ld@f\def\pssetsecondcmyk#1{\def\sec@ndcolor{#1}\def\sec@ndcolorc@md{\c@msetcmykcolor}%
    \def\sec@ndcolorc@mdStroke{\c@msetcmykcolorStroke}}
\ctr@ld@f\def\pssetsecondrgb#1{\def\sec@ndcolor{#1}\def\sec@ndcolorc@md{\c@msetrgbcolor}%
    \def\sec@ndcolorc@mdStroke{\c@msetrgbcolorStroke}}
\ctr@ld@f\def\pssetsecondgray#1{\def\sec@ndcolor{#1}\def\sec@ndcolorc@md{\c@msetgray}%
    \def\sec@ndcolorc@mdStroke{\c@msetgrayStroke}}
\ctr@ld@f\def\us@secondC@lor{\immediate\write\fwf@g{\d@fsecondC@lor}%
    \let\fillc@md=\sdfillc@md}
\ctr@ld@f\def\sdfillc@md{\d@fsecondC@lor\space\c@mfill}
\ctr@ld@f\edef\defaultsecondcolor{\defaultcolor} 
\ctr@ld@f\def\Pss@tsecondSt{%
    \s@uvdash{\typ@dash}\pssetdash{\curr@ntseconddash}%
    \s@uvwidth{\typ@width}\pssetwidth{\curr@ntsecondwidth}\us@secondC@lor}
\ctr@ld@f\def\Psrest@reSt{\pssetwidth{\typ@width}\pssetdash{\typ@dash}\us@primarC@lor}
\ctr@ld@f\def\Pssetth@rd#1=#2|{\keln@mun#1|%
    \def\n@mref{c}\ifx\l@debut\n@mref\Pssetth@rdcolor{#2}\else%
    \immediate\write16{*** Unknown attribute: \BS@ psset third(..., #1=...)}%
    \fi}
\ctr@ln@m\th@rdcolor \ctr@ln@m\th@rdcolorc@md
\ctr@ld@f\def\Pssetth@rdcolor#1{\ifps@cri\result@tent=\@ne\expandafter\c@lnbV@l#1 :%
    \def\th@rdcolor{}\def\th@rdcolorc@md{}%
    \ifcase\result@tent\or\Pssetth@rdgray{#1}\or\or\Pssetth@rdrgb{#1}%
    \or\Pssetth@rdcmyk{#1}\fi\fi}
\ctr@ln@m\th@rdcolorc@mdStroke
\ctr@ld@f\def\Pssetth@rdcmyk#1{\def\th@rdcolor{#1}\def\th@rdcolorc@md{\c@msetcmykcolor}%
    \def\th@rdcolorc@mdStroke{\c@msetcmykcolorStroke}}
\ctr@ld@f\def\Pssetth@rdrgb#1{\def\th@rdcolor{#1}\def\th@rdcolorc@md{\c@msetrgbcolor}%
    \def\th@rdcolorc@mdStroke{\c@msetrgbcolorStroke}}
\ctr@ld@f\def\Pssetth@rdgray#1{\def\th@rdcolor{#1}\def\th@rdcolorc@md{\c@msetgray}%
    \def\th@rdcolorc@mdStroke{\c@msetgrayStroke}}
\ctr@ld@f\def\us@thirdC@lor{\immediate\write\fwf@g{\d@fthirdC@lor}%
    \let\fillc@md=\thfillc@md}
\ctr@ld@f\def\thfillc@md{\d@fthirdC@lor\space\c@mfill}
\ctr@ld@f\def\defaultthirdcolor{1}  
\ctr@ld@f\def\pstrimesh#1[#2,#3,#4]{{\ifcurr@ntPS\ifps@cri%
    \PSc@mment{pstrimesh Type=#1, Triangle=[#2,#3,#4]}%
    \s@uvc@ntr@l\et@tpstrimesh\ifnum#1>\@ne\Pss@tsecondSt\setc@ntr@l{2}%
    \Pstrimeshp@rt#1[#2,#3,#4]\Pstrimeshp@rt#1[#3,#4,#2]%
    \Pstrimeshp@rt#1[#4,#2,#3]\Psrest@reSt\fi\psline[#2,#3,#4,#2]%
    \PSc@mment{End pstrimesh}\resetc@ntr@l\et@tpstrimesh\fi\fi}}
\ctr@ld@f\def\Pstrimeshp@rt#1[#2,#3,#4]{{\l@mbd@un=\@ne\l@mbd@de=#1\loop\ifnum\l@mbd@de>\@ne%
    \advance\l@mbd@de\m@ne\figptbary-1:[#2,#3;\l@mbd@de,\l@mbd@un]%
    \figptbary-2:[#2,#4;\l@mbd@de,\l@mbd@un]\psline[-1,-2]%
    \advance\l@mbd@un\@ne\repeat}}
\initpr@lim\initpss@ttings\initPDF@rDVI
\ctr@ln@w{newbox}\figBoxA
\ctr@ln@w{newbox}\figBoxB
\ctr@ln@w{newbox}\figBoxC
\catcode`\@=12

\title{Cellular automata on a $G$-set}

\author{Sébastien Moriceau}


\maketitle


\begin{abstract}
In this paper, we extend the usual definition of cellular automaton
on a group in order to deal with a new kind of cellular automata,
like cellular automata in the hyperbolic plane and we explore some
properties of these cellular automata. This definition also allows
to deal with maps, intuitively considered as cellular automata, even
if they did not match the usual definition, like the Margolus billiard-ball.
One of the main results is an extension of Hedlund's theorem for these
cellular automata.
\end{abstract}

\section{Introduction}

Cellular automata have been developped first by John von Neumann \cite{Neumann}
on an infinite rectangular grid. Originally, the cells were the squares
of an infinite 2-dimensional checker board, addressed by $\mathbb{Z}^{2}$.
Later it had been extended to a $d$-dimensional board, addressed
by $\mathbb{Z}^{d}$ (see e.g. \cite{Kari2}). In modern cellular
automaton theory, the lattice structure is provided by any group $G$
(see e.g. \cite{Coornaert}). This latter case shall be refered to
as the \emph{classical case} in the rest of the present paper. Ever
since, cellular automata have been used in various topics like group
theory, but also language recognition, decidability questions, computational
universality, dynamical systems, conservation laws in physics, reversibility
in microscopic physical systems.

Recently cellular automata have been developped in a new environnement
by Margenstern and Morita \cite{Margenstern-Morita}: the grid is
provided by a tesselation of the hyperbolic plane $\mathbb{H}^{2}$.
Let's recall the theorem of Poincaré: the Coxeter group of a tesselation
(i.e., the group generated by reflections with respect to the sides
of the polygons of the tesselation) acts freely on the tesselation
if every angle of the polygons of the tesselation is $\frac{2\pi}{p}$
for some even number $p$. The classical case may be useless in this
context if the hypothesis of Poincaré's theorem is not verified and
then there is no natural group addressing the tiles. Yet, there are
groups acting on the tiles like the group of isometries of $\mathbb{H}^{2}$
preserving the tesselation or the Coxeter group. Margenstern \cite{Margenstern1,Margenstern2}
obtained good results on this new kind of cellular automata in the
specific context of a regular tesselation of the hyperbolic plane.
But this extension of the definition of a cellular automata has not
been investigated yet on a theorical aspect. Hence this paper defines
and studies what is a cellular automaton defined on a set equipped
with a group action, also called a $G$-set. The only requirement
is the transitivity of the action. This condition is essential since
the local definition of a cellular automaton has to be propagated
on the whole set. The results of this paper may be applied to the
tesselation of the hyperbolic plane, but also to any tiling in higher
dimensional hyperbolic spaces or even to unusual tiling of any Euclidean
space.

Section \ref{sec:Coordinate-system} defines what a coordinate system
is, i.e., a choice of addressing the cells. Section \ref{sec:Cellular-automata}
defines what is a cellular automaton on a set equipped with a transitive
group action. Section \ref{sec:Equivariant-cellular-automaton} defines
what equivariant cellular automata are. This class of cellular automata
is the one which have the most similarity with the ones of the classical
case. Section \ref{sec:Minimal-memory-set} investigates the properties
of the memory set of cellular automata, and how they are related to
the coordinate systems. It will be proved that there is only one minimal
memory set, up to the origin of the coordinate system. In Section
\ref{sec:Hedlund-theorem}, we give a characterization of equivariant
cellular automata which is an analogue of Hedlund's theorem. Section
\ref{sec:Composition-of-cell} studies the stability of the composition
of cellular automata. In particular, there exist cellular automata
which, when composed with themselves, are no longer cellular automata.

We would like to express our gratitude to Maurice Margenstern
for inspiration, motivation and good discussions. 
We are also greatly thankful to Tullio Ceccherini-Silberstein
and Michel Coornaert for their support and numerous
suggestions and remarks.

\section{\label{sec:Coordinate-system}Coordinate system }

Let $\Gamma$ be a set equipped with a transitive left action of a
group $G$. For $\alpha\in\Gamma$, let $\Stab\left(\alpha\right)=\left\{ g\in G:g\cdot\alpha=\alpha\right\} $
denote the stabilizer subgroup of $\alpha$ in $G$. As we have \[
\Stab\left(g\cdot\alpha\right)=g\Stab\left(\alpha\right)g^{-1}\]
for all $g\in G$, all the stabilizer subgroups are conjugate since
the action is transitive.  

Consider the set $G\diagup\Stab\left(\alpha\right)=\left\{ g\Stab\left(\alpha\right):g\in G\right\} $
of the left cosets of $\Stab\left(\alpha\right)$ in $G$. A subset
$T\subset G$ is a complete system of representatives of the classes
of $G\diagup\Stab\left(\alpha\right)$ if the set of the left cosets
$t\Stab\left(\alpha\right)$ with $t\in T$ is a partition of $G$,
i.e., \[
G=\bigsqcup_{t\in T}t\Stab\left(\alpha\right).\]

\begin{defn}
\label{def:sys-coord}Let $T$ be a subset of $G$ and $\alpha_{0}\in\Gamma$.
A pair $\left(\alpha_{0},T\right)$ is a \emph{coordinate system}
on $\Gamma$ if $T$ is a complete system of representatives of the
classes of $G\diagup\Stab\left(\alpha_{0}\right)$ and if $1_{G}\in T$,
where $1_{G}$ denotes the neutral element of $G$.
\end{defn}
The element $\alpha_{0}$ is called the \emph{origin} of $\left(\alpha_{0},T\right)$
and the set $T$ is called the \emph{coordinate set} of $\left(\alpha_{0},T\right)$.
Since the action of $G$ on $\Gamma$ is transitive, for any $\alpha\in\Gamma$,
there exists a unique $t\in T$ such that $t\cdot\alpha_{0}=\alpha$
and $t$ is called the \emph{coordinate} of $\alpha$ in the coordinate
system $\left(\alpha_{0},T\right)$. 

\begin{example}
\label{exple:syst-coord}(a) For any group $G$, consider the action
of $G$ on itself by left multiplication. Then $\left(1_{G},G\right)$
is a coordinate system on $G$. This is the coordinate system used
in the classical case. More generally, if $\Gamma$ is a set equipped
with a free left action of a group $G$, the pair $\left(g_{0},G\right)$
is a coordinate system on $\Gamma$, for any $g_{0}\in G$.

(b) Denote by $\Isom\left(\mathbb{R}^{d}\right)$ the isometry group
of $\mathbb{R}^{d}$. Let $\Gamma=\mathbb{Z}^{d}$ and $G\subset\Isom\left(\mathbb{R}^{d}\right)$
be the subgroup of isometries preserving $\Gamma$. Define $T\subset G$
as being the set of the translations in $G$. Then the pair $\left(\alpha,T\right)$
is a coordinate system on $\Gamma$, for any $\alpha\in\Gamma$. 

(c) Here is an example of a coordinate system $\left(\alpha,T'\right)$
where $T'$ is not a subgroup of $G$. Let us take the previous example
with $d=2$, and denote by $T_{1}\subset T$ the subset of translations
$t\in T$ such that $t\cdot\left(0,0\right)\in\mathbb{N}^{*}\times\mathbb{N}$
and by $r\in G$ the rotation about $\left(0,0\right)$ by the angle
${\displaystyle \frac{\pi}{2}}$. Then the pair $\left(\left(0,0\right),T'\right)$
is a coordinate system on $\Gamma$, with \[
T'=T_{1}\cup rT_{1}\cup r^{2}T_{1}\cup r^{3}T_{1}\cup\left\{ \mathrm{Id}_{\mathbb{Z}^{2}}\right\} .\]

(d) Denote by $\mathbb{H}^{d}$ the $d$-dimensional hyperbolic space,
by $\Isom\left(\mathbb{H}^{d}\right)$ the isometry group of $\mathbb{H}^{d}$
and by $\Isom^{+}\left(\mathbb{H}^{d}\right)$ (resp. $\Isom^{-}\left(\mathbb{H}^{d}\right)$)
the subset of isometries preserving (resp. reversing) the orientation.
Note that $\Isom^{+}\left(\mathbb{H}^{d}\right)$ is a subgroup of
$\Isom\left(\mathbb{H}^{d}\right)$. A tesselation of $\mathbb{H}^{d}$
is a tiling of $\mathbb{H}^{d}$ by congruent polytopes such that
the reflections with respect to the faces of the polytopes preserve
the tiling. Let $\Gamma$ be the set of polytopes of a tesselation
of $\mathbb{H}^{d}$ and $G$ be the subgroup of $\Isom\left(\mathbb{H}^{d}\right)$
preserving the tesselation. Choose a polytope $\alpha_{0}\in\Gamma$
of the tesselation and let $T$ be the Coxeter group generated by
the reflections with respect to the faces of $\alpha_{0}$. Suppose
the hypothesis of Poincaré's theorem are verified (see e.g. \cite{Epstein-Petronio}).
Then $T$ is a normal subgroup of $G$ and the pair $\left(\alpha_{0},T\right)$
is a coordinate system on $\Gamma$.

(e) In the previous example, suppose there exists a reflection $r_{0}\in\Stab\left(\alpha_{0}\right)$
preserving the polytope $\alpha_{0}$. Denote by $T^{+}=T\cap\Isom^{+}\left(\mathbb{H}^{d}\right)$
the subgroup of orientation-preserving isometries of $T$ and by $T^{-}=T\cap\Isom^{-}\left(\mathbb{H}^{d}\right)$
the subset of orientation-reversing isometries of $T$. Define $T'\subset G$
as $T'=T^{+}\cup\left(T^{-}r_{0}\right)$. Then $T'$ is a subgroup
of $\Isom^{+}\left(\mathbb{H}^{d}\right)$ and the pair $\left(\alpha_{0},T'\right)$
is a coordinate system on $\Gamma$.
\end{example}
\begin{rem}
\label{rem:chg-origine-syst-coord}If the pair $\left(\alpha_{0},T\right)$
is a coordinate system on $\Gamma$ then, for any $g\in T$, the pair
$\left(g\cdot\alpha_{0},Tg^{-1}\right)$ is also a coordinate system
on $\Gamma$.
\end{rem}

\begin{rem}
If the pair $\left(\alpha_{0},T\right)$ is a coordinate system on
$\Gamma$ then, for any $g\in G$, the pair $\left(g\cdot\alpha_{0},gTg^{-1}\right)$
is also a coordinate system on $\Gamma$.  These remarks give a simple
way to change the origin of a coordinate system, if needed.
\end{rem}
Denote by $\Stab\left(\alpha_{0},H\right)$ the stabilizer subgroup
of $\alpha_{0}$ in $H$, for a subgroup $H$ of $G$. We have $\Stab\left(\alpha_{0},H\right)=\Stab\left(\alpha_{0}\right)\cap H$.
Remark that $T\cap\Stab\left(\alpha_{0}\right)$ is the trivial subgroup
of $G$ for any coordinate system $\left(\alpha_{0},T\right)$. 

We can decompose each element of $G$ as a product of an element of
$T$ and an element of the stabilizer subgroup of $\alpha_{0}$, i.e.,
for any $g\in G$, there exist $t\in T$ and $r\in\Stab\left(\alpha_{0}\right)$
such that $g=tr$. More generaly, there is a similar decomposition
for any subgroup of $G$.

\begin{prop}
\label{prop:coord}Let $H$ be a subgroup of $G$. For any coordinate
system $\left(\alpha_{0},T\right)$ on $\Gamma$ such that $T\subset H$,
we have $H=T\cdot\Stab\left(\alpha_{0},H\right)$.
\end{prop}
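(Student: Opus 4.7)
The plan is to prove equality by a double inclusion, both of which are essentially immediate from the defining properties of a coordinate system together with the hypothesis $T \subset H$.

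First I would establish the inclusion $T \cdot \Stab(\alpha_0, H) \subset H$. This is the easy direction: since $T \subset H$ by hypothesis and $\Stab(\alpha_0, H) = \Stab(\alpha_0) \cap H \subset H$ by definition, and since $H$ is a subgroup (hence closed under multiplication), any product of an element of $T$ and an element of $\Stab(\alpha_0, H)$ lies in $H$.

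For the reverse inclusion $H \subset T \cdot \Stab(\alpha_0, H)$, I would take an arbitrary $h \in H \subset G$ and use the defining property of the coordinate system: since $T$ is a complete system of representatives of the left cosets $G / \Stab(\alpha_0)$, there exist (unique) elements $t \in T$ and $r \in \Stab(\alpha_0)$ with $h = tr$. The key step is then to observe that $r = t^{-1} h$ automatically lies in $H$, because $t \in T \subset H$ and $h \in H$, so their product $t^{-1} h$ belongs to the subgroup $H$. Therefore $r \in \Stab(\alpha_0) \cap H = \Stab(\alpha_0, H)$, which yields the factorization $h = t r \in T \cdot \Stab(\alpha_0, H)$.

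There is no genuine obstacle in this proof; the only point that deserves attention is keeping careful track of the difference between $\Stab(\alpha_0)$ (a subgroup of $G$) and $\Stab(\alpha_0, H)$ (its intersection with $H$), to be sure that the decomposition inherited from the coordinate system on all of $G$ actually descends to a decomposition inside $H$. The hypothesis $T \subset H$ is precisely what makes this descent work, because it forces the ``stabilizer part'' $r = t^{-1} h$ of an element $h \in H$ to land back in $H$.
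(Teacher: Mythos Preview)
Your proof is correct and essentially identical to the paper's. The paper only writes out the nontrivial inclusion $H \subset T \cdot \Stab(\alpha_0, H)$, leaving the easy inclusion implicit, and phrases the coset decomposition via the action (picking the unique $t \in T$ with $t \cdot \alpha_0 = h \cdot \alpha_0$) rather than via representatives, but the argument is the same: $r = t^{-1}h$ lies in $\Stab(\alpha_0)$ by construction and in $H$ because $T \subset H$.
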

\begin{proof}
Let $h$ be an element in $H$. Since $\left(\alpha_{0},T\right)$
is a coordinate system on $\Gamma$, there exists a unique $t\in T$
such that $t\cdot\alpha_{0}=h\cdot\alpha_{0}$. Then $\alpha_{0}=t^{-1}h\cdot\alpha_{0}$
and consequently $t^{-1}h\in\Stab\left(\alpha_{0}\right)$. We have
$t^{-1}h\in H$ because $t\in T\subset H$. Since $h=t\cdot\left(t^{-1}h\right)$,
we have $H=T\cdot\Stab\left(\alpha_{0},H\right)$.
\end{proof}
Note that such a decomposition of $h$ in $T\cdot\Stab\left(\alpha_{0}.H\right)$
is unique. 

\begin{rem*}
With the hypothesis given in Proposition \ref{prop:coord}, if $T$
is a normal subgroup of $H$, then $H$ is the semidirect product
of $T$ and $\Stab\left(\alpha_{0}\right)$. 
\end{rem*}

\section{\label{sec:Cellular-automata}Cellular automata}

Let $\Gamma$ be a set equipped with a transitive left action of a
group $G$. For $g\in G$, let $L_{g}\colon\Gamma\to\Gamma$ denote
the map defined by $L_{g}(\alpha)=g\cdot\alpha$ for all $\alpha\in\Gamma$.

Let $Q$ be a nonempty finite set. Consider the set $Q^{\Gamma}$
consisting of all maps from $\Gamma$ to $Q$:

\[
Q^{\Gamma}=\prod_{\alpha\in\Gamma}Q=\left\{ x\colon\Gamma\to Q\right\} .\]

The elements of $Q$ are called the \emph{states}. The set $\Gamma$
is the \emph{universe} and its elements are called the \emph{cells}.
The elements of $Q^{\Gamma}$ are called the \emph{configurations}. 

Given an element $g\in G$ and a configuration $x\in Q^{\Gamma}$,
we define the configuration $gx\in Q^{\Gamma}$ by \[
gx=x\circ L_{g^{-1}}.\]
This defines a left group action of $G$ on $Q^{\Gamma}$. 

\begin{defn}
\label{def:cell-auto}A \emph{cellular automaton} over the state set
$Q$ and the universe $\Gamma$ is a map $\tau\colon Q^{\Gamma}\to Q^{\Gamma}$
satisfying the following property: there exists a coordinate system
$\left(\alpha_{0},T\right)$, a finite subset $M\subset\Gamma$ and
a map $\mu\colon Q^{M}\to Q$ such that\begin{equation}
\tau\left(x\right)\left(\alpha\right)=\mu\left(\left(t^{-1}x\right)\vert_{M}\right)\label{eq:def-aut-cell}\end{equation}
 for all $x\in Q^{\Gamma}$ and $\alpha\in\Gamma$, where $t\in T$
denotes the coordinate of $\alpha$ and $\left(t^{-1}x\right)\vert_{M}$
denotes the restriction of the configuration $t^{-1}x$ to $M$.

Such a set $M$ is called a \emph{memory set} for $\tau$, and $\mu$
is called a \emph{local defining map} for $\tau$. For $\alpha=\alpha_{0}$,
formula (\ref{eq:def-aut-cell}) gives us\begin{equation}
\tau\left(x\right)\left(\alpha_{0}\right)=\mu\left(x\vert_{M}\right)\label{eq:mu-def-par-ori}\end{equation}
 for all $x\in Q^{\Gamma}$ since the coordinate of the origin $\alpha_{0}$
is $1_{G}$. Thus, by formulas (\ref{eq:def-aut-cell}) and (\ref{eq:mu-def-par-ori}),
we have\begin{equation}
\tau\left(x\right)\left(\alpha\right)=\tau\left(t^{-1}x\right)\left(\alpha_{0}\right)\label{eq:rel-ori-aut-cell}\end{equation}
 for all $x\in Q^{\Gamma}$ and $\alpha\in\Gamma$, where $t\in T$
denotes the coordinate of $\alpha$. Following the definition of the
left action of $G$ on $Q^{\Gamma}$ above, one has $\tau\left(x\right)\left(\alpha\right)=\tau\left(x\right)\left(t\cdot\alpha_{0}\right)=t^{-1}\tau\left(x\right)\left(\alpha_{0}\right)$,
and consequently, \begin{equation}
\tau\left(t^{-1}x\right)\left(\alpha_{0}\right)=t^{-1}\tau\left(x\right)\left(\alpha_{0}\right)\label{eq:pre-equivariance}\end{equation}
for all $x\in Q^{\Gamma}$ and $t\in T$. 
\end{defn}
\begin{rem}
\label{rem:auto-cell=00003Dtriple}Most cellular automata are constructed
this way: given a finite subset $M\subset\Gamma$, a map $\mu\colon Q^{M}\to Q$
and a coordinate system $\left(\alpha_{0},T\right)$, one define the
map $\tau\colon Q^{\Gamma}\to Q^{\Gamma}$ by setting \[
\tau\left(x\right)\left(\alpha\right)=\mu\left(\left(t^{-1}x\right)\vert_{M}\right)\]
for all $x\in Q^{\Gamma}$ and $\alpha\in\Gamma$, where $t\in T$
denotes the coordinate of $\alpha$. The map $\tau$ is clearly a
cellular automaton. Such a triple $\left(M,\mu,\left(\alpha_{0},T\right)\right)$
is called a \emph{construction triple} for the cellular automaton
$\tau$. Two construction triples are called equivalent if they give
rise to the same cellular automaton. This defines an equivalence relation.
There is a one-to-one correspondance between the equivalence classes
of construction triples and the cellular automata on $Q^{\Gamma}$.
Note that it is quite common to define a cellular automaton $A$ as
an equivalence class of construction triples $A=\left[\left(M,\mu,\left(\alpha_{0},T\right)\right)\right]$.
Many papers use this definition without mentionning it, as it is supposed
to be known, but you may still see \cite{Kari2}. In this case, the
map $\tau$ is called the \emph{global transition map} of $A$.
\end{rem}
\begin{example}
\label{exple:cell-auto}(a) \textbf{A hyperbolic Game of Life cellular
automaton.} This one is adapted from the famous Conway's Game of
Life cellular automaton, which was proved to be universal in \cite{Conway}.
Consider a tesselation of $\mathbb{H}^{2}$ by regular octogons. Let
$\Gamma$ be the set of the polygons of the tesselation and $G\subset\Isom\left(\mathbb{H}^{2}\right)$
be the subgroup of isometries preserving $\Gamma$. Let $\left(\alpha_{0},T\right)$
be a coordinate system for $\Gamma$ and define $M$ as the set of
polygons having a common edge with $\alpha_{0}$ (this includes $\alpha_{0}$
itself). Consider the state set $Q=\left\{ 0,1\right\} $. For a configuration
$x\in Q^{\Gamma}$, one says that a cell $\alpha$ is alive if $x\left(\alpha\right)=1$
and dead otherwise. Consider the map $\mu\colon Q^{M}\to Q$ defined
as follow:\[
\mu\left(x\right)=\left\{ \begin{array}{cc}
1 & \mathrm{\ if\ }\left\{ \begin{array}{c}
{\displaystyle \sum_{\beta\in M}x\left(\beta\right)=3}\\
\mathrm{or}\\
{\displaystyle \sum_{\beta\in M}}x\left(\beta\right)=4\mathrm{\ and\ }x\left(\alpha_{0}\right)=1\end{array}\right.\\
0 & \mathrm{\ otherwise\ }\end{array}\right.\]
for all $x\in Q^{M}$. The construction triple $\left(M,\mu,\left(\alpha_{0},T\right)\right)$
defines a cellular automaton over the state set $Q$ and the universe
$\Gamma$. This cellular automaton can be interpreted as for its Euclidean
version: the neighborhood of a cell consists of the cells having an
edge in common with it; if a cell is alive in the configuration $x$,
then the cell dies in the configuration $\tau\left(x\right)$ if it
is overcrowded (i.e., it has 4 or more neighbor cells alive) or lonely
(i.e., it has 1 or 0 neighbor cell alive) in the configuration $x$;
it remains alive otherwise; if a cell is dead in the configuration
$x$, then the cell is reborn in the configuration $\tau\left(x\right)$
if it has 3 neighbor cells alive in the configuration $x$; it remains
dead otherwise. Remark that if the angles of the tesselation are $\frac{2\pi}{p}$,
with $p$ an even number, then the action of the Coxeter group is
free and $\tau$ is a cellular automaton in the classical definition.

\begin{figure}[h]

\caption{The Euclidean and hyperbolic games of life.}

\input{fig1.tex}
\end{figure}

(b) \textbf{The Fairy Lights cellular automaton.} Consider $\Gamma=\mathbb{Z}^{2}$
and $G\subset\Isom\left(\mathbb{R}^{2}\right)$ as defined in Example
\ref{exple:syst-coord} (b). For $\alpha\in\mathbb{Z}^{2}$, denote
by $t_{\alpha}\colon\mathbb{Z}^{2}\to\mathbb{Z}^{2}$ the translation
defined by $t_{\alpha}\left(\beta\right)=\beta+\alpha$ and let \[
T_{1}=\left\{ t_{\left(\alpha_{1},\alpha_{2}\right)}\colon\alpha_{1}+\alpha_{2}\in2\mathbb{Z}\right\} \]
 and \[
T_{2}=\left\{ \left(-\mathrm{Id}_{\mathbb{Z}^{2}}\right)\circ t_{\left(\alpha_{1},\alpha_{2}\right)}\colon\alpha_{1}+\alpha_{2}\in2\mathbb{Z}+1\right\} .\]
Then the pair $\left(\alpha_{0},T\right)$ is a coordinate system
on $\Gamma$, with $\alpha_{0}=\left(0,0\right)$ and $T=T_{1}\cup T_{2}$.
The cells of $\Gamma$ represent bulbs that are turned on. The set
$Q$ represents the possible colors of a bulb. Let $M=\left\{ \left(0,1\right)\right\} $
and consider the map $\mu\colon Q^{M}\to Q$ defined as follow:\[
\mu\left(x\right)=x\left(\left(0,1\right)\right)\]
for all $x\in Q^{M}$. The construction triple $\left(M,\mu,\left(\alpha,T\right)\right)$
defines a cellular automaton $\tau$ over the state set $Q$ and the
universe $\mathbb{Z}^{2}$ and we have \[
\tau\left(x\right)\left(\alpha_{1},\alpha_{2}\right)=\left\{ \begin{array}{cc}
x\left(\alpha_{1},\alpha_{2}+1\right) & \mathrm{\ if\ }\alpha_{1}+\alpha_{2}\in2\mathbb{Z}\\
x\left(\alpha_{1},\alpha_{2}-1\right) & \mathrm{\ otherwise\ }\end{array}\right..\]
Note that $\tau\circ\tau=\mathrm{Id}_{\left(\mathbb{Z}^{2}\right)^{Q}}$
and thus $\tau$ is reversible (see Section \ref{sec:Hedlund-theorem}).

\begin{figure}[h]
\caption{The fairy lights automaton $\tau$ with 2 colors (black and white).}
\input{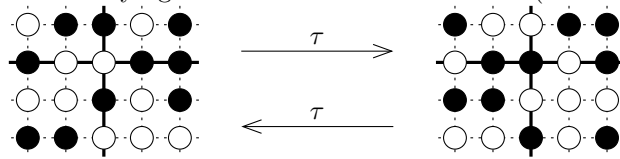}
\end{figure}

(c) \textbf{A state shift cellular automaton.} A state shift cellular
automaton is a cellular automaton whose memory set $M$ is a singleton
and whose local defining map is the identification $Q^{M}\simeq Q$.
Consider the tesselation of the Euclidean plane $\mathbb{R}^{2}$
by unit squares with vertices in $\mathbb{Z}^{2}$. Let $\Gamma$
be the set of the squares of the tesselation and $G\subset\Isom\left(\mathbb{R}^{2}\right)$
be the subgroup of isometries preserving $\Gamma$. Denote by $t_{a}\colon\mathbb{R}^{2}\to\mathbb{R}^{2}$
the translation defined by $t_{a}\left(b\right)=b+a$ for all $a$
and $b\in\mathbb{R}^{2}$ and let $T_{1}=\left\{ t_{a}\in G\colon a\in\mathbb{N}^{2}\right\} $
and $r\in G$ be the rotation about $\left(0,0\right)$ by the angle
${\displaystyle \frac{\pi}{2}}$. Then the pair $\left(\alpha_{0},T\right)$
is a coordinate system on $\Gamma$, with $\alpha_{0}$ the square
of $\Gamma$ whose center is $\left(\frac{1}{2},\frac{1}{2}\right)$
and $T=T_{1}\cup rT_{1}\cup r^{2}T_{1}\cup r^{3}T_{1}$. Let $Q$
be a nonempty finite set and $M=\left\{ \alpha_{1}\right\} $, with
$\alpha_{1}$ the square of $\Gamma$ whose center is $\left(\frac{1}{2},\frac{3}{2}\right)$.
Consider the map $\mu\colon Q^{M}\to Q$ defined as follow:\[
\mu\left(x\right)=x\left(\alpha_{1}\right)\]
for all $x\in Q^{M}$. The construction triple $\left(M,\mu,\left(\alpha_{0},T\right)\right)$
defines a cellular automaton over the state set $Q$ and the universe
$\Gamma$. This automaton shifts the state of a cell of the first
quadrant to the cell below, the state of a cell of the second quadrant
to the cell on its right, the state of a cell of the third quadrant
to the cell above, and the state of a cell of the forth quadrant to
the cell on its left (see figure \ref{fig-exple3}). %
\begin{figure}[h]
\caption{The state shift automaton of Example (c)}
\label{fig-exple3}\input{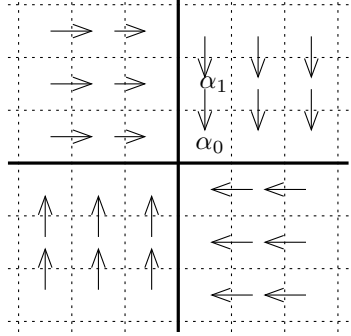}
\end{figure}

(d) \textbf{Another state shift cellular automaton.} Consider the
same tesselation $\Gamma$ of the Euclidean plane $\mathbb{R}^{2}$
by unit squares and vertices in $\mathbb{Z}^{2}$ and the same subgroup
$G\subset\Isom\left(\mathbb{R}^{2}\right)$. Let $T_{1}=\left\{ t_{a}\in G\colon a\in\mathbb{N}^{*}\times\mathbb{N}\right\} $
(where $t_{a}$ still denotes the same translation) and $r\in G$
be the rotation about $\left(\frac{1}{2},\frac{1}{2}\right)$ by the
angle ${\displaystyle \frac{\pi}{2}}$. Then the pair $\left(\alpha_{0},T\right)$
is a coordinate system on $\Gamma$, with $\alpha_{0}$ the square
of $\Gamma$ whose center is $\left(\frac{1}{2},\frac{1}{2}\right)$
and $T=T_{1}\cup rT_{1}\cup r^{2}T_{1}\cup r^{3}T_{1}\cup\left\{ \mathrm{Id}_{\mathbb{R}}\right\} $.
Let $Q$ be a nonempty finite set and $M=\left\{ \alpha_{1}\right\} $,
with $\alpha_{1}$ the square of $\Gamma$ whose center is $\left(\frac{1}{2},\frac{3}{2}\right)$.
Consider the map $\mu\colon Q^{M}\to Q$ defined as follow:\[
\mu\left(x\right)=x\left(\alpha_{1}\right)\]
for all $x\in Q^{M}$. The construction triple $\left(M,\mu,\left(\alpha_{0},T\right)\right)$
defines a cellular automaton $\tau$ over the state set $Q$ and the
universe $\Gamma$. This automaton is very similar to the previous
one (see figure \ref{fig-exple4}), but differs on this: $\tau\circ\tau$
is not a cellular automaton (see Section \ref{sec:Composition-of-cell}).

\begin{figure}[h]
\caption{The state shift automaton of Example (d)}
\label{fig-exple4}\input{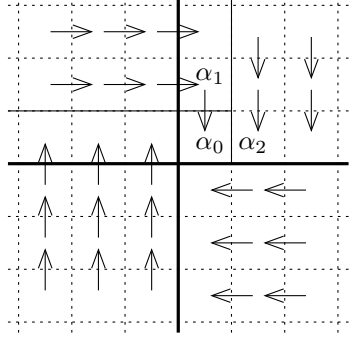}
\end{figure}

(e) \textbf{The Margolus billiard-ball cellular automaton.} We still
consider the same tesselation $\Gamma$ of the Euclidean plane $\mathbb{R}^{2}$
by unit squares and $G\subset\Isom\left(\mathbb{R}^{2}\right)$  the
subgroup of isometries preserving $\Gamma$. Denote by $r\in G$ the
rotation about $\left(1,1\right)$ by the angle ${\displaystyle \frac{\pi}{2}}$.
Let $\alpha_{0}$ be the square of $\Gamma$ whose center is $\left(\frac{1}{2},\frac{1}{2}\right)$
and $T=\left\{ t_{a}\in G\colon a\in2\mathbb{Z}\times2\mathbb{Z}\right\} $
(where $t_{a}$ still denotes the same translation). Then the pair
$\left(\alpha_{0},T_{0}\right)$ is a coordinate system on $\Gamma$,
with $T_{0}=T\cup Tr\cup Tr^{2}\cup Tr^{3}$ . Let $Q=\left\{ 0,1\right\} $
and $M_{0}=\left\{ \alpha_{0},r\cdot\alpha_{0},r^{2}\cdot\alpha_{0},r^{3}\cdot\alpha_{0}\right\} $.
Consider the map $\mu_{0}\colon Q^{M_{0}}\to Q$ defined as follow:\[
\mu_{0}\left(x\right)=\left\{ \begin{array}{cc}
x\left(r^{2}\cdot\alpha_{0}\right) & \mathrm{\ if\ }{\displaystyle \sum_{\alpha\in M_{0}}}x\left(\alpha\right)=1\\
x\left(r\cdot\alpha_{0}\right) & \mathrm{\ if\ }{\displaystyle \sum_{\alpha\in M_{0}}}x\left(\alpha\right)=2\mathrm{\ and\ }x\left(r\cdot\alpha_{0}\right)=x\left(r^{3}\cdot\alpha_{0}\right)\\
x\left(\alpha_{0}\right) & \mathrm{\ otherwise\ }\end{array}\right.\]
for all $x\in Q^{M_{0}}$. The construction triple $\left(M_{0},\mu_{0},\left(\alpha_{0},T_{0}\right)\right)$
defines a cellular automaton $\tau_{0}$ over the state set $Q$ and
the universe $\Gamma$. Note that $\tau_{0}$ is involutive since
$\tau_{0}\circ\tau_{0}=\mathrm{Id}_{Q^{\Gamma}}$ and therefore $\tau_{0}$
is a reversible cellular automaton (see Section \ref{sec:Hedlund-theorem}).
Let $t_{0}\in G$ be the translation $t_{\left(1,1\right)}$ and define
the map $\tau_{1}\colon Q^{\Gamma}\to Q^{\Gamma}$ by $\tau_{1}\left(x\right)=t_{0}\tau_{0}\left(t_{0}^{-1}x\right)$
for all $x\in Q^{\Gamma}$. The map $\tau_{1}$ is a cellular automaton
since $\left(t_{0}\cdot M_{0},t_{0}\mu_{0},\left(t_{0}\cdot\alpha_{0},t_{0}T_{0}t_{0}^{-1}\right)\right)$
is a construction triple for $\tau_{1}$, where $t_{0}\mu_{0}\colon Q^{t_{0}\cdot M_{0}}\to Q$
is defined by $t_{0}\mu_{0}\left(x\right)=\mu_{0}\left(t_{0}^{-1}x\right)$
for all $x\in Q^{t_{0}\cdot M_{0}}$.   As $\tau_{0}$ is involutive,
we also have $\tau_{1}\circ\tau_{1}=\mathrm{Id}_{Q^{\Gamma}}$. 
The Margolus billiard-ball cellular automaton is the map $\tau=\tau_{1}\circ\tau_{0}$
(see figure \ref{fig-exple5}). It will be proved in Section \ref{sec:Composition-of-cell}
that $\tau$ is a cellular automaton. 

\begin{figure}[h]
\caption{The Margolus billiard-ball automaton}
\label{fig-exple5}\input{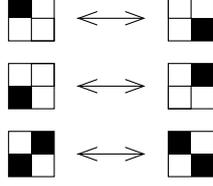}
\end{figure}

(f) Let $\Gamma$ be a set equipped with a transitive left action
of a group $G$ and $Q$ be any finite set. Let $\left(\alpha_{0},T\right)$
be any coordinate system on $\Gamma$. With $M=\left\{ \alpha_{0}\right\} $
and $\mu\colon Q^{M}\to Q$ defined by\[
\mu\left(x\right)=x\left(\alpha_{0}\right)\]
for all $x\in Q^{M}$. Then the cellular automaton defined by the
construction triple $\left(M,\mu,\left(\alpha,T\right)\right)$ is
the identity map $\tau=\mathrm{Id}_{Q^{\Gamma}}$.
\end{example}
Given a map $\tau\colon Q^{\Gamma}\to Q^{\Gamma}$, we will denote
by $Eq\left(\tau\right)$ the subset of $G$ defined by \[
Eq\left(\tau\right)=\left\{ g\in G\colon\tau\left(gx\right)=g\tau\left(x\right)\mathrm{\ for\ all\ }x\in Q^{\Gamma}\right\} .\]

\begin{prop}
\label{prop:Eq()-is-subgr}For any map $\tau\colon Q^{\Gamma}\to Q^{\Gamma}$,
the set $Eq\left(\tau\right)$ is a subgroup of $G$.
\end{prop}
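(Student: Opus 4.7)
The plan is a straightforward verification of the three subgroup axioms, relying on the fact that the map $g \mapsto (x \mapsto gx)$ is a genuine left action of $G$ on $Q^\Gamma$, i.e., that $1_G x = x$ and $(gh)x = g(hx)$ for all $g, h \in G$ and $x \in Q^\Gamma$.

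First I would check that $1_G \in Eq(\tau)$. Since the action is a left action, $1_G x = x$ for all $x \in Q^\Gamma$, and therefore $\tau(1_G x) = \tau(x) = 1_G \tau(x)$, giving $1_G \in Eq(\tau)$. Next I would verify closure under the group operation: given $g, h \in Eq(\tau)$ and $x \in Q^\Gamma$, apply the defining property of $h$ to $x$ and then the defining property of $g$ to $hx$, so that
\[
\tau((gh)x) \;=\; \tau(g(hx)) \;=\; g\,\tau(hx) \;=\; g\,(h\,\tau(x)) \;=\; (gh)\,\tau(x),
\]
which shows $gh \in Eq(\tau)$.

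Finally, closure under inverses: given $g \in Eq(\tau)$ and any $x \in Q^\Gamma$, set $y = g^{-1}x$, so that $x = gy$. Applying the equivariance of $g$ at $y$ yields $\tau(x) = \tau(gy) = g\,\tau(y) = g\,\tau(g^{-1}x)$, hence $g^{-1}\tau(x) = \tau(g^{-1}x)$, i.e., $g^{-1} \in Eq(\tau)$.

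None of these steps presents an obstacle; the whole argument is a routine subgroup check. The only thing to be careful about is consistent use of the convention $gx = x \circ L_{g^{-1}}$ introduced before Definition \ref{def:cell-auto}, which is exactly what makes $g \mapsto (x \mapsto gx)$ a left action and thus ensures the associativity identity $(gh)x = g(hx)$ used in the closure step.
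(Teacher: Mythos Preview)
Your proof is correct and follows essentially the same approach as the paper: a direct check of the identity, closure, and inverse conditions using the left-action axioms on $Q^\Gamma$. The only difference is cosmetic---you make the use of $(gh)x = g(hx)$ and $1_G x = x$ explicit, whereas the paper leaves these implicit.
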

\begin{proof}
It is clear that $1_{G}\in Eq\left(\tau\right)$. Given $g_{1}$ and
$g_{2}\in Eq\left(\tau\right)$, we have \[
\tau\left(g_{1}g_{2}x\right)=g_{1}\tau\left(g_{2}x\right)=g_{1}g_{2}\tau\left(x\right)\]
for any $x\in Q^{\Gamma}$, consequently $g_{1}g_{2}\in Eq\left(\tau\right)$.
Finally, if $g\in Eq\left(\tau\right)$, one has\[
g\tau\left(g^{-1}x\right)=\tau\left(gg^{-1}x\right)=\tau\left(x\right)\]
and then $\tau\left(g^{-1}x\right)=g^{-1}\tau\left(x\right)$. Therefore
$g^{-1}\in Eq\left(\tau\right)$ and $Eq\left(\tau\right)$ is a subgroup
of $G$.
\end{proof}
\begin{defn}
\label{def:G-equiv}Let $H$ be a subgroup of the group $G$. One
says that a map $\tau\colon Q^{\Gamma}\to Q^{\Gamma}$ is\emph{ $H$-equivariant}
if $H\subset Eq\left(\tau\right)$, i.e., for all $h\in H$ and for
all $x\in Q^{\Gamma}$, we have $\tau\left(hx\right)=h\tau\left(x\right)$.
\end{defn}
This can also be written $\tau\left(x\circ L_{h^{-1}}\right)=\tau\left(x\right)\circ L_{h^{-1}}$,
or $\tau\left(hx\right)\left(\alpha\right)=\tau\left(x\right)\left(h^{-1}\cdot\alpha\right)$
for all $\alpha\in\Gamma$.

We can characterize the $H$-equivariance of a cellular automaton
by the $H$-invariance of any of its local defining map, defined as
follows.

\begin{defn}
Let $S$ be a subset of $G$ and let $\Omega$ be a subset of $\Gamma$.
One says that a map $\varphi\colon Q^{\Omega}\to Q$ is \emph{$S$-invariant}
if for all $s\in S$ and for all $x\in Q^{\Gamma}$, we have $\varphi\left(sx\vert_{\Omega}\right)=\varphi\left(x\vert_{\Omega}\right)$.
\end{defn}
\begin{prop}
Let $S$ be a subset of $G$ and let $\Omega$ be a subset of $\Gamma$.
Denote by $H$ the subgroup of $G$ generated by $S$. Let $\varphi\colon Q^{\Omega}\to Q$
be a map. Then the following conditions are equivalent:\\
(i) the map $\varphi$ is $H$-equivariant;\\
(ii) the map $\varphi$ is \emph{$S$}-invariant.
\end{prop}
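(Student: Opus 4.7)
The plan is to prove the equivalence by showing that the implication (i) $\Rightarrow$ (ii) is immediate, and then establishing (ii) $\Rightarrow$ (i) by a subgroup argument that mimics the proof of Proposition \ref{prop:Eq()-is-subgr}.

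For (i) $\Rightarrow$ (ii), I would simply note that since $S$ generates $H$, we have $S\subset H$, so every element of $S$ satisfies the defining condition of $H$-invariance, which is exactly what $S$-invariance asserts.

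For (ii) $\Rightarrow$ (i), the key idea is to introduce the set
\[
I(\varphi)=\bigl\{g\in G:\varphi\bigl((gx)\vert_{\Omega}\bigr)=\varphi\bigl(x\vert_{\Omega}\bigr)\text{ for all }x\in Q^{\Gamma}\bigr\}
\]
and show that $I(\varphi)$ is a subgroup of $G$. First, $1_{G}\in I(\varphi)$ is clear since $1_{G}x=x$. For closure under products, if $g_{1},g_{2}\in I(\varphi)$ then for every $x\in Q^{\Gamma}$, using the fact that the formula $gx=x\circ L_{g^{-1}}$ defines a left action and hence $(g_{1}g_{2})x=g_{1}(g_{2}x)$, I would compute
\[
\varphi\bigl(((g_{1}g_{2})x)\vert_{\Omega}\bigr)=\varphi\bigl((g_{1}(g_{2}x))\vert_{\Omega}\bigr)=\varphi\bigl((g_{2}x)\vert_{\Omega}\bigr)=\varphi\bigl(x\vert_{\Omega}\bigr),
\]
applying the defining property of $I(\varphi)$ once with $g_{1}$ and once with $g_{2}$. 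For closure under inverses, if $g\in I(\varphi)$, then substituting $g^{-1}x$ in place of $x$ in the invariance identity gives $\varphi\bigl((g(g^{-1}x))\vert_{\Omega}\bigr)=\varphi\bigl((g^{-1}x)\vert_{\Omega}\bigr)$, that is, $\varphi\bigl(x\vert_{\Omega}\bigr)=\varphi\bigl((g^{-1}x)\vert_{\Omega}\bigr)$, so $g^{-1}\in I(\varphi)$.

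Once $I(\varphi)$ is a subgroup, the hypothesis of $S$-invariance means exactly $S\subset I(\varphi)$. Since $H$ is the subgroup generated by $S$ and $I(\varphi)$ is a subgroup containing $S$, we conclude $H\subset I(\varphi)$, which gives $H$-invariance and finishes the proof. I do not foresee any real obstacle here; the only subtle point is the careful use of the left action identity $(g_{1}g_{2})x=g_{1}(g_{2}x)$, which holds by definition of the action on $Q^{\Gamma}$ introduced just before Definition \ref{def:cell-auto}.
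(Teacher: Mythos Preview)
Your proof is correct and follows essentially the same approach as the paper. The only difference is packaging: you explicitly introduce the set $I(\varphi)$ and prove it is a subgroup (mirroring Proposition \ref{prop:Eq()-is-subgr}), whereas the paper argues directly that $S$-invariance implies $S^{-1}$-invariance via the substitution $\varphi(x\vert_{\Omega})=\varphi(ss^{-1}x\vert_{\Omega})=\varphi(s^{-1}x\vert_{\Omega})$ and then invokes the word decomposition $h=s_{1}^{a_{1}}\cdots s_{p}^{a_{p}}$ of an arbitrary $h\in H$; the underlying computations are identical.
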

\begin{proof}
(i)$\Rightarrow$(ii) is obvious. Conversely, suppose (ii), i.e.,
$\varphi$ is \emph{$S$}-invariant. As any element of $H$ is $s_{1}^{a_{!}}s_{2}^{a_{2}}\cdots s_{p}^{a_{p}}$
with $a_{i}\in\mathbb{Z}$ and $s_{i}\in S$, it is sufficient to
prove that $\varphi$ is \emph{$S^{-1}$}-invariant. For any $s\in S$,
one has\[
\varphi\left(x\vert_{\Omega}\right)=\varphi\left(ss^{-1}x\vert_{\Omega}\right)=\varphi\left(s^{-1}x\vert_{\Omega}\right)\]
and therefore $\varphi$ is \emph{$S^{-1}$}-invariant. 
\end{proof}
\begin{prop}
\label{prop:H-equi<=00003D>Stab-inv}Let $\tau\colon Q^{\Gamma}\to Q^{\Gamma}$
be a cellular automaton and $\left(M,\mu,\left(\alpha_{0},T\right)\right)$
be a construction triple for $\tau$. Suppose that $H$ is a subgroup
of $G$ containing $T$. Then the following conditions are equivalent:\\
(i) the map $\tau$ is $H$-equivariant;\\
(ii) the map $\mu$ is $\Stab\left(\alpha_{0},H\right)$-invariant. 
\end{prop}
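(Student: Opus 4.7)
The strategy is to exploit the fact that a cellular automaton is entirely determined by its values at the origin $\alpha_{0}$ (via formula~(\ref{eq:rel-ori-aut-cell})), together with the decomposition $H = T\cdot\Stab(\alpha_{0},H)$ furnished by Proposition~\ref{prop:coord}. So both implications will reduce to computations at the single cell $\alpha_{0}$.

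For (i)$\Rightarrow$(ii), the idea is straightforward: take any $s\in\Stab(\alpha_{0},H)$. Since $s\in H$, the $H$-equivariance of $\tau$ gives $\tau(sx)=s\tau(x)$ for all $x\in Q^{\Gamma}$. Evaluating both sides at $\alpha_{0}$, the left-hand side becomes $\mu((sx)\vert_{M})$ by~(\ref{eq:mu-def-par-ori}), while the right-hand side is $\tau(x)(s^{-1}\cdot\alpha_{0})=\tau(x)(\alpha_{0})=\mu(x\vert_{M})$ because $s$ fixes $\alpha_{0}$. This is precisely the $\Stab(\alpha_{0},H)$-invariance of $\mu$.

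For (ii)$\Rightarrow$(i), I would fix $h\in H$ and an arbitrary cell $\alpha\in\Gamma$, and aim to show $\tau(hx)(\alpha)=\tau(x)(h^{-1}\cdot\alpha)$. Let $t_{\alpha}\in T$ be the coordinate of $\alpha$ and $t'\in T$ be the coordinate of $h^{-1}\cdot\alpha$. Then the element $s:=t_{\alpha}^{-1}ht'$ satisfies $s\cdot\alpha_{0}=\alpha_{0}$ and lies in $H$ (since $T\subset H$), so $s\in\Stab(\alpha_{0},H)$. Rewriting $t_{\alpha}^{-1}h=s(t')^{-1}$ and applying the fundamental formula~(\ref{eq:rel-ori-aut-cell}) twice, together with the $\Stab(\alpha_{0},H)$-invariance of $\mu$ applied to the configuration $y=(t')^{-1}x$, will yield the desired identity. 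This is where Proposition~\ref{prop:coord} is crucially used: it guarantees that every $h\in H$ admits such a decomposition compatible with the coordinate system.

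The main obstacle I anticipate is just bookkeeping: one must be careful in distinguishing the action of $G$ on configurations ($(gx)(\alpha)=x(g^{-1}\cdot\alpha)$) from the restriction to $M$, so that the identity $(sy)\vert_{M}$ really is the configuration on $M$ to which the $\Stab(\alpha_{0},H)$-invariance of $\mu$ applies. Once the element $s\in\Stab(\alpha_{0},H)$ has been correctly identified and the decomposition $t_{\alpha}^{-1}h=s(t')^{-1}$ written down, the computation is routine. No other input beyond Proposition~\ref{prop:coord} and formulas~(\ref{eq:def-aut-cell})--(\ref{eq:rel-ori-aut-cell}) is required.
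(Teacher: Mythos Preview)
Your proposal is correct and follows essentially the same approach as the paper's proof: both directions are reduced to computations at $\alpha_{0}$ via~(\ref{eq:mu-def-par-ori}) and~(\ref{eq:def-aut-cell}), and for (ii)$\Rightarrow$(i) the key step is identifying the stabilizer element relating $t_{\alpha}^{-1}h$ to $(t')^{-1}$. The only cosmetic difference is that the paper writes the decomposition as $h^{-1}t=t's$ (invoking Proposition~\ref{prop:coord}) and then deduces that $t'$ is the coordinate of $h^{-1}\cdot\alpha$, whereas you define $t'$ as that coordinate first and then verify $s:=t_{\alpha}^{-1}ht'\in\Stab(\alpha_{0},H)$ directly; your route in fact does not need Proposition~\ref{prop:coord} at all, only the existence of coordinates and $T\subset H$.
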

\begin{proof}
Suppose first that the map $\mu$ is $\Stab\left(\alpha_{0},H\right)$-invariant.
Let $h\in H$, $x\in Q^{\Gamma}$, and $\alpha\in\Gamma$. Let $t\in T\subset H$
be the coordinate of $\alpha$. By Proposition \ref{prop:coord},
one has $H=T\cdot\Stab\left(\alpha_{0},H\right)$. As $h^{-1}t\in H$,
we can write $h^{-1}t=t's$ for some $t'\in T$ and $s\in\Stab\left(\alpha_{0}.H\right)$.
Consequently $h^{-1}\cdot\alpha=h^{-1}t\cdot\alpha_{0}=t's\cdot\alpha_{0}=t'\cdot\alpha_{0}$
and \[
h\tau\left(x\right)\left(\alpha\right)=\tau\left(x\right)\left(h^{-1}\cdot\alpha\right)=\tau\left(x\right)\left(t'\cdot\alpha_{0}\right)=\mu\left(\left(t'^{-1}x\right)\vert_{M}\right).\]
On the other hand, since $\mu$ is $\Stab\left(\alpha_{0},H\right)$-invariant,
we have \[
\tau\left(hx\right)\left(\alpha\right)=\mu\left(\left(t^{-1}hx\right)\vert_{M}\right)=\mu\left(\left(s^{-1}t'^{-1}x\right)\vert_{M}\right)=\mu\left(\left(t'^{-1}x\right)\vert_{M}\right).\]
Hence $h\tau\left(x\right)\left(\alpha\right)=\tau\left(hx\right)\left(\alpha\right)$
for all $\alpha\in\Gamma$. Thus $h\tau\left(x\right)=\tau\left(hx\right)$
for all $h\in H$ and for all $x\in Q^{\Gamma}$ and therefore $\tau$
is $H$-equivariant.

Conversely, suppose that $\tau$ is $H$-equivariant, i.e., for all
$h\in H$, for all $x\in Q^{\Gamma}$, and for all $\alpha\in\Gamma$,
we have $\tau\left(hx\right)\left(\alpha\right)=\tau\left(x\right)\left(h^{-1}\cdot\alpha\right)$.
Let $x\in Q^{\Gamma}$ and $s\in\Stab\left(\alpha_{0},H\right)$.
We have $\mu\left(x\vert_{M}\right)=\tau\left(x\right)\left(\alpha_{0}\right)$
and \[
\mu\left(sx\vert_{M}\right)=\tau\left(sx\right)\left(\alpha_{0}\right)=\tau\left(x\right)\left(s^{-1}\cdot\alpha_{0}\right)=\tau\left(x\right)\left(\alpha_{0}\right).\]
Thus $\mu\left(sx\vert_{M}\right)=\mu\left(x\vert_{M}\right)$ and
$\mu$ is $\Stab\left(\alpha_{0},H\right)$-invariant. 
\end{proof}

\section{\label{sec:Equivariant-cellular-automaton}Equivariant cellular automaton}

Let $\Gamma$ be a set equipped with a transitive left action of a
group $G$ and let $Q$ be a nonempty finite set. 

\begin{defn}
Let $\tau\colon Q^{\Gamma}\to Q^{\Gamma}$ be a cellular automaton
and $\left(\alpha_{0},T\right)$ be a coordinate system on $\Gamma$.
One says that $\left(\alpha_{0},T\right)$ is a coordinate system
for $\tau$ if there exists a finite subset $M\subset\Gamma$ and
a map $\mu\colon Q^{\Gamma}\to Q$ such that $\left(M,\mu,\left(\alpha_{0},T\right)\right)$
is a construction triple for $\tau$.
\end{defn}
\begin{prop}
\label{prop:chg-origine-aut-cell}Let $\tau\colon Q^{\Gamma}\to Q^{\Gamma}$
be a cellular automaton. Then for any cell $\alpha_{0}\in\Gamma$,
there exists a subset $T\subset G$ such that the pair $\left(\alpha_{0},T\right)$
is a coordinate system for $\tau$. 
\end{prop}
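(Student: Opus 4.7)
The plan is to translate any given construction triple for $\tau$ to one having $\alpha_0$ as its origin. Concretely, I start from a construction triple $(M_0,\mu_0,(\beta_0,T_0))$ for $\tau$ guaranteed by the very fact that $\tau$ is a cellular automaton, and use the change-of-origin remarks following Example~\ref{exple:syst-coord} to relocate the origin to the prescribed $\alpha_0$.

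Since $T_0$ is a complete set of representatives of $G/\Stab(\beta_0)$ and the action of $G$ on $\Gamma$ is transitive, there is a unique $s_0\in T_0$ with $s_0\cdot\beta_0=\alpha_0$. The first remark after Example~\ref{exple:syst-coord} applied with $g=s_0$ tells us that the pair $(\alpha_0,T)$ with $T:=T_0 s_0^{-1}$ is a coordinate system on $\Gamma$. I then define the new memory set and local map by $s_0$-translation: set $M:=s_0\cdot M_0$ and define $\mu\colon Q^M\to Q$ by $\mu(y):=\mu_0(\widetilde y)$, where $\widetilde y\in Q^{M_0}$ is given by $\widetilde y(m)=y(s_0\cdot m)$ for all $m\in M_0$.

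The main step is to verify that $(M,\mu,(\alpha_0,T))$ is a construction triple for $\tau$, i.e.\ that formula (\ref{eq:def-aut-cell}) holds with these new data. Fix $\alpha\in\Gamma$ and let $t\in T$ be its $(\alpha_0,T)$-coordinate. Writing $t=t_0 s_0^{-1}$ with $t_0\in T_0$, I would compute $t\cdot\alpha_0=t_0 s_0^{-1}\cdot\alpha_0=t_0\cdot\beta_0$, so that $t_0$ is automatically the $(\beta_0,T_0)$-coordinate of $\alpha$. Then, using the action rule $(gx)(\gamma)=x(g^{-1}\cdot\gamma)$, a direct calculation shows that for each $m\in M_0$,
\[
\widetilde{(t^{-1}x)|_M}(m)=(t^{-1}x)(s_0\cdot m)=x(t s_0\cdot m)=x(t_0\cdot m)=(t_0^{-1}x)(m),
\]
so $\widetilde{(t^{-1}x)|_M}=(t_0^{-1}x)|_{M_0}$. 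Applying $\mu_0$ and invoking the construction triple $(M_0,\mu_0,(\beta_0,T_0))$ gives $\mu((t^{-1}x)|_M)=\mu_0((t_0^{-1}x)|_{M_0})=\tau(x)(\alpha)$, which is exactly the required identity.

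No real obstacle is expected here: the proof is essentially bookkeeping around the translation by $s_0$, and the key point is merely to confirm that the freedom built into Definition~\ref{def:cell-auto} (choice of coordinate system, memory set, and local map) is rich enough to absorb the change of origin; the remarks after Example~\ref{exple:syst-coord} make exactly this freedom available, so the argument reduces to the compatibility computation above.
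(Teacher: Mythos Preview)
Your proof is correct and follows essentially the same approach as the paper's own proof: both start from an arbitrary construction triple, pick the element of the old coordinate set sending the old origin to $\alpha_0$, right-translate the coordinate set by its inverse (invoking Remark~\ref{rem:chg-origine-syst-coord}), and transport the memory set and local map by that element. Your verification is slightly more explicit than the paper's, but the underlying argument is identical.
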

\begin{proof}
Let $\left(M,\mu,\left(\alpha_{1},U\right)\right)$ be a construction
triple for $\tau$. As the pair $\left(\alpha_{1},U\right)$ is a
coordinate system on $\Gamma$, there exists $g\in U$ such that $g\cdot\alpha_{1}=\alpha_{0}$.
From Remark \ref{rem:chg-origine-syst-coord}, the pair $\left(\alpha_{0},T\right)$
is a coordinate system on $\Gamma$, with $T=Ug^{-1}$. Then $\left(\alpha_{0},T\right)$
is a coordinate system for $\tau$. Indeed, let's define the map $\tilde{\mu}\colon Q^{g\cdot M}\to Q$
as follow: $\tilde{\mu}\left(x\right)=\mu\left(g^{-1}x\right)$ for
all $x\in Q^{g\cdot M}$. Then from (\ref{eq:def-aut-cell}) we have
\[
\tau\left(x\right)\left(\alpha\right)=\mu\left(\left(u^{-1}x\right)\vert_{M}\right)=\tilde{\mu}\left(\left(gu^{-1}x\right)\vert_{g\cdot M}\right)=\tilde{\mu}\left(\left(t^{-1}x\right)\vert_{g\cdot M}\right)\]
 for all $x\in Q^{\Gamma}$ and for all $\alpha\in\Gamma$, where
$u$ denotes the coordinate of $\alpha$ in $\left(\alpha_{1},U\right)$
and $t=ug^{-1}$ denotes the coordinate of $\alpha$ in $\left(\alpha_{0},T\right)$.
Thus $\left(g\cdot M,\tilde{\mu},\left(\alpha_{0},T\right)\right)$
is a construction triple for $\tau$.
\end{proof}
Proposition \ref{prop:chg-origine-aut-cell} shows that one can choose
the origin of a coordinate system for a cellular automaton. This property
will be used throughout this paper. The following proposition shows
that the memory set and the local defining map only depend on the
origin of the coordinate system:

\begin{prop}
\label{prop:Eq(tau) contient des coord-syst}Let $\tau\colon Q^{\Gamma}\to Q^{\Gamma}$
be a cellular automaton and $\left(M,\mu,\left(\alpha_{0},T\right)\right)$
be a construction triple for $\tau$. Let $\left(\alpha_{0},U\right)$
be another coordinate system on $\Gamma$. Then the following hold:\\
(i) if $\left(\alpha_{0},U\right)$ is a coordinate system for $\tau$,
then $\left(M,\mu,\left(\alpha_{0},U\right)\right)$ is another construction
triple for $\tau$;\\
(ii) if $U\subset Eq\left(\tau\right)$, then $\left(\alpha_{0},U\right)$
is a coordinate system for $\tau$.
\end{prop}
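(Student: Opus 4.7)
The plan is to handle the two parts in sequence, exploiting a single key observation: since the coordinate of $\alpha_{0}$ in any coordinate system with origin $\alpha_{0}$ is $1_{G}$, formula (\ref{eq:def-aut-cell}) collapses at the cell $\alpha_{0}$ to $\tau(x)(\alpha_{0}) = \mu(x\vert_{M})$ (i.e.\ (\ref{eq:mu-def-par-ori})), so the pair $(M,\mu)$ is determined by the action of $\tau$ at the single cell $\alpha_{0}$, modulo the obvious non-uniqueness.

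For (i), I will unpack the hypothesis: there exists some construction triple $(M',\mu',(\alpha_{0},U))$ for $\tau$. Evaluating formula (\ref{eq:def-aut-cell}) at $\alpha=\alpha_{0}$ with both the original triple $(M,\mu,(\alpha_{0},T))$ and the new triple $(M',\mu',(\alpha_{0},U))$ yields the pointwise identity $\mu(x\vert_{M}) = \mu'(x\vert_{M'})$ for every $x\in Q^{\Gamma}$. Then, for an arbitrary $\alpha\in\Gamma$ with coordinate $u\in U$ in $(\alpha_{0},U)$, substituting $u^{-1}x$ for $x$ in that identity and re-applying (\ref{eq:def-aut-cell}) to the triple $(M',\mu',(\alpha_{0},U))$ at $\alpha$ will give $\tau(x)(\alpha) = \mu'((u^{-1}x)\vert_{M'}) = \mu((u^{-1}x)\vert_{M})$, showing that $(M,\mu,(\alpha_{0},U))$ is itself a construction triple for $\tau$.

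For (ii), I will attempt to build the construction triple for $(\alpha_{0},U)$ directly by reusing $M$ and $\mu$. Fix $\alpha\in\Gamma$ with coordinate $u\in U$ in $(\alpha_{0},U)$, so that $\alpha = u\cdot\alpha_{0}$. Since $Eq(\tau)$ is a subgroup of $G$ (Proposition \ref{prop:Eq()-is-subgr}) containing $U$, it also contains $u^{-1}$, hence $\tau(u^{-1}x) = u^{-1}\tau(x)$. Evaluating this at $\alpha_{0}$ and using the definition $gy = y\circ L_{g^{-1}}$ of the left action gives $\tau(u^{-1}x)(\alpha_{0}) = \tau(x)(u\cdot\alpha_{0}) = \tau(x)(\alpha)$. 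On the other hand, the original construction triple applied at $\alpha_{0}$ (where the $T$-coordinate is $1_{G}$) yields $\tau(u^{-1}x)(\alpha_{0}) = \mu((u^{-1}x)\vert_{M})$. Combining the two identifications produces exactly formula (\ref{eq:def-aut-cell}) for the triple $(M,\mu,(\alpha_{0},U))$, so $(\alpha_{0},U)$ is a coordinate system for $\tau$.

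No significant obstacle is expected: both parts reduce to unwinding definitions and being careful about which coordinate system is being referenced. The only mild subtlety will be keeping the two roles of the origin straight, since $\alpha_{0}$ is common to both coordinate systems but the representative elements $t\in T$ and $u\in U$ for a given cell $\alpha$ generally differ by an element of $\Stab(\alpha_{0})$; this difference is exactly what the equivariance in (ii) absorbs.
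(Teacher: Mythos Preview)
Your proposal is correct and follows essentially the same approach as the paper. For (i) the paper invokes formula (\ref{eq:rel-ori-aut-cell}) directly (which already encapsulates the existence of some construction triple for $(\alpha_{0},U)$) rather than explicitly naming an auxiliary $(M',\mu')$, and for (ii) your argument is virtually identical to the paper's; these are purely cosmetic differences.
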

\begin{proof}
Suppose first that $\left(\alpha_{0},U\right)$ is a coordinate system
for $\tau$. Let $x\in Q^{\Gamma}$ be a configuration and $\alpha\in\Gamma$
be a cell with coordinate $u\in U$ in the coordinate system $\left(\alpha_{0},U\right)$.
By formula (\ref{eq:rel-ori-aut-cell}) we have\begin{eqnarray*}
\tau\left(x\right)\left(\alpha\right) & = & \tau\left(u^{-1}x\right)\left(\alpha_{0}\right)\\
 & = & \mu\left(u^{-1}x\vert_{M}\right)\end{eqnarray*}
and thus $\left(M,\mu,\left(\alpha_{0},U\right)\right)$ is another
construction triple for $\tau$.

Suppose now that $U\subset Eq\left(\tau\right)$. Let $x\in Q^{\Gamma}$
be a configuration and $\alpha\in\Gamma$ be a cell with coordinate
$u\in U$ in the coordinate system $\left(\alpha_{0},U\right)$. Then
we have\begin{eqnarray*}
\tau\left(x\right)\left(\alpha\right) & = & \tau\left(x\right)\left(u\cdot\alpha_{0}\right)\\
 & = & u^{-1}\tau\left(x\right)\left(\alpha_{0}\right)\\
 & = & \tau\left(u^{-1}x\right)\left(\alpha_{0}\right)\end{eqnarray*}
since $u\in Eq\left(\tau\right)$. By formula (\ref{eq:mu-def-par-ori})
one has\[
\tau\left(x\right)\left(\alpha\right)=\mu\left(u^{-1}x\vert_{M}\right)\]
and thus $\left(M,\mu,\left(\alpha_{0},U\right)\right)$ is a construction
triple for $\tau$ and the pair $\left(\alpha_{0},U\right)$ is a
coordinate system for $\tau$. 
\end{proof}
As the restriction map $Q^{\Gamma}\to Q^{M}$, $x\mapsto x\vert_{M}$
is surjective, formula (\ref{eq:mu-def-par-ori}) shows that if $M$
is a memory set for a cellular automaton $\tau$ and $\alpha_{0}\in\Gamma$
is a cell, then there is a unique map $\mu\colon Q^{M}\to Q$ which
satisfies (\ref{eq:def-aut-cell}). Thus one says that $\mu$ is the
local defining map for $\tau$ associated with the memory set $M$
and the origin $\alpha_{0}$.

Proposition \ref{prop:Eq(tau) contient des coord-syst} shows that
if the subgroup $Eq\left(\tau\right)$ contains a coordinate set,
then the corresponding coordinate system on $\Gamma$ is a coordinate
system for $\tau$. In this case, from Proposition \ref{prop:chg-origine-aut-cell},
we deduce that the subgroup $Eq\left(\tau\right)$ contains many coordinate
systems on $\Gamma$, at least one for each origin. A subgroup having
this property will be qualified as ``big''. 

\begin{defn}
\label{def:big-subgr}A subgroup $H\subset G$ is called a \emph{big
subgroup} of $G$ if the action of $H$ on $\Gamma$ induced by the
action of $G$ on $\Gamma$ is transitive. 
\end{defn}
As a consequence of $H$ being a big subgroup, for any origin $\alpha_{0}\in\Gamma$,
there exists a coordinate system $\left(\alpha_{0},T\right)$ on $\Gamma$
such that $H$ contains $T$.

\begin{defn}
\label{def:eq-autcell<=00003D>big-subgr}One says that a cellular
automaton $\tau\colon Q^{\Gamma}\to Q^{\Gamma}$ is \emph{equivariant}
if $Eq\left(\tau\right)$ is a big subgroup of $G$.
\end{defn}
An equivariant cellular automaton has the property to be $H$-equivariant
for some big subgroup $H$ of $G$. For any coordinate system, denote
by $S\left(\alpha_{0},T\right)=\left(T^{-1}T^{-1}T\right)\cap\Stab\left(\alpha_{0}\right)$.
Then for all $t$, $t'\in T$, the coordinate of $t^{-1}t'\cdot\alpha_{0}$
is $t^{-1}t's^{-1}$ for some $s\in S\left(\alpha_{0},T\right)$.
We can characterize the equivariance of a cellular automaton by the
$S\left(\alpha_{0},T\right)$-invariance of its local defining map.

\begin{prop}
\label{prop:eq<=00003D>S-inv}Let $\tau\colon Q^{\Gamma}\to Q^{\Gamma}$
be a cellular automaton. Then $\tau$ is equivariant if and only if
there exists a construction triple $\left(M,\mu,\left(\alpha_{0},T\right)\right)$
for $\tau$ such that the map $\mu$ is $S\left(\alpha_{0},T\right)$-invariant.
\end{prop}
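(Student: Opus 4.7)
My plan is to prove both implications by reducing to Proposition \ref{prop:H-equi<=00003D>Stab-inv} and by exploiting Definition \ref{def:big-subgr} of a big subgroup. The key observation I will use throughout is that $S(\alpha_{0},T)$ encodes exactly the ``correction'' elements needed to translate coordinates: for any $t,t'\in T$, the element $t^{-1}t'$ sends $\alpha_{0}$ to a cell whose coordinate $t''\in T$ satisfies $t''^{-1}t^{-1}t'\in\Stab(\alpha_{0})$, and thus $t''^{-1}t^{-1}t'\in S(\alpha_{0},T)$.

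For the forward direction, suppose $\tau$ is equivariant, so that $H=Eq(\tau)$ is a big subgroup of $G$. Fixing any origin $\alpha_{0}\in\Gamma$, the transitivity of $H$ on $\Gamma$ combined with Proposition \ref{prop:chg-origine-aut-cell} will yield a coordinate system $(\alpha_{0},T)$ for $\tau$ with $T\subset H$ (one chooses a coordinate system with this origin whose representatives can be selected inside $H$, since $H\cdot\alpha_{0}=\Gamma$). Then by Proposition \ref{prop:Eq(tau) contient des coord-syst}(i) I get a construction triple $(M,\mu,(\alpha_{0},T))$ for $\tau$, and by Proposition \ref{prop:H-equi<=00003D>Stab-inv} the map $\mu$ is $\Stab(\alpha_{0},H)$-invariant. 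Since $T\subset H$ and $H$ is a subgroup, $T^{-1}T^{-1}T\subset H$, whence $S(\alpha_{0},T)=(T^{-1}T^{-1}T)\cap\Stab(\alpha_{0})\subset H\cap\Stab(\alpha_{0})=\Stab(\alpha_{0},H)$, and $\mu$ is a fortiori $S(\alpha_{0},T)$-invariant.

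For the converse, suppose $(M,\mu,(\alpha_{0},T))$ is a construction triple with $\mu$ being $S(\alpha_{0},T)$-invariant. It suffices to show $T\subset Eq(\tau)$: since $T\cdot\alpha_{0}=\Gamma$, this will imply $Eq(\tau)$ acts transitively on $\Gamma$, so $Eq(\tau)$ is big and $\tau$ is equivariant. Fix $t\in T$, $x\in Q^{\Gamma}$ and $\alpha\in\Gamma$ with coordinate $t_{1}\in T$, and let $t_{2}\in T$ be the coordinate of $t^{-1}\cdot\alpha=t^{-1}t_{1}\cdot\alpha_{0}$. By the observation above, $s:=t_{2}^{-1}t^{-1}t_{1}\in S(\alpha_{0},T)$, so $t_{2}^{-1}=st_{1}^{-1}t$. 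Applying formula (\ref{eq:def-aut-cell}) twice, I compute
\[
\tau(x)(t^{-1}\cdot\alpha)=\mu\bigl((t_{2}^{-1}x)|_{M}\bigr)=\mu\bigl((st_{1}^{-1}tx)|_{M}\bigr)=\mu\bigl((t_{1}^{-1}tx)|_{M}\bigr)=\tau(tx)(\alpha),
\]
where the third equality uses the $S(\alpha_{0},T)$-invariance of $\mu$ applied to the configuration $t_{1}^{-1}tx$. Since the left-hand side equals $(t\tau(x))(\alpha)$ by the definition of the action of $G$ on $Q^{\Gamma}$, this proves $\tau(tx)=t\tau(x)$, hence $t\in Eq(\tau)$.

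The main subtlety, I anticipate, is the forward direction: extracting a coordinate system $(\alpha_{0},T)$ for $\tau$ with $T\subset Eq(\tau)$. The transitivity of the big subgroup $H$ on $\Gamma$ guarantees one may pick a set of representatives of $G/\Stab(\alpha_{0})$ inside $H$ (containing $1_{G}$), and then Proposition \ref{prop:Eq(tau) contient des coord-syst}(ii) converts this into a coordinate system for $\tau$. Once this preliminary step is in place, everything else reduces to bookkeeping with the decomposition $H=T\cdot\Stab(\alpha_{0},H)$ from Proposition \ref{prop:coord} and the invariance characterization of Proposition \ref{prop:H-equi<=00003D>Stab-inv}.
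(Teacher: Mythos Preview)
Your proof is correct. The converse direction is essentially identical to the paper's (with $u,t$ relabeled as $t,t_{1}$). For the forward direction the paper proceeds by direct computation: given $s\in S(\alpha_{0},T)$ and $y\in Q^{\Gamma}$, it picks $t,t'\in T$ with $t^{-1}t's^{-1}\in T$, sets $x=t^{-1}t'y$, and unwinds $\mu(sy|_{M})=\tau(x)(t^{-1}t's^{-1}\cdot\alpha_{0})=(t')^{-1}t\tau(x)(\alpha_{0})=\tau(y)(\alpha_{0})=\mu(y|_{M})$ using $T\subset Eq(\tau)$. Your route is shorter and more conceptual: you invoke Proposition~\ref{prop:H-equi<=00003D>Stab-inv} with $H=Eq(\tau)$ to get $\Stab(\alpha_{0},H)$-invariance of $\mu$, then observe $S(\alpha_{0},T)\subset \Stab(\alpha_{0},H)$ since $T\subset H$. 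Both approaches rely on the same preliminary step (obtaining $T\subset Eq(\tau)$ via Proposition~\ref{prop:Eq(tau) contient des coord-syst}(ii)), so the difference is purely in how the invariance is extracted.
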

\begin{proof}
Suppose first that $\tau$ is equivariant. Then by Proposition \ref{prop:Eq(tau) contient des coord-syst},
there exists a construction triple $\left(M,\mu,\left(\alpha_{0},T\right)\right)$
for $\tau$ such that $T\subset Eq\left(\tau\right)$. Let $s\in S\left(\alpha_{0},T\right)$
and $y\in Q^{\Gamma}$. There exists $t$, $t'\in T$ such that $t^{-1}t's^{-1}\in T$.
Let $x=t^{-1}t'y$. One has\[
sy=s\left(t'\right)^{-1}tx=\left(t^{-1}t's^{-1}\right)^{-1}x\]
and then\begin{eqnarray*}
\mu\left(sy\vert_{M}\right) & = & \mu\left(\left(t^{-1}t's^{-1}\right)^{-1}x\vert_{M}\right)\\
 & = & \tau\left(x\right)\left(t^{-1}t's^{-1}\cdot\alpha_{0}\right)\\
 & = & \left(t'\right)^{-1}t\tau\left(x\right)\left(s^{-1}\cdot\alpha_{0}\right).\end{eqnarray*}
As $T\subset Eq\left(\tau\right)$ and $s\in\Stab\left(\alpha_{0}\right)$,
we have \begin{eqnarray*}
 & = & \tau\left(\left(t'\right)^{-1}tx\right)\left(\alpha_{0}\right)\\
 & = & \tau\left(y\right)\left(\alpha_{0}\right)\\
 & = & \mu\left(y\vert_{M}\right)\end{eqnarray*}
and thus $\mu$ is $S\left(\alpha_{0},T\right)$-invariant.

Conversely, suppose now that there exists a construction triple $\left(M,\mu,\left(\alpha_{0},T\right)\right)$
for $\tau$ such that the map $\mu$ is $S\left(\alpha_{0},T\right)$-invariant.
Let $u\in T$ and $x\in Q^{\Gamma}$. For all $t\in T$, there exists
$s\in S\left(\alpha_{0},T\right)$ such that $u^{-1}ts^{-1}\in T$.
Then one has \begin{eqnarray*}
u\tau\left(x\right)\left(t\cdot\alpha_{0}\right) & = & \tau\left(x\right)\left(u^{-1}t\cdot\alpha_{0}\right)\\
 & = & \mu\left(\left(u^{-1}ts^{-1}\right)^{-1}x\vert_{M}\right)\\
 & = & \mu\left(st^{-1}ux\vert_{M}\right).\end{eqnarray*}
As $\mu$ is $S\left(\alpha_{0},T\right)$-invariant, we have \begin{eqnarray*}
 & = & \mu\left(t^{-1}ux\vert_{M}\right)\\
 & = & \tau\left(ux\right)\left(t\cdot\alpha_{0}\right)\end{eqnarray*}
and thus $u\tau\left(x\right)=\tau\left(ux\right)$ for all $x\in Q^{\Gamma}$
and all $u\in T$. Therefore $T\subset Eq\left(\tau\right)$ and $\tau$
is equivariant.
\end{proof}
This proposition shall be used to prove that certain cellular automata
are not equivariant, as shown in the following example.

\begin{example}
\label{ex:non-eq-cell-aut}\textbf{Another state shift automaton.}
Consider the tesselation of the Euclidean plane $\mathbb{R}^{2}$
by unit squares and vertices in $\mathbb{Z}^{2}$. Let $\Gamma$ be
the set of the squares of the tesselation and $G\subset\Isom^{+}\left(\mathbb{R}^{2}\right)$
be the subgroup of direct isometries preserving $\Gamma$. Denote
by $t_{a}\colon\mathbb{R}^{2}\to\mathbb{R}^{2}$ the translation defined
by $t_{a}\left(b\right)=b+a$ for all $a$ and $b\in\mathbb{R}^{2}$
and let $T_{1}=\left\{ t_{\left(a,b\right)}\in G\colon a,b\in\mathbb{Z}\mathrm{\ and\ }0\leq a\leq\left|b\right|\right\} $.
Also let $r\in G$ be the rotation about $\left(0,0\right)$ by the
angle ${\displaystyle \frac{\pi}{2}}$. Then the pair $\left(\alpha_{0},T\right)$
is a coordinate system on $\Gamma$, with $\alpha_{0}$ the square
of $\Gamma$ whose center is $\left(\frac{1}{2},\frac{1}{2}\right)$
and $T=T_{1}\cup rT_{1}\cup r^{2}T_{1}\cup r^{3}T_{1}$. Let $Q$
be a nonempty finite set and $M=\left\{ \alpha_{1}\right\} $, with
$\alpha_{1}$ the square of $\Gamma$ whose center is $\left(\frac{1}{2},\frac{3}{2}\right)$.
Consider the map $\mu\colon Q^{M}\to Q$ defined as follow:\[
\mu\left(x\right)=x\left(\alpha_{1}\right)\]
for all $x\in Q^{M}$. The construction triple $\left(M,\mu,\left(\alpha_{0},T\right)\right)$
defines a cellular automaton over the state set $Q$ and the universe
$\Gamma$. Note that a state shift automaton admits only one coordinate
system in $G$. Since \[
r'=t_{\left(0,0\right)}^{-1}\circ s_{\left(1,0\right)}\circ r\in T^{-1}T^{-1}T\]
is the rotation about $\left(\frac{1}{2},\frac{1}{2}\right)$ by the
angle ${\displaystyle \frac{\pi}{2}}$, one has $r'\in\Stab\left(\alpha_{0}\right)$
and thus $r'\in S\left(\alpha_{0},T\right)$. Therefore $\mu$ is
not $S\left(\alpha_{0},T\right)$-invariant. Hence $\tau$ is not
equivariant by Proposition \ref{prop:eq<=00003D>S-inv}.

\begin{figure}[h]
\caption{The state shift automaton of Example \ref{ex:non-eq-cell-aut}.}
 \input{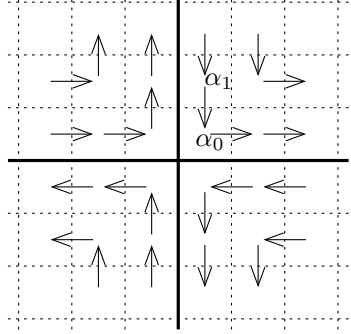} 
\end{figure}

\end{example}

\section{\label{sec:Minimal-memory-set}Minimal memory set}

Let $\Gamma$ be a set equipped with a transitive left action of a
group $G$ and $Q$ a nonempty finite set.

From the definition of a memory set $M$ for a cellular automaton
$\tau$ (cf. Definition \ref{def:cell-auto}), it is clear that if
a subset $M'\subset\Gamma$ contains $M$, then $M'$ is also a memory
set for $\tau$. It may happen that a subset $M"\subset M$ is also
a memory set for $\tau$. We therefore define what is a ``useful''
element for the local defining map.

\begin{defn}
\label{def:useful-element}Let $M$ be a subset of $\Gamma$ and $\mu\colon Q^{M}\to Q$
a map. A cell $\alpha\in\Gamma$ is said to be \emph{$\mu$-useless}
if for all configurations $x$, $y\in Q^{\Gamma}$ such that $x\vert_{\Gamma\setminus\left\{ \alpha\right\} }=y\vert_{\Gamma\setminus\left\{ \alpha\right\} }$,
we have $\mu\left(x\vert_{M}\right)=\mu\left(y\vert_{M}\right)$.
Otherwise, $\alpha$ is said to be \emph{$\mu$-useful}.
\end{defn}
It is clear that any cell outside $M$ is $\mu$-useless. Let $\tau\colon Q^{\Gamma}\to Q^{\Gamma}$
be a cellular automaton and $\left(M,\mu,\left(\alpha_{0},T\right)\right)$
be a construction triple for $\tau$. Denote by $M_{0}$ the subset
of $M$ containing all the $\mu$-useful cells. Then $M_{0}$ is also
a memory set for $\tau$ and is the minimal memory set of $\tau$
for any coordinate system $\left(\alpha_{0},T'\right)$ with respect
to inclusion. More precisely, we have the following proposition:

\begin{prop}
\label{prop:minimal-memory}Let $\tau\colon Q^{\Gamma}\to Q^{\Gamma}$
be a cellular automaton and $\left(M,\mu,\left(\alpha_{0},T\right)\right)$
be a construction triple for $\tau$. Let $M_{0}$ be the subset of
$M$ containing all the $\mu$-useful cells. Suppose $\left(M',\mu',\left(\alpha_{0},T'\right)\right)$
is another construction triple for $\tau$. Then one has $M_{0}\subset M'$
and $M_{0}$ is called the \emph{minimal memory set} of $\tau$ associated
with the origin $\alpha_{0}$.
\end{prop}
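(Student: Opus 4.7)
The plan is to argue by contradiction, exploiting the fact that both construction triples share the same origin $\alpha_0$, so both local maps compute $\tau(x)(\alpha_0)$ directly from formula (\ref{eq:mu-def-par-ori}) without involving the coordinate sets $T$ and $T'$.

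Concretely, I would suppose for contradiction that there is a cell $\alpha \in M_0$ with $\alpha \notin M'$. Since $\alpha$ is $\mu$-useful, Definition \ref{def:useful-element} supplies two configurations $x, y \in Q^\Gamma$ that agree on $\Gamma \setminus \{\alpha\}$ yet satisfy $\mu(x\vert_M) \neq \mu(y\vert_M)$. Applying formula (\ref{eq:mu-def-par-ori}) to the construction triple $(M, \mu, (\alpha_0, T))$ gives $\tau(x)(\alpha_0) = \mu(x\vert_M)$ and $\tau(y)(\alpha_0) = \mu(y\vert_M)$, so in particular $\tau(x)(\alpha_0) \neq \tau(y)(\alpha_0)$.

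Now I apply formula (\ref{eq:mu-def-par-ori}) again, this time to the other construction triple $(M', \mu', (\alpha_0, T'))$, obtaining $\tau(x)(\alpha_0) = \mu'(x\vert_{M'})$ and $\tau(y)(\alpha_0) = \mu'(y\vert_{M'})$. Because $\alpha \notin M'$ and $x$ and $y$ coincide everywhere except possibly at $\alpha$, the restrictions $x\vert_{M'}$ and $y\vert_{M'}$ are equal. Hence $\mu'(x\vert_{M'}) = \mu'(y\vert_{M'})$, forcing $\tau(x)(\alpha_0) = \tau(y)(\alpha_0)$ and contradicting the previous paragraph. Therefore $M_0 \subset M'$.

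Since the whole argument only uses what happens at the origin $\alpha_0$, there is no real obstacle; the only thing to check is that it is legitimate to use (\ref{eq:mu-def-par-ori}) for both triples at the same point $\alpha_0$, which is exactly the hypothesis that both coordinate systems have the common origin $\alpha_0$ (so $1_G \in T \cap T'$ serves as the coordinate of $\alpha_0$ in both). In particular the minimality of $M_0$ depends only on the origin, not on the choice of coordinate set, which justifies the name \emph{minimal memory set associated with the origin $\alpha_0$}.
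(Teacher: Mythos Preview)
Your proof is correct and follows essentially the same route as the paper: both argue by contradiction, pick a $\mu$-useful cell outside $M'$, use Definition~\ref{def:useful-element} to produce two configurations differing only at that cell, and then derive the contradiction by computing $\tau(x)(\alpha_0)$ and $\tau(y)(\alpha_0)$ via both local maps through formula~(\ref{eq:mu-def-par-ori}). Your remark that $1_G\in T\cap T'$ is what makes~(\ref{eq:mu-def-par-ori}) applicable to both triples is a helpful clarification that the paper leaves implicit.
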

\begin{proof}
Suppose $M_{0}\nsubseteq M'$. Let $\beta\in M_{0}\setminus M'$.
Since $\beta$ is a $\mu$-useful cell, we may find two configurations
$x$ and $y\in Q^{\Gamma}$ such that $x\vert_{\Gamma\setminus\left\{ \beta\right\} }=y\vert_{\Gamma\setminus\left\{ \beta\right\} }$
and $\mu\left(x\vert_{M}\right)\neq\mu\left(y\vert_{M}\right)$. As
$\beta\notin M'$, we have $x\vert_{M'}=y\vert_{M'}$ and therefore
$\mu'\left(x\vert_{M'}\right)=\mu'\left(y\vert_{M'}\right)$. Hence
$\tau\left(x\right)\left(\alpha_{0}\right)=\mu'\left(x\vert_{M'}\right)=\mu'\left(y\vert_{M'}\right)=\tau\left(y\right)\left(\alpha_{0}\right)$
and then $\mu\left(x\vert_{M}\right)=\tau\left(x\right)\left(\alpha_{0}\right)=\tau\left(y\right)\left(\alpha_{0}\right)=\mu\left(y\vert_{M}\right)$,
which contradicts the fact that $\beta$ is $\mu$-useful.
\end{proof}
Note that originally, in the classical case, the memory set was defined
as a neighborhood of the cell $0_{G}$, i.e., the nearest cells surrounding
the cell $0_{G}$. Neighborhoods commonly used, when $G=\mathbb{Z}^{d}$,
are the von Neumann neighborhood and the Moore neighborhood. The von
Neumann neighborhood is defined with the $\left\Vert \cdot\right\Vert _{1}$
metric and the Moore neighborhood is defined with the $\left\Vert \cdot\right\Vert _{\infty}$
metric, where $\left\Vert x\right\Vert _{1}={\displaystyle \sum_{k=1}^{d}\left|x_{k}\right|}$
and $\left\Vert x\right\Vert _{\infty}={\displaystyle \max_{k=1\dots d}}\left|x_{k}\right|$
for all $x=\left(x_{1},x_{2},\dots,x_{d}\right)\in\mathbb{Z}^{d}$.
With this definition, the minimal memory set is not the set of the
$\mu$-useful cells, but the smallest neighborhood containing the
$\mu$-useful cells.

Proposition \ref{prop:minimal-memory} shows that there is a unique
minimal memory set for a given origin $\alpha_{0}$. For another origin
$\alpha_{1}$, the minimal memory set is just a translation of the
minimal memory set associated with $\alpha_{0}$.

\begin{prop}
Let $\tau\colon Q^{\Gamma}\to Q^{\Gamma}$ be a cellular automaton.
Assume that $M$ and $M'$ are minimal memory sets for $\tau$. Then
one has $M'=g\cdot M$ for some $g\in G$.
\end{prop}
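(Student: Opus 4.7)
The plan is to reduce the problem to the fact, already recorded in Proposition \ref{prop:minimal-memory}, that the minimal memory set is unique once the origin of the coordinate system is fixed. Since $M$ and $M'$ are both minimal memory sets for $\tau$, each is the minimal memory set associated with some origin: say $M$ is the minimal memory set for an origin $\alpha_{0}$, and $M'$ is the minimal memory set for an origin $\alpha_{1}$. Because the action of $G$ on $\Gamma$ is transitive, there exists $g\in G$ with $g\cdot\alpha_{0}=\alpha_{1}$, and I will show that $g\cdot M$ is also a minimal memory set associated with $\alpha_{1}$; by uniqueness this forces $M'=g\cdot M$.

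The main step is therefore to establish the following translation principle: if $(M,\mu,(\alpha_{0},T))$ is a construction triple for $\tau$ with $M$ minimal, and $g\in G$, then $(g\cdot M,\tilde{\mu},(g\cdot\alpha_{0},gTg^{-1}))$ is a construction triple for $\tau$ (with $\tilde{\mu}(x)=\mu(g^{-1}x)$ for $x\in Q^{g\cdot M}$) in which $g\cdot M$ is minimal. The existence of such a construction triple is essentially the content of (the proof of) Proposition \ref{prop:chg-origine-aut-cell}: conjugating the coordinate set by $g$ yields a coordinate system with origin $g\cdot\alpha_{0}$, and the local defining map transports to $\tilde{\mu}$.

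To see that minimality is preserved, I would unpack Definition \ref{def:useful-element}. A cell $\beta\in g\cdot M$ is $\tilde{\mu}$-useful iff there exist $x,y\in Q^{\Gamma}$ with $x|_{\Gamma\setminus\{\beta\}}=y|_{\Gamma\setminus\{\beta\}}$ and $\tilde{\mu}(x|_{g\cdot M})\neq\tilde{\mu}(y|_{g\cdot M})$, i.e., $\mu((g^{-1}x)|_{M})\neq\mu((g^{-1}y)|_{M})$. Setting $x'=g^{-1}x$ and $y'=g^{-1}y$, the map $z\mapsto g^{-1}z$ is a bijection of $Q^{\Gamma}$, and $x'$ and $y'$ agree off $g^{-1}\cdot\beta$. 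Hence $\beta$ is $\tilde{\mu}$-useful iff $g^{-1}\cdot\beta$ is $\mu$-useful, i.e., iff $g^{-1}\cdot\beta\in M$, i.e., iff $\beta\in g\cdot M$. This proves that every cell of $g\cdot M$ is $\tilde{\mu}$-useful, so $g\cdot M$ is the minimal memory set for origin $g\cdot\alpha_{0}=\alpha_{1}$.

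Putting the pieces together: $g\cdot M$ and $M'$ are both minimal memory sets for the same origin $\alpha_{1}$, so by Proposition \ref{prop:minimal-memory} (applied in both directions, giving $g\cdot M\subset M'$ and $M'\subset g\cdot M$) we conclude $M'=g\cdot M$. The only delicate point in the argument is keeping track of which coordinate system each memory set is attached to; once the translation principle is correctly formulated, uniqueness of the minimal memory set for a fixed origin does the rest.
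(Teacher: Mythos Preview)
Your overall strategy---translate the minimal memory set to the other origin, show the translate is again minimal via the usefulness characterization, and invoke uniqueness---is sound and, once corrected, is actually more direct than the paper's argument. But there is a genuine gap in your translation step.

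You claim that for an arbitrary $g\in G$, the triple $(g\cdot M,\tilde{\mu},(g\cdot\alpha_{0},gTg^{-1}))$ is a construction triple for $\tau$. This is false in general: it requires $g\in Eq(\tau)$. To see the problem, take $g\in\Stab(\alpha_{0})$; your claim would give $\tau(x)(\alpha_{0})=\tilde{\mu}(x|_{g\cdot M})=\mu((g^{-1}x)|_{M})$, while we already have $\tau(x)(\alpha_{0})=\mu(x|_{M})$. This forces $\mu$ to be $\Stab(\alpha_{0})$-invariant, which by Proposition~\ref{prop:H-equi<=00003D>Stab-inv} is equivalent to $\tau$ being $G$-equivariant---certainly not assumed here. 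The proof of Proposition~\ref{prop:chg-origine-aut-cell} you cite does \emph{not} conjugate: it takes $g$ in the coordinate set and uses the new coordinate set $Tg^{-1}$, not $gTg^{-1}$.

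The fix is immediate: choose $g\in T$ with $g\cdot\alpha_{0}=\alpha_{1}$ and work with $(g\cdot M,\tilde{\mu},(\alpha_{1},Tg^{-1}))$ as in Proposition~\ref{prop:chg-origine-aut-cell}. Your bijection argument on useful cells then goes through verbatim and shows $g\cdot M$ is the minimal memory set at $\alpha_{1}$, so $M'=g\cdot M$. For comparison, the paper does not verify minimality of the translate directly; instead it applies Proposition~\ref{prop:minimal-memory} twice to get $M'\subset g\cdot M$ and $M\subset g'\cdot M'$ (with $g'\in T'$ the coordinate of $\alpha_{0}$), and then uses a cardinality argument to close the chain $M'\subset g\cdot M\subset gg'\cdot M'$. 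Your route, once repaired, avoids the second coordinate system and the cardinality step entirely.
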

\begin{proof}
Let $\left(M,\mu,\left(\alpha_{0},T\right)\right)$ and $\left(M',\mu',\left(\alpha_{1},T'\right)\right)$
be construction triples for $\tau$. By Proposition \ref{prop:chg-origine-aut-cell},
$\left(g\cdot M,\tilde{\mu},\left(g\cdot\alpha_{0},Tg^{-1}\right)\right)$
is also a construction triple for $\tau$ for all $g\in T$, where
$\tilde{\mu}$ is defined as in the proof of Proposition \ref{prop:chg-origine-aut-cell}.
Similarly, $\left(g'\cdot M',\tilde{\mu'},\left(g'\cdot\alpha_{1},T'g'^{-1}\right)\right)$
is also a construction triple for $\tau$ for all $g'\in T'$. Let
$g\in T$ denote the coordinate of $\alpha_{1}$ in the coordinate
system $\left(\alpha_{0},T\right)$ and $g'\in T'$ denote the coordinate
of $\alpha_{0}$ in the coordinate system $\left(\alpha_{1},T'\right)$.
Since $M$ and $M'$ are minimal memory sets, one has $M\subset g'\cdot M'$
and $M'\subset g\cdot M$, and therefore $M'\subset gg'\cdot M'$.
As $\left|M'\right|=\left|gg'\cdot M'\right|$, we have $M'=gg'\cdot M'$
and then $g\cdot M\subset gg'\cdot M'=M'$. Thus $M'=g\cdot M$.
\end{proof}
Consequently, all the minimal memory sets have the same cardinality.
The minimal memory set associated with the origin $\alpha_{0}$ of
an equivariant cellular automaton has the property of being $S$-invariant,
with $S$ the stabilizer subgroup of the origin $\alpha_{0}$ in a
big subgroup.

\begin{prop}
\label{prop:min-mem-is-R-inv}Let $\tau\colon Q^{\Gamma}\to Q^{\Gamma}$
be an equivariant cellular automaton, and $\alpha_{0}\in\Gamma$.
Denote by $M_{0}$ the minimal memory set associated with the origin
$\alpha_{0}$. Let $H\subset Eq\left(\tau\right)$ be a big subgroup
of $G$ and denote by $S=\Stab\left(\alpha_{0},H\right)$. Then $M_{0}$
is $S$-invariant, i.e., one has $S\cdot M_{0}=M_{0}$. 
\end{prop}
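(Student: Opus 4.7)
My plan is to invoke the characterization of equivariance via invariance of the local defining map, and then show directly from the definition of $\mu$-usefulness that translates by $S$ of useful cells remain useful.

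First, I would set up a construction triple compatible with $H$. Since $H$ is a big subgroup of $G$, the action of $H$ on $\Gamma$ is transitive, so for the fixed origin $\alpha_0$ there exists a coordinate system $(\alpha_0, T)$ on $\Gamma$ with $T \subset H$. As $T \subset H \subset Eq(\tau)$, Proposition \ref{prop:Eq(tau) contient des coord-syst}(ii) gives that $(\alpha_0, T)$ is a coordinate system for $\tau$, and part (i) of the same proposition shows that $(M_0, \mu, (\alpha_0, T))$ is a construction triple for $\tau$, where $\mu$ is the local defining map associated with $M_0$ and $\alpha_0$. Since $T \subset H$, Proposition \ref{prop:H-equi<=00003D>Stab-inv} applies and gives that $\mu$ is $S$-invariant, namely $\mu\left((sx)\vert_M\right) = \mu\left(x\vert_M\right)$ for every $s \in S$ and every $x \in Q^\Gamma$.

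Next, I would verify $S \cdot M_0 \subset M_0$ using Definition \ref{def:useful-element}. Let $\alpha \in M_0$ and $s \in S$; I need to show $s \cdot \alpha$ is $\mu$-useful. Pick two configurations $x, y \in Q^\Gamma$ that agree on $\Gamma \setminus \{\alpha\}$ and satisfy $\mu(x\vert_{M_0}) \neq \mu(y\vert_{M_0})$, which exist because $\alpha$ is $\mu$-useful. Set $x' = sx$ and $y' = sy$. From $(sz)(\beta) = z(s^{-1} \cdot \beta)$, the configurations $x'$ and $y'$ agree on $\Gamma \setminus \{s \cdot \alpha\}$. By the $S$-invariance from the previous paragraph,
\[
\mu\left(x'\vert_{M_0}\right) = \mu\left(x\vert_{M_0}\right) \neq \mu\left(y\vert_{M_0}\right) = \mu\left(y'\vert_{M_0}\right),
\]
so $s \cdot \alpha$ is $\mu$-useful, hence $s \cdot \alpha \in M_0$. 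This proves $S \cdot M_0 \subset M_0$.

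Finally, since $S$ is a subgroup of $G$, the same argument applied to $s^{-1} \in S$ gives $s^{-1} \cdot M_0 \subset M_0$, which rearranges to $M_0 \subset s \cdot M_0$, and intersecting over $s \in S$ yields $M_0 \subset S \cdot M_0$. Combining the two inclusions gives $S \cdot M_0 = M_0$. I do not foresee any serious obstacle here: the only subtlety is keeping the left-action conventions $(gx)(\beta) = x(g^{-1} \cdot \beta)$ straight, which controls where the perturbed cell lands after translating by $s$; everything else is an immediate consequence of Propositions \ref{prop:H-equi<=00003D>Stab-inv} and \ref{prop:Eq(tau) contient des coord-syst} together with the definition of the minimal memory set.
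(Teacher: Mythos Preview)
Your proof is correct and follows essentially the same approach as the paper: establish $S$-invariance of $\mu$ via Proposition~\ref{prop:H-equi<=00003D>Stab-inv}, then push a witness pair $(x,y)$ for $\mu$-usefulness of $\alpha$ through the action of $s$ to witness $\mu$-usefulness of $s\cdot\alpha$. Your version is slightly more explicit than the paper's in setting up the construction triple with $T\subset H$ and in spelling out the reverse inclusion $M_0\subset S\cdot M_0$ from the subgroup property, but the core argument is identical.
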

\begin{proof}
Denote by $\mu$ the local defining map associated with the memoy
set $M_{0}$. Let $s\in S$ and $\beta\in M_{0}$ and let's prove
that the cell $s\cdot\beta\in M_{0}$, i.e., $s\cdot\beta$ is $\mu$-useful.
As $\beta$ is a $\mu$-useful cell, we may find find two configurations
$x$ and $y\in Q^{\Gamma}$ such that $x\vert_{\Gamma\setminus\left\{ \beta\right\} }=y\vert_{\Gamma\setminus\left\{ \beta\right\} }$
and $\mu\left(x\vert_{M_{0}}\right)\neq\mu\left(y\vert_{M_{0}}\right)$.
Then $sx\vert_{\Gamma\setminus\left\{ s\cdot\beta\right\} }=sy\vert_{\Gamma\setminus\left\{ s\cdot\beta\right\} }$
and since $\mu$ is $S$-invariant by Proposition \ref{prop:H-equi<=00003D>Stab-inv},
we have $\mu\left(sx\vert_{M_{0}}\right)=\mu\left(x\vert_{M_{0}}\right)\neq\mu\left(y\vert_{M_{0}}\right)=\mu\left(sy\vert_{M_{0}}\right)$.
Therefore $s\cdot\beta$ is $\mu$-useful. 
\end{proof}

\section{\label{sec:Hedlund-theorem}Hedlund's theorem}

Let $\Gamma$ be a set equipped with a transitive left action of a
group $G$ and $Q$ a nonempty finite set. 

We equip $Q^{\Gamma}$ with the prodiscrete topology (i.e., the product
topology where each factor $Q$ of $Q^{\Gamma}$ has the discrete
topology). This is the smallest topology on $Q^{\Gamma}$ for which
the projection maps $\pi_{\alpha}\colon Q^{\Gamma}\to Q$, given by
$\pi_{\alpha}\left(x\right)=x\left(\alpha\right)$, are continuous
for every $\alpha\in\Gamma$. The elementary cylinders \[
\Cyl\left(\alpha,q\right)=\left\{ x\in Q^{\Gamma}\colon x\left(\alpha\right)=q\right\} \]
where $\alpha\in\Gamma$ and $q\in Q$ are both open and closed in
$Q^{\Gamma}$. If $x\in Q^{\Gamma}$, a neighborhood base of $x$
is given by the sets \[
V\left(x,\Omega\right)=\left\{ y\in Q^{\Gamma}\colon x\vert_{\Omega}=y\vert_{\Omega}\right\} =\bigcap_{\alpha\in\Omega}\Cyl\left(\alpha,x\left(\alpha\right)\right)\]
where $\Omega$ runs over all finite subsets of $\Gamma$.

An important feature of cellular automata is their continuity, with
respect to the prodiscrete topology. We will use the following lemma
in the proof of this property.

\begin{lem}
\label{l-prop:autcell=00003D>cont.}Let $\tau\colon Q^{\Gamma}\to Q^{\Gamma}$
be a cellular automaton with memory set $M$ and coordinate system
$\left(\alpha_{0},T\right)$ and let $\alpha\in\Gamma$. Then $\tau\left(x\right)\left(\alpha\right)$
only depends on the restriction of $x$ to $t\cdot M$, where $t\in T$
denotes the coordinate of $\alpha$.
\end{lem}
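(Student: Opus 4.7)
The plan is to unwind the definition of cellular automaton directly. By the defining formula (\ref{eq:def-aut-cell}) applied at the cell $\alpha$ with coordinate $t\in T$, we have
\[
\tau(x)(\alpha) = \mu\bigl((t^{-1}x)\vert_{M}\bigr),
\]
so it suffices to show that the restriction $(t^{-1}x)\vert_{M}$ is determined by $x\vert_{t\cdot M}$.

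For this, I would recall the definition of the left action of $G$ on $Q^{\Gamma}$: for any $g\in G$ and any configuration $y\in Q^{\Gamma}$, one has $gy = y\circ L_{g^{-1}}$. Applying this with $g=t^{-1}$ and $y=x$ gives $(t^{-1}x)(\beta) = x(L_{t}(\beta)) = x(t\cdot\beta)$ for every $\beta\in\Gamma$. Restricting to $\beta\in M$, the map $\beta\mapsto (t^{-1}x)(\beta)$ on $M$ is exactly the pullback along the bijection $M\to t\cdot M$, $\beta\mapsto t\cdot\beta$, of the restriction $x\vert_{t\cdot M}$.

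Consequently, if two configurations $x$ and $x'$ agree on $t\cdot M$, then $(t^{-1}x)\vert_{M} = (t^{-1}x')\vert_{M}$, and hence $\tau(x)(\alpha) = \mu((t^{-1}x)\vert_{M}) = \mu((t^{-1}x')\vert_{M}) = \tau(x')(\alpha)$. This is exactly the dependence statement claimed. There is no real obstacle here; the lemma is essentially a bookkeeping translation between the two ways of expressing the local rule (evaluated at $\alpha_0$ on the shifted configuration $t^{-1}x$, versus evaluated at $\alpha$ on $x$ itself), and will serve later as the technical ingredient for continuity of $\tau$ in the prodiscrete topology and for Hedlund-type characterizations.
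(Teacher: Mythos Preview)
Your proof is correct and follows exactly the same approach as the paper: use the defining formula $\tau(x)(\alpha)=\mu((t^{-1}x)\vert_M)$, then compute $(t^{-1}x)(\beta)=(x\circ L_t)(\beta)=x(t\cdot\beta)$ for $\beta\in M$ to conclude the dependence on $x\vert_{t\cdot M}$. Your version is simply a bit more explicit (spelling out the two-configuration argument), but the content is identical.
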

\begin{proof}
Since $\tau\left(x\right)\left(\alpha\right)=\mu\left(\left(t^{-1}x\right)\vert_{M}\right)$
and for all $\beta\in M$, $\left(t^{-1}x\right)\left(\beta\right)=\left(x\circ L_{t}\right)\left(\beta\right)=x\left(t\cdot\beta\right)$,
then $\tau\left(x\right)\left(\alpha\right)$ only depends on the
restriction of $x$ to $t\cdot M$. 
\end{proof}
\begin{prop}
\label{prop:autcell=00003D>cont.}Every cellular automaton $\tau\colon Q^{\Gamma}\to Q^{\Gamma}$
is continuous.
\end{prop}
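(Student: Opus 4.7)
The plan is to show continuity pointwise using the characterization of continuity via the prodiscrete neighborhood base $V(x,\Omega)$ described just above the statement. Fix $x \in Q^\Gamma$ and a basic neighborhood $V(\tau(x),\Omega)$ of $\tau(x)$, where $\Omega \subset \Gamma$ is finite. I need to exhibit a finite $\Omega' \subset \Gamma$ such that $\tau(V(x,\Omega')) \subset V(\tau(x),\Omega)$.

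First I would fix a construction triple $(M,\mu,(\alpha_0,T))$ for $\tau$. For each $\alpha \in \Omega$, let $t_\alpha \in T$ denote its coordinate, and set
\[
\Omega' = \bigcup_{\alpha \in \Omega} t_\alpha \cdot M.
\]
Since $\Omega$ is finite and $M$ is finite (by the definition of a cellular automaton), $\Omega'$ is a finite subset of $\Gamma$. This is the candidate neighborhood size.

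Next, I would take any $y \in V(x,\Omega')$, i.e., $y|_{\Omega'} = x|_{\Omega'}$. Then for each $\alpha \in \Omega$, we have $y|_{t_\alpha \cdot M} = x|_{t_\alpha \cdot M}$ since $t_\alpha \cdot M \subset \Omega'$. Applying Lemma \ref{l-prop:autcell=00003D>cont.} (which says $\tau(z)(\alpha)$ depends only on $z|_{t_\alpha \cdot M}$) to both $x$ and $y$ gives $\tau(y)(\alpha) = \tau(x)(\alpha)$ for every $\alpha \in \Omega$, i.e., $\tau(y) \in V(\tau(x),\Omega)$. This establishes $\tau(V(x,\Omega')) \subset V(\tau(x),\Omega)$ and hence the continuity of $\tau$ at $x$.

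There is no real obstacle here; the whole argument is a direct unpacking of the lemma together with the fact that the memory set is finite, which is exactly the local-finiteness property one needs for continuity with respect to the prodiscrete topology. The only mild point of care is making sure to use the same coordinate system throughout so that the coordinates $t_\alpha$ are unambiguously defined, but once the construction triple is fixed, this is immediate.
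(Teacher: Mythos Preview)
Your proof is correct and essentially identical to the paper's: the paper defines the same set (written $\Omega M = \{t_\alpha \cdot \beta : \alpha \in \Omega,\ \beta \in M\}$ instead of your $\Omega'$) and invokes the same lemma to conclude $\tau(V(x,\Omega M)) \subset V(\tau(x),\Omega)$. The only cosmetic difference is that the paper starts from an arbitrary neighborhood $W$ of $\tau(x)$ and first passes to a basic one $V(\tau(x),\Omega) \subset W$, whereas you work directly with the basic neighborhood.
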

\begin{proof}
Let $M$ be a memory set and $\left(\alpha_{0},T\right)$ a coordinate
system for $\tau$. Let $x\in Q^{\Gamma}$ and let $W$ be a neighborhood
of $\tau\left(x\right)$ in $Q^{\Gamma}$. Then one can find a finite
subset $\Omega\subset\Gamma$ such that \[
V\left(\tau\left(x\right),\Omega\right)\subset W.\]
Consider the finite set $\Omega M=\left\{ t_{\alpha}\cdot\beta\colon\alpha\in\Omega,\,\beta\in M\right\} $,
where $t_{\alpha}$ denotes the coordinate of $\alpha$. If $y\in Q^{\Gamma}$
coincides with $x$ on $\Omega M$, then $\tau\left(x\right)$ and
$\tau\left(y\right)$ coincide on $\Omega$ by Lemma \ref{l-prop:autcell=00003D>cont.}.
Thus we have\[
\tau\left(V\left(x,\Omega M\right)\right)\subset V\left(\tau\left(x\right),\Omega\right)\subset W.\]
This shows that $\tau$ is continuous.
\end{proof}
\begin{lem}
\label{l-prop:big-subgr+cont=00003D>autcell}Let $\varphi\colon Q^{\Gamma}\to Q$
be a continuous map. Then there exists a finite subset $M\subset\Gamma$
and a map $\mu\colon Q^{M}\to Q$ such that $\varphi\left(x\right)=\mu\left(x\vert_{M}\right)$
for all $x\in Q^{\Gamma}$.
\end{lem}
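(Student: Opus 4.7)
The plan is to exploit the compactness of $Q^{\Gamma}$ (which holds by Tychonoff's theorem, since $Q$ is finite and hence compact in the discrete topology) together with the continuity of $\varphi$, using the fact that $Q$ is discrete so the fibers of $\varphi$ are both open and closed.

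First, I would note that for every configuration $x\in Q^{\Gamma}$, the singleton $\{\varphi(x)\}$ is an open neighborhood of $\varphi(x)$ in $Q$, so by continuity of $\varphi$ there exists a basic open neighborhood $V(x,\Omega_{x})$ of $x$, with $\Omega_{x}\subset\Gamma$ finite, such that $\varphi$ is constant equal to $\varphi(x)$ on $V(x,\Omega_{x})$. The family $\{V(x,\Omega_{x})\}_{x\in Q^{\Gamma}}$ is an open cover of $Q^{\Gamma}$. Since $Q^{\Gamma}$ is compact, this cover admits a finite subcover, say corresponding to configurations $x_{1},\dots,x_{n}$. I would then set
\[
M=\bigcup_{i=1}^{n}\Omega_{x_{i}},
\]
which is a finite subset of $\Gamma$.

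Next, I would verify that $\varphi(x)$ depends only on $x\vert_{M}$. Given $x,y\in Q^{\Gamma}$ with $x\vert_{M}=y\vert_{M}$, there is some $i$ such that $x\in V(x_{i},\Omega_{x_{i}})$, i.e., $x$ agrees with $x_{i}$ on $\Omega_{x_{i}}\subset M$. Since $y$ agrees with $x$ on $M$, it also agrees with $x_{i}$ on $\Omega_{x_{i}}$, so $y\in V(x_{i},\Omega_{x_{i}})$ as well. Therefore $\varphi(x)=\varphi(x_{i})=\varphi(y)$. This allows me to define $\mu\colon Q^{M}\to Q$ unambiguously by $\mu(z)=\varphi(\tilde{z})$, where $\tilde{z}\in Q^{\Gamma}$ is any extension of $z\in Q^{M}$ (such extensions exist since the restriction map $Q^{\Gamma}\to Q^{M}$ is surjective). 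By construction, $\varphi(x)=\mu(x\vert_{M})$ for every $x\in Q^{\Gamma}$.

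The main step is the compactness/finite subcover argument; once one has the finite set $M$, well-definedness of $\mu$ is straightforward from the observation above. No obstacle is expected here since this is essentially a uniform continuity argument on a compact space with a discrete target, standard in the theory of cellular automata (and used precisely to prepare the forthcoming Hedlund-type characterization of equivariant cellular automata).
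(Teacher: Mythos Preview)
Your proof is correct and follows essentially the same approach as the paper: for each $x$ find a basic neighborhood $V(x,\Omega_x)$ on which $\varphi$ is constant, extract a finite subcover by compactness of $Q^{\Gamma}$, take $M$ to be the union of the corresponding finite sets $\Omega_{x_i}$, and verify that $\varphi$ factors through $Q^{M}$. The only cosmetic difference is that you spell out the definition of $\mu$ via an arbitrary extension, whereas the paper simply notes that such a $\mu$ exists.
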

\begin{proof}
As the map $\varphi\colon Q^{\Gamma}\to Q$ is continuous, we can
find, for any $x\in Q^{\Gamma}$, a neighborhood $W$ of $x$ such
that $\varphi\left(W\right)=\left\{ \varphi\left(x\right)\right\} $
and thus a finite subset $\Omega_{x}\subset\Gamma$ such that $V\left(x,\Omega_{x}\right)\subset W$.
The sets $V\left(x,\Omega_{x}\right)$ form an open cover of $Q^{\Gamma}$.
As $Q$ is finite, $Q^{\Gamma}$ is compact, and there is a finite
subset $F\subset Q^{\Gamma}$ such that the sets $V\left(x,\Omega_{x}\right)$,
$x\in F$, cover $Q^{\Gamma}$. Let us set $M=\bigcup_{x\in F}\Omega_{x}$.
Then $M$ is a finite subset of $\Gamma$.

Let $x$ and $y$ be two configurations in $Q^{\Gamma}$ such that
$x$ and $y$ coincide on $M$. There is a $x_{0}\in F$ such that
$x\in V\left(x_{0},\Omega_{x_{0}}\right)$, i.e., $x$ and $x_{0}$
coincide on $\Omega_{x_{0}}$. As $x$ and $y$ coincide on $M\supset\Omega_{x_{0}}$,
we have $y\in V\left(x_{0},\Omega_{x_{0}}\right)$. Thus $\varphi\left(x\right)=\varphi\left(y\right)$,
and there is a map $\mu\colon Q^{M}\to Q$ such that $\varphi\left(x\right)=\mu\left(x\vert_{M}\right)$
for all $x\in Q^{\Gamma}$.
\end{proof}

\begin{prop}
\label{prop:big-subgr+cont=00003D>autcell}Let $\tau\colon Q^{\Gamma}\to Q^{\Gamma}$
be a continuous map. If $Eq\left(\tau\right)$ is a big subgroup of
$G$, then $\tau$ is a cellular automaton.
\end{prop}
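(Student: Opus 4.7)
The plan is to combine the compactness-based Lemma~\ref{l-prop:big-subgr+cont=00003D>autcell} applied at a single cell with the transitivity provided by $Eq(\tau)$ being a big subgroup. Since $Eq(\tau)$ acts transitively on $\Gamma$, for any chosen origin $\alpha_0\in\Gamma$ we can select a coordinate system $(\alpha_0,T)$ with $T\subset Eq(\tau)$ (exactly as in Definition~\ref{def:big-subgr} and the paragraph preceding Definition~\ref{def:eq-autcell<=00003D>big-subgr}). The rough idea is to define $\tau$ locally only at $\alpha_0$ via the lemma, and then use $T$-equivariance to transport this local rule to every cell.

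First I would introduce the map $\varphi=\pi_{\alpha_0}\circ\tau\colon Q^\Gamma\to Q$, where $\pi_{\alpha_0}(x)=x(\alpha_0)$. Since $\pi_{\alpha_0}$ is continuous by definition of the prodiscrete topology and $\tau$ is continuous by hypothesis, $\varphi$ is continuous. By Lemma~\ref{l-prop:big-subgr+cont=00003D>autcell} there is a finite subset $M\subset\Gamma$ and a map $\mu\colon Q^M\to Q$ such that $\varphi(x)=\mu(x|_M)$ for every $x\in Q^\Gamma$; in particular $\tau(x)(\alpha_0)=\mu(x|_M)$.

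Next I would define a candidate cellular automaton $\tilde\tau\colon Q^\Gamma\to Q^\Gamma$ from the construction triple $(M,\mu,(\alpha_0,T))$, as in Remark~\ref{rem:auto-cell=00003Dtriple}, by $\tilde\tau(x)(\alpha)=\mu((t^{-1}x)|_M)$, where $t\in T$ is the coordinate of $\alpha$. It then remains to verify $\tilde\tau=\tau$. For an arbitrary $\alpha\in\Gamma$ with coordinate $t\in T\subset Eq(\tau)$, the key computation is
\[
\tau(x)(\alpha)=\tau(x)(t\cdot\alpha_0)=(t^{-1}\tau(x))(\alpha_0)=\tau(t^{-1}x)(\alpha_0)=\mu((t^{-1}x)|_M)=\tilde\tau(x)(\alpha),
\]
where the third equality uses exactly that $t\in Eq(\tau)$ (so that $\tau$ commutes with the action of $t^{-1}$) and the fourth equality uses the local description of $\tau$ at the origin coming from the lemma.

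There is no serious obstacle here: the proof is essentially a book-keeping argument once one realises that the memory set and local rule need only be constructed at a single cell, and that a transitive subgroup inside $Eq(\tau)$ suffices to extend the local rule consistently to every cell. The only point deserving care is the direction of the action of $G$ on $Q^\Gamma$ (i.e., the identity $(t^{-1}y)(\alpha_0)=y(t\cdot\alpha_0)$ from the definition given before Definition~\ref{def:cell-auto}), and the verification that the $T$-coordinate at $\alpha_0$ is indeed $1_G$, so that the formula for $\tilde\tau$ reduces to $\mu(x|_M)$ there and matches $\varphi$ tautologically.
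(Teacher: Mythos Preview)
Your proof is correct and follows essentially the same approach as the paper's own proof: choose a coordinate system $(\alpha_0,T)$ with $T\subset Eq(\tau)$, apply Lemma~\ref{l-prop:big-subgr+cont=00003D>autcell} to the continuous map $x\mapsto\tau(x)(\alpha_0)$ to obtain $M$ and $\mu$, and then use $T$-equivariance to verify the defining formula~(\ref{eq:def-aut-cell}) at every cell. The only cosmetic difference is that you introduce an auxiliary $\tilde\tau$ and show $\tilde\tau=\tau$, whereas the paper directly verifies that $\tau$ itself satisfies~(\ref{eq:def-aut-cell}).
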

\begin{proof}
Since $Eq\left(\tau\right)$ is a big subgroup of $G$, there exists
a coordinate system $\left(\alpha_{0},T\right)$ such that $T\subset Eq\left(\tau\right)$.
As $\tau$ is continuous, the map $\varphi\colon Q^{\Gamma}\to Q$
defined by $\varphi\left(x\right)=\tau\left(x\right)\left(\alpha_{0}\right)$
is continuous. From Lemma \ref{l-prop:big-subgr+cont=00003D>autcell},
there exists a finite subset $M\subset\Gamma$ and a map $\mu\colon Q^{M}\to Q$
such that $\varphi\left(x\right)=\mu\left(x\vert_{M}\right)$ for
all $x\in Q^{\Gamma}$. For any $\alpha\in\Gamma$, denote by $t\in T$
the coordinate of $\alpha$ in $\left(\alpha_{0},T\right)$. One has
\[
\tau\left(x\right)\left(\alpha\right)=\tau\left(x\right)\left(t\cdot\alpha_{0}\right)=t^{-1}\tau\left(x\right)\left(\alpha_{0}\right)\]
 for all $x\in Q^{\Gamma}$ and for all $\alpha\in\Gamma$. Then,
since $T\subset Eq\left(\tau\right)$, we have \[
\tau\left(x\right)\left(\alpha\right)=\tau\left(t^{-1}x\right)\left(\alpha_{0}\right)=\varphi\left(t^{-1}x\right)=\mu\left(t^{-1}x\vert_{M}\right)\]
 for all $x\in Q^{\Gamma}$ and for all $\alpha\in\Gamma$. Therefore,
$\tau$ is a cellular automaton.
\end{proof}

\begin{cor}
\label{cor1:big-subgr+cont=00003D>autcell}Let $\tau\colon Q^{\Gamma}\to Q^{\Gamma}$
be a continuous and $H$-equivariant map, where $H$ is a big subgroup
of $G$. Then $\tau$ is a cellular automaton.
\end{cor}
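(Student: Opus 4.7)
The plan is to derive this corollary as an essentially immediate consequence of Proposition \ref{prop:big-subgr+cont=00003D>autcell}. The strategy is to unpack the $H$-equivariance hypothesis and show that $Eq(\tau)$ itself is a big subgroup of $G$, which will put us in position to apply the preceding proposition.

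First, I would invoke Definition \ref{def:G-equiv}: the $H$-equivariance of $\tau$ means precisely that $H \subset Eq(\tau)$. By Proposition \ref{prop:Eq()-is-subgr}, $Eq(\tau)$ is a subgroup of $G$, so this inclusion is meaningful. Next, I would use the hypothesis that $H$ is a big subgroup of $G$, which by Definition \ref{def:big-subgr} means that the action of $H$ on $\Gamma$ induced by the action of $G$ is transitive. Since $H \subset Eq(\tau)$, every orbit of $H$ is contained in the corresponding orbit of $Eq(\tau)$; transitivity of the $H$-action therefore forces transitivity of the $Eq(\tau)$-action on $\Gamma$. Hence $Eq(\tau)$ is itself a big subgroup of $G$.

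Finally, I would apply Proposition \ref{prop:big-subgr+cont=00003D>autcell} to the continuous map $\tau$: since $Eq(\tau)$ is a big subgroup of $G$, we conclude that $\tau$ is a cellular automaton.

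There is no real obstacle here, as the corollary is purely a packaging statement: the substantive content was proved in Proposition \ref{prop:big-subgr+cont=00003D>autcell}, where the cellular automaton structure (memory set $M$, local defining map $\mu$, coordinate system $(\alpha_0, T)$ with $T \subset Eq(\tau)$) is constructed using continuity and compactness of $Q^{\Gamma}$ via Lemma \ref{l-prop:big-subgr+cont=00003D>autcell}. The only conceptual point worth emphasizing is the monotonicity of transitivity under subgroup inclusion, which guarantees that enlarging a transitive subgroup preserves the ``big'' property.
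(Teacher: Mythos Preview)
Your proposal is correct and follows exactly the intended approach: the paper states this corollary without proof immediately after Proposition~\ref{prop:big-subgr+cont=00003D>autcell}, and your argument (that $H$-equivariance means $H \subset Eq(\tau)$, so $Eq(\tau)$ inherits transitivity from $H$ and is therefore big) is precisely the one-line deduction the paper has in mind.
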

Since $G$ is a big subgroup of itself, we also have:

\begin{cor}
\label{cor2:big-subgr+cont=00003D>autcell}Let $\tau\colon Q^{\Gamma}\to Q^{\Gamma}$
be a continuous and $G$-equivariant map. Then $\tau$ is a cellular
automaton.
\end{cor}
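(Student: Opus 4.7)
The plan is to deduce this corollary directly from Corollary \ref{cor1:big-subgr+cont=00003D>autcell} by exhibiting $G$ itself as the big subgroup. Specifically, I would first verify that $G$, viewed as a subgroup of itself, satisfies Definition \ref{def:big-subgr}: the action of $G$ on $\Gamma$ was assumed from the outset to be transitive, and the action of $G$ on $\Gamma$ induced by the inclusion $G \hookrightarrow G$ coincides with the original action. Hence $G$ is a big subgroup of $G$.

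Next, since $\tau$ is $G$-equivariant by hypothesis, it is in particular $H$-equivariant for the choice $H = G$, which we have just shown is a big subgroup. Combined with the continuity assumption on $\tau$, all the hypotheses of Corollary \ref{cor1:big-subgr+cont=00003D>autcell} are satisfied, and I can invoke it to conclude that $\tau$ is a cellular automaton.

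There is essentially no obstacle here: the statement is a specialization of the previous corollary obtained by taking the big subgroup to be the ambient group $G$. The only thing one needs to observe explicitly is the tautology that the whole group $G$ is big in the sense of Definition \ref{def:big-subgr}, which follows immediately from the standing transitivity assumption on the action of $G$ on $\Gamma$. After that, the proof is a one-line appeal to Corollary \ref{cor1:big-subgr+cont=00003D>autcell}.
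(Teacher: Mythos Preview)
Your proposal is correct and matches the paper's approach exactly: the paper simply prefaces this corollary with the remark ``Since $G$ is a big subgroup of itself, we also have,'' which is precisely your observation that transitivity of the $G$-action makes $G$ big in the sense of Definition~\ref{def:big-subgr}, followed by an appeal to Corollary~\ref{cor1:big-subgr+cont=00003D>autcell}.
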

Let's recall the classical theorem of Hedlund, i.e., with $\Gamma=G$
and $G$ acting on itself by left multiplication. In this case, all
the coordinate systems are $\left(g,G\right)$ with $g\in G$. Thus
a big subgroup of $G$ is necessary $G$ itself.

\begin{thm*}
\emph{(Hedlund,} \cite{Hedlund}\emph{)} A map $\tau\colon Q^{G}\to Q^{G}$
is a cellular automaton if and only if $\tau$ a continuous map and
$Eq\left(\tau\right)=G$. 
\end{thm*}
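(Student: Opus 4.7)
The plan is to deduce Hedlund's theorem as a direct specialization of the general results already established in Sections~\ref{sec:Cellular-automata}--\ref{sec:Hedlund-theorem}. The key observation is that when $\Gamma = G$ with $G$ acting on itself by left multiplication, every stabilizer subgroup $\Stab(\alpha)$ is trivial, so the coordinate set in any coordinate system $(\alpha_0,T)$ must be a full set of coset representatives for the trivial subgroup, namely $T = G\alpha_0^{-1}$ (up to relabeling, essentially $G$ itself). In particular, $G$ is automatically a big subgroup of itself in the sense of Definition~\ref{def:big-subgr}.

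For the forward direction, assume $\tau$ is a cellular automaton. Continuity is given by Proposition~\ref{prop:autcell=00003D>cont.}. To show $Eq(\tau) = G$, I would fix a construction triple $(M,\mu,(1_G,T))$ for $\tau$, where $T=G$ since stabilizers are trivial. Then for any $g \in G$, $x \in Q^G$, and $h \in G$, I would compute both sides directly: on the one hand, $\tau(gx)(h) = \mu((h^{-1}gx)|_M)$ by formula~(\ref{eq:def-aut-cell}) with coordinate $h$; on the other, $g\tau(x)(h) = \tau(x)(g^{-1}h) = \mu(((g^{-1}h)^{-1}x)|_M) = \mu((h^{-1}gx)|_M)$. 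The two are equal, so $g \in Eq(\tau)$ for every $g\in G$. (Alternatively, one can invoke Proposition~\ref{prop:Eq(tau) contient des coord-syst} combined with Proposition~\ref{prop:H-equi<=00003D>Stab-inv}, noting that the $\Stab(\alpha_0,G)$-invariance condition on $\mu$ is vacuous here.)

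For the backward direction, assume $\tau$ is continuous and $Eq(\tau) = G$. Since $G$ is a big subgroup of itself and $\tau$ is $G$-equivariant, Corollary~\ref{cor2:big-subgr+cont=00003D>autcell} applies and gives that $\tau$ is a cellular automaton. This direction requires no further argument beyond citing the corollary.

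There is no real obstacle in this proof: Hedlund's classical theorem becomes a straightforward corollary of the more general machinery developed in the paper. The only subtlety worth flagging in the write-up is the bookkeeping in the forward direction to verify that, in the classical case, the coordinate set must exhaust $G$, which forces equivariance under the full group rather than merely a subgroup.
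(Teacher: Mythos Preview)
Your proposal is correct and matches the paper's treatment. The paper does not give an explicit proof of the classical Hedlund theorem; it merely recalls it, having observed in the preceding sentence that when $\Gamma=G$ acts on itself by left multiplication all coordinate systems are $(g,G)$ and hence the only big subgroup is $G$ itself, so the classical statement is precisely the specialization of Theorem~\ref{thm:Hedlund-theorem}. Your argument spells this out: the backward direction is exactly Corollary~\ref{cor2:big-subgr+cont=00003D>autcell}, and your direct computation for the forward direction (showing $Eq(\tau)=G$) is the explicit verification that every classical cellular automaton is equivariant, which the paper leaves implicit. The minor notational slip ``$T=G\alpha_0^{-1}$'' is harmless since this equals $G$ anyway, and you immediately correct yourself.
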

As a corollary to Propositions \ref{prop:autcell=00003D>cont.} and
\ref{prop:big-subgr+cont=00003D>autcell}, we have a generalized version
of Hedlund's theorem for equivariant cellular automata:

\begin{thm}
\label{thm:Hedlund-theorem}A map $\tau\colon Q^{\Gamma}\to Q^{\Gamma}$
is an equivariant cellular automaton if and only if $\tau$ a continuous
map and $Eq\left(\tau\right)$ is a big subgroup of $G$. 
\end{thm}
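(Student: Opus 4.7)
The plan is to derive the theorem directly as a corollary of the two main propositions established earlier, namely Proposition \ref{prop:autcell=00003D>cont.} (every cellular automaton is continuous) and Proposition \ref{prop:big-subgr+cont=00003D>autcell} (a continuous map with big $Eq(\tau)$ is a cellular automaton). Both directions are essentially immediate from these results combined with the definition of equivariance.

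For the forward implication, I would assume that $\tau$ is an equivariant cellular automaton. By Definition \ref{def:eq-autcell<=00003D>big-subgr}, equivariance means exactly that $Eq(\tau)$ is a big subgroup of $G$, which gives one half of the conclusion for free. For continuity, I would invoke Proposition \ref{prop:autcell=00003D>cont.}, which states that every cellular automaton is continuous with respect to the prodiscrete topology. So there is nothing to compute here.

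For the reverse implication, I would suppose that $\tau$ is continuous and that $Eq(\tau)$ is a big subgroup of $G$. Then Proposition \ref{prop:big-subgr+cont=00003D>autcell} applies verbatim and yields that $\tau$ is a cellular automaton. Combined with the hypothesis that $Eq(\tau)$ is big, Definition \ref{def:eq-autcell<=00003D>big-subgr} then gives that $\tau$ is an equivariant cellular automaton.

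Since both directions are immediate applications of results already proven, there is no real obstacle: all the substantive work has been done in Propositions \ref{prop:autcell=00003D>cont.} and \ref{prop:big-subgr+cont=00003D>autcell}. The only thing to be careful about is properly citing Definition \ref{def:eq-autcell<=00003D>big-subgr} so that the reader sees that {}``equivariant cellular automaton'' is precisely ``cellular automaton with $Eq(\tau)$ big,'' which makes the two-way reduction transparent. The proof should therefore be no more than a few lines, essentially of the form: $(\Rightarrow)$ apply Proposition \ref{prop:autcell=00003D>cont.} and unwind Definition \ref{def:eq-autcell<=00003D>big-subgr}; $(\Leftarrow)$ apply Proposition \ref{prop:big-subgr+cont=00003D>autcell} and reapply Definition \ref{def:eq-autcell<=00003D>big-subgr}.
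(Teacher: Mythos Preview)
Your proposal is correct and matches the paper's approach exactly: the paper states this theorem as an immediate corollary of Propositions \ref{prop:autcell=00003D>cont.} and \ref{prop:big-subgr+cont=00003D>autcell} without even writing out a proof, and your argument is precisely the unwinding of that corollary via Definition \ref{def:eq-autcell<=00003D>big-subgr}.
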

\begin{cor}
Let $H$ be a big subgroup of $G$. A map $\tau\colon Q^{\Gamma}\to Q^{\Gamma}$
is a $H$-equivariant cellular automaton if and only if $\tau$ is
a continuous map and $H\subset Eq\left(\tau\right)$. 
\end{cor}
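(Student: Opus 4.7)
The plan is to derive this corollary directly from Theorem~\ref{thm:Hedlund-theorem} together with Definition~\ref{def:G-equiv} and Proposition~\ref{prop:autcell=00003D>cont.}, without introducing any new machinery. The two implications are essentially a repackaging of results already in hand.

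First I would handle the forward direction. Suppose $\tau\colon Q^{\Gamma}\to Q^{\Gamma}$ is an $H$-equivariant cellular automaton. Continuity follows immediately from Proposition~\ref{prop:autcell=00003D>cont.}, which asserts that every cellular automaton is continuous with respect to the prodiscrete topology. The inclusion $H\subset Eq(\tau)$ is just the definition of $H$-equivariance (Definition~\ref{def:G-equiv}).

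For the converse, assume $\tau$ is continuous and that $H\subset Eq(\tau)$. The key observation is that if $H$ is a big subgroup of $G$ and $H\subset Eq(\tau)$, then $Eq(\tau)$ is itself a big subgroup of $G$: indeed, $Eq(\tau)$ is a subgroup of $G$ by Proposition~\ref{prop:Eq()-is-subgr}, and its action on $\Gamma$ extends the already transitive action of $H$, so it is transitive, which is the definition of a big subgroup (Definition~\ref{def:big-subgr}). Now Theorem~\ref{thm:Hedlund-theorem} applies and yields that $\tau$ is an (equivariant) cellular automaton. Combined with the hypothesis $H\subset Eq(\tau)$, this shows that $\tau$ is an $H$-equivariant cellular automaton in the sense of Definition~\ref{def:G-equiv}.

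There is no substantive obstacle here; the only point requiring a moment's care is verifying that $H\subset Eq(\tau)$ together with $H$ being big forces $Eq(\tau)$ to be big, so that the hypothesis of Theorem~\ref{thm:Hedlund-theorem} is met. Everything else is bookkeeping between the definitions.
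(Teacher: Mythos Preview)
Your proof is correct and follows exactly the intended approach: the paper states this as an immediate corollary of Theorem~\ref{thm:Hedlund-theorem} (together with Proposition~\ref{prop:autcell=00003D>cont.} and Definition~\ref{def:G-equiv}), and your argument supplies precisely those details. The only minor shortcut available is that for the converse you could invoke Corollary~\ref{cor1:big-subgr+cont=00003D>autcell} directly instead of going through Theorem~\ref{thm:Hedlund-theorem}, but this is the same reasoning packaged differently.
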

%
{}The $G$-equivariant cellular automata are characterized by the property
that they admit all the coordinate systems:

\begin{prop}
Let $\tau\colon Q^{\Gamma}\to Q^{\Gamma}$ be a cellular automaton.
Then $\tau$ is a $G$-equivariant cellular automaton if and only
if any coordinate system on $\Gamma$ is a coordinate system for $\tau$. 
\end{prop}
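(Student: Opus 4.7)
The forward direction will be essentially a one-line application of the machinery already built up. If $\tau$ is $G$-equivariant, then by definition $Eq(\tau) = G$, so every subset of $G$ sits inside $Eq(\tau)$. In particular, for any coordinate system $(\alpha_{0},T)$ on $\Gamma$ we have $T \subset G = Eq(\tau)$, and Proposition \ref{prop:Eq(tau) contient des coord-syst}(ii) then says precisely that $(\alpha_{0},T)$ is a coordinate system for $\tau$.

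For the reverse direction, I plan to reduce the statement, via Proposition \ref{prop:H-equi<=00003D>Stab-inv} applied with $H = G$, to showing that some local defining map $\mu$ is $\Stab(\alpha_{0})$-invariant. So I will fix a construction triple $(M,\mu,(\alpha_{0},T))$ for $\tau$ and, given an arbitrary $s \in \Stab(\alpha_{0})$, build a second coordinate system whose comparison with $(\alpha_{0},T)$ forces $\mu$ to be invariant under $s$. The trivial case $\Gamma = \{\alpha_{0}\}$ is immediate since then $G$ acts trivially on $Q^{\Gamma}$; otherwise I can pick some $t_{0} \in T$ with $t_{0} \neq 1_{G}$ and set $U = (T \setminus \{t_{0}\}) \cup \{t_{0} s\}$. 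Since $t_{0}s$ and $t_{0}$ lie in the same left coset of $\Stab(\alpha_{0})$, the set $U$ is still a complete system of representatives, and $1_{G} \in U$ because $t_{0} \neq 1_{G}$; so $(\alpha_{0},U)$ is a genuine coordinate system on $\Gamma$.

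The hypothesis then provides that $(\alpha_{0},U)$ is a coordinate system for $\tau$, so Proposition \ref{prop:Eq(tau) contient des coord-syst}(i) upgrades it to a construction triple $(M,\mu,(\alpha_{0},U))$. Evaluating the two expressions for $\tau(x)$ at the cell $\alpha = t_{0} \cdot \alpha_{0} = (t_{0}s) \cdot \alpha_{0}$ yields the identity $\mu((t_{0}^{-1}x)\vert_{M}) = \mu((s^{-1}t_{0}^{-1}x)\vert_{M})$ for every $x \in Q^{\Gamma}$, and reparametrizing by $y = t_{0}^{-1}x$ gives $\mu(y\vert_{M}) = \mu((s^{-1}y)\vert_{M})$ for every $y$. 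As $s$ was arbitrary in $\Stab(\alpha_{0})$, the map $\mu$ is $\Stab(\alpha_{0})$-invariant. Applying Proposition \ref{prop:H-equi<=00003D>Stab-inv} with $H = G$ (which clearly contains $T$ and satisfies $\Stab(\alpha_{0}, G) = \Stab(\alpha_{0})$) concludes that $\tau$ is $G$-equivariant.

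I do not expect any serious obstacle here; the only delicate point is ensuring the modified set $U$ remains a legitimate coordinate set, which is why the case distinction on whether $t_{0} = 1_{G}$ is needed and why the degenerate universe $|\Gamma| = 1$ is handled separately.
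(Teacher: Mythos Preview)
Your proof is correct and follows essentially the same approach as the paper: modify a single non-identity element of the coordinate set $T$ by right-multiplying with an element of $\Stab(\alpha_0)$, invoke Proposition~\ref{prop:Eq(tau) contient des coord-syst}(i) to get the same $\mu$ in the new coordinate system, and compare the two evaluations at the modified cell to deduce $\Stab(\alpha_0)$-invariance of $\mu$. The only cosmetic differences are that the paper replaces $t$ by $ts^{-1}$ (you use $t_0 s$) and evaluates at a shifted configuration $tx$ rather than reparametrizing afterwards; you are also slightly more careful in explicitly disposing of the degenerate case $|\Gamma|=1$, which the paper tacitly assumes away when it picks $\alpha_1\in\Gamma\setminus\{\alpha_0\}$.
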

\begin{proof}
Suppose first that $\tau$ is $G$-equivariant, i.e., $G\subset Eq\left(\tau\right)$.
Let $\left(\alpha_{0},T\right)$ be a coordinate system on $\Gamma$.
One has $T\subset G\subset Eq\left(\tau\right)$. Therefore, by Proposition
\ref{prop:Eq(tau) contient des coord-syst}, the pair $\left(\alpha_{0},T\right)$
is a coordinate system for $\tau$. 

Conversely, suppose that any coordinate system on $\Gamma$ is a coordinate
system for $\tau$. Let $\left(M,\mu,\left(\alpha_{0},T\right)\right)$
be a construction triple for $\tau$. By virtue of Proposition \ref{prop:H-equi<=00003D>Stab-inv},
it is enough to show that $\mu$ is $S$-invariant, where $S=\Stab\left(\alpha_{0}\right)$
denotes the stabilizer subgroup of $\alpha_{0}$ in $G$. Let $s\in S$
and $x\in Q^{\Gamma}$, and let us show that $\mu\left(sx\vert_{M}\right)=\mu\left(x\vert_{M}\right)$.
Pick a random cell $\alpha_{1}\in\Gamma\setminus\left\{ \alpha_{0}\right\} $,
with coordinate $t$ in $\left(\alpha_{0},T\right)$. Since any coordinate
system on $\Gamma$ is a coordinate system for $\tau$, then $\left(M,\mu,\left(\alpha_{0},T'\right)\right)$
is another construction triple for $\tau$, where \[
T'=\left(T\setminus\left\{ t\right\} \right)\cup\left\{ ts^{-1}\right\} .\]
Let us calculate $\tau\left(tx\right)\left(\alpha_{1}\right)$. In
the coordinate system $\left(\alpha_{0},T\right)$, we have\[
\tau\left(tx\right)\left(\alpha_{1}\right)=\mu\left(\left(t^{-1}tx\right)\vert_{M}\right)=\mu\left(x\vert_{M}\right).\]
On the other hand, in the coordinate system $\left(\alpha_{0},T'\right)$,
we have\[
\tau\left(tx\right)\left(\alpha_{1}\right)=\mu\left(\left(\left(ts^{-1}\right)^{-1}tx\right)\vert_{M}\right)=\mu\left(sx\vert_{M}\right).\]
Therefore one has $\mu\left(x\vert_{M}\right)=\mu\left(sx\vert_{M}\right)$
for all $s\in S$ and all $x\in Q^{\Gamma}$. Then $\tau$ is a $G$-equivariant
cellular automaton.
\end{proof}
\begin{defn}
\label{def:reversible}One says that a cellular automaton $\tau\colon Q^{\Gamma}\to Q^{\Gamma}$
is \emph{reversible} if $\tau$ is bijective and $\tau^{-1}$ is also
a cellular automaton.
\end{defn}
\begin{lem}
\label{l-prop:bij<=00003D>rev}For any bijective map $\tau\colon Q^{\Gamma}\to Q^{\Gamma}$,
one has $Eq\left(\tau^{-1}\right)=Eq\left(\tau\right)$. 
\end{lem}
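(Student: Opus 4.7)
The plan is to establish the two inclusions $Eq(\tau) \subset Eq(\tau^{-1})$ and $Eq(\tau^{-1}) \subset Eq(\tau)$. By the symmetry of the statement under swapping $\tau$ and $\tau^{-1}$ (using that $(\tau^{-1})^{-1} = \tau$), it will actually suffice to prove just one inclusion and then invoke the same argument for the other direction.

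For the inclusion $Eq(\tau) \subset Eq(\tau^{-1})$, I would fix $g \in Eq(\tau)$ and pick an arbitrary $y \in Q^{\Gamma}$. The key move is to use bijectivity of $\tau$ to write $y = \tau(x)$ for a unique $x \in Q^{\Gamma}$, namely $x = \tau^{-1}(y)$. Then, applying the defining relation $\tau(gx) = g\tau(x)$ for $g \in Eq(\tau)$, I would get $\tau(g\tau^{-1}(y)) = g\tau(\tau^{-1}(y)) = gy$. Applying $\tau^{-1}$ to both sides yields $g\tau^{-1}(y) = \tau^{-1}(gy)$, which is precisely the condition $g \in Eq(\tau^{-1})$.

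The reverse inclusion is immediate by the same argument, replacing $\tau$ with $\tau^{-1}$ throughout. There is no real obstacle here: the whole proof is a one-line manipulation that hinges on the bijectivity of $\tau$ allowing us to reparametrize configurations as $y = \tau(x)$. This lemma will then be used in the subsequent discussion of reversibility of cellular automata (Definition \ref{def:reversible}), most likely to transfer equivariance properties from $\tau$ to $\tau^{-1}$ in order to invoke Theorem \ref{thm:Hedlund-theorem} (the generalized Hedlund theorem) on $\tau^{-1}$.
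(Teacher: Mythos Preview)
Your proof is correct and matches the paper's approach almost exactly: the paper also shows $Eq(\tau)\subset Eq(\tau^{-1})$ via the one-line computation $\tau^{-1}(gx)=\tau^{-1}(g\tau(\tau^{-1}(x)))=\tau^{-1}(\tau(g\tau^{-1}(x)))=g\tau^{-1}(x)$, and then obtains the reverse inclusion by applying the same to $\tau^{-1}$. Your anticipated use of the lemma (transferring equivariance to $\tau^{-1}$ in order to apply the Hedlund-type theorem) is also precisely what the paper does in Proposition~\ref{prop:bij<=00003D>rev}.
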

\begin{proof}
For all $g\in Eq\left(\tau\right)$, we have \[
\tau^{-1}\left(gx\right)=\tau^{-1}\left(g\tau\left(\tau^{-1}\left(x\right)\right)\right)=\tau^{-1}\left(\tau\left(g\tau^{-1}\left(x\right)\right)\right)=g\tau^{-1}\left(x\right)\]
and then $g\in Eq\left(\tau^{-1}\right)$. Therefore $Eq\left(\tau\right)\subset Eq\left(\tau^{-1}\right)$.
Applying the latter inclusion to $\tau^{-1}$, one has $Eq\left(\tau^{-1}\right)\subset Eq\left(\left(\tau^{-1}\right)^{-1}\right)=Eq\left(\tau\right)$.
Thus $Eq\left(\tau^{-1}\right)=Eq\left(\tau\right)$. 
\end{proof}
\begin{prop}
\label{prop:bij<=00003D>rev}Let $\tau\colon Q^{\Gamma}\to Q^{\Gamma}$
be an equivariant cellular automaton. Then the following conditions
are equivalent:\\
(i) the map $\tau$ is bijective;\\
(ii) the cellular automaton $\tau$ is reversible.
\end{prop}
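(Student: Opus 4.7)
The implication (ii)$\Rightarrow$(i) is immediate from Definition \ref{def:reversible}, so the substance of the proposition lies in showing (i)$\Rightarrow$(ii): namely, that if an equivariant cellular automaton $\tau$ happens to be bijective, then its set-theoretic inverse $\tau^{-1}$ is again a cellular automaton.

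My plan is to verify the two hypotheses of the generalized Hedlund theorem (Theorem \ref{thm:Hedlund-theorem}) for $\tau^{-1}$, namely continuity and the largeness of $Eq(\tau^{-1})$. The largeness is essentially free: Lemma \ref{l-prop:bij<=00003D>rev} gives $Eq(\tau^{-1})=Eq(\tau)$, and the hypothesis that $\tau$ is equivariant means precisely that $Eq(\tau)$ is a big subgroup of $G$ in the sense of Definition \ref{def:eq-autcell<=00003D>big-subgr}. So $Eq(\tau^{-1})$ is a big subgroup of $G$ as well.

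The main point is therefore to establish the continuity of $\tau^{-1}$. For this I would use a standard compactness argument: since $Q$ is a nonempty finite set, it is a compact Hausdorff space when equipped with the discrete topology, and hence $Q^{\Gamma}$ equipped with the prodiscrete topology is compact Hausdorff by Tychonoff's theorem. By Proposition \ref{prop:autcell=00003D>cont.}, the cellular automaton $\tau\colon Q^{\Gamma}\to Q^{\Gamma}$ is continuous. A continuous bijection from a compact space to a Hausdorff space is automatically a homeomorphism (its inverse is continuous, because the direct image of any closed subset under $\tau$ is compact, hence closed). Therefore $\tau^{-1}$ is continuous.

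Combining these two facts, $\tau^{-1}$ is a continuous map whose equivariance subgroup $Eq(\tau^{-1})$ is big, so Theorem \ref{thm:Hedlund-theorem} yields that $\tau^{-1}$ is an equivariant cellular automaton. This is exactly what is needed to conclude that $\tau$ is reversible. The only step requiring any real care is the ``compact-to-Hausdorff continuous bijection is a homeomorphism'' argument, but it is a standard general-topology fact; no obstacle specific to the theory of cellular automata on a $G$-set arises here, since the prodiscrete topology has been set up earlier in Section \ref{sec:Hedlund-theorem} precisely to make this argument available.
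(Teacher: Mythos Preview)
Your proof is correct and follows essentially the same route as the paper: continuity of $\tau^{-1}$ via the compact--Hausdorff homeomorphism argument, bigness of $Eq(\tau^{-1})$ via Lemma~\ref{l-prop:bij<=00003D>rev}, and then the Hedlund-type characterization. The only cosmetic difference is that the paper invokes Proposition~\ref{prop:big-subgr+cont=00003D>autcell} directly rather than the packaged Theorem~\ref{thm:Hedlund-theorem}, but the content is identical.
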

\begin{proof}
(ii)$\Rightarrow$(i) is obvious. Conversely, suppose (i), i.e.,
$\tau$ is bijective. By Proposition \ref{prop:autcell=00003D>cont.},
$\tau$ is a continuous map. Since every continuous bijective map
from a compact space to a Hausdorff space is a homeomorphism, $\tau^{-1}$
is also continuous. As $\tau$ is equivariant, $Eq\left(\tau\right)$
is a big subgroup of $G$. Since $Eq\left(\tau^{-1}\right)=Eq\left(\tau\right)$
by Lemma \ref{l-prop:bij<=00003D>rev}, $Eq\left(\tau^{-1}\right)$
is a big subgroup of $G$. Finally, by Proposition \ref{prop:big-subgr+cont=00003D>autcell},
$\tau^{-1}$ is a cellular automaton and then $\tau$ is reversible.
\end{proof}
This proof shows moreover that a reversible equivariant cellular automaton
can be reversed using the same coordinate system. This is not necessarly
true for non-equivariant cellular automata, as in the following example.

\begin{example}
Consider  Example \ref{ex:non-eq-cell-aut}. The map $\tau$ is bijective.
Let $M'=\left\{ \alpha_{2}\right\} $, with $\alpha_{2}$ the square
of $\Gamma$ whose center is $\left(\frac{3}{2},\frac{1}{2}\right)$
and $\mu'\colon Q^{M'}\to Q$ defined by $\mu\left(x\right)=x\left(\alpha_{2}\right)$
for all $x\in Q^{M'}$. Let $T_{2}=\left\{ s_{\left(a,b\right)}\in G\colon a,b\in\mathbb{Z}\mathrm{\ and\ }0\leq b\leq\left|a\right|\right\} $
and $T'=T_{2}\cup rT_{2}\cup r^{2}T_{2}\cup r^{3}T_{2}$. Consider
the cellular automaton $\tau'$ defined by the construction triple
$\left(M',\mu',\left(\alpha_{0},T'\right)\right)$. Then one has $\tau\circ\tau'=\tau'\circ\tau=\mathrm{Id}_{Q^{\Gamma}}$
and hence $\tau$ is reversible. Note that the pair $\left(\alpha_{0},T\right)$
is the only coordinate system for $\tau$ and the pair $\left(\alpha_{0},T'\right)$
is the only coordinate system for $\tau^{-1}$, up to the origin.
Therefore the cellular automaton $\tau$ is not reversible in its
own coordinate system. 

\begin{figure}[h]
\caption{The reverse state shift automaton of Example \ref{ex:non-eq-cell-aut}}
\input{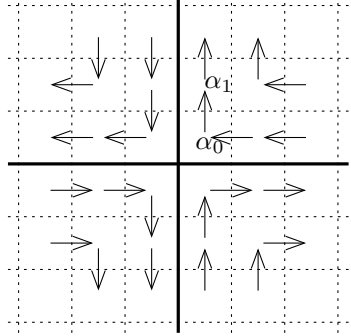} 
\end{figure}

\end{example}

\section{\label{sec:Composition-of-cell}Composition of cellula automata}

Let $\Gamma$ be a set equipped with a transitive left action of a
group $G$ and let $Q$ be a nonempty finite set. 

\begin{lem}
\label{lem:pre-composition-aut-cell}Let $\tau\colon Q^{\Gamma}\to Q^{\Gamma}$
be a cellular automaton. Suppose there exists a cell $\alpha_{0}\in\Gamma$,
a finite subset $M\subset\Gamma$ and a map $\mu\colon Q^{M}\to Q$
such that $\tau\left(x\right)\left(\alpha_{0}\right)=\mu\left(x\vert_{M}\right)$
for all $x\in Q^{\Gamma}$. Then $\left(M,\mu,\left(\alpha_{0},T\right)\right)$
is a construction triple for $\tau$ for some $T\subset G$.
\end{lem}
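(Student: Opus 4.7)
The plan is to reuse an existing coordinate system associated with $\tau$ and transport the local identity at $\alpha_0$ to all cells via formula \eqref{eq:rel-ori-aut-cell}. First, since $\tau$ is assumed to be a cellular automaton, Proposition \ref{prop:chg-origine-aut-cell} provides a coordinate system $(\alpha_0,T)$ on $\Gamma$ with the prescribed origin such that $(\alpha_0,T)$ is a coordinate system for $\tau$. Thus there exist a finite memory set $M'\subset\Gamma$ and a local defining map $\mu'\colon Q^{M'}\to Q$ such that $(M',\mu',(\alpha_0,T))$ is a construction triple for $\tau$. I will show that this same $T$ works with the given $M$ and $\mu$.

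Next, I would fix an arbitrary configuration $x\in Q^{\Gamma}$ and cell $\alpha\in\Gamma$ with coordinate $t\in T$. Applying formula \eqref{eq:rel-ori-aut-cell} to the construction triple $(M',\mu',(\alpha_0,T))$ gives
\[
\tau(x)(\alpha)=\tau(t^{-1}x)(\alpha_0).
\]
Note that \eqref{eq:rel-ori-aut-cell} was derived in the excerpt as a consequence of the definition of a cellular automaton, so it depends only on the coordinate system and not on the particular memory set or local map used in any given construction triple; this is the key observation that lets me switch from $(M',\mu')$ to $(M,\mu)$.

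Now I apply the hypothesis $\tau(y)(\alpha_0)=\mu(y\vert_M)$ with $y=t^{-1}x$ to obtain
\[
\tau(t^{-1}x)(\alpha_0)=\mu((t^{-1}x)\vert_M).
\]
Combining the two identities yields $\tau(x)(\alpha)=\mu((t^{-1}x)\vert_M)$ for every $x\in Q^{\Gamma}$ and every $\alpha\in\Gamma$, which is exactly the defining equation \eqref{eq:def-aut-cell} of a cellular automaton with construction triple $(M,\mu,(\alpha_0,T))$. Hence $(M,\mu,(\alpha_0,T))$ is a construction triple for $\tau$.

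There is essentially no obstacle here: the statement is really a rigidity fact saying that the value of $\tau$ at the origin, together with a coordinate system, determines the whole global transition map through the equivariance-like identity \eqref{eq:rel-ori-aut-cell}. The only care needed is to notice that \eqref{eq:rel-ori-aut-cell} is a property of $\tau$ and the coordinate system alone, so it can be invoked with the given pair $(M,\mu)$ even though they were not a priori part of a construction triple.
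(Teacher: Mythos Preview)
Your proof is correct and follows essentially the same approach as the paper: invoke Proposition~\ref{prop:chg-origine-aut-cell} to obtain a coordinate system $(\alpha_0,T)$ for $\tau$, then use \eqref{eq:rel-ori-aut-cell} together with the hypothesis to conclude. The paper's version simply omits the intermediate mention of $(M',\mu')$, writing the chain $\tau(x)(\alpha)=\tau(t^{-1}x)(\alpha_0)=\mu((t^{-1}x)\vert_M)$ directly.
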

\begin{proof}
By Proposition \ref{prop:chg-origine-aut-cell}, there exists a subset
$T\subset G$ such that the pair $\left(\alpha_{0},T\right)$ is a
coordinate system for $\tau$. Then from (\ref{eq:rel-ori-aut-cell})
we have\[
\tau\left(x\right)\left(\alpha\right)=\tau\left(t^{-1}x\right)\left(\alpha_{0}\right)=\mu\left(\left(t^{-1}x\right)\vert_{M}\right)\]
for all $x\in Q^{\Gamma}$ and $\alpha\in\Gamma$, where $t\in T$
denotes the coordinate of $\alpha$. Thus $\left(M,\mu,\left(\alpha_{0},T\right)\right)$
is a construction triple for $\tau$.
\end{proof}
Let $\tau_{1}$ and $\tau_{2}\colon Q^{\Gamma}\to Q^{\Gamma}$ be
cellular automata with construction triples $\left(M_{1},\mu_{1},\left(\alpha_{1},T_{1}\right)\right)$
and $\left(M_{2},\mu_{2},\left(\alpha_{2},T_{2}\right)\right)$ respectively.
We construct a cellular automaton $\tau'$ with the construction triple
$\left(M,\mu,\left(\alpha_{1},T_{1}\right)\right)$ defined this way:
let \[
M=\left\{ t_{\beta_{1}}\cdot\beta_{2}\colon\beta_{1}\in M_{1}\mathrm{\ and\ }\beta_{2}\in M_{2}\right\} \]
where $t_{\beta_{1}}$ denotes the coordinate of $\beta_{1}$ in the
coordinate system $\left(\alpha_{2},T_{2}\right)$; for $y\in Q^{M}$
and $t\in T_{2}$ the coordinate of an element of $M_{1}$ in the
coordinate system $\left(\alpha_{2},T_{2}\right)$, define $y_{t}\colon M_{2}\to Q$
by setting $y_{t}\left(\alpha\right)=y\left(t\cdot\alpha\right)$
for all $\alpha\in M_{2}$. Also, let $\overline{y}\colon M_{1}\to Q$
be the map defined by $\overline{y}\left(\alpha\right)=\mu_{2}\left(y_{t}\right)$
for all $\alpha\in M_{1}$ with coordinate $t\in T_{2}$. Finally
define the map $\mu\colon Q^{M}\to Q$ by setting \[
\mu\left(y\right)=\mu_{1}\left(\overline{y}\right)\]
for all $y\in Q^{M}$. Then we have the following proposition:

\begin{prop}
\label{prop:pre-composition-aut-cell}With the above notation, if
the composite map $\tau=\tau_{1}\circ\tau_{2}$ is a cellular automaton,
then $\tau=\tau'$.
\end{prop}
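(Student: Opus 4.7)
The plan is to verify the pointwise equality $\tau(x)(\alpha)=\tau'(x)(\alpha)$ for every configuration $x$ and every cell $\alpha$. First I would prove the equality at the distinguished cell $\alpha_1$ by a direct unfolding of the two construction triples, and then extend the result to all of $\Gamma$ using the hypothesis that $\tau$ is a cellular automaton.

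For the first step, using the construction triple $(M_1,\mu_1,(\alpha_1,T_1))$ of $\tau_1$ together with the fact that the $T_1$-coordinate of $\alpha_1$ is $1_G$, I would write $\tau(x)(\alpha_1)=\tau_1(\tau_2(x))(\alpha_1)=\mu_1(\tau_2(x)\vert_{M_1})$. For each $\beta_1\in M_1$ with $T_2$-coordinate $t_{\beta_1}$, the construction triple of $\tau_2$ gives $\tau_2(x)(\beta_1)=\mu_2((t_{\beta_1}^{-1}x)\vert_{M_2})$. The key observation is that $(t_{\beta_1}^{-1}x)(\beta_2)=x(t_{\beta_1}\cdot\beta_2)$, and since $t_{\beta_1}\cdot\beta_2\in M$ by the very definition of $M$, this value equals $(x\vert_M)_{t_{\beta_1}}(\beta_2)$ by the definition of the map $y\mapsto y_t$. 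Therefore $\tau_2(x)\vert_{M_1}$ coincides with $\overline{x\vert_M}$, and consequently $\tau(x)(\alpha_1)=\mu_1(\overline{x\vert_M})=\mu(x\vert_M)=\tau'(x)(\alpha_1)$.

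Once this equality at $\alpha_1$ is established for every $x$, the hypothesis that $\tau$ is a cellular automaton together with Lemma \ref{lem:pre-composition-aut-cell} yields a coordinate set $T\subset G$ such that $(M,\mu,(\alpha_1,T))$ is a construction triple for $\tau$. To conclude $\tau=\tau'$, I would argue that $(\alpha_1,T_1)$ is itself a coordinate system for $\tau$; then Proposition \ref{prop:Eq(tau) contient des coord-syst}(i), combined with the uniqueness of the local defining map associated with a given origin and memory set, forces $(M,\mu,(\alpha_1,T_1))$ to be a construction triple for $\tau$, whence $\tau=\tau'$.

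The main obstacle is precisely this last identification. The coordinate set $T$ supplied by Lemma \ref{lem:pre-composition-aut-cell} need not equal $T_1$; the two differ by stabilizer elements of $\alpha_1$, so the issue reduces to showing that $\mu$ is invariant under the relevant elements of $\Stab(\alpha_1)$. I expect this invariance to follow from a second round of the direct unfolding used in the first step, now applied at a general cell $\alpha=t\cdot\alpha_1$ with $t\in T_1$, exploiting the hypothesis that the composite $\tau=\tau_1\circ\tau_2$ is globally a cellular automaton so that no inconsistency can arise between the two construction triples for $\tau$ at $\alpha_1$.
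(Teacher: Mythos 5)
Your first two paragraphs reproduce the paper's proof verbatim in content: the key identity $\left(t_{\beta_{1}}^{-1}x\right)\vert_{M_{2}}=\left(x\vert_{M}\right)_{t_{\beta_{1}}}$, hence $\tau_{2}\left(x\right)\vert_{M_{1}}=\overline{x\vert_{M}}$, hence $\tau_{1}\circ\tau_{2}\left(x\right)\left(\alpha_{1}\right)=\mu_{1}\left(\overline{x\vert_{M}}\right)=\mu\left(x\vert_{M}\right)$. Where you and the paper part ways is what comes next: the paper's proof consists of exactly this computation, preceded by the single assertion that, by Lemma \ref{lem:pre-composition-aut-cell}, it suffices to verify the value at the origin $\alpha_{1}$; it makes no attempt to compare the coordinate set $T$ produced by that lemma with $T_{1}$. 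You, on the contrary, insist on upgrading $T$ to $T_{1}$ (via Proposition \ref{prop:Eq(tau) contient des coord-syst}(i)), and you leave that upgrade as an "expected" consequence of a second unfolding. As written, that is a genuine gap: the conclusion $\tau=\tau'$ is never actually derived.

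Moreover, the invariance you hope to extract cannot follow from the stated hypotheses alone, so no amount of further unfolding will close the gap in the generality you set up. Take $\tau_{2}$ to be the non-equivariant state shift of Example \ref{ex:non-eq-cell-aut}, with triple $\left(\left\{ \alpha_{1}\right\} ,\mathrm{id},\left(\alpha_{0},T\right)\right)$ in the notation of that example, and take $\tau_{1}=\mathrm{Id}_{Q^{\Gamma}}$ presented, as in Example \ref{exple:cell-auto}(f), by the triple $\left(\left\{ \alpha_{0}\right\} ,\mathrm{id},\left(\alpha_{0},U\right)\right)$ for a coordinate set $U\neq T$ with the same origin (for instance the set $T'$ constructed in the example following Proposition \ref{prop:bij<=00003D>rev}). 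The composite $\tau_{1}\circ\tau_{2}=\tau_{2}$ is a cellular automaton, and since the coordinate of $\alpha_{0}$ in $\left(\alpha_{0},T\right)$ is $1_{G}$, the construction gives $M=\left\{ \alpha_{1}\right\} $ and $\mu$ the identification, so here $\tau'$ is the automaton defined by $\left(\left\{ \alpha_{1}\right\} ,\mathrm{id},\left(\alpha_{0},U\right)\right)$. But (for $Q$ with at least two states) this differs from $\tau_{2}$: as remarked in Example \ref{ex:non-eq-cell-aut}, that state shift admits only one coordinate system with origin $\alpha_{0}$, namely $\left(\alpha_{0},T\right)$, so $\left(\alpha_{0},U\right)$ cannot be a coordinate system for it. Thus agreement at the origin plus the hypothesis that the composite is a cellular automaton does not by itself force $\tau=\tau'$; your last step needs an extra assumption (for example $T_{1}\subset Eq\left(\tau\right)$, which holds in the equivariant situation of Proposition \ref{prop:composition-aut-cell}), or one must settle for the weaker conclusion that $\left(M,\mu,\left(\alpha_{1},T\right)\right)$ is a construction triple for $\tau$ for some $T$. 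Note that this is precisely the point the paper's one-sentence reduction passes over: it implicitly treats the data $\left(M,\mu,\alpha_{1}\right)$ as determining the automaton, which Examples \ref{exple:cell-auto}(c), (d) and \ref{ex:non-eq-cell-aut} show is not the case. So your computation matches the paper, your unease about $T$ versus $T_{1}$ is well founded, but the route you sketch does not (and cannot, without further hypotheses) complete the proof.
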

\begin{proof}
From Lemma \ref{lem:pre-composition-aut-cell} it is sufficient to
prove that $\tau\left(x\right)\left(\alpha_{1}\right)=\mu\left(x\vert_{M}\right)$
for all $x\in Q^{\Gamma}$. Let $x\in Q^{\Gamma}$ be a configuration
and $\beta_{1}\in M_{1}$ (resp. $\beta_{2}\in M_{2}$) be a cell
with coordinate $t_{1}\in T_{2}$ (resp. $t_{2}\in T_{2}$). We have\[
\left(t_{1}^{-1}x\right)\left(\beta_{2}\right)=x\left(t_{1}\cdot\beta_{2}\right)=x\vert_{M}\left(t_{1}\cdot\beta_{2}\right)=\left(x\vert_{M}\right)_{t_{1}}\left(\beta_{2}\right)\]
and thus $t_{1}^{-1}x\vert_{M_{2}}=\left(x\vert_{M}\right)_{t_{1}}$.
Therefore one has\[
\tau_{2}\left(x\right)\left(\beta_{1}\right)=\mu_{2}\left(t_{1}^{-1}x\vert_{M_{2}}\right)=\mu_{2}\left(\left(x\vert_{M}\right)_{t_{1}}\right)=\overline{x\vert_{M}}\left(\beta_{1}\right)\]
 and thus $\tau_{2}\left(x\right)\vert_{M_{1}}=\overline{x\vert_{M}}$.
Finally we have \[
\tau_{1}\circ\tau_{2}\left(x\right)\left(\alpha_{1}\right)=\mu_{1}\left(\tau_{2}\left(x\right)\vert_{M_{1}}\right)=\mu_{1}\left(\overline{x\vert_{M}}\right)=\mu\left(x\vert_{M}\right)\]
and thus $\tau=\tau_{1}\circ\tau_{2}=\tau'$.
\end{proof}
Note that it may happen that $\tau_{1}\circ\tau_{2}$ is not a cellular
automaton. The following proposition gives a sufficient condition
for $\tau_{1}\circ\tau_{2}$ to be a cellular automaton, when $\tau_{1}$
and $\tau_{2}$ are equivariant cellular automata. Note that the intersection
of two big subgroups may not be a big subgroup.

\begin{prop}
\label{prop:composition-aut-cell}Let $\tau_{1}$ and $\tau_{2}\colon Q^{\Gamma}\to Q^{\Gamma}$
be cellular automata. If $Eq\left(\tau_{1}\right)\cap Eq\left(\tau_{2}\right)$
is a big subgroup of $G$, then $\tau_{1}\circ\tau_{2}$ is a cellular
automaton.
\end{prop}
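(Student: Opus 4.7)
The plan is to apply the generalized Hedlund theorem (Theorem \ref{thm:Hedlund-theorem}), or more directly Proposition \ref{prop:big-subgr+cont=00003D>autcell}, which says that a continuous map whose equivariance subgroup is big is automatically a cellular automaton. So the strategy is to verify two things about $\tau = \tau_1 \circ \tau_2$: continuity, and that $Eq(\tau)$ is a big subgroup of $G$.

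First I would handle continuity. Each $\tau_i$ is a cellular automaton, hence continuous by Proposition \ref{prop:autcell=00003D>cont.}, and the composition of continuous maps is continuous, so $\tau_1 \circ \tau_2$ is continuous with respect to the prodiscrete topology on $Q^{\Gamma}$. This step is immediate.

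Next I would show the inclusion
\[
Eq(\tau_1) \cap Eq(\tau_2) \subset Eq(\tau_1 \circ \tau_2).
\]
Indeed, for any $h \in Eq(\tau_1) \cap Eq(\tau_2)$ and any $x \in Q^{\Gamma}$, one computes
\[
(\tau_1 \circ \tau_2)(hx) = \tau_1(\tau_2(hx)) = \tau_1(h\,\tau_2(x)) = h\,\tau_1(\tau_2(x)) = h\,(\tau_1 \circ \tau_2)(x),
\]
using $h \in Eq(\tau_2)$ in the second equality and $h \in Eq(\tau_1)$ in the third. Hence $h \in Eq(\tau_1 \circ \tau_2)$.

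Finally, since $Eq(\tau_1) \cap Eq(\tau_2)$ is by hypothesis a big subgroup of $G$, its action on $\Gamma$ is already transitive. The inclusion above forces $Eq(\tau_1 \circ \tau_2)$ to contain a subgroup acting transitively, so $Eq(\tau_1 \circ \tau_2)$ itself acts transitively and is therefore a big subgroup of $G$ in the sense of Definition \ref{def:big-subgr}. Applying Proposition \ref{prop:big-subgr+cont=00003D>autcell} to the continuous map $\tau_1 \circ \tau_2$ with $Eq(\tau_1 \circ \tau_2)$ big concludes that $\tau_1 \circ \tau_2$ is a cellular automaton. There is no real obstacle here; the only subtlety is noticing that bigness passes upward through subgroup inclusion, which is immediate from the definition, and this is why the hypothesis is phrased in terms of the \emph{intersection} rather than the individual equivariance groups (the intersection of two big subgroups need not be big, as observed in the remark preceding the statement).
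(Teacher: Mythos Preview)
Your proof is correct and follows exactly the same approach as the paper: continuity of the composite via Proposition~\ref{prop:autcell=00003D>cont.}, the inclusion $Eq(\tau_1)\cap Eq(\tau_2)\subset Eq(\tau_1\circ\tau_2)$, and then Proposition~\ref{prop:big-subgr+cont=00003D>autcell}. You simply spell out the inclusion computation and the upward-closure of bigness, which the paper states without justification.
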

\begin{proof}
From Proposition \ref{prop:autcell=00003D>cont.}, $\tau_{1}$ and
$\tau_{2}$ are continuous maps, therefore $\tau_{1}\circ\tau_{2}$
is a continuous map. As $Eq\left(\tau_{1}\right)\cap Eq\left(\tau_{2}\right)\subset Eq\left(\tau_{1}\circ\tau_{2}\right)$,
$Eq\left(\tau_{1}\circ\tau_{2}\right)$ is a also big subgroup of
$G$. Thus, by Proposition \ref{prop:big-subgr+cont=00003D>autcell},
$\tau_{1}\circ\tau_{2}$ is a cellular automaton.
\end{proof}
From Proposition \ref{prop:composition-aut-cell}, we deduce that
if $\tau$ is an equivariant cellular automaton, then $\tau\circ\tau$
is also a cellular automaton. But it may happen that $\tau\circ\tau$
is not a cellular automaton, if $\tau$ is not an equivariant cellular
automaton.

\begin{example}
\label{ex:comp-cell-aut}(a) \textbf{The Margolus billiard-ball cellular
automaton.}  Consider the cellular automata $\tau_{0}$ and $\tau_{1}$
and the map $\tau=\tau_{1}\circ\tau_{0}$ defined in Example \ref{exple:cell-auto}
(e). Remark that $T_{0}$ is a   subgroup of $G$ and that $T_{0}=t_{0}T_{0}t_{0}^{-1}$
and therefore $\tau_{0}$ and $\tau_{1}$ are $T_{0}$-equivariant.
 Hence by Proposition \ref{prop:composition-aut-cell} the Margolus
billiard-ball $\tau$ is a cellular automaton.  As $\tau_{0}$ and
$\tau_{1}$ are reversible, $\tau$ is bijective. Then, since $\tau$
is equivariant, $\tau$ is reversible by Proposition \ref{prop:bij<=00003D>rev}.
The Margolus billiard-ball is an important example since Margolus
in \cite{Margolus} proved that it is a universal cellular automaton.
Still, there was no formal proof that $\tau_{1}\circ\tau_{0}$ was
a cellular automaton. Indeed, it may happen that the composition of
two cellular automata is no longer a cellular automaton, as one can
see in the following example. 

(b) Consider the cellular automaton defined in Example \ref{exple:cell-auto}
(d). We construct the cellular automaton $\tau'$ as in Proposition
\ref{prop:pre-composition-aut-cell} with $\tau_{1}=\tau_{2}=\tau$.
Then we have $M'=\left\{ \alpha_{3}\right\} $ with $\alpha_{3}$
the square of $\Gamma$ whose center is $\left(-\frac{1}{2},\frac{3}{2}\right)$,
$\mu'\colon Q^{M'}\to Q$ defined by $\mu'\left(x\right)=x\left(\alpha_{3}\right)$
for all $x\in Q^{M}$, and the construction triple $\left(M',\mu',\left(\alpha_{0},T\right)\right)$
defines a cellular automaton $\tau'$. By Proposition \ref{prop:pre-composition-aut-cell},
we know that if $\tau\circ\tau$ is a cellular automaton, then $\tau\circ\tau=\tau'$.
Let $\beta_{1}$ (resp. $\beta_{2},$ $\beta_{3}$) denote the square
of $\Gamma$ whose center is $\left(\frac{3}{2},\frac{1}{2}\right)$
(resp. $\left(\frac{3}{2},\frac{5}{2}\right)$, $\left(\frac{1}{2},\frac{3}{2}\right)$).
Then one has for all $x\in Q^{\Gamma}$, \[
\tau\circ\tau\left(x\right)\left(\beta_{1}\right)=x\left(\beta_{2}\right)\]
and\[
\tau'\left(x\right)\left(\beta_{1}\right)=x\left(\beta_{3}\right)\]
Hence $\tau\circ\tau$ is not a cellular automaton. Note that this
also proves that $\tau$ is not an equivariant cellular automaton. 

\begin{figure}[h]
\caption{Comparison between $\tau\circ\tau$ and $\tau'$}
\label{fig-exple8}\input{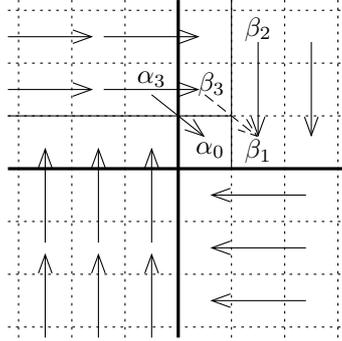}
\end{figure}

\end{example}
Denote by $\mathcal{CA}\left(\Gamma,Q\right)$ the set of cellular
automata over the state set $Q$ and the universe $\Gamma$. The latter
example shows that $\mathcal{CA}\left(\Gamma,Q\right)$ is not stable
for the composition of maps, and any subset of $\mathcal{CA}\left(\Gamma,Q\right)$
containing the cellular automaton of Example \ref{ex:comp-cell-aut}
(b) is not stable either. But there are subsets of $\mathcal{CA}\left(\Gamma,Q\right)$
which are stable for the composition of maps: for every coordinate
system $\left(\alpha_{0}.T\right)$, denote by $\mathcal{CA}\left(\Gamma,Q,\left(\alpha_{0}.T\right)\right)$
the subset of $\mathcal{CA}\left(\Gamma,Q\right)$ of cellular automata
$\tau$ such that $T\subset Eq\left(\tau\right)$. As a corollary
to Proposition \ref{prop:composition-aut-cell}, we have the following:

\begin{cor}
For every coordinate system $\left(\alpha_{0}.T\right)$, the set
$\mathcal{CA}\left(\Gamma,Q,\left(\alpha_{0}.T\right)\right)$ is
a monoid for the composition of maps. 
\end{cor}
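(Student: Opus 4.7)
The plan is to verify the three defining properties of a monoid: existence of a two-sided identity element, closure under the binary operation, and associativity. Associativity is automatic since composition of maps on $Q^{\Gamma}$ is associative. For the identity, the map $\mathrm{Id}_{Q^{\Gamma}}$ is a cellular automaton by Example \ref{exple:cell-auto} (f), and obviously $Eq(\mathrm{Id}_{Q^{\Gamma}}) = G \supset T$, so $\mathrm{Id}_{Q^{\Gamma}} \in \mathcal{CA}(\Gamma, Q, (\alpha_0, T))$ and serves as a neutral element for composition.

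The only nontrivial point is closure under composition. Given $\tau_1, \tau_2 \in \mathcal{CA}(\Gamma, Q, (\alpha_0, T))$, I want to apply Proposition \ref{prop:composition-aut-cell} to conclude that $\tau_1 \circ \tau_2$ is a cellular automaton. For this, I first need to check that $Eq(\tau_1) \cap Eq(\tau_2)$ is a big subgroup of $G$. It is a subgroup, being the intersection of two subgroups (Proposition \ref{prop:Eq()-is-subgr}), and it contains $T$ by hypothesis on $\tau_1$ and $\tau_2$. Since $(\alpha_0, T)$ is a coordinate system, for every $\alpha \in \Gamma$ there exists $t \in T$ with $t \cdot \alpha_0 = \alpha$; hence $T$ alone already acts transitively on $\Gamma$, so \emph{a fortiori} $Eq(\tau_1) \cap Eq(\tau_2)$ acts transitively, i.e.\ is a big subgroup in the sense of Definition \ref{def:big-subgr}. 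Proposition \ref{prop:composition-aut-cell} then gives that $\tau_1 \circ \tau_2$ is a cellular automaton.

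It remains to verify that $\tau_1 \circ \tau_2$ still lies in $\mathcal{CA}(\Gamma, Q, (\alpha_0, T))$, i.e.\ that $T \subset Eq(\tau_1 \circ \tau_2)$. This is straightforward: for every $t \in T$ and every $x \in Q^{\Gamma}$,
\[
(\tau_1 \circ \tau_2)(tx) = \tau_1(\tau_2(tx)) = \tau_1(t \tau_2(x)) = t \tau_1(\tau_2(x)) = t (\tau_1 \circ \tau_2)(x),
\]
where the two middle equalities use $t \in Eq(\tau_2)$ and $t \in Eq(\tau_1)$ respectively. Thus $t \in Eq(\tau_1 \circ \tau_2)$, so $\tau_1 \circ \tau_2 \in \mathcal{CA}(\Gamma, Q, (\alpha_0, T))$.

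There is really no hard obstacle here; the main conceptual point to be careful about is the distinction between the set $T$ (a system of coset representatives, not in general a subgroup) and the genuine subgroup $Eq(\tau_1) \cap Eq(\tau_2)$ which contains $T$ and which is what Proposition \ref{prop:composition-aut-cell} actually needs. Once this observation is made, transitivity on $\Gamma$ is immediate and everything reduces to invoking the results already proved.
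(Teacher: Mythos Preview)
Your proof is correct and follows exactly the route the paper intends: the corollary is stated without proof, immediately after Proposition~\ref{prop:composition-aut-cell}, as a direct consequence of that proposition together with the observation that any coordinate set $T$ already acts transitively on $\Gamma$. You have simply made explicit the details (identity, associativity, closure, and $T\subset Eq(\tau_1\circ\tau_2)$) that the paper leaves to the reader.
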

For every big subgroup $H$ of $G$, denote by $\mathcal{CA}\left(\Gamma,Q,H\right)$
the subset of $\mathcal{CA}\left(\Gamma,Q\right)$ of cellular automata
$\tau$ such that $H\subset Eq\left(\tau\right)$. As a corollary
to Proposition \ref{prop:composition-aut-cell}, we have the following:

\begin{cor}
For every big subgroup $H$ of $G$ and every coordinate system $\left(\alpha_{0}.T\right)$
such that $T\subset H$, the set $\mathcal{CA}\left(\Gamma,Q,H\right)$
is a submonoid of $\mathcal{CA}\left(\Gamma,Q,\left(\alpha_{0}.T\right)\right)$.
The set $\mathcal{CA}\left(\Gamma,Q,G\right)$ is a submonoid of $\mathcal{CA}\left(\Gamma,Q,H\right)$
for every big subgroup $H$.
\end{cor}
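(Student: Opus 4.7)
The plan is to unpack the definitions and then invoke the two main closure results already proven, namely Proposition \ref{prop:Eq()-is-subgr} and Proposition \ref{prop:composition-aut-cell}. First I would verify the inclusions of sets. For the first statement, if $\tau \in \mathcal{CA}\left(\Gamma,Q,H\right)$ then by definition $H \subset Eq\left(\tau\right)$, and since $T \subset H$ by hypothesis, we get $T \subset Eq\left(\tau\right)$, i.e., $\tau \in \mathcal{CA}\left(\Gamma,Q,\left(\alpha_{0},T\right)\right)$. For the second statement, $G \subset Eq\left(\tau\right)$ trivially implies $H \subset Eq\left(\tau\right)$ for any subgroup $H$.

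Next I would prove that each of the three sets is actually a monoid, i.e., is closed under composition and contains the identity. The identity map $\mathrm{Id}_{Q^{\Gamma}}$ (realized as the cellular automaton of Example \ref{exple:cell-auto}(f)) satisfies $Eq\left(\mathrm{Id}\right)=G$, so it lies in every $\mathcal{CA}\left(\Gamma,Q,H\right)$ and in every $\mathcal{CA}\left(\Gamma,Q,\left(\alpha_{0},T\right)\right)$. For closure under composition in $\mathcal{CA}\left(\Gamma,Q,H\right)$, take $\tau_{1},\tau_{2}\in\mathcal{CA}\left(\Gamma,Q,H\right)$; then $H\subset Eq\left(\tau_{1}\right)\cap Eq\left(\tau_{2}\right)$, and since $H$ is a big subgroup of $G$, so is $Eq\left(\tau_{1}\right)\cap Eq\left(\tau_{2}\right)$ (it contains a subgroup acting transitively on $\Gamma$). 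By Proposition \ref{prop:composition-aut-cell}, $\tau_{1}\circ\tau_{2}$ is a cellular automaton. A direct calculation shows that for any $h\in H$ and any $x\in Q^{\Gamma}$,
\[
\tau_{1}\circ\tau_{2}\left(hx\right)=\tau_{1}\left(h\tau_{2}\left(x\right)\right)=h\tau_{1}\circ\tau_{2}\left(x\right),
\]
so $H\subset Eq\left(\tau_{1}\circ\tau_{2}\right)$, and consequently $\tau_{1}\circ\tau_{2}\in\mathcal{CA}\left(\Gamma,Q,H\right)$. The same argument, applied with $H$ replaced by $G$, proves closure for $\mathcal{CA}\left(\Gamma,Q,G\right)$.

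Combining the inclusions with the closure properties, $\mathcal{CA}\left(\Gamma,Q,H\right)$ is a submonoid of $\mathcal{CA}\left(\Gamma,Q,\left(\alpha_{0},T\right)\right)$ whenever $T\subset H$, and $\mathcal{CA}\left(\Gamma,Q,G\right)$ is a submonoid of $\mathcal{CA}\left(\Gamma,Q,H\right)$ for every big subgroup $H$ of $G$. There is no real obstacle here: everything follows from Proposition \ref{prop:composition-aut-cell} once one checks that $Eq$ is compatible with composition and that being a big subgroup is inherited upward. The only subtlety worth a sentence is the remark (used implicitly above) that if $H$ is big and $H\subset K$ for some subgroup $K$ of $G$, then $K$ is also big, since the transitivity of the $H$-action forces the transitivity of the $K$-action. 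With that observation in place, the corollary follows immediately from the previous corollary and the definitions.
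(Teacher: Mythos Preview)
Your proposal is correct and matches the paper's approach: the paper states this as an immediate corollary to Proposition~\ref{prop:composition-aut-cell} with no written proof, and your argument simply unpacks the definitions and invokes that proposition together with the previous corollary, exactly as intended.
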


\section{Conclusion}
The question arises whether other classical theorems on cellular automata are
also true for $G$-set cellular automata. As an example, we can take the Garden
of Eden theorem, characterizing surjective cellular automata
as pre-injective cellular automata. 
As the equivalence between reversibility and bijectivity has been proven
for equivariant cellular automaton, another natual question is:
does there exist a non-equivariant non-reversible bijective cellular automaton?

\medskip{}
\medskip{}

\textsc{UFR de math\'ematiques, Université de Strasbourg, 7 rue Ren\'e-Descartes, 67000 Strasbourg, France} \\
E-mail address: \texttt{moriceau@unistra.fr}

\end{document}